\documentclass[11pt]{amsart}
\usepackage{amsmath,amssymb,amscd}

\usepackage{latexsym}
\usepackage[dvips]{graphics}
\marginparwidth 1.5cm

\usepackage[latin1]{inputenc}
\usepackage{amsthm}
\usepackage{amsfonts}
\usepackage{comment}
\usepackage{asymptote}
\usepackage{graphicx,xy,pstricks}
\DeclareGraphicsRule{.pdftex}{pdf}{.pdftex}{}
\input xy
\xyoption{all}

\newcommand{\executeiffilenewer}[3]{%
 \ifnum\pdfstrcmp{\pdffilemoddate{#1}}%
 {\pdffilemoddate{#2}}>0%
 {\immediate\write18{#3}}\fi%
}

\newcommand{%
 \executeiffilenewer{.svg}{.pdf}%
 {inkscape -z -D --file=.svg %
 --export-pdf=.pdf --export-latex}%
 \input{.pdf_tex}%
}[1]{%
 \executeiffilenewer{#1.svg}{#1.pdf}%
 {inkscape -z -D --file=#1.svg %
 --export-pdf=#1.pdf --export-latex}%
 \input{#1.pdf_tex}%
}

\newtheorem{theorem}{Theorem}[section]
\newtheorem{proposition}[theorem]{Proposition}
\newtheorem{lemma}[theorem]{Lemma}
\newtheorem{corollary}[theorem]{Corollary}

\theoremstyle{remark}
\newtheorem{remark}{Remark}[section]

\newtheorem*{theorem*}{Theorem}


\theoremstyle{plain}      

\newcommand{\Z}{\mathbb{Z}}
\newcommand{\Q}{\mathbb{Q}}
\newcommand{\R}{\mathbb{R}}
\newcommand{\N}{\mathbb{N}}
\newcommand{\C}{\mathbb{C}}
\newcommand{\HH}{\mathbb{H}^2}

\newcommand{\mcg}{\operatorname{Mod}}

\renewcommand{\phi}{\varphi}
\renewcommand{\sinh}{\operatorname{sh}}
\renewcommand{\cosh}{\operatorname{ch}}
\renewcommand{\tanh}{\operatorname{th}}
\renewcommand{\epsilon}{\varepsilon}
\newcommand{\mo}{\mathcal{M}}
\newcommand{\tr}{\operatorname{Tr}}
\newcommand{\eu}{\operatorname{eu}}

\newcommand{\Isom}{\operatorname{Isom}}
\newcommand{\rep}{\mathcal{R}}
\newcommand{\psl}{\mathrm{PSL}_2(\mathbb{R})}
\newcommand{\psltild}{\widetilde{\mathrm{PSL}}_2(\mathbb{R})}
\newcommand{\sldeuxR}{\mathrm{SL}_2(\mathbb{R})}

\newcommand{\Aut}{\operatorname{Aut}}
\newcommand{\Out}{\operatorname{Out}}
\newcommand{\Hom}{\operatorname{Hom}}

\setcounter{tocdepth}{2}

\title{The modular action on PSL$_2(\R)$-characters in genus 2}

\author{Julien March\'e}
\address{}
\email{}
\author{Maxime Wolff}
\address{}
\email{}
\begin{document}
\maketitle
\begin{center}
Version of \today.
\end{center}
\begin{abstract}
We explore the dynamics of the action of the mapping class group
in genus $2$ on the $\psl$-character variety. We prove that this
action is ergodic on the connected components of Euler class $\pm 1$,
as it was conjectured by Goldman.
In the connected component of Euler class $0$ there are
two invariant open subsets, on one of them the action is ergodic.
In this process we give a partial answer to a question of Bowditch.

\vspace{0.2cm}
MSC Classification: 58D29, 57M05, 20H10, 30F60. 

\end{abstract}

\section{Introduction and statements}

In all this text, for every $g\geq 2$, $\Sigma_g$ will be a
genus $g$ compact connected oriented surface without boundary,
and $\Gamma_g$ will be a fundamental group of $\Sigma_g$. Let
\[\Gamma_g=\langle a_1,\ldots,b_g\, |\, [a_1,b_1]\cdots[a_g,b_g]\rangle \]
be a standard presentation of $\Gamma_g$.
The space $\mathcal{R}(\Gamma_g)=\Hom(\Gamma_g,\psl)$ is
naturally a real algebraic variety in $\psl^{2g}$.
The group $\psl$ acts on $\mathcal{R}(\Gamma_g)$ by conjugation, and preserves
the set $\mathcal{R}_{ne}(\Gamma_g)$ of non-elementary representations
(ie the set of representations of Zariski dense image in $\psl$).
For simplicity, we will denote by $\mathcal{M}(\Gamma_g)$ the quotient
$\mathcal{R}_{ne}(\Gamma_g)/\psl$. A theorem of Goldman \cite{Goldman84}
asserts that $\mathcal{M}(\Gamma_g)$ is a smooth symplectic manifold of dimension
$6g-6$ (see also \cite{Weil64}).

There is a natural map $\eu\colon\mathcal{R}(\Gamma_g)\rightarrow\Z$,
the {\em Euler class}, which factors through the quotient
$\eu\colon\mathcal{M}(\Gamma_g)\rightarrow\Z$. It is the map which associates
to any representation $\rho$, the Euler class of the $\R P^1$-bundle on
$\Sigma_g$ associated to $\rho$. We will come back to this
definition, and more reminders, in Section \ref{SectionPrelimin}.
The Euler class satisfies the so-called Milnor-Wood inequality,
\[ |\eu(\rho)|\leq 2g-2, \]
and Goldman proved in \cite{Goldman80} that the equality characterizes
the representations which are faithful and discrete.
Also, he proved in \cite{Goldman88} that the Euler class, subject to the
Milnor-Wood inequality, parametrizes the different connected components
of $\mathcal{R}(\Gamma_g)$. In other words,
for all $k\in\{2-2g,\ldots,2g-2\}$, the set of representations of Euler
class $k$ is nonempty and connected, and the component of Euler class
$2-2g$ (resp. $2g-2$) is identified with the Teichm\"uller space of
$\Sigma_g$ (resp. of $-\Sigma_g$, ie the surface equipped with the opposite
orientation). For $k\in\{2-2g,\ldots,2g-2\}$, we will denote by
$\mathcal{M}^k(\Gamma_g)$ the space of classes of representations of Euler class $k$.

The automorphism group $\Aut(\Gamma_g)$ acts on $\mathcal{R}(\Gamma_g)$ by
precomposition, and its index $2$ subgroup $\Aut^+(\Gamma_g)$ respecting
the orientation (ie fixing the fundamental class in $H_2(\Gamma_g,\Z)$)
preserves the Euler class. The Dehn-Nielsen-Baer theorem identifies
the quotient $\Out^+(\Gamma_g)=\Aut^+(\Gamma_g)/\mathrm{Inn}(\Gamma_g)$ with
the mapping class group
$\mcg(\Sigma_g)=\mathrm{Homeo}^+(\Sigma_g)/\mathrm{Homeo}^+_0(\Sigma_g)$,
and the action of $\Out^+(\Gamma_g)$ on the two components of Fuchsian
(ie faithful and discrete) representations is identified to the action of the mapping
class group on the Teichm\"uller space. This action is well known to
be discrete, and the quotient is the moduli space of the surface.

The discreteness of the mapping class group action on the Teichm\"uller
space comes from the interpretation of every point in this space as a
hyperbolic, or complex structure on the surface. Natural functions on
this space
yield functions on the 
Teichm\"uller space which
are invariant under the mapping class group action. The points in the other
connected components, however, do not seem to bear (by themselves)
any structure on the surface (they can be related to branched
$\C P^1$-structures, or to branched hyperbolic structures, or to
anti-de Sitter structures on $3$-manifolds related to the surface
\cite{GalloKapovichMarden,Tan,GueritaudKasselWolff},
but the corresponding moduli spaces have bigger dimension)
and Goldman conjectured in \cite{Goldman05} that the mapping class group
should act ergodically on every connected component of
$\mathcal{M}(\Gamma_g)$ of non-zero and non-extremal Euler class.

It seems to be of folklore knowledge that this conjecture is related
to a long standing question of Bowditch (\cite{Bowditch}, question C):
{\em Does every non-Fuchsian representation send some {\em simple}
closed loop to a non-hyperbolic element of $\psl$?}
However, to the authors' knowledge, this link between Bowditch's
question and Goldman's conjecture does not enjoy a precise statement
anywhere in the litterature.

This paper is devoted to the description of the action of the mapping class group
of the surface of genus $2$ on the {\em exotic} (ie, non-Fuchsian)
connected components of $\mathcal{M}(\Gamma_2)$.

Our first result is a simple and unexpected observation very particular to the genus $2$ case; it uses the
hyperelliptic involution $\varphi$, which is the only non-trivial element of
the center of $\mcg(\Sigma_2)$.
\begin{proposition}\label{nonergod}
  The space $\mo^0(\Gamma_2)$ of conjugacy classes of
  representations of Euler class $0$ is the union of two disjoint $\mcg(\Sigma_2)$-invariant
  open subsets, $\mo^0_+(\Gamma_2)$ and $\mo^0_-(\Gamma_2)$ with the following property.
  
  Let $\bar\varphi\in\Aut^+(\Gamma_2)$ be a lift of $\varphi$.
  Then for every $[\rho]\in\mo^0_+(\Gamma_2)$ (resp. $\mo^0_-(\Gamma_2)$),
  the representations
  $\rho$ and $\rho\circ\bar\varphi$ are conjugated by an
  orientation-preserving (resp. reversing) isometry of the hyperbolic plane.
\end{proposition}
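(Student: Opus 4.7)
My plan is to construct a continuous $\{\pm 1\}$-valued invariant $\epsilon$ on $\mo(\Gamma_2)$ and set $\mo^0_\pm := \mo^0(\Gamma_2)\cap\epsilon^{-1}(\pm 1)$. First I would fix an explicit lift $\bar\varphi\in\Aut^+(\Gamma_2)$ of the hyperelliptic involution -- for example $\bar\varphi(a_i)=b_ia_i^{-1}b_i^{-1}$ and $\bar\varphi(b_i)=b_ia_ib_i^{-1}a_i^{-1}b_i^{-1}$ for $i=1,2$, for which a short computation confirms that each $[a_i,b_i]$ is preserved, so the relator is respected. The central assertion is then that for every non-elementary $\rho$ there exists $g\in\Isom(\HH)$ with $\rho\circ\bar\varphi=g\rho g^{-1}$, unique up to the $\R^\times$-ambiguity of lifting to $\mathrm{GL}_2(\R)$, since Zariski density of $\rho(\Gamma_2)$ forces the centralizer to be trivial. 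As $\det(\lambda\hat g)=\lambda^2\det\hat g$, the sign $\epsilon([\rho]):=\mathrm{sgn}(\det\hat g)$ is intrinsic.

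The existence of $g$ is the technical core, and I would establish it by a character argument. For the Fuchsian inclusion $\rho_0\colon\Gamma_2\hookrightarrow\psl$, every Riemann surface of genus $2$ is hyperelliptic, so the involution is realized by an orientation-preserving isometry $\tilde\varphi\in\psl$ normalizing $\rho_0(\Gamma_2)$ with $\tilde\varphi\rho_0(\gamma)\tilde\varphi^{-1}=\rho_0(\bar\varphi(\gamma))$; in particular the $\psl$-characters of $\rho_0$ and $\rho_0\circ\bar\varphi$ coincide. This equality is a polynomial identity on the irreducible complex variety $\rep(\Gamma_2,\mathrm{SL}_2(\C))$, and since the Fuchsian locus is Zariski dense there, it extends to every representation. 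The classical fact that an irreducible representation is determined, up to conjugation in $\mathrm{PGL}_2(\R)=\Isom(\HH)$, by its $\psl$-character then produces the sought $g$. Because $g$ depends analytically on $\rho$, the sign $\epsilon$ is locally constant on the smooth manifold $\mo(\Gamma_2)$.

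From here the argument closes quickly. Orientation-reversing conjugation reverses the Euler class, so $\epsilon\equiv +1$ on each component with $\eu\ne 0$; the dichotomy is therefore visible only on $\mo^0(\Gamma_2)$, giving the disjoint open decomposition $\mo^0(\Gamma_2)=\mo^0_+(\Gamma_2)\sqcup\mo^0_-(\Gamma_2)$. The $\mcg(\Sigma_2)$-invariance uses the centrality of $\varphi$: for any lift $\bar\psi\in\Aut^+(\Gamma_2)$ one has $\bar\psi\bar\varphi\bar\psi^{-1}=\bar\varphi\circ\mathrm{inn}_\delta$ for some $\delta\in\Gamma_2$, and a direct calculation shows the conjugator for $\rho\circ\bar\psi$ is $g\cdot\rho(\delta)$; since $\rho(\delta)\in\psl$ admits an $\sldeuxR$-lift of determinant $+1$, the sign is preserved. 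Finally I would exhibit explicit representations in both $\mo^0_+$ and $\mo^0_-$ to show the decomposition is non-trivial. The main obstacle I foresee is the character identity step: either a careful Zariski-density argument in the complex character variety, or -- more directly -- a proof that $\bar\varphi$ sends each conjugacy class in $\Gamma_2$ to its inverse class, an attractive reflection of the fact that the hyperelliptic involution reverses every unoriented free homotopy class of loop on $\Sigma_2$, but whose justification requires a dose of surface topology.
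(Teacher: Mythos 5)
Your construction of the sign invariant, its continuity, its $\mcg(\Sigma_2)$-invariance via the centrality of $\varphi$, and the need to exhibit both signs all match the paper. Where you genuinely diverge is the key existence step, producing $g$ with $\rho\circ\bar\varphi=g\rho g^{-1}$. The paper uses the Haas--Susskind fact that $\bar\varphi$ sends every \emph{simple} closed curve to a conjugate of itself or its inverse, deduces equality of traces on simple curves for an $\mathrm{SL}_2(\R)$-lift (which exists since $\eu=0$), and then invokes the Goldman--Xia theorem that a trace function is determined by its restriction to simple curves. Your route --- realize $\varphi$ isometrically on every genus-$2$ hyperbolic surface, obtain the character identity on the Fuchsian locus, and propagate it by Zariski density of that locus in the irreducible variety $\Hom(\Gamma_2,\mathrm{SL}_2(\C))$ --- is viable, and trades one black box (Goldman--Xia) for others (irreducibility of the representation variety, density of the Fuchsian locus). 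One point you must address: on the Fuchsian locus the identity $\rho_0\circ\bar\varphi=\tilde\varphi\rho_0\tilde\varphi^{-1}$ holds in $\psl$, and its two $\mathrm{SL}_2(\R)$-lifts a priori differ by a character $\Gamma_2\to\{\pm1\}$; this character must be shown trivial (e.g.\ by evaluating on the generators $a_i,b_i$, which are simple curves of nonzero trace) before there is a polynomial identity to propagate.

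Two genuine problems remain. First, your fallback claim that $\bar\varphi$ sends \emph{every} conjugacy class of $\Gamma_2$ to its inverse class is false: this holds only for simple closed curves, and the remarks following Corollary \ref{CorNonErgod} exhibit representations $\rho$ for which $\rho$ and $\rho\circ\bar\varphi$ have distinct trace functions, which would be impossible if $\bar\varphi$ inverted all classes. Second, $\epsilon$ cannot be defined on all of $\mo(\Gamma_2)$ by your method: the conjugator $g$ exists only when the characters of $\rho$ and $\rho\circ\bar\varphi$ agree, and the Zariski-density argument only reaches representations lifting to $\mathrm{SL}_2(\C)$, i.e.\ of even Euler class; for odd Euler class the characters genuinely differ for some $\rho$ (same remarks in the paper). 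This costs nothing for the proposition, which concerns only $\mo^0$, but the assertion that $\epsilon\equiv+1$ on every component with $\eu\neq 0$ is unjustified as stated. Finally, verifying that each $[a_i,b_i]$ is preserved only shows your explicit $\bar\varphi$ is an automorphism acting by $-I$ on homology, which in genus $2$ does not single out $\varphi$ modulo the Torelli group; since your argument never uses the formula, it is safer to take an arbitrary lift of $\varphi$.
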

An explicit, individual description of the elements of $\mo^0_\pm(\Gamma_2)$
yields the following statement.
\begin{proposition}\label{propintrononergod}{\ }
  \begin{itemize}
    \item[-] For every $[\rho]\in\mo^0_+(\Gamma_2)$ and for every
      non-separating simple curve $a$, $\rho(a)$ is either hyperbolic
      or the identity. Moreover,
      for every simple curves $a$, $b$
      such that $i(a,b)=1$, the trace of the commutator
      $[\rho(a),\rho(b)]$ is
      in $(-\infty,2]$.
      It is equal to $2$ if and only if $[\rho(a),\rho(b)]=1$.
    \item[-] For every $[\rho]\in\mo^0_-(\Gamma_2)$ and for every
      simple curves $a$, $b$ such that $i(a,b)=1$, the trace of
      the commutator $[\rho(a),\rho(b)]$ is not lower than $2$,
      and this commutator is either hyperbolic or the identity.
  \end{itemize}
\end{proposition}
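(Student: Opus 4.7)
The plan is to extract from Proposition~\ref{nonergod} explicit normal forms for the representations in $\mo^0_\pm(\Gamma_2)$ and then carry out direct trace computations via the Fricke identity. Choose a basepoint for $\Gamma_2$ at a Weierstrass point of $\varphi$ so that $\bar\varphi\in\Aut^+(\Gamma_2)$ is an honest involution; since $\varphi_*=-\mathrm{Id}$ on $H_1(\Sigma_2,\Z)$, the element $\bar\varphi(a)$ is $\Gamma_2$-conjugate to $a^{-1}$ for every non-separating simple closed curve $a$. Let $g\in\Isom(\HH)$ be the isometry from Proposition~\ref{nonergod}; uniqueness (from non-elementarity of $\rho$) and $\bar\varphi^2=\mathrm{id}$ force $g^2=1$. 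Hence in case $+$, $g$ is a $\pi$-rotation around some $p\in\HH$ (the alternative $g=\mathrm{id}$ is excluded because it would make every $\rho(a_i),\rho(b_i)$ of order $\le 2$, and the surface relation would then force the image to preserve a common geodesic, contradicting non-elementarity); in case $-$, $g$ is a reflection across a geodesic $\ell\subset\HH$. In either case, $g\rho(a)g^{-1}=\rho(a)^{-1}$ and $g\rho(b)g^{-1}=\rho(b)^{-1}$.

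In case $+$: the elements $X\in\psl$ inverted by conjugation by the $\pi$-rotation $g$ at $p$ are exactly the identity, $g$ itself, and the hyperbolic elements with axis through $p$. The $\pi$-rotation alternative for $\rho(a)$ is ruled out by a sub-case analysis: if $\rho(a_1)=g$, then for each possible type of $\rho(b_1)$, either a direct computation combined with the surface relation $[\rho(a_1),\rho(b_1)]=[\rho(a_2),\rho(b_2)]^{-1}$ and the trace formula below yields a contradiction (for instance $[\rho(a_1),\rho(b_1)]=\rho(b_1)^{-2}$ has trace $>2$ when $\rho(b_1)$ is hyperbolic, while the trace formula would give $\le 2$ on the other handle), or a kernel/axis analysis forces the image of $\rho$ to stabilize a common geodesic. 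Hence $\rho(a)$ is hyperbolic with axis through $p$ or the identity. For the trace bound, complete $a=a_1,b=b_1$ (with $i(a,b)=1$) to a standard basis and conjugate $\rho$ so that $p=i$; place
\[
\rho(a)=\mathrm{diag}(e^{\ell_a/2},e^{-\ell_a/2}),\qquad \rho(b)=R_\theta\,\mathrm{diag}(e^{\ell_b/2},e^{-\ell_b/2})\,R_\theta^{-1},
\]
where $R_\theta$ is the rotation of $\HH$ by $\theta$ around $i$ and $\theta$ is the angle between the two axes. The Fricke identity
\[
\tr[A,B]=(\tr A)^2+(\tr B)^2+(\tr AB)^2-\tr A\,\tr B\,\tr AB-2,
\]
combined with $\cosh(\alpha\pm\beta)=\cosh\alpha\cosh\beta\pm\sinh\alpha\sinh\beta$, reduces to
\[
\tr[\rho(a),\rho(b)]=2-4\sinh^2(\ell_a/2)\,\sinh^2(\ell_b/2)\,\sin^2\theta \le 2,
\]
with equality iff one of the three factors vanishes, which is precisely when $\rho(a),\rho(b)$ commute.

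In case $-$: after conjugation take $\ell$ to be the imaginary axis; the relation $gXg^{-1}=X^{-1}$ forces any $\sldeuxR$-lift of a generator $\begin{pmatrix}p&q\\r&s\end{pmatrix}$ to satisfy $p=s$. Writing $\rho(a)=\begin{pmatrix}p&q\\r&p\end{pmatrix}$ and $\rho(b)=\begin{pmatrix}p'&q'\\r'&p'\end{pmatrix}$, the Fricke identity collapses to
\[
\tr[\rho(a),\rho(b)]=2+(qr'-rq')^2 \ge 2,
\]
with equality iff $(q,r)\parallel(q',r')$, forcing $\rho(a),\rho(b)$ to commute; when strict, the commutator has trace $>2$ and is therefore hyperbolic, ruling out the parabolic case. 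The main obstacle is the careful exclusion of the $\pi$-rotation alternative for $\rho(a)$ in case $+$: disentangling it from the trace inequality requires combining the classification inherited from $g$, the surface relation across the two handles, and the non-elementarity hypothesis; once this is settled, the two trace inequalities follow from the Fricke identity in the respective normal forms.
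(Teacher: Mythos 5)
Your two Fricke-identity computations are correct, and the overall strategy (classify the conjugating involution $g$, put the images in normal form, compute) is a legitimate alternative to the paper's configuration-geometric argument; but the reduction to your normal forms has a genuine gap. The isometry $g$ with $\rho\circ\bar\varphi=g\rho g^{-1}$ does \emph{not} satisfy $g\rho(a)g^{-1}=\rho(a)^{-1}$: even with the basepoint at a Weierstrass point, $\bar\varphi(a)$ is only \emph{conjugate in $\Gamma_2$} to $a^{-1}$ (the lift used in the paper has $\bar\varphi(a_1)=b_1a_1^{-1}b_1^{-1}$), so all you get is $(h_ag)\rho(a)(h_ag)^{-1}=\rho(a)^{-1}$ with $h_a\in\rho(\Gamma_2)$ depending on $a$, and $h_ag$ need not be $g$ nor even an involution a priori. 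Consequently there is no single point $p$ through which all axes pass in case $+$, and no single geodesic with respect to which all images are of the form $\begin{pmatrix}p&q\\r&p\end{pmatrix}$ in case $-$: a dimension count (four elements in either normal form, modulo the surface relation and conjugation, form a $4$-dimensional family, while $\mo^0_\pm$ are open in the $6$-dimensional $\mo^0$) shows these loci cannot exhaust $\mo^0_\pm$. What survives is only that each $\rho(a)$ is inverted by \emph{some} isometry of the same orientation type as $g$; in case $+$ this gives ``identity, hyperbolic, or order-$2$ elliptic'' but no control on the axes, and in case $-$ it gives no restriction at all, since every element of $\psl$ is inverted by some reflection. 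The repair is to use that for $i(a,b)=1$ the pair $(\bar\varphi(a),\bar\varphi(b))$ is \emph{simultaneously} conjugate to $(a^{-1},b^{-1})$, so a single isometry $k_{a,b}$ of the orientation type of $g$ inverts both images; when $\langle\rho(a),\rho(b)\rangle$ is non-elementary, $k_{a,b}^2=1$, and your normal forms then hold relative to $k_{a,b}$. This is in substance what the paper does, phrased as: the configuration of the two oriented axes determines the orientation type of the conjugator.

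Two further gaps. First, the exclusion of the order-$2$ alternative in case $+$ is only sketched and starts from the unjustified identification $\rho(a_1)=g$; the paper instead perturbs $\rho$ inside the clopen set $\mo^0_\pm$ so that the elliptic image has infinite order and then exhibits a transverse curve with hyperbolic image avoiding the fixed point, which forces $s(\rho)=-1$ -- this handles elliptics of every order at once, for every non-separating curve. Second, in case $+$ you never treat the possibility that $\rho(a)$ and $\rho(b)$ are hyperbolic with disjoint or asymptotic axes: disjoint axes force the inverting isometry to be a reflection in the common perpendicular, hence $[\rho]\in\mo^0_-$, while asymptotic axes give a commutator that is parabolic of trace $2$, which forces $\rho$ to be elementary. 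Both cases must be excluded before the crossing-axes formula $\tr[\rho(a),\rho(b)]=2-4\sinh^2(\ell_a/2)\sinh^2(\ell_b/2)\sin^2\theta$ may be invoked.
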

In the statement above, $i(a,b)$ denotes the minimal geometric
intersection number between curves freely homotopic to $a$ and $b$.
Also, recall that the commutator of two elements of $\psl$ is a
well-defined element of $\sldeuxR$, so its trace is well-defined,
without absolute value.
\begin{corollary}\label{CorNonErgod}
  The action of $\mcg(\Sigma_2)$
  on $\mo^0(\Gamma_2)$ is not ergodic.
\end{corollary}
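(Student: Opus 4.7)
The plan is to observe that Corollary \ref{CorNonErgod} is a formal consequence of Proposition \ref{nonergod}, once both $\mo^0_+(\Gamma_2)$ and $\mo^0_-(\Gamma_2)$ are known to be non-empty. Indeed, $\mo^0(\Gamma_2)$ is an open subset of the smooth symplectic manifold $\mo(\Gamma_2)$ of Goldman, and the associated Liouville measure assigns positive measure to every non-empty open subset. Since Proposition \ref{nonergod} already provides a decomposition $\mo^0(\Gamma_2)=\mo^0_+(\Gamma_2)\sqcup\mo^0_-(\Gamma_2)$ into two disjoint, $\mcg(\Sigma_2)$-invariant, open subsets, as soon as both are non-empty each of them is a measurable invariant set whose measure is strictly between $0$ and the total measure of $\mo^0(\Gamma_2)$, and the action fails to be ergodic.

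Hence the only real content of the proof is to exhibit one representation of Euler class $0$ in each of $\mo^0_\pm(\Gamma_2)$. Here Proposition \ref{propintrononergod} provides a simple detection mechanism: for any pair of simple curves $a,b$ with $i(a,b)=1$, the sign of $\tr[\rho(a),\rho(b)]-2$ distinguishes the two classes. It therefore suffices to construct, on some fixed marking of $\Sigma_2$, one non-elementary representation of Euler class $0$ with $\tr[\rho(a),\rho(b)]>2$ and one with $\tr[\rho(a),\rho(b)]<2$.

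Such representations can be built by the standard device of decomposing $\Sigma_2$ along a separating simple closed curve $c$ into two one-holed tori $T_1, T_2$, prescribing a common hyperbolic holonomy along $c$, and choosing representations of $\pi_1(T_i)$ whose relative Euler numbers sum to $0$. Inside the character variety of a one-holed torus with fixed boundary holonomy one can realise both signs of $\tr[\rho(a),\rho(b)]-2$, producing examples of both types; generic choice of traces on each half automatically guarantees non-elementarity. This elementary construction is the only step requiring any computation, and it completes the proof.
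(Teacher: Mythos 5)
Your proposal is correct and follows essentially the same route as the paper: Corollary \ref{CorNonErgod} is deduced formally from the decomposition of Proposition \ref{nonergod} into two disjoint invariant open sets, with the only substantive point being their non-emptiness, which the paper also settles by exhibiting explicit representations detected via the sign of $\tr[\rho(a),\rho(b)]-2$ as in Proposition \ref{propintrononergod}. Your particular way of producing the examples (gluing two one-holed-torus representations with relative Euler classes summing to $0$) is a minor variant of the paper's direct examples (hyperbolic images of $a_1,b_1$ with crossing versus disjoint axes) and works equally well.
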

Proposition \ref{nonergod} is not only specific to the genus two, but also to
the dimension two. The lack of generalizations of this proposition to other
situations yields the following surprising remarks of interest independent of the rest of the paper:
\begin{itemize}
  \item[-] if $\rho\in\Hom(\Gamma_2,\psl)$,
    the map $\Gamma_2\rightarrow\R_+$, $\gamma\mapsto|\tr(\rho(\gamma))|$
    is not determined, in general, by its restriction to the set of simple closed curves;
  \item[-] if $n\geq 3$, if $\rho\in\Hom(\Gamma_2,\textrm{SL}_n(\R))$,
    the map $\Gamma_2\rightarrow\R$, $\gamma\mapsto\tr(\rho(\gamma))$
    is not determined, in general, by its restriction to the set of non-separating simple closed
    curves, neither on the set of separating simple closed curves.
\end{itemize}

We then turn to Bowditch's question in genus $2$.
\begin{theorem}\label{Bowditch}
  Let $[\rho]\in\mo(\Gamma_2)\smallsetminus\mo^0_+(\Gamma_2)$. Then
  $\rho$
  sends some simple closed curve to a
  non-hyperbolic element of $\psl$.
\end{theorem}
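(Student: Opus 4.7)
I would argue by contrapositive: suppose the non-Fuchsian class $[\rho]\in\mo(\Gamma_2)\smallsetminus\mo^0_+(\Gamma_2)$ sends every essential simple closed curve to a hyperbolic element of $\psl$, and derive a contradiction. Under this assumption we have $\eu(\rho)\in\{-1,0,1\}$, with the added constraint that if $\eu(\rho)=0$ then $[\rho]\in\mo^0_-(\Gamma_2)$.

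First, cut $\Sigma_2$ along a separating simple closed curve $c$, decomposing the surface as $\Sigma_2=T_1\cup_c T_2$ into two one-holed tori. By hypothesis $\rho(c)$ is hyperbolic, and each restriction $\rho_i:=\rho|_{\pi_1(T_i)}$ is a representation of $F_2$ sending every essential simple closed curve of $T_i$, and the boundary $c$, to a hyperbolic element of $\psl$.

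The core of the proof is a Bowditch-type theorem for the one-holed torus with hyperbolic boundary: such a $\rho_i$ is necessarily discrete, faithful, and convex-cocompact Fuchsian of relative Euler class $\pm 1$, with commutator trace $\tr\rho_i([a_i,b_i])<-2$ in the $\sldeuxR$ lift. This step relies on an analysis of the action of $\Out(F_2)$ on the Markov-like character surface $\{(x,y,z)\in\R^3:x^2+y^2+z^2-xyz=\kappa\}$, combined with the combinatorics of the Farey graph of simple closed curves on $T_i$, adapting Bowditch's original fork arguments (which treat the parabolic-boundary regime).

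With both $\rho_i$ convex-cocompact Fuchsian of relative Euler class $\pm 1$, the Euler class decomposes as $\eu(\rho)=\eu(\rho_1)+\eu(\rho_2)\in\{-2,0,2\}$, which immediately rules out $\eu(\rho)=\pm 1$. In the remaining case $\eu(\rho)=0$, the two relative Euler classes must carry opposite signs (an ``anti-coherent'' gluing), and then $\tr\rho([a_1,b_1])<-2\leq 2$; Proposition~\ref{propintrononergod} thereby places $[\rho]\in\mo^0_+(\Gamma_2)$, contradicting the starting assumption. The principal obstacle is the Bowditch-type theorem invoked in the third paragraph: in the hyperbolic-boundary regime the trace of the boundary ranges over a quite different interval than in Bowditch's original parabolic setting, and the fork-based combinatorial control over traces of simple loops must be carefully re-derived. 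This is presumably where the bulk of the technical work of the paper lies.
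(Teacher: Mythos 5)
Your argument hinges on the ``Bowditch-type theorem'' of the third paragraph, and that statement is false. A representation of the one-holed torus group $F_2=\langle a,b\rangle$ into $\psl$ can send \emph{every} essential simple closed curve \emph{and} the boundary $[a,b]$ to hyperbolic elements while having relative Euler class $0$: take the holonomy of a hyperbolic pair of pants, i.e.\ a character with $x=\tr A$, $y=\tr B$, $z=\tr(AB)$ all less than $-2$. Such a representation is discrete and faithful but is \emph{not} Fuchsian for the one-holed torus; its commutator satisfies $\tr[A,B]=x^2+y^2+z^2-xyz-2>18$, and the whole $\mcg$-orbit of $(x,y,z)$ stays outside $[-2,2]^3$, so all simple curves remain hyperbolic. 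Consequently your contrapositive collapses: after cutting along the separating curve $c$, each restriction $\rho_i$ may land in this pants regime with $\eu(\rho_i)=0$, so additivity no longer excludes $\eu(\rho)=\pm1$, and in the class-$0$ case you get $\tr\rho([a_1,b_1])>2$ with a hyperbolic commutator, which is exactly consistent with $\mo^0_-(\Gamma_2)$ (Proposition~\ref{propintrononergod}) and yields no contradiction.

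The correct one-holed-torus input (Goldman's Theorem~\ref{dixhuit}) only excludes the pants regime when the boundary trace lies in $(2,18]$, and the entire point of the paper's proof is to manufacture curves with traces below this threshold: the domination theorem of Deroin--Tholozan and Gu\'eritaud--Kassel--Wolff produces a Fuchsian $j$ dominating $\rho$, and the Bers constant gives a pants decomposition with all three traces at most $2\cosh(B_2/2)\simeq 9.3<18$. Even then, when none of the three short curves is separating, one cannot reduce to the one-holed torus at all; the paper instead runs an explicit trace-reduction algorithm in the coordinates of Section~\ref{SectionParam}, decreasing $\max_i|\tr\rho(\gamma_i)|$ by a uniform $\mu>0$ at each step until a non-hyperbolic simple curve appears. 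None of this global control is present in your outline, so the gap is not a technical re-derivation of Bowditch's fork arguments but a missing idea.
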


From this result, we derive the following theorem:

\begin{theorem}\label{ergodicite}{\ }
  \begin{itemize}
    \item[-] The action of $\mcg(\Sigma_2)$ on $\mo^k(\Gamma_2)$ is ergodic if $|k|=1$.
    \item[-] $\mo^0_-(\Gamma_2)$ is connected, and the action of $\mcg(\Sigma_2)$ on $\mo^0_-(\Gamma_2)$ is ergodic.
  \end{itemize}
\end{theorem}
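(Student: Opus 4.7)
The strategy follows the Goldman--Pickrell--Xia paradigm: use Theorem \ref{Bowditch} to produce simple closed curves with non-hyperbolic image at every point of the relevant component, and exploit the Hamiltonian action of Dehn twists to generate ergodicity. Recall that for any simple closed curve $c$, the trace function $f_c\colon[\rho]\mapsto\tr(\rho(c))$ is a Hamiltonian on $\mo(\Gamma_2)$ with respect to Goldman's symplectic form, and the Dehn twist $T_c$ acts as the time-$\theta$ map of the associated flow on the locus where $\rho(c)$ is elliptic of rotation angle $\theta$. When $\theta/\pi$ is irrational, the cyclic orbit $\{T_c^n\cdot[\rho]\}_{n\in\Z}$ is equidistributed on a smooth circle inside the level set of $f_c$.

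For $\mo^k(\Gamma_2)$ with $|k|=1$: by Theorem \ref{Bowditch}, every $[\rho]$ sends some simple closed curve to a non-hyperbolic element. The parabolic and central loci inside $\psl$ are lower-dimensional, and among elliptic elements those with irrational rotation angle form a full-measure subset. A measure-theoretic density argument, together with a careful choice of measurable selector for the distinguished curve associated to each $[\rho]$, should upgrade this to: for almost every $[\rho]$, some simple closed curve is sent to an elliptic element of irrational rotation angle. Any $\mcg(\Sigma_2)$-invariant measurable function $\phi$ is then invariant under the Hamiltonian flow of $f_c$ for such a curve. Letting $c$ range over all simple closed curves and using that the Hamiltonian vector fields of the $f_c$ span the tangent space at almost every point, we conclude that $\phi$ is locally constant, hence constant by connectedness of $\mo^k(\Gamma_2)$.

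For $\mo^0_-(\Gamma_2)$: I would first establish connectedness, using the explicit constraints of Proposition \ref{propintrononergod} (commutator traces $\geq 2$, and commutator either hyperbolic or the identity). A natural route is to set up a Fenchel--Nielsen-style parametrization on a pair-of-pants decomposition, adapted to these sign constraints, and show that the resulting parameter space is connected; alternatively, one retracts onto a lower-dimensional, manifestly connected stratum. With connectedness in hand, the ergodicity argument from the preceding paragraph runs unchanged: Proposition \ref{nonergod} exhibits $\mo^0_-(\Gamma_2)$ as a measurable $\mcg$-invariant open subset of $\mo^0(\Gamma_2)$, so the local analysis can be localized to this component, and Theorem \ref{Bowditch} applies because $\mo^0_-(\Gamma_2)\subset\mo(\Gamma_2)\setminus\mo^0_+(\Gamma_2)$.

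The main obstacle is the spanning claim: that the Hamiltonian vector fields generated by trace functions of simple closed curves span the tangent space at almost every point of the component. This requires producing, for generic $[\rho]$, sufficiently many independent simple closed curves whose images are simultaneously elliptic of irrational angle, and depends on a delicate combinatorial analysis tailored to genus $2$. A related subtlety is the measure-theoretic promotion of Bowditch's pointwise statement to a full-measure statement: although the curve produced by Theorem \ref{Bowditch} depends on $[\rho]$ in a potentially uncontrolled way, one must arrange the passage so that the resulting exceptional set has measure zero for Goldman's volume form. Once these two points are secured, ergodicity is a formal consequence of invariance under the twist flows combined with connectedness of the component.
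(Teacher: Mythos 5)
Your plan is the right skeleton (it is essentially the Goldman--Xia paradigm that the paper follows), but the two points you flag as ``obstacles'' are precisely where all the mathematical content of the paper's proof lives, and you do not supply arguments for either. First, the spanning claim. The paper's Proposition \ref{ToutesLesDirections} does \emph{not} proceed by finding many simple curves that are simultaneously elliptic of irrational angle, as you suggest; that would be very hard to arrange. Instead, starting from a \emph{single} non-separating curve $\gamma$ with $\rho(\gamma)$ elliptic of infinite order, it shows that the subspace $D_{[\rho]}$ spanned by differentials of trace functions of elliptic-image curves already contains $df_\delta$ for \emph{every} simple closed curve $\delta$: for $i(\gamma,\delta)=1$ one uses the linear recursion $dF_{n+1}+dF_{n-1}=G\,dF_n \bmod D_{[\rho]}$ coming from the trace identity, together with the fact that $BA^n$ is elliptic for infinitely many $n$ and the irrationality of the angle, to force $dF_n\in D_{[\rho]}$ for all $n$; disjoint curves are handled by further trace identities, and the remaining twist direction by a Poisson bracket computation. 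Without some such mechanism your ``spanning'' step is unsupported. Second, the promotion from ``non-hyperbolic'' to ``elliptic of infinite order on a \emph{non-separating} curve'' is not a routine measure-theoretic selection issue (no measurable selector is needed: the bad locus is a countable union of proper analytic subvarieties, hence null). The real difficulty is that Theorem \ref{Bowditch} may hand you a \emph{separating} curve with elliptic image, and converting this into a non-separating elliptic curve is a genuinely geometric argument (the paper's Lemmas \ref{SepNonSepLem1} and \ref{SepNonSepLem2}, via configurations of axes and reflections) which in fact fails for $(g,k)=(2,0)$ on $\mo^0_+$ --- so this step cannot be waved through.

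There is also a gap in your concluding appeal to ``connectedness of $\mo^k(\Gamma_2)$.'' The local-constancy argument only runs on the open set $\mathcal{U}^k$ where the twist flows act with spanning Hamiltonian vector fields; a full-measure open subset of a connected manifold need not be connected, so Goldman's connectedness of $\mo^k$ does not suffice. The paper must separately prove that the locus $\mathcal{EI}^k$ of representations sending a non-separating simple curve to an elliptic of infinite order is connected (Proposition \ref{EConnexe}), which is a nontrivial combinatorial argument using pairs of curves intersecting once, Dehn twists, and connectivity of the curve complex; connectedness of $\mo^0_-$ is then \emph{deduced} from this (since $\mathcal{EI}^0\subset\mo^0_-$ is dense there), rather than proved by the Fenchel--Nielsen route you sketch. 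In short: the architecture is correct, but the proposal defers exactly the three substantive inputs (spanning, separating-to-non-separating, connectedness of the elliptic locus) that make the theorem true.
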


Actually, the proof proceeds with a general result relating Bowditch's question to the ergodicity property.
If $g\geq 2$,
let $\mathcal{NH}^k(\Gamma_g)$ denote the subset of $\mo^k(\Gamma_g)$ consisting
of representations which send some simple closed loop to a non-hyperbolic
element. 
\begin{theorem}\label{ergodgeneral}
  Let $g\geq 2$, and
  let $k\in\{3-2g,\ldots,2g-3\}$. Suppose moreover that $(g,k)\neq(2,0)$.
  Then
  the action of $\mcg(\Sigma_g)$ on $\mathcal{NH}^k(\Gamma_g)$ is ergodic.
\end{theorem}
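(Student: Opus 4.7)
The plan is to adapt the classical twist-flow strategy initiated by Goldman. First I would reduce to the open, dense, $\mcg(\Sigma_g)$-invariant subset $\mathcal{E}^k\subset\mathcal{NH}^k(\Gamma_g)$ consisting of classes $[\rho]$ which send some simple closed curve to an \emph{elliptic} element whose rotation angle is irrational modulo $\pi$. Density follows from a perturbation argument: any non-hyperbolic $\rho(a)$ can be deformed within non-hyperbolic elements to one elliptic of irrational angle, and such a perturbation extends to a nearby non-elementary representation of the same Euler class. Thus it is enough to establish $\mcg$-ergodicity on $\mathcal{E}^k$.

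The main local input is the ergodicity of the Dehn-twist flow. By Goldman's formula, the Dehn twist $T_a$ is realized as the time-one map of the Hamiltonian flow of a function of $\tr\rho(a)$; when $\rho(a)$ is elliptic of angle $\theta$, this flow is a pure rotation. In a local parameterization obtained by cutting $\Sigma_g$ along $a$ and gluing with a twist parameter $\tau_a\in\R/2\pi\Z$, one has $T_a\colon\tau_a\mapsto\tau_a+\theta$, so for $\theta/\pi$ irrational $T_a$ acts uniquely ergodically on each $\tau_a$-circle. A Lebesgue-density argument then shows that any measurable $\mcg$-invariant subset $U\subset\mathcal{E}^k$ of positive measure is, up to null sets, locally a union of $\tau_a$-circles. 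Using the conjugates $T_{\phi(a)}=\phi T_a\phi^{-1}$ for various $\phi\in\mcg(\Sigma_g)$, a Fenchel--Nielsen-type analysis shows that the resulting circle foliations span every transverse direction at generic points of $\mathcal{E}^k$, so that $U$ has full measure in a neighborhood of any of its density points.

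The principal obstacle is the passage from local to global ergodicity, namely ruling out any $\mcg$-invariant partition of $\mathcal{E}^k$ into proper open subsets. The hypothesis $(g,k)\neq(2,0)$ is essential here, since Proposition \ref{nonergod} exhibits precisely such a partition in the excluded case, and the argument must exclude every analogous phenomenon in all other cases. I expect to rule it out by showing that any two classes in $\mathcal{E}^k$ can be joined by a continuous path along which some simple closed curve remains irrational-elliptic; the strict Milnor--Wood inequality $|k|\leq 2g-3$ provides the flexibility (absent in the Teichm\"uller components) needed to find such paths, and the local ergodicity established in the previous step then propagates the full-measure property of $U$ along them, yielding global ergodicity.
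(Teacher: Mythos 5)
There is a genuine gap in your reduction step, and it is precisely where the hypothesis $(g,k)\neq(2,0)$ does its work. Your set $\mathcal{E}^k$ consists of classes sending \emph{some} simple closed curve to an irrational elliptic, and you propose to reach it from $\mathcal{NH}^k$ by perturbing a non-hyperbolic $\rho(a)$ into an irrational elliptic. But that perturbation keeps the elliptic curve equal to $a$, and if $a$ is \emph{separating} the twist-flow machinery you invoke afterwards does not apply: no simple closed curve meets a separating curve exactly once, so the trace-identity recursion that shows the twist circles span all transverse directions (the analogue of Proposition \ref{ToutesLesDirections}, which rests on Lemma \ref{inter1}) is unavailable, and the curve-complex connectivity argument, which works with pairs of curves of intersection number one, is unavailable as well. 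The genuinely hard content of the theorem is showing that a representation with a separating elliptic curve also has a \emph{non-separating} elliptic curve whenever $(g,k)\neq(2,0)$; this is Proposition \ref{EDense}, proved by explicit hyperbolic-geometry constructions (Lemmas \ref{SepNonSepLem1} and \ref{SepNonSepLem2}: large powers of Dehn twists to lengthen axes, conjugation by powers of the elliptic to position them, and a reflection decomposition to produce an elliptic product). Your proposal contains no substitute for this step, and the failure mode is real: in genus $2$, Euler class $0$, the set $\mo^0_+$ contains representations with separating elliptic curves but no non-separating non-hyperbolic curve, which is exactly why $\mathcal{NH}^0(\Gamma_2)$ is not ergodic.

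Relatedly, you locate the use of $(g,k)\neq(2,0)$ in the ``local to global'' connectivity step, but that is not where it belongs: the ergodicity of $\mcg(\Sigma_g)$ on $\mathcal{E}^k$ (with $\mathcal{E}^k$ defined via \emph{non-separating} elliptic curves) holds for all $k$ with $|k|\leq 2g-3$, including $(2,0)$ (Theorem \ref{Ergod}). The connectedness of the relevant set is proved combinatorially via the curve complex together with Goldman's connectivity results for relative representation varieties, with no case exclusion. The exclusion of $(2,0)$ enters only in the full-measure statement $\mathcal{E}^k\supset\mathcal{NH}^k$ up to null sets. As it stands, your argument would ``prove'' the theorem for $(g,k)=(2,0)$ as well, which is false by Proposition \ref{nonergod}; this is a reliable sign that the missing step is essential.
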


In particular, Goldman's conjecture is equivalent to $\mathcal{NH}^k(\Gamma_g)$ having full measure in $\mo^k(\Gamma_g)$. 

Part of the techniques used in this paper do not apply to
the study of $\mo^0_+(\Sigma_2)$, and the question of the ergodicity
of $\mcg(\Sigma_2)$ on this open set remains open. We hope to
address it in a future work.

Now we present a very brief outline of the proof of Theorem \ref{Bowditch}.
A recent result proved
independently by Deroin-Tholozan \cite{DeroinTholozan} and
Gu\'eritaud-Kassel-Wolff \cite{GueritaudKasselWolff} asserts that
for every non-Fuchsian representation $\rho$, there exists a Fuchsian
representation $j$ such that for all $\gamma\in\Gamma_g$,
$|\tr(\rho(\gamma))|\leq|\tr(j(\gamma))|$ (actually a better
inequality is true, but we will not need it here).
There is a constant $B_2>0$ such that, for every Fuchsian
representation $j$, there exists a family of three curves cutting
$\Sigma_2$ into two pairs of pants, such that the maximum of the $j$-lengths
of these three curves is not greater than $B_2$. This is called the Bers
constant (in genus $2$) and it was explicitly computed recently by
Gendulphe \cite{Gendulphe}: $\cosh(B_2/2)\simeq 4.67$. In
the search for simple curves mapped by $\rho$ to non-hyperbolic elements,
this allows us to start from a pair of pants decomposition in which all three
curves in the cut system is sent by $\rho$ to an element of trace no bigger
than $2\cosh(B_2/2)$. The last main ingredient in the proof of Theorem
\ref{Bowditch} is a theorem of Goldman asserting that every representation
of the fundamental group of the one-holed torus in $\sldeuxR$ which
maps the boundary curve to an element of trace in $(2,18]$ maps a simple
closed curve to a non-hyperbolic element \cite{Goldman03}.
An adequate parametrization of
$\mathcal{M}(\Gamma_2)$ and an involved algorithm of trace reduction, then
enable to find a simple closed curve mapped to a non-hyperbolic element.

Let us describe the organization of the paper. Section \ref{SectionPrelimin} gathers reminders as well as preliminary
considerations. It begins with a review
of the Euler class and of classical results of Goldman about the spaces of
representations. It continues with a detailed geometric description of
the commutators in $\psl$, which will be
useful for the proof of Theorem \ref{ergodicite},
and ends with a reminder of some formulas of hyperbolic trigonometry,
which will be used in the proof of Theorem \ref{Bowditch}.

We prove Proposition \ref{nonergod}, Proposition \ref{propintrononergod}, and
the subsequent remarks, in Section
\ref{SectionNonErgod}.
We then give in Section \ref{SectionParam}
an explicit parametrization of the representation varieties
in genus $2$, which will support the explicit computations of
Section \ref{SectionChasseElliptique}, where we prove Theorem \ref{Bowditch}.
Finally, the results involving ergodicity will be proved in
Section \ref{SectionErgod}. Sections \ref{SectionNonErgod}, \ref{SectionChasseElliptique} and \ref{SectionErgod} can be read independently.

This work was partially supported by the french ANR ModGroup ANR-11-BS01-0020 and SGT ANR-11-BS01-0018. The authors would like to thank G. Courtois, S. Diverio, E. Falbel and F. Kassel for their support and inspiration.

\section{Reminders and preliminary considerations}\label{SectionPrelimin}

\subsection{Notation and conventions}\label{SectionNotation}

We will often need to use explicit elements of the fundamental group of a
genus two closed surface, and here we fix the notation we will use along
the paper. We fix a base point and a system of four loops $a_1$, $b_1$,
$a_2$, $b_2$ as in Figure \ref{FigureMarquage}. Whenever $\gamma_1$ and $\gamma_2$
are loops based at the base point, $\gamma_1\gamma_2$ will be the element
of $\Gamma_2$ defined by the concatenation of these two paths:
we travel along $\gamma_1$ and then along $\gamma_2$, in this chronological
order. With this notation, we have
$a_1b_1a_1^{-1}b_1^{-1}a_2b_2a_2^{-1}b_2^{-1}=1$, and in whenever
$\gamma_1,\gamma_2\in\Gamma_2$ we will denote
$[\gamma_1,\gamma_2]=\gamma_1\gamma_2\gamma_1^{-1}\gamma_2^{-1}$.
\begin{figure}[hb]
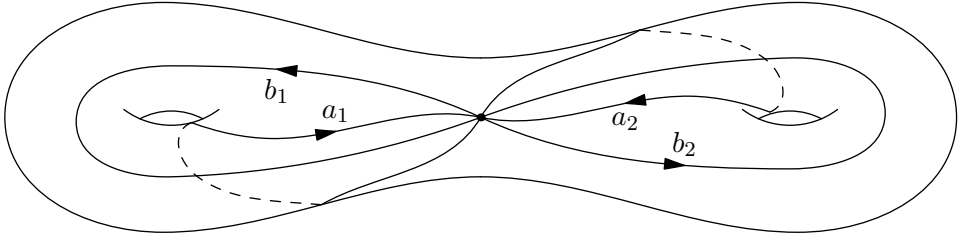

\begin{asy}
  import geometry;
  unitsize(3pt);

  //
  //
  path contg=((0,0){dir(180)}..(-40,-7){dir(180)}..(-60,7.5){dir(90)}..(-40,22){dir(0)}..(0,15){dir(0)});
  draw(contg);
  path contd=shift((0,15))*rotate(180)*contg;
  draw(contd);
  // Observation : le centre de la figure est en (0,7.5).

  //

  //
  //
  picture trou;
  draw (trou,(-6,8.5)..(0,6.5)..(6,8.5));
  draw (trou,(-4,7.3)..(0,8.3)..(4,7.3));
  //
  picture troudgauche=shift(-39,0)*trou;
  add(troudgauche);
  //
  picture trouddroite=shift(39,0)*trou;
  add(trouddroite);

  point PBas=(0,7.5);
  dot(PBas);

  // D'abord a1
  //
  path BasTrouG=shift(-39,0)*((-6,8.5)..(0,6.5)..(6,8.5));
  point MBA1=relpoint(BasTrouG,0.7);

  point BasA1=relpoint(contg,0.15);

  path DebutA1=(PBas{dir(168)}..(MBA1+(10,-2))..MBA1{dir(160)});
  path A1Suite=MBA1{dir(200)}..(MBA1+(1,-7))..BasA1{dir(-5)};
  path A1Fin=BasA1{dir(30)}..PBas{dir(60)};
  draw (reverse(DebutA1),black,Arrow(Relative(0.5)));
  label("{\small $a_1$}",relpoint(DebutA1,0.5),N,black);
  draw (A1Suite,black+dashed);
  draw (A1Fin,black);

  // Maintenant zeta
  path B1=(PBas{dir(155)}..(-39,14)..(-51,7)..(-39,0)..PBas{dir(20)});
  draw (B1,black,Arrow(Relative(0.24)));
  label("{\small $b_1$}",relpoint(B1,0.24),S,black);

  // Anse de DROITE
  path DebutA2=rotate(180,(0,7.5))*DebutA1;
  path A2Suite=rotate(180,(0,7.5))*A1Suite;
  path A2Fin=rotate(180,(0,7.5))*A1Fin;
  draw (reverse(DebutA2),black,Arrow(Relative(0.5)));
  label("{\small $a_2$}",relpoint(DebutA2,0.5),S,black);
  draw (A2Suite,black+dashed);
  draw (A2Fin,black);

  path B2=rotate(180,(0,7.5))*B1;
  draw (B2,black,Arrow(Relative(0.24)));
  label("{\small $b_2$}",relpoint(B2,0.24),N,black);

  //// Maintenant les courbes VERTES
  //point VertGauch1Bas = (-39,8.3);
  //point VertGauch1Haut = (-40,22);
  //path VertVertG1 = (VertGauch1Bas{dir(150)}..VertGauch1Haut{dir(65)});
  //path VertVertG2 = (VertGauch1Bas{dir(65)}..VertGauch1Haut{dir(150)});
  //draw(VertVertG1,heavygreen+1.1pt);
  //draw(VertVertG2,heavygreen+1.1pt+dashed);

  //path VertVertD1 = rotate(180,(0,7.5))*VertVertG1;
  //path VertVertD2 = rotate(180,(0,7.5))*VertVertG2;
  //draw(VertVertD1,heavygreen+1.1pt);
  //draw(VertVertD2,heavygreen+1.1pt+dashed);

  //path VertHorzG = ((-39,12.5){right}..(-31,7.5){down}..(-39,2.5){left}..(-47,7.5){up}..cycle);
  //draw(VertHorzG,heavygreen+1.1pt);

  //path VertHorzD = rotate(180,(0,7.5))*VertHorzG;
  //draw(VertHorzD,heavygreen+1.1pt);

  //// Maintenant la courbe ROUGE
  //path RougeGorge = ((0,9.5){left}..(-5,11)..(-39,18){left}..(-54,7.5){down});
  //path Rouge2 = reflect((0,7.5),(20,7.5))*RougeGorge;
  //path Rouge3 = rotate(180,(0,7.5))*RougeGorge;
  //path Rouge4 = reflect((0,7.5),(20,7.5))*Rouge3;

  //draw(RougeGorge,red+1.1pt);
  //draw(Rouge2,red+1.1pt);
  //draw(Rouge3,red+1.1pt);
  //draw(Rouge4,red+1.1pt);
\end{asy}
\caption{A marked surface of genus 2}
\label{FigureMarquage}
\end{figure}
On the other hand, most of the time we will think of $\psl$ as
acting on the left on $\HH$. Whenever $\pm A$ and $\pm B$ are
two elements of $\psl$ we want to think of $\pm BA$ as acting
first by $\pm A$ and then by $\pm B$. Therefore, instead of
considering morphisms from $\Gamma_2$ to $\psl$ with their
natural group structures, we will use the opposite group structure
on $\Gamma_2$. In other words, what we will call a morphism
from $\Gamma_2$ to $\psl$ will be a function $\rho$
satisfying $\rho(\gamma_1\gamma_2)=\rho(\gamma_2)\rho(\gamma_1)$,
for all $\gamma_1,\gamma_2\in\psl$. This convention is adopted
from \cite{GalloKapovichMarden}. Accordingly, if $\pm A$, $\pm B\in\psl$,
we will denote $[A,B]=B^{-1}A^{-1}BA$.

When $A\in\sldeuxR$ is a matrix and $\pm A$ is the
corresponding element in $\psl$, we will sometimes write $A$ instead
of $\pm A$, provided the distinction is not crucial.
On $\psl=\Isom^+(\HH)$, the displacement function will be denoted
by $\lambda$. Explicitely, if $\pm A\in\psl$,
\[
\lambda(A)=2\mathrm{arcch}(\max(1,\frac{|\tr(A)|}{2})).
\]
The usual functions $\mathrm{cosh}$, $\mathrm{sinh}$ will be
abreviated into $\cosh$, $\sinh$ for convenience. The unit
tangent bundle, $T_u\HH$ will be identified with
the topological space
$\psl$ via identification with the orbit
of the unit tangent vector pointing upwards at the point
$i$, in the upper half plane model. This yields the right-action of $\psl$ 
on $T_u\HH$, by right multiplication of
$\psl$ on itself. If $\ell,\theta\in\R$, the matrices
\[
T_\ell=
\left(\begin{array}{cc}e^{\ell/2} & 0 \\ 0 & e^{-\ell/2}\end{array}\right)
\text{ and }
R_\theta=
\left(\begin{array}{cc}\cos(\theta/2) & \sin(\theta/2) \\ -\sin(\theta/2) & \cos(\theta/2)\end{array}\right)
\]
act on the left on $\HH$, respectively, by translation along the axis $(0,\infty)$ and
rotation around $i$, and on the right on $T_u\HH$, respectively, by moving forward
by length $\ell$ in the direction given by the vector, and by rotating the vector by
angle $\theta$ at the same base point.
We will also use the notation
\[ R_l = R_{\frac{\pi}{2}},\quad R_r = R_{-\frac{\pi}{2}},\quad\text{and}\quad S=R_\pi.\]

\subsection{The Euler class}\label{SectionEuler}

Here we give a brief account on the Euler class. Excellent presentations
can be found in \cite{Ghys01} and in \cite{Calegari04}, Section 2, so
we refer the reader to these texts for details and complements.

Let $\psltild$ be the universal cover of $\psl$. We identify the kernel of the canonical surjection $\psltild\rightarrow\psl$ with $\pi_1(\psl)\simeq \Z$ via the path $\theta\mapsto R_\theta$ for $\theta\in [0,2\pi]$. 

The Euler class of a representation $\rho\colon\pi_1\Sigma_g\rightarrow\psl$ is
the element of $H^2(\Sigma_g,\Z)$ which measures the obstruction
to lifting $\rho$ to $\psltild$. It can be constructed as follows. Pick
an arbitrary set-theoretic section
$s\colon\psl\rightarrow\psltild$, and choose a triangulation
of $\Sigma_g$ with only one vertex, the base point of the surface.
Orient (arbitrarily) each edge of the triangulation.
To any oriented triangle $\sigma$ of the triangulation, with
boundary $\gamma_1^{\epsilon_1}\gamma_2^{\epsilon_2}\gamma_3^{\epsilon_3}$
(where the $\gamma_i's$ are edges with the chosen orientation),
associate the element
$\mathrm{eu}(\rho)(\sigma)=s(\rho(\gamma_3))^{\epsilon_3}s(\rho(\gamma_2))^{\epsilon_2}s(\rho(\gamma_1))^{\epsilon_1}$,
which is an element of the kernel of the map $\psltild\rightarrow\psl$.
This defines an element $\mathrm{eu}(\rho)\in H^2(\Sigma,\Z)$, and its evaluation on the fundamental
class defines a number, the {\em Euler class} of $\rho$, which we still
denote by $\mathrm{eu}(\rho)$.
Its parity measures the obstruction to lifting $\rho$ to $\sldeuxR$. 

If we build our surface of genus $g$ by gluing the faces of a $4g$-gon in the
standard way, this yields a practical formula, sometimes called the
{\em Milnor algorithm}:
given $\rho\colon\pi_1\Sigma_g\rightarrow\psl$, choose arbitrary
lifts $\widetilde{\rho(a_1)}$, \ldots, $\widetilde{\rho(a_g)}$ and
compute $\prod\left[\widetilde{\rho(a_i)},\widetilde{\rho(b_i)}\right]$.
As before, this defines an integer, the Euler number $\mathrm{eu}(\rho)$. 

For every hyperbolic element $\pm A$ of $\psl$, there is a natural path
from $\pm I_2$ to $\pm A$, which lies in the one-parameter subgroup
of $\psl$ containing $\pm A$. This defines a canonical lift
$\widetilde{A}\in\psltild$. Similarly, if $\Sigma$ is a surface
with boundary whose fundamental group has the presentation
\[\pi_1(\Sigma)=\langle a_1,b_1,\ldots,a_g,b_g,c_1,\ldots,c_n|[a_1,b_1]\cdots [a_g,b_g]c_1\cdots c_n\rangle,\]
and if each boundary
curve $c_i$ of $\Sigma$ is mapped by $\rho$ to a hyperbolic element, we can
define the Euler class of $\rho$ as
$$\eu(\rho)=\widetilde{C}_n\cdots\widetilde{C}_1\widetilde{B}_g^{-1}\widetilde{A}_g^{-1}\widetilde{B}_g\widetilde{A}_g\cdots\widetilde{B}_1^{-1}\widetilde{A}_1^{-1}\widetilde{B}_1\widetilde{A}_1\in \Z,$$
where $C_1,\ldots,C_n\in\psl$ are the images of the boundary curves by
$\rho$, $\widetilde{C_1}$, \ldots, $\widetilde{C_n}$ are their canonical lifts
and $\widetilde{A}_1,\ldots,\widetilde{B}_g$ are arbitrary lifts
to $\psltild$ of $\rho(a_1),\ldots, \rho(b_g)$.

This Euler class is additive in the following sense: if $\Sigma$ is the
union of two surfaces $\Sigma'$ and $\Sigma''$ glued along a family of
curves $d_1,\ldots, d_k$ then for any representation
$\rho:\pi_1(\Sigma)\to \psl$ which maps the curves $d_1,\ldots,d_k$ and the
boundary curves to hyperbolic elements one has
\[ \eu(\rho)=\eu(\rho')+\eu(\rho''), \]
where $\rho'$ and $\rho''$ stand for the restriction of $\rho$ to $\Sigma'$
and $\Sigma''$ respectively.

Also, it follows from the definition that for any representation $\rho$,
the Euler class $\mathrm{eu}(\rho)\in\Z$ does not vary in its
$\psl$-conjugacy class, it is $\Aut^+(\pi_1\Sigma)$-invariant, but its sign
changes to the opposite under conjugation by an orientation-reversing
isometry of $\HH$ or if we precompose $\rho$ by an element of $\Aut(\pi_1\Sigma)$
which does not preserve the fundamental class in $H_2(\Gamma_g,\Z)$.


As stated in the introduction, a foundational result of Goldman
asserts that for every $k$ such that $|k|\leq 2g-2$, $\eu^{-1}(k)$
is non-empty and connected, and if $k\neq 0$ it is a smooth symplectic manifold
of dimension $6g-6$.


\subsection{Miscellanea}\label{SectionReminders}

We will need a couple of results from Goldman's paper \cite{Goldman88}
on the connected components of $\psl$-representations:
\begin{theorem}[\cite{Goldman88}, Theorem 3.3]\label{fibr0}
  Let $\Sigma'$ be a genus $g-1$ surface ($g\geq 2$) with one boundary
  component, and let $k$ be an integer satisfying $|k|<2g-2$. Then the space
  $\mathcal{M}^k$ of classes of representations which are hyperbolic
  at the boundary, and of Euler class $k$, is non-empty, and connected.
\end{theorem}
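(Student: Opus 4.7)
The plan is to use a pants decomposition of $\Sigma' = \Sigma_{g-1,1}$, which consists of $-\chi(\Sigma') = 2g-3$ pairs of pants joined along $3g - 5$ simple closed curves (the $2g-4$ interior pants curves together with the boundary of $\Sigma'$). For nonemptiness of $\mathcal{M}^k$ with $|k| \leq 2g-3$: for any pair of pants and any choice of three hyperbolic conjugacy classes for its boundary curves, the relative character variety is nonempty and the Euler class of any such representation lies in $\{-1, 0, +1\}$, all three values being realized. Fixing a hyperbolic conjugacy class on each interior pants curve and on the boundary of $\Sigma'$, one chooses Euler classes $k_i \in \{-1, 0, +1\}$ on the $2g-3$ pants summing to $k$ (possible because $|k| \leq 2g-3$), picks a representative on each pant, and glues via the additivity recalled in Section \ref{SectionEuler}.

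For connectedness, given $[\rho_0], [\rho_1] \in \mathcal{M}^k$, I would first deform both representations so that they share the same boundary holonomy, which is possible by connectedness of the set of hyperbolic elements of $\psl$. Next, using the invariance of $\mathcal{M}^k$ under the mapping class group, and the fact that the mapping class group acts transitively on isotopy classes of pants decompositions through elementary (Hatcher--Thurston) moves, one reduces to the case where both representations respect a common pants decomposition $\mathcal{P}$ in which every pants curve is mapped to a hyperbolic element. Relative to $\mathcal{P}$, each such representation is described by continuous length and twist parameters along the pants curves and a discrete tuple $(k_i)$ of Euler classes on the pants with $\sum_i k_i = k$; for a fixed tuple, the resulting stratum is connected by standard arguments on Fenchel--Nielsen-type coordinates.

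The main obstacle is then to show that any two tuples $(k_i), (k_i')$ with the same sum $k$ can be joined by a continuous path in $\mathcal{M}^k$. Since such tuples are related by a sequence of elementary swaps transferring $\pm 1$ between two adjacent pants across a single interior pants curve $c$, it suffices to realize each such swap. An elementary swap is carried out by a continuous family of representations during which the image of $c$ is allowed to become temporarily non-hyperbolic (passing through parabolic and elliptic values), while the other pants curves and the outer boundary remain hyperbolic throughout; the existence of such a path can be extracted from a direct analysis of the pair-of-pants character variety when one boundary is non-hyperbolic. The strict inequality $|k| < 2g-2$ is essential precisely here: it ensures that some pant carries a non-extremal Euler class, so the tuple $(k_i)$ is never pinned to an all-$\pm 1$ configuration and one always has room to perform the required transfers without being forced into a rigid Fuchsian piece.
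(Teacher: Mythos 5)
This statement is quoted from Goldman's 1988 paper (Theorem 3.3) and is not proved in the present text, so there is no in-paper proof to compare against; judging your argument on its own, the overall strategy (pants decomposition, additivity of the relative Euler class, transfer of Euler class between adjacent pants) is the right flavor, but two of its steps hide the real content of the theorem. First, the reduction to "both representations respect a common pants decomposition in which every pants curve is mapped to a hyperbolic element" is unjustified and partly circular. A general $[\rho]\in\mathcal{M}^k$ need not send the curves of \emph{any} pants decomposition to hyperbolic elements (much of the present paper is precisely about representations sending many simple curves to non-hyperbolic elements), and deforming $\rho$ inside $\mathcal{M}^k$ until it is adapted to a pants decomposition is essentially Goldman's inductive argument, not a remark. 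Moreover, replacing $\rho_1$ by $\rho_1\circ\phi$ for a mapping class $\phi$ aligning the two decompositions produces a different point of $\mathcal{M}^k$, and the fact that $[\rho_1]$ and $[\rho_1\circ\phi]$ lie in the same connected component is part of what you are trying to prove. Second, the "elementary swap" across an interior curve $c$ is the crux of the whole proof and is only asserted: it amounts to the connectedness of the relative representation variety of the four-holed sphere or one-holed torus obtained as the union of the two adjacent pants, with fixed hyperbolic boundary and fixed total Euler class --- i.e.\ a base case that itself requires the full strength of the argument (one must in particular track that the total Euler class is constant along a path on which $\rho(c)$ leaves the hyperbolic locus, where the two pieces no longer have well-defined relative Euler classes).

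There is also a misreading of the hypothesis. For $\Sigma'$ of genus $g-1$ with one boundary component one has $-\chi(\Sigma')=2g-3$, so $|k|<2g-2$ is exactly $|k|\le 2g-3$, the full Milnor--Wood range for $\Sigma'$; the extremal classes $k=\pm(2g-3)$, for which the only admissible tuple is $k_i=\pm1$ for all $i$, are \emph{included} in the statement. The hypothesis therefore does not "ensure that some pant carries a non-extremal Euler class"; it serves only to make $\mathcal{M}^k$ non-empty. (Minor: the decomposition has $3g-5$ interior pants curves plus the boundary, not $2g-4$ interior curves.)
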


The next statement is a consequence of Corollary 7.8 of \cite{Goldman88}.
\begin{theorem}[\cite{Goldman88}, Corollary 7.8]\label{fibr1}
   Let $\Sigma'$ be a genus $g-1$ surface ($g\geq 2$) with one boundary
   component; denote by $\gamma$ a curve freely homotopic to this
   boundary component. 
   Let $k$ be an integer satisfying $|k|<2g-2$.
   Let $A\colon[0,1]\rightarrow \psl$ be a continuous path taking values in the set of hyperbolic elements. 
   Then there exists a continuous path $t\mapsto\rho_t$ consisting of representations
   of relative Euler class $k$ such that for all $t$, $\rho_t(\gamma)=A(t)$.
\end{theorem}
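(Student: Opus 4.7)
The plan is to treat this as a path-lifting property for the boundary-evaluation map. Let $\mathcal{R}^k_{\mathrm{hyp}}$ denote the space of representations $\rho \colon \pi_1(\Sigma') \to \psl$ of relative Euler class $k$ sending $\gamma$ to a hyperbolic element, and consider
\[
\mathrm{ev}\colon \mathcal{R}^k_{\mathrm{hyp}} \longrightarrow \psl^{\mathrm{hyp}}, \quad \rho \mapsto \rho(\gamma),
\]
where $\psl^{\mathrm{hyp}}$ denotes the open subset of hyperbolic elements. By Theorem~\ref{fibr0}, $\mathcal{R}^k_{\mathrm{hyp}}$ is non-empty; the goal is to show $\mathrm{ev}$ has the continuous path-lifting property.

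The key technical point is that $\mathrm{ev}$ is a smooth submersion at every non-elementary representation. Since $\Sigma'$ has genus $g-1 \geq 1$ and one boundary component, $\pi_1(\Sigma')$ is a free group on the $2(g-1)$ generators $a_1, b_1, \ldots, a_{g-1}, b_{g-1}$, with $\gamma = [a_1,b_1]\cdots[a_{g-1},b_{g-1}]$. The differential of $\mathrm{ev}$ at $\rho$ is the cocycle-evaluation map $u \mapsto u(\gamma)$ from $Z^1(\pi_1(\Sigma'), \mathrm{Ad}\,\rho)$ to $T_{\rho(\gamma)}\psl \cong \mathfrak{sl}_2(\R)$. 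Cocycles on a free group are freely determined by their values on generators; a direct computation with the standard cocycle formula for a commutator shows that the three-dimensional target is hit as soon as $\rho(a_1)$ and $\rho(b_1)$ generate a non-abelian subgroup, giving surjectivity of the differential on the open subset of non-elementary representations.

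From this submersion property, local continuous sections of $\mathrm{ev}$ exist around every non-elementary $\rho$ by the implicit function theorem, providing local lifts of $A$ near each time $t \in [0,1]$. To produce a global continuous lift, I would cover $[0,1]$ by finitely many closed subintervals on which local lifts $\rho^{(i)}_t$ are defined, and glue them at their common endpoints. The gluing relies on the path-connectedness of the fibers $\mathrm{ev}^{-1}(A)$ for each hyperbolic $A$, a statement about the relative representation variety of $\Sigma'$ with prescribed boundary holonomy that can be extracted from Theorem~\ref{fibr0} (whose proof yields connectedness uniformly in the boundary data).

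The main obstacle I anticipate is ensuring the lift remains in the locus of non-elementary representations where the submersion property holds. This locus is open and of positive codimension in $\mathcal{R}^k_{\mathrm{hyp}}$, and because the inequality $|k| < 2g-2$ excludes the extremal Fuchsian components, Theorem~\ref{fibr0} provides enough connectedness to choose the initial lift non-elementary and to perturb the path away from the degenerate stratum when necessary. Once this is handled, the concatenation of the local lifts furnishes the desired continuous path $t \mapsto \rho_t$ of representations of relative Euler class $k$ with $\rho_t(\gamma) = A(t)$.
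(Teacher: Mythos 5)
The paper does not actually prove this statement: it is quoted verbatim as a consequence of Corollary 7.8 of Goldman's 1988 paper, so there is no internal argument to compare yours against. Your sketch follows the standard route one would take to reprove it (boundary-evaluation map, submersion criterion, fiber connectedness, path lifting), and the submersion part is essentially right: since $\pi_1(\Sigma')$ is free, $Z^1(\pi_1(\Sigma'),\ad\rho)\cong\mathfrak{sl}_2(\R)^{2(g-1)}$, and Goldman's criterion says $u\mapsto u(\gamma)$ is onto exactly when the centralizer of $\rho(\pi_1(\Sigma'))$ is trivial, which holds for non-elementary $\rho$ (your reduction to the first handle needs $\langle\rho(a_1),\rho(b_1)\rangle$ non-abelian, which a non-elementary representation need not satisfy, but the full differential is still onto).

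The genuine gaps are elsewhere. First, the fiber connectedness you invoke for the gluing step --- connectedness of $\mathrm{ev}^{-1}(C)\cap\{\eu=k\}$ for a \emph{fixed} hyperbolic $C$ --- does not follow from Theorem \ref{fibr0}, which asserts connectedness of the space with \emph{unconstrained} hyperbolic boundary holonomy; connectedness of a total space says nothing about connectedness of the fibers of a map on it. That relative statement is precisely the content of Goldman's Corollary 7.8, i.e.\ you are assuming the key input rather than deriving it. Second, your claim that ``$|k|<2g-2$ excludes the extremal Fuchsian components'' is false for $\Sigma'$: its Euler characteristic is $3-2g$, so the Milnor--Wood bound for $\Sigma'$ is $|k|\le 2g-3$, and the hypothesis $|k|<2g-2$ \emph{includes} the extremal values $k=\pm(2g-3)$. (This happens to be harmless --- in the extremal case every representation hyperbolic at the boundary is Fuchsian, hence non-elementary --- but your stated mechanism for avoiding the elementary locus rests on a wrong premise there.) Third, even granting local sections through every non-elementary point, a non-proper submersion need not have the path-lifting property: your inductive concatenation could a priori require infinitely many subintervals accumulating at some $t^*<1$, and ``perturbing the path away from the degenerate stratum'' while keeping $\rho_t(\gamma)=A(t)$ exactly is asserted, not constructed. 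One needs either local triviality of $\mathrm{ev}$ over the hyperbolic locus or a compactness/uniformity argument for the section domains; this is where the real work in Goldman's proof lies.
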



The proof of Theorem \ref{Bowditch} will rely on the following two results.
The first one was proved recently independently by Deroin-Tholozan \cite{DeroinTholozan}
and Gu\'eritaud-Kassel-Wolff \cite{GueritaudKasselWolff} and the second one is due to Goldman,
\cite{Goldman03}.

\begin{theorem}\label{domination}
  Given a closed surface $\Sigma$ of genus $g>1$, an integer $k$ satisfying
  $|k|<2g-2$ and a representation $\rho\in \rep^k(\Sigma)$, there exists a
  Fuchsian representation $j\in \rep^{2-2g}(\Sigma)$ which dominates
  $\rho$ in the sense that for any $\gamma$ in $\pi_1(\Sigma)$ one has:
  $$|\tr \rho(\gamma)|\leq|\tr j(\gamma)|.$$
\end{theorem}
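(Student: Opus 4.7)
The plan is to construct, for any non-maximal $\rho$, a Fuchsian representation $j \in \rep^{2-2g}(\Sigma)$ and a $(j,\rho)$-equivariant map $f : \HH \to \HH$ whose Lipschitz constant is at most $1$. Granted such a map, the conclusion of the theorem is immediate. For any $\gamma \neq 1$ in $\pi_1(\Sigma)$, $j(\gamma)$ is hyperbolic since $j$ is faithful and discrete on a torsion-free surface group. If $\rho(\gamma)$ is also hyperbolic, then equivariance of $f$ applied to a fundamental domain of the $j(\gamma)$-axis yields $\lambda(\rho(\gamma)) \leq \lambda(j(\gamma))$, so by $|\tr A| = 2\cosh(\lambda(A)/2)$ for hyperbolic $A$ we get $|\tr\rho(\gamma)| \leq |\tr j(\gamma)|$; if $\rho(\gamma)$ is not hyperbolic, then $|\tr\rho(\gamma)| \leq 2 \leq |\tr j(\gamma)|$ holds automatically.

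First I would reduce to the case where $\rho$ is reductive, since traces depend only on the semisimplification and semisimplification preserves the Euler class (hence non-maximality). Then, for every hyperbolic structure $X$ on $\Sigma_g$ with uniformizing Fuchsian representation $j_X$, the classical theory of equivariant harmonic maps produces a unique $\rho$-equivariant harmonic map $f_X : \widetilde X \to \HH$. Pointwise, the differential $df_X$ decomposes into its $(1,0)$ and $(0,1)$ parts with squared norms $\mathcal{H}(f_X)$ and $\mathcal{A}(f_X)$; the local operator norm of $df_X$ is $\sqrt{\mathcal{H}} + \sqrt{\mathcal{A}}$, and a Chern--Weil identity expresses the integral of $\mathcal{H} - \mathcal{A}$ on $X$ in terms of $\eu(\rho)$. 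The strict Milnor--Wood bound $|k| < 2g-2$ produces a genuine integral gap.

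The core step is to choose $X$ converting this integral gap into the pointwise inequality $\sqrt{\mathcal{H}(f_X)} + \sqrt{\mathcal{A}(f_X)} \leq 1$, after which $j := j_X$ and $f := f_X$ finish the proof. Two strategies are available: a variational argument on $\mathcal{T}(\Sigma_g)$ minimizing a well-chosen functional, as in \cite{DeroinTholozan}, or an iterative geometric deformation producing increasingly dominating Fuchsian representations along a path, as in \cite{GueritaudKasselWolff}. Either can be invoked as a black box, since the present paper uses the theorem only through its statement.

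The main obstacle is exactly this passage from an integral to a pointwise estimate: in principle the harmonic energy could concentrate on small regions even when the total Euler class is strictly sub-maximal, and ruling this out is the technical heart of both cited works. The hypothesis $|k| < 2g - 2$ is moreover sharp, since in the maximal case $\rho$ is itself Fuchsian and no strict domination is possible. Because the present paper needs only the statement of the theorem and not a sharpening of the constants, the black-box approach is the natural route; a full self-contained proof would necessarily reproduce the concentration estimates of \cite{DeroinTholozan} or \cite{GueritaudKasselWolff}.
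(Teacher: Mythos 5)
The paper does not prove Theorem \ref{domination} at all: it is quoted as a black box from \cite{DeroinTholozan} and \cite{GueritaudKasselWolff}, which is exactly what your proposal does for the hard analytic step, and your reduction from the existence of a $1$-Lipschitz $(j,\rho)$-equivariant map to the trace inequality (via $\lambda(\rho(\gamma))\leq\lambda(j(\gamma))$ and $|\tr|=2\cosh(\lambda/2)$ for hyperbolic elements) is correct. So your approach is essentially the paper's; the only quibble is attributional, since the pointwise estimate in \cite{DeroinTholozan} comes from a curvature/Ahlfors--Schwarz argument for a fixed complex structure rather than a variational one on Teichm\"uller space, the latter being the strategy of \cite{GueritaudKasselWolff}.
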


\begin{theorem}[\cite{Goldman03}, Theorem 6.3]\label{dixhuit}
  Let $\Sigma$ be a punctured torus and $\rho\in \mo^0(\Sigma)$ such that the
  boundary curve has trace in $[-18,18]$. Then there is a simple curve
  $\gamma$ in $\Sigma$ such that 
  $$|\tr \rho(\gamma)|\le 2.$$
\end{theorem}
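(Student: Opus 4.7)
The plan is to transfer the problem to the Fricke parametrization of the $\sldeuxR$-character variety of the punctured torus and to exploit the Markov dynamics of the mapping class group on trace triples. Fix a free basis $(a,b)$ of $\pi_1(\Sigma)\cong F_2$ such that $[a,b]$ represents the boundary, and set
\[x=\tr\rho(a),\quad y=\tr\rho(b),\quad z=\tr\rho(ab).\]
The classical Fricke identity $\tr\rho([a,b])=x^2+y^2+z^2-xyz-2$ translates the hypothesis on the boundary trace into the constraint $\kappa(x,y,z):=x^2+y^2+z^2-xyz\in[-16,20]$.

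Recall next that the primitive conjugacy classes of $F_2$ --- equivalently, the free homotopy classes of simple closed curves on $\Sigma$ --- are parametrized by $\Q\cup\{\infty\}$, and $\mcg(\Sigma)\cong\mathrm{GL}_2(\Z)$ acts transitively on them. On the Fricke coordinates this action is generated by coordinate permutations, the sign changes of pairs of entries such as $(x,y,z)\mapsto(-x,-y,z)$, and the three Markov moves
\[(x,y,z)\;\longmapsto\;(yz-x,y,z),\ (x,xz-y,z),\ (x,y,xy-z),\]
each of which preserves $\kappa$. Consequently the set of traces $\tr\rho(\gamma)$ over simple curves $\gamma$ equals the set of entries appearing in the $\mcg$-orbit of $(x,y,z)$.

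I would then argue by contradiction. Assume $|\tr\rho(\gamma)|>2$ for every simple $\gamma$, so that every orbit representative has all three entries of absolute value greater than $2$. Pick an orbit element minimizing $F(x,y,z):=x^2+y^2+z^2$. The Euler-class hypothesis $\rho\in\mo^0(\Sigma)$ singles out a specific sign sector of the Fricke $\R^3$; together with the sign-change symmetries available in $\mcg$, one reduces to a normalized configuration (typically $x,y,z>2$). $F$-minimality under each of the three Markov moves then forces
\[(xy-z)^2\geq z^2,\quad (yz-x)^2\geq x^2,\quad (xz-y)^2\geq y^2,\]
which in the chosen sector become the semi-algebraic inequalities $xy\geq 2z$, $yz\geq 2x$, $xz\geq 2y$.

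The main obstacle is the closing quantitative estimate: one must show that the region of $\R^3$ cut out by these minimality inequalities, the sign constraints coming from $\mo^0$, and the bound $\kappa\leq 20$ either is empty or still forces some coordinate to be of absolute value at most $2$. The constant $18$ is sharp and is calibrated to the extremal Markov-minimal triples in the relevant sign sector; pinning it down requires a careful case analysis on the faces of the semi-algebraic region and at the boundary configurations where two of the three Markov inequalities become equalities. This combinatorial-numerical step, rather than the structural setup above, is where essentially all the work lies, and it is the reason the proof is tightly entangled with the arithmetic of the Markov cubic rather than being a soft dynamical argument.
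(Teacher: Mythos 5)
Your setup (Fricke coordinates, Markov moves, descent on a Markov-minimal representative) is precisely the skeleton of Goldman's own proof of his Theorem~6.3, which is the result this statement cites as a black box. The paper does \emph{not} reprove that theorem: its short argument only performs the reduction, namely (a) the identity $\tr[A,B]=x^2+y^2+z^2-xyz-2$, (b) the observation that Euler class $0$ together with $|\tr[A,B]|>2$ forces $\tr[A,B]>2$, and (c) the observation that the alternative ``$x,y,z<-2$'' in Goldman's trichotomy is incompatible with $\tr[A,B]\le 18$, since there $\tr[A,B]=x^2+y^2+z^2+|xyz|-2>4+4+4+8-2=18$. By contrast, you attempt to establish Goldman's trichotomy itself and then explicitly stop at ``the main obstacle \dots\ where essentially all the work lies'': the proof that a Markov-minimal triple with all $|x|,|y|,|z|>2$ and $\tr[A,B]\in(2,18]$ cannot exist. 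That closing estimate is the entire content of the theorem, so as written the proposal has a genuine gap; it is not a sharpness calibration to be ``pinned down'' but the theorem itself.

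Two further points in the part you did write need repair. First, the reduction to the sector $x,y,z>2$ is not a consequence of the hypothesis $\rho\in\mo^0(\Sigma)$: the Euler class controls the sign of $\tr[A,B]$ (class $0$ gives $\tr[A,B]>2$ rather than $<-2$ when the boundary is hyperbolic), whereas the sign of $xyz$ is a separate invariant of the sign-change group, and Euler class $0$ representations occur in both sectors (e.g.\ the ``three-holed sphere'' representations with $x,y,z<-2$ have relative Euler class $0$). Ruling out the all-negative sector is exactly where the bound $18$ enters, as in (c) above. Second, once you are in the sector $x,y,z>2$, the minimality inequalities $xy\ge 2z$, $yz\ge 2x$, $xz\ge 2y$ in fact force $\kappa<4$, i.e.\ $\tr[A,B]<2$, contradicting (b); carrying out that elementary but non-trivial estimate (or simply citing Goldman, as the paper does) is what is missing.
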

Since this is not readily the statement of \cite{Goldman03},
we add a few lines which reduce our statement to the one of Goldman.
\begin{proof}
Fix a base point $x$ on the boundary of $\Sigma$ and choose two standard
generators $a,b$ of $\pi_1(\Sigma,x)$ so that the boundary of $\Sigma$ is
homotopic to $[a,b]$. Let $A,B\in$ SL$_2(\R)$ be two lifts of $\rho(a)$ and
$\rho(b)$. We can suppose that $|\tr([A,B])|> 2$ otherwise we are done. Set $x=\tr A$, $y=\tr B$
and $z=\tr(AB)$. A classical computation gives $\tr([A,B])=x^2+y^2+z^2-xyz-2$. The vanishing of the
Euler class implies that $\tr([A,B])>2$. We apply Theorem 6.3 of
\cite{Goldman03}: up to the action of $\mcg(\Sigma)$ we have either
$x<-2,y<-2,z<-2$ or one of the three coordinates is in $[-2,2]$. The first
case immediately implies that $\tr([A,B])>18$ hence we are in the second case
and one of the simple curves $a,b$ or $ab$ is sent to a non-hyperbolic element.
\end{proof}

\subsection{Geometry of commutators}\label{SectionCommut}

Let $A,B\in\sldeuxR$. It is well-known that the inequality $\tr([A,B])<2$ holds if and only if
$\pm A$ and $\pm B$ are two hyperbolic elements of $\psl$ whose
axes intersect at exactly one point in $\HH$
(\cite{Goldman03}, Lemma 4.4).

The aim of this section is to give a compass-and-straightedge construction
of the commutator $\pm [A,B]$ in this case. That is, to describe
the commutator in terms of two successive reflections along explicit axes.

We start with the case when the axes of $\pm A$ and $\pm B$ intersect
perpendicularly. Draw the perpendiculars to the axes
of $\pm A$ and $\pm B$ as in the Figure \ref{FigureComm},
\begin{figure}[ht]
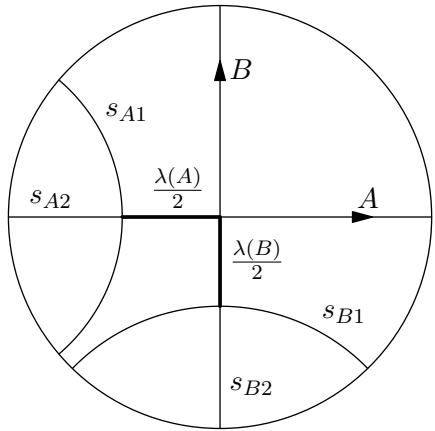

\begin{asy}
  import math;
  import hyperbolic_geometry;
  real  taille = 160;
  size(taille,taille);
  hyperbolic_point Orig = hyperbolic_point(0,0);
  hyperbolic_point PA = hyperbolic_point(1,180);
  hyperbolic_point PB = hyperbolic_point(0.9,-90);
  hyperbolic_line AxeA = hyperbolic_line(PA,Orig);
  hyperbolic_line AxeB = hyperbolic_line(PB,Orig);
  hyperbolic_line PerpA = hyperbolic_normal(AxeA,PA);
  hyperbolic_line PerpB = hyperbolic_normal(AxeB,PB);

  draw(unitcircle);
  draw(AxeA,Arrow(Relative(0.86)));
  draw(AxeB,Arrow(Relative(0.87)));
  label("{\small $A$}",relpoint(AxeA.to_path(),0.85),N);
  label("{\small $B$}",relpoint(AxeB.to_path(),0.85),E);
  draw(PerpA);
  draw(PerpB);

  label("{\small $s_{A1}$}",relpoint(PerpA.to_path(),0.2),NE);
  label("{\small $s_{B1}$}",relpoint(PerpB.to_path(),0.8),NE);
  label("{\small $s_{A2}$}",relpoint(AxeA.to_path(),0.1),N);
  label("{\small $s_{B2}$}",relpoint(AxeB.to_path(),0.1),E);

  draw(hyperbolic_segment(Orig,PA),black+1.4pt);
  label("{\small $\frac{\lambda(A)}{2}$}",relpoint(AxeA.to_path(),0.4),N);
  draw(hyperbolic_segment(Orig,PB),black+1.4pt);
  label("{\small $\frac{\lambda(B)}{2}$}",relpoint(AxeB.to_path(),0.4),E);
\end{asy}
\caption{A commutator}
\label{FigureComm}
\end{figure}
and identify the isometry $\pm A$ as a product of two
reflections:
$\pm A=s_{A2}\circ s_{A1}$. Similarly, $\pm B=s_{B2}\circ s_{B1}$.
It follows that $\pm [A,B]=s_{B1}s_{B2}s_{A1}s_{A2}s_{B2}s_{B1}s_{A2}s_{A1}$.
Whenever two lines intersect perpendicularly, the reflections along
these lines commute. It follows that $s_{B2}$ commutes with $s_{A1}$
and $s_{A2}$ and that $s_{A2}$ commutes with $s_{B1}$. Therefore,
\[ \pm [A,B]=\left(s_{B1}\circ s_{A1}\right)^2. \]
Depending on whether the axes of $s_{A1}$ and $s_{B1}$ intersect, we
are in one of the three situations pictured in Figure \ref{Figure3Comm}.
\begin{figure}[ht]
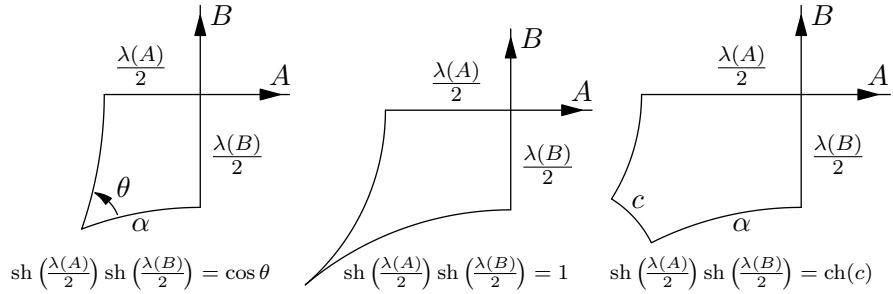

\begin{asy}
  import math;
  import hyperbolic_geometry;
  real  taille = 110;
  size(taille,taille);
  hyperbolic_point Orig = hyperbolic_point(0,0);
  hyperbolic_point PA = hyperbolic_point(0.67,180);
  hyperbolic_point PB = hyperbolic_point(0.8,-90);
  hyperbolic_line AxeA = hyperbolic_line(PA,Orig);
  hyperbolic_line AxeB = hyperbolic_line(PB,Orig);
  hyperbolic_line PerpA = hyperbolic_normal(AxeA,PA);
  hyperbolic_line PerpB = hyperbolic_normal(AxeB,PB);
  hyperbolic_point PComm = intersection(PerpA,PerpB);

  path segmentA=(PA.get_euclidean())..(0.3,0);
  draw(segmentA,Arrow(Relative(0.95)));
  label("{\small $A$}",relpoint(segmentA,0.95),N);
  path segmentB=(PB.get_euclidean())..(0,0.3);
  draw(segmentB,Arrow(Relative(0.95)));
  label("{\small $B$}",relpoint(segmentB,0.95),E);

  draw(hyperbolic_segment(PA,PComm));
  draw(hyperbolic_segment(PB,PComm));

  //draw(unitcircle);
  //draw(AxeA,Arrow(Relative(0.86)));
  //draw(AxeB,Arrow(Relative(0.87)));
  //draw(PerpA); draw(PerpB);
  //draw(hyperbolic_segment(Orig,PA)); draw(hyperbolic_segment(Orig,PB));
  //draw(hyperbolic_segment(PComm,PA)); draw(hyperbolic_segment(PComm,PB));

  //label("{\small $s_{A1}$}",relpoint(PerpA.to_path(),0.2),NE);
  //label("{\small $s_{B1}$}",relpoint(PerpB.to_path(),0.8),NE);
  //label("{\small $s_{A2}$}",relpoint(AxeA.to_path(),0.1),N);
  //label("{\small $s_{B2}$}",relpoint(AxeB.to_path(),0.1),E);

  //draw(hyperbolic_segment(Orig,PA),black+1.4pt);
  label("{\small $\frac{\lambda(A)}{2}$}",relpoint(AxeA.to_path(),0.4),N);
  //draw(hyperbolic_segment(Orig,PB),black+1.4pt);
  label("{\small $\frac{\lambda(B)}{2}$}",relpoint(AxeB.to_path(),0.4),E);

  label("{\small $\alpha$}",(-0.2,-0.44));
  draw(arc(PComm.get_euclidean(),0.13,22,70),Arrow());
  label("{\small $\theta$}",PComm.get_euclidean()+0.2*dir(45));

  label("{\scriptsize $\mathrm{sh}\left(\!\frac{\lambda(A)}{2}\!\right)\mathrm{sh}\left(\!\frac{\lambda(B)}{2}\!\right)=\cos\theta$}",(-0.2,-0.6));
\end{asy}
\begin{asy}
  import math;
  import hyperbolic_geometry;
  real  taille = 110;
  size(taille,taille);
  hyperbolic_point Orig = hyperbolic_point(0,0);
  hyperbolic_point PA = hyperbolic_point(1,180);
  // arcsinh(1/sinh(1))=0.7719368 et des brouettes.
  hyperbolic_point PB = hyperbolic_point(0.7719,-90);
  hyperbolic_line AxeA = hyperbolic_line(PA,Orig);
  hyperbolic_line AxeB = hyperbolic_line(PB,Orig);
  hyperbolic_line PerpA = hyperbolic_normal(AxeA,PA);
  hyperbolic_line PerpB = hyperbolic_normal(AxeB,PB);
  hyperbolic_point PComm = intersection(PerpA,PerpB);

  path segmentA=(PA.get_euclidean())..(0.3,0);
  draw(segmentA,Arrow(Relative(0.95)));
  label("{\small $A$}",relpoint(segmentA,0.95),N);
  path segmentB=(PB.get_euclidean())..(0,0.3);
  draw(segmentB,Arrow(Relative(0.95)));
  label("{\small $B$}",relpoint(segmentB,0.95),E);

  draw(hyperbolic_segment(PA,PComm));
  draw(hyperbolic_segment(PB,PComm));

  label("{\small $\frac{\lambda(A)}{2}$}",relpoint(AxeA.to_path(),0.4),N);
  label("{\small $\frac{\lambda(B)}{2}$}",relpoint(AxeB.to_path(),0.4),E);

  label("{\scriptsize $\mathrm{sh}\left(\!\frac{\lambda(A)}{2}\!\right)\mathrm{sh}\left(\!\frac{\lambda(B)}{2}\!\right)=1$}",(-0.2,-0.6));
\end{asy}
\begin{asy}
  import math;
  import hyperbolic_geometry;
  real  taille = 110;
  size(taille,taille);
  hyperbolic_point Orig = hyperbolic_point(0,0);
  hyperbolic_point PA = hyperbolic_point(1.2,180);
  hyperbolic_point PB = hyperbolic_point(0.8,-90);
  hyperbolic_line AxeA = hyperbolic_line(PA,Orig);
  hyperbolic_line AxeB = hyperbolic_line(PB,Orig);
  hyperbolic_line PerpA = hyperbolic_normal(AxeA,PA);
  hyperbolic_line PerpB = hyperbolic_normal(AxeB,PB);
  hyperbolic_line AxeComm = common_perpendicular(PerpA,PerpB);
  hyperbolic_point PCommA = intersection(PerpA,AxeComm);
  hyperbolic_point PCommB = intersection(PerpB,AxeComm);

  path segmentA=(PA.get_euclidean())..(0.3,0);
  draw(segmentA,Arrow(Relative(0.95)));
  label("{\small $A$}",relpoint(segmentA,0.95),N);
  path segmentB=(PB.get_euclidean())..(0,0.3);
  draw(segmentB,Arrow(Relative(0.95)));
  label("{\small $B$}",relpoint(segmentB,0.95),E);

  draw(hyperbolic_segment(PA,PCommA));
  draw(hyperbolic_segment(PB,PCommB));
  draw(hyperbolic_segment(PCommA,PCommB));

  label("{\small $\frac{\lambda(A)}{2}$}",relpoint(AxeA.to_path(),0.4),N);
  label("{\small $\frac{\lambda(B)}{2}$}",relpoint(AxeB.to_path(),0.4),E);

  label("{\small $\alpha$}",(-0.2,-0.44));
  label("{\small $c$}",(-0.55,-0.36));

  label("{\scriptsize $\mathrm{sh}\left(\!\frac{\lambda(A)}{2}\!\right)\mathrm{sh}\left(\!\frac{\lambda(B)}{2}\!\right)=\mathrm{ch}(c)$}",(-0.2,-0.6));
\end{asy}
\caption{Elliptic, parabolic or hyperbolic commutators of trace lower than $2$}
\label{Figure3Comm}
\end{figure}
\begin{remark}\label{RemarqueComm}
  In the first case, when $\pm [A,B]$ is elliptic, we also have the relation
  $\sinh(\alpha)\tanh\left(\frac{\lambda(B)}{2}\right)\tan(\theta)=1$,
  where $\alpha$ is the
  distance between the axis of $\pm A$ and the fixed point of $\pm [A,B]$,
  and where $\theta$ is one fourth of the rotation angle of $\pm [A,B]$.
  In the third case, when $\pm [A,B]$ is hyperbolic, then we have
  $\cosh(\alpha)\tanh\left(\frac{\lambda(B)}{2}\right)\tanh(c)=1$,
  where $\alpha$ is the distance between the axis of $\pm B$ and that of
  $\pm [A,B]$, and $c$ is one fourth of the displacement distance of
  $\pm [A,B]$. These formulas come readily from classical formulas in
  hyperbolic trigonometry, see eg \cite{Buser}, p. 454.
\end{remark}

Consider now the general case, when the axes of $\pm A$ and $\pm B$ need not
intersect perpendicularly.
First observe that the commutator $[A,B]=B^{-1}A^{-1}BA$ does not change
if we replace $A$ by $MA$, where $M$ is any matrix which commutes
with $B$. Up to replacing $B$ by $-B$, we may suppose that $\tr(B)>2$.
Then there is a well defined power $B^t$, for any real $t$. The element
$\pm B^t$ is then a hyperbolic displacement along the same axis as that
of $B$, with displacement depending on $t$, and $B^0={\mathrm I_2}$.
We want to understand geometrically the isometry $\pm B^tA$,
where $\pm B^t$ and $\pm A$ are hyperbolic isometries whose axes intersect
exactly once in $\HH$. This time we decompose $\pm B^t$ and $\pm A$ as
products of two rotations of angle $\pi$.
\begin{figure}[ht]
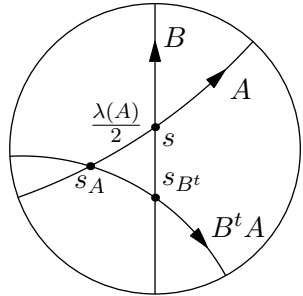

\begin{asy}
  import math;
  import hyperbolic_geometry;
  real  taille = 110;
  size(taille,taille);
  hyperbolic_point Orig = hyperbolic_point(0.3,90);
  hyperbolic_point PA = hyperbolic_point(1,-165);
  hyperbolic_point PBt = hyperbolic_point(0.7,-90);
  hyperbolic_line AxeA = hyperbolic_line(PA,Orig);
  hyperbolic_line AxeB = hyperbolic_line(PBt,Orig);
  hyperbolic_line AxeBtA = hyperbolic_line(PA,PBt);

  draw(unitcircle);
  draw(reverse(AxeA.to_path()),Arrow(Relative(0.86)));
  draw(AxeB,Arrow(Relative(0.87)));
  draw(AxeBtA,Arrow(Relative(0.87)));
  label("{\small $A$}",relpoint(reverse(AxeA.to_path()),0.85),SE);
  label("{\small $B$}",relpoint(AxeB.to_path(),0.85),dir(30));
  label("{\small $B^tA$}",relpoint(AxeBtA.to_path(),0.85),NE);
  dot(PA); dot(PBt); dot(Orig);

  label("{\small $s_{A}$}",PA.get_euclidean(),S);
  label("{\small $s$}",Orig.get_euclidean(),SE);
  label("{\small $s_{B^t}$}",PBt.get_euclidean(),NE);

  label("{\small $\frac{\lambda(A)}{2}$}",relpoint(AxeA.to_path(),0.62),N);
\end{asy}
\caption{A product of two hyperbolic elements with crossing axes}
\label{FigureHypCross}
\end{figure}
We have $\pm A=s\circ s_{A}$ and $\pm B^t=s_{B^t}\circ s$, where $s$, $s_A$
and $s_{B^t}$ are the rotations of angle $\pi$ as suggested in the above
picture.
It follows that $\pm B^t A=s_{B^t} s_A$ is a hyperbolic isometry, whose axis is
as in Figure \ref{FigureHypCross}. For a suitable $t\in\R$,
the fixed point of $s_{B^t}$ can be chosen to be the projection of the center of
$s_A$ on the axis of $\pm B$, so the axis of
$\pm B^t A$ is perpendicular to that of $\pm B$.
Finally, the construction we did
earlier enables to draw the commutator $\pm [A,B]=\pm [B^t A, B]$. Given two hyperbolic isometries $\pm A$, $\pm B$ whose axes cross, Figure \ref{FigureConstrucComm} provides a compass-and-straightedge construction of the commutator $\pm [A,B]$.
\begin{figure}[ht]
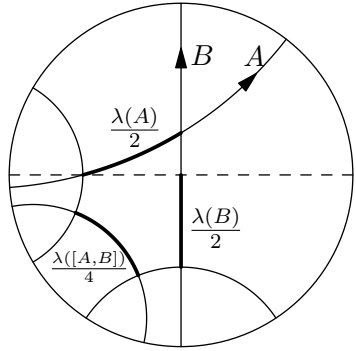

\begin{asy}
  import math;
  import hyperbolic_geometry;
  real  taille = 130;
  size(taille,taille);
  hyperbolic_point Orig = hyperbolic_point(0,0);
  hyperbolic_point PA1 = hyperbolic_point(1.3,180);
  hyperbolic_point PB = hyperbolic_point(1.2,-90);
  hyperbolic_line AxeA0 = hyperbolic_line(PA1,Orig);
  hyperbolic_line AxeB = hyperbolic_line(PB,Orig);
  hyperbolic_line PerpA = hyperbolic_normal(AxeA0,PA1);
  hyperbolic_line PerpB = hyperbolic_normal(AxeB,PB);
  hyperbolic_point PA2 = hyperbolic_point(0.5,90);
  hyperbolic_line AxeA = hyperbolic_line(PA1,PA2);
  hyperbolic_line AxeComm = common_perpendicular(PerpA,PerpB);
  hyperbolic_point PCommA = intersection(PerpA,AxeComm);
  hyperbolic_point PCommB = intersection(PerpB,AxeComm);

  draw(unitcircle);
  draw(reverse(AxeA.to_path()),Arrow(Relative(0.86)));
  draw(AxeB,Arrow(Relative(0.87)));
  label("{\small $A$}",relpoint(AxeA.to_path(),1-0.85),N);
  label("{\small $B$}",relpoint(AxeB.to_path(),0.85),E);
  draw(PerpA); draw(PerpB); draw(AxeComm);
  draw(AxeA0,black+dashed);

  //label("{\small $s_{A1}$}",relpoint(PerpA.to_path(),0.2),NE);
  //label("{\small $s_{B1}$}",relpoint(PerpB.to_path(),0.8),NE);
  //label("{\small $s_{A2}$}",relpoint(AxeA.to_path(),0.1),N);
  //label("{\small $s_{B2}$}",relpoint(AxeB.to_path(),0.1),E);

  draw(hyperbolic_segment(PA1,PA2),black+1.4pt);
  label("{\small $\frac{\lambda(A)}{2}$}",relpoint(AxeA.to_path(),0.6),N);
  draw(hyperbolic_segment(Orig,PB),black+1.4pt);
  label("{\small $\frac{\lambda(B)}{2}$}",relpoint(AxeB.to_path(),0.28),dir(40));
  draw(hyperbolic_segment(PCommA,PCommB),black+1.4pt);
  label("{\scriptsize $\frac{\lambda([A,B])}{4}$}",relpoint(AxeComm.to_path(),0.4)+(-0.05,-0.1),S);
\end{asy}
\caption{Compass-and-straightedge construction of a commutator}
\label{FigureConstrucComm}
\end{figure}

\subsection{Hyperbolic trigonometry}
Consider the polygons in $\HH$ presented in Figure \ref{hexagon}. We collect
here some formulas which will be used in the sequel, the most important of
which is Heron's formula and its variants. All of these formulas are
discussed in detail for instance in \cite{Fenchel} Chap. VI. 
\begin{figure}[htbp]
\centering
  \def\svgwidth{12cm}
 \executeiffilenewer{hexagon.svg}{hexagon.pdf}%
 {inkscape -z -D --file=hexagon.svg %
 --export-pdf=hexagon.pdf --export-latex}%
 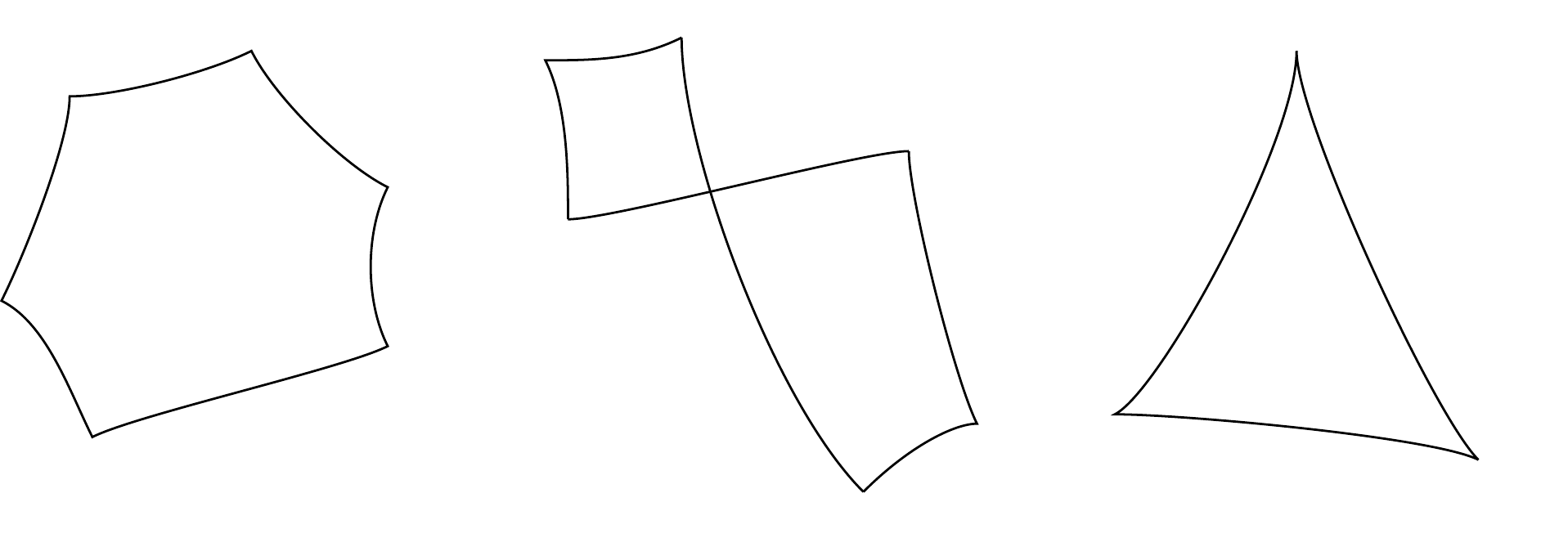%

  \caption{Polygons in hyperbolic plane}
  \label{hexagon} 
\end{figure}

\subsubsection{The right-angled hexagon}
We use three distinct indices $i,j,k$ in $\{1,2,3\}$. The length $b_i$ is given by the formula:
\begin{equation}\label{longhex}
\cosh(b_i)=\frac{\cosh(a_i)+\cosh(a_j)\cosh(a_k)}{\sinh(a_j)\sinh(a_k)}.
\end{equation}
Define the positive number $D'$ by the following formula where $2s=a_1+a_2+a_3$:
\begin{eqnarray*}
D'^2&=&4\cosh(s)\cosh(s-a_1)\cosh(s-a_2)\cosh(s-a_3)\\
&=&2\cosh(a_1)\cosh(a_2)\cosh(a_3)+\cosh(a_1)^2+\cosh(a_2)^2+\cosh(a_3)^2-1.
\end{eqnarray*}
This quantity satisfies the equation:
\begin{equation}\label{heronhex}
D'=\sinh(b_i)\sinh(a_j)\sinh(a_k).
\end{equation}

\subsubsection{The triangle}
In that case, Formula \eqref{longhex} becomes the following:
\begin{equation}\label{longtri}
\cos(\theta_i)=\frac{\cosh(a_j)\cosh(a_k)-\cosh(a_i)}{\sinh(a_j)\sinh(a_k)}.
\end{equation}
We define the positive number $D$ by the following formula:
\begin{eqnarray*}
D^2&=&4\sinh(s)\sinh(s-a_1)\sinh(s-a_2)\sinh(s-a_3)\\
&=&2\cosh(a_1)\cosh(a_2)\cosh(a_3)-\cosh(a_1)^2-\cosh(a_2)^2-\cosh(a_3)^2+1.
\end{eqnarray*}
The Heron formula becomes the following equation:
\begin{equation}\label{herontri}
D=\sin(\theta_i)\sinh(a_j)\sinh(a_k).
\end{equation}

\subsubsection{The self-intersecting hexagon}
In this case, the formulas are the same as in the case of the triangle,
up to some sign changes. For instance, if we denote by $a_3$
the length of the long edge, we have:
\begin{equation}\label{longself}
\cosh(d_3)=\frac{\cosh(a_3)-\cosh(a_1)\cosh(a_2)}{\sinh(a_1)\sinh(a_2)},\quad
\cosh(d_1)=\frac{\cosh(a_2)\cosh(a_3)-\cosh(a_1)}{\sinh(a_2)\sinh(a_3)},
\end{equation}
the formula for $d_2$ being similar to that of $d_1$. The Heron formula becomes the following:
\begin{equation}\label{heronself}
D=\sinh(d_i)\sinh(a_j)\sinh(a_k).
\end{equation}

\section{Non-ergodicity on the component of Euler class $0$}\label{SectionNonErgod}

The aim of this section is to prove Proposition \ref{nonergod}.

Recall that a surface of genus $2$ admits a hyperelliptic involution $\varphi$,
well-defined in $\Out^+(\Gamma_2)$; actually it generates the
center of $\Out^+(\Gamma_2)$.

A lift of this hyperelliptic involution,
$\overline\varphi\in\Aut(\Gamma_2)$, can be chosen as follows
(with the notation of Section \ref{SectionNotation}):
$\overline\varphi(a_1)=b_1a_1^{-1}b_1^{-1}$,
$\overline\varphi(b_1)=b_1^{-1}$,
$\overline\varphi(a_2)=\gamma b_2a_2^{-1}b_2^{-1}\gamma^{-1}$,
$\overline\varphi(b_2)=\gamma b_2^{-1}\gamma^{-1}$,
with $\gamma=b_1a_1^{-1}b_1^{-1}a_2$.

This element of the mapping class group has a very remarkable property:
it preserves every simple curve. More precisely, for every $\gamma\in\Gamma_2$
which is homotopic to a simple closed curve, $\overline{\varphi}(\gamma)$
is conjugate in $\Gamma_2$ to $\gamma$ or $\gamma^{-1}$ whether $\gamma$ is
separating or not. This was first observed in \cite{HaasSusskind}.

This can be used to define a continuous
map $s\colon\mo^0\rightarrow\{\pm 1\}$, as follows.
Let $\mathcal{R}^0_{ne}$ denote the subspace of $\Hom(\Gamma_2,\psl)$
consisting of non-elementary representations of Euler class $0$
and let $\rho$ be an element of $\mathcal{R}^0_{ne}$. Since it has Euler class $0$, it admits
a lift $\bar{\rho}\colon\Gamma_2\rightarrow\sldeuxR$.
For every $\gamma\in\Gamma_2$
homotopic to a simple closed curve, $\overline{\varphi}(\gamma)$ being
conjugate to $\gamma$ or $\gamma^{-1}$, we have
$\tr(\bar\rho(\overline{\varphi}(\gamma)))=\tr(\bar\rho(\gamma))$. Indeed,
in $\sldeuxR$, every element has the same trace
as its inverse.

It is well-known that two representations $\rho,\rho'$ in $\Hom(\Gamma_2,\sldeuxR)$
satisfy $\tr\rho(\gamma)=\tr \rho'(\gamma)$ for all simple curves $\gamma$ if and only if they satisfy the same equation 
for all $\gamma$ in $\Gamma_2$. This fact is proven in \cite{GoldmanXia11} Theorem 2.1, and also follows from
trace identities by induction on the number of double points of $\gamma$. 

This implies that for all $\gamma\in\Gamma_2$ we have: 
$\tr(\bar\rho(\gamma))=\tr(\bar\rho\circ\overline{\varphi}(\gamma))$.
Since $\rho$ is non-elementary,
there exists $g\in\mathrm{GL}_2(\R)$ such that for all $\gamma\in\Gamma_2$,
$\bar\rho(\overline{\varphi}(\gamma))=g\bar\rho(\gamma)g^{-1}$. This element $g$ is well-defined up to right-multiplication by the centralizer of $\rho$ which is trivial in this case because $\rho$ is non-elementary. 
Writing $c(\rho)=\pm g$, we define a map $c\colon \mathcal{R}^0_{ne}\rightarrow\textrm{PGL}_2(\R)$.
Notice that this map does not depend on the lift $\bar\rho$ of $\rho$. 
Moreover, if we replace $\bar\phi$ with $\gamma \bar\phi \gamma^{-1}$, $c(\rho)$ is replaced with $\rho(\gamma)c(\rho)$. 

Now, the group $\textrm{PGL}_2(\R)=\Isom^+(\HH)\cup\Isom^-(\HH)$
has two connected components, so the formula $s(\rho)=\operatorname{sign} \det(c(\rho))$ defines a $\Aut^+(\Gamma_2)$-invariant map
\[ s\colon \mathcal{R}^0_{ne}\rightarrow\{-1,+1\}. \]

The map $\rho\mapsto \rho\circ\phi$ is continuous and commutes with the projection $\pi:\mathcal{R}^0_{ne}\to \mathcal{R}^0_{ne}/\textrm{PGL}_2(\R)$ which is a principal $\textrm{PGL}_2(\R)$-bundle. Hence the map $c$ (and therefore the map $s$) which satisfies $\rho\circ\bar\phi=c(\rho)\rho c(\rho)^{-1}$ is continuous. 
A more explicit proof of the continuity of $s$ goes as follows.

Pick a representation $\rho\in \mathcal{R}^0_{ne}$. The group $\rho(\Gamma_2)$,
being non-elementary, contains Schottky groups. In such a group,
there exists two hyperbolic elements $A,B\in\psl$ with crossing
axes. Let $\gamma_1$, $\gamma_2\in\Gamma_2$ be such
that $\rho(\gamma_1)=A$ and $\rho(\gamma_2)=B$. Orient the axes
of $A$ and $B$ forward the attractive points of $A$ and $B$.
Then at the intersection of these axes, these two directions define
an orientation of the hyperbolic plane.
As above, there exists a unique $\pm g\in\Isom(\HH)$ such that
$\rho\circ\overline{\varphi}=g\rho g^{-1}$. In particular,
$g$ conjugates $(\rho(\gamma_1),\rho(\gamma_2))$ into
$(\rho(\overline{\varphi}(\gamma_1)),\rho(\overline{\varphi}(\gamma_1)))$
so the property of whether $\pm g\in\Isom(\HH)$ preserves, or
reverses the orientation, is prescribed by the elements $\rho(\gamma_i)$
and $\rho(\overline{\varphi}(\gamma_i))$, which vary continuously with $\rho$.
This proves that $s$ is continuous.

In particular, it descends to a continuous function,
$s\colon\mo^0(\Gamma_2)\rightarrow\{-1,+1\}$, which is $\mcg(\Sigma_2)$-invariant.
We define
\[ \mo^0_+(\Gamma_2)=s^{-1}(+1)\textrm{ and } \mo^0_-(\Gamma_2)=s^{-1}(-1).\]
In order to complete the proof of Proposition \ref{nonergod}, it
remains to prove that these two open sets are non empty.
This amounts to studying representations individually, so now we turn to the
proof of Proposition
\ref{propintrononergod}.

Let $\rho$ be a non-elementary representation of Euler class $0$.
Suppose first that $\rho(a_1)$ and $\rho(b_1)$ are hyperbolic elements
with crossing axes. Note that the couples $(a_1,b_1)$ and
$(\bar\varphi(a_1^{-1}),\bar\varphi(b_1^{-1}))$ are conjugated by $b_1$.
It follows that $(\rho(a_1),\rho(b_1))$ and
$(\rho\circ\bar\varphi(a_1),\rho\circ\bar\varphi(b_1))$ are both
couples of hyperbolic elements whose axes intersect once in $\HH$,
and their oriented axes both define
the same orientation of the plane. In particular, the element of
$\Isom(\HH)$ conjugating $(\rho(a_1),\rho(b_1))$ into
$(\rho\circ\bar\varphi(a_1),\rho\circ\bar\varphi(b_1))$ is
orientation-preserving, hence $\rho\in\mo^0_+(\Sigma_2)$.

Similarly,
we observe that:
\begin{itemize}
  \item[-] if $\rho(a_1)$ and $\rho(b_1)$ are hyperbolic
    elements whose axes are disjoint in $\overline{\HH}$ then
    $\rho\in\mo^0_-(\Sigma_2)$;
  \item[-] if $\rho(a_1)$ is elliptic but not of order 2,
    or parabolic, and if $\rho(b_1)$ is hyperbolic and its
    axis does not contain the fixed point of $\rho(a_1)$,
    then $\rho\in\mo^0_-(\Sigma_2)$.
\end{itemize}
If a non-elementary representation $\rho$ sends some non-separating
simple closed curve (say, $a$) to
an elliptic element, then up to deforming $\rho$ we can suppose
that $\rho(a)$ has infinite order. Then it is easy to produce a
non-separating simple closed curve $b$ such that $i(a,b)=1$
and such that $\rho(b)$ is hyperbolic and its axis does not contain
the fixed point of $\rho(a)$, hence $\rho$ belongs to $\mo^0_-(\Gamma_2)$.
In particular, elements of $\mo^0_+(\Gamma_2)$ do not map any
non-separating simple closed curve to an elliptic or parabolic
element. And such a representation has to map $\rho(a_1)$
and $\rho(b_1)$ to hyperbolic elements with crossing axes,
(in which case $\tr([\rho(a_1),\rho(b_1)])<2$), unless
their commutator is mapped to $1$.
This finishes to prove the part of Proposition \ref{propintrononergod}
which concerns $\mo^0_+(\Gamma_2)$.

In order to finish the proof of Proposition \ref{propintrononergod},
it remains to recall two elementary and well-known facts. One of them was
recalled in Section \ref{SectionCommut}: if $\tr([\rho(a_1),\rho(b_1)])<2$
then $\rho(a_1)$ and $\rho(b_1)$ are hyperbolic and their axes cross
once in $\HH$. This implies $\rho\in\mo^0_+(\Gamma_2)$. The
second is that if $[\rho(a_1),\rho(b_1)]$ is parabolic, and of trace
$2$ (not $-2$), then $\rho$ is an elementary representation.

\medskip

Finally, let us discuss the remarks following Corollary \ref{CorNonErgod}.

In $\textrm{SL}(n,\R)$, for $n\geq 3$, matrices are generically not
conjugate to their inverses. Let $\tau$ be the composition of the transposition
and the inversion in $\textrm{SL}(n,\R)$. As above,
let $\bar\varphi$ be a lift in $\Aut^+(\Gamma_2)$ of the
hyperelliptic involution. If $\rho\in\Hom(\Gamma_2,\textrm{SL}(n,\R))$,
let $\rho'=\rho\circ\bar\varphi$ and
$\rho''=\tau\circ\rho\circ\bar\varphi$.
Then $\rho$ and $\rho'$ send separating simple closed curves to elements
of the same trace, and $\rho$ and $\rho''$ send non-separating simple
closed curves to elements of the same trace. However, $\rho$ and $\rho'$
(resp. $\rho''$) can very well be non-conjugate: for this it suffices
to define a representation sending $a_1$ (resp. $[a_1,b_1]$) to
some element in $\textrm{SL}(n,\R)$ which is not conjugate to its inverse.

For what concerns the first remark, we can easily produce a representation
$\rho\in\Hom(\Gamma_2,\psl)$ which sends $a_1$, $b_1$ to
hyperbolic elements whose axes cross once in $\HH$ and such that
$\tr([\rho(a_1),\rho(b_1)])<-2$, and which sends $a_2$, $b_2$ to
hyperbolic elements whose axes do not cross in $\overline{\HH}$. If $\rho$ is such a representation,
$\rho\circ\bar\varphi$ and $\rho$ have the same traces on
simple closed loops, but it follows from the preceding discussion that they cannot be conjugated
neither by an element of $\Isom^+(\HH)$ nor by an element of $\Isom^-(\HH)$. It then follows eg from
\cite{Maxime}, Proposition 2.15, that the corresponding
trace functions do not coincide.


\section{Coordinates on representation varieties}\label{SectionParam}

Given a closed surface of genus 2, one can decompose it as the union of two
pairs of pants $P_1$ and $P_2$ whose common boundary is the disjoint union of
three curves $\gamma_1,\gamma_2,\gamma_3$. A representation
$\rho:\Gamma_2\to\psl$ such that $\rho(\gamma_1),\rho(\gamma_2)$ and
$\rho(\gamma_3)$ are hyperbolic will restrict on $P_1$ (resp. $P_2$) to a
representation $\rho_1$ (resp. $\rho_2)$ of Euler class $-1,0$ or $1$.
These representations are almost completely determined by the traces
of the boundary curves: hence we will build our coordinate system by finding
an explicit representation for each case.

\subsection{Pants representations}
Let $P$ be a pair of pants obtained by gluing together two hexagons as in
Figure \ref{pants}. 
\begin{figure}[htbp]
\centering
  \def\svgwidth{6cm}
 \executeiffilenewer{pantalon.svg}{pantalon.pdf}%
 {inkscape -z -D --file=pantalon.svg %
 --export-pdf=pantalon.pdf --export-latex}%
 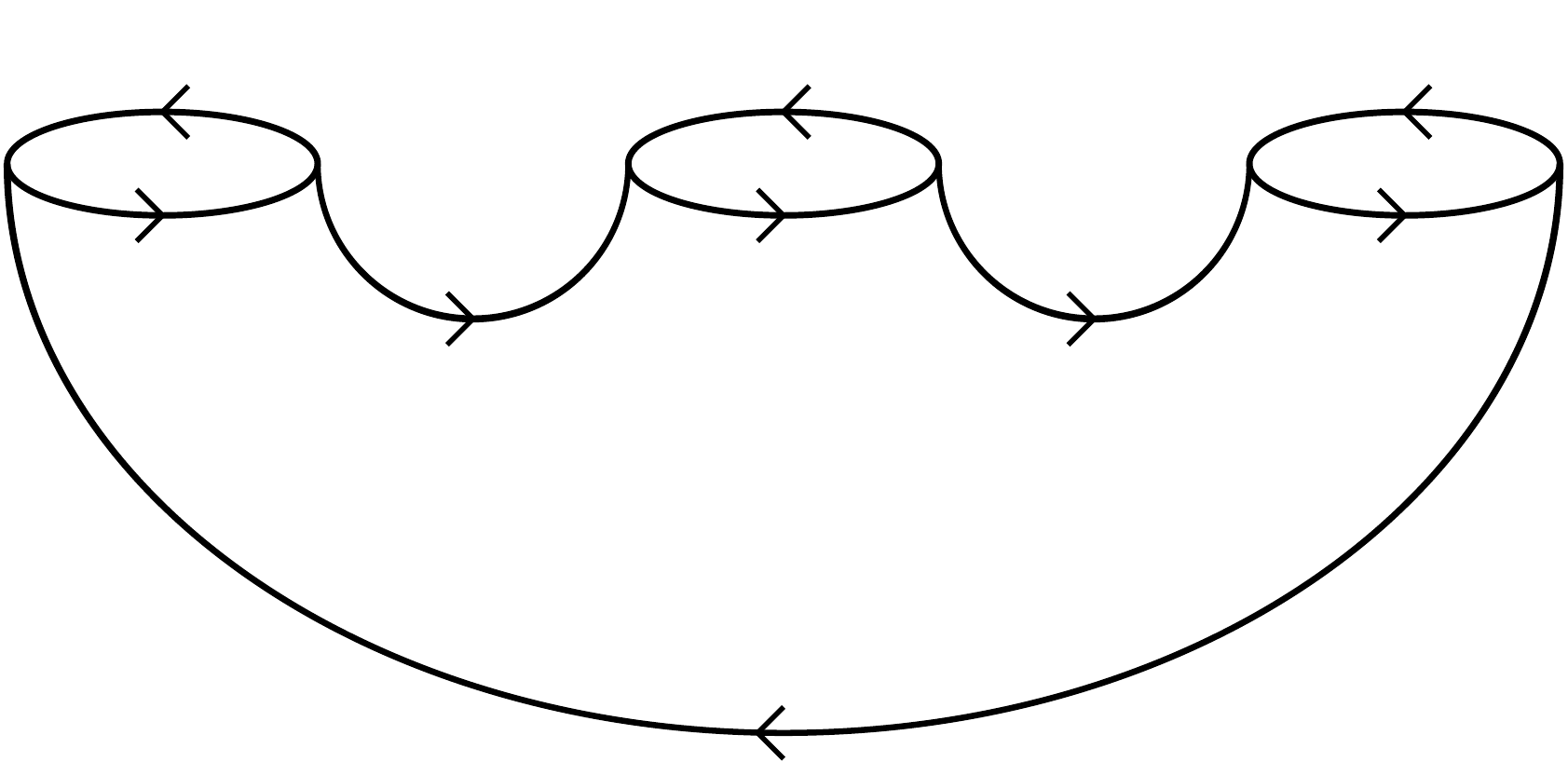%

  \caption{Decomposition of a pair of pants}
  \label{pants} 
\end{figure}
We view $P$ as a cellular complex and construct a 
1-cocycle on $P$ with values in $\psl$. Precisely, a 1-cocycle is a tuple
$(g_e)$ of elements of $\psl$ indexed by oriented edges of $P$ and
satisfying the following two relations: 
\begin{itemize}
\item[-] if $-e$ denotes the edge $e$ with opposite orientation then
  $g_{-e}=g_e^{-1}$ 
\item[-] if $c$ is a cell whose boundary is the composition of the oriented
  paths $e_1,\ldots,e_k$ then $g_{e_k}\cdots g_{e_1}=1$. 
\end{itemize}
Let $a_1,a_2,a_3$ be three positive real numbers.
%
Given three elements $X_1$, $X_2$, $X_3\in \psl$, the data given in Figure \ref{pants}
defines a 1-cocycle if and only if the two following conditions are satisfied
(one for each hexagon):
\begin{equation}\label{cocycle}
T_{a_2}X_3T_{a_1}X_2T_{a_3}X_1=\pm I_2,\quad T_{-a_2}X_3T_{-a_1}X_2T_{-a_3}X_1=\pm I_2.
\end{equation}

{\bf Case $\eu=\pm 1$.} \\
Let $H$ be the right-angled hexagon in $\HH$ shown in Figure \ref{hexagon}
(unique up to isometry) whose lengths are cyclically given by
$a_1,b_2,a_3,b_1,a_2,b_3$, the lengths $b_1,b_2,b_3$ being
determined by $a_1,a_2,a_3$ through Equation \eqref{longhex}.
%
Remembering the action of the matrices $R_\theta$ and $T_\ell$ on the right
on $T_u\HH$, and writing that
a unit vector which follows the sides of an hexagon comes back to its initial
value, we get the following relation:
$$T_{a_2}R_rT_{b_3}R_rT_{a_1}R_rT_{b_2}R_rT_{a_3}R_rT_{b_1}R_r=\pm I_2.$$
This implies the first part of Equation \eqref{cocycle}. Conjugating the
equation by $U=\begin{pmatrix} 1& 0\\ 0 & -1\end{pmatrix}$ we get the same
equation where $R_r$ has been replaced by $R_l$. Using $R_r=SR_l$ and
$ST_aS=-T_{-a}$ we derive the second equation. 

It remains to compute the Euler class of such a representation, but it is
well-known to be equal to $\pm 1$, as it is the holonomy of the hyperbolic
structure on the pair of pants given by gluing two isometric hexagons.

If we reflect the hexagon, the $b_i's$ and the Euler class change signs. 

{\bf Case $\eu=0$, $\Delta\neq 0$.} \\
For pairs of pants of Euler class $0$, we will need to consider the
following quantity, which is positive if and only if no $a_i$ is greater than the sum of the two others.
$$\Delta=2\cosh(a_1)\cosh(a_2)\cosh(a_3)-\cosh(a_1)^2-\cosh(a_2)^2-\cosh(a_3)^2+1.$$

We consider first the case when $\Delta$ is positive. 
In that case, there exists a hyperbolic triangle whose sides have lengths
$a_1,a_2,a_3$ and the opposite angles are
$\theta_1,\theta_2,\theta_3\in (0,\pi)$. As before, walking a unit vector
along the sides of a triangle gives the formula:
$$T_{a_2}R_{\pi+\theta_3}T_{a_1}R_{\pi+\theta_2}T_{a_3}R_{\pi+\theta_1}=\pm I_2.$$
Putting $X_1=SR_{\theta_1},X_2=SR_{\theta_2}$ and $X_3=SR_{\theta_3}$ we get
a solution of the first equation. Computing the transpose and inverse of this
equation, we get the second equation and hence a 1-cocycle as expected.

In the case when $a_3> a_1+a_2$, there is a self-intersecting right-angled
hexagon as in Figure \ref{hexagon}. Walking along the hexagon we obtain the
equation: 
$$T_{a_2}R_lT_{d_3}R_rT_{a_1}R_rT_{d_2}R_rT_{a_3}R_lT_{d_1}R_l=\pm I_2.$$
It follows that setting $X_1=R_lT_{d_1}R_l,X_2=R_rT_{d_2}R_r$ and
$X_3=R_lT_{d_3}R_r$ we get the first equation for the 1-cocycle.
Again, conjugating by $U$ and writing $R_l=SR_r$ and $R_r=-SR_l$
we see that the first equation implies the second one.

The remaining cases are treated in the same way by cyclically permuting the
indices. In order to compute the Euler class in all these cases, we observe
that we can deform them to the case when $a_3=a_1+a_2$, that is
$\theta_1=\theta_2=0$ and $\theta_3=\pi$. In that case, $X_1=X_2=S$ and
$X_3=1$; it follows that the representation that we construct is diagonal
and hence has Euler class 0. 

If we reflect the figure, the angles $\theta_i$ (resp. the lengths $d_i$)
are changed to their opposite whereas the Euler class still vanishes.
We will refer to these to cases by writing $\eu=0^+$ in the first case
and $\eu=0^-$ in the second case. This makes a subtle difference between
the cases $\eu=0$ and $\eu=\pm 1$.  We almost obtained the complete list of
representations of the pair of pants as it is stated in the following lemma. 

\begin{lemma}
  Consider a pair of pants $P$ with boundary curves $\gamma_1,\gamma_2,\gamma_3$ and three positive real numbers
  $a_1,a_2,a_3$ such that $\Delta\neq 0$.
  There are up to conjugation 4 representations $\rho:\pi_1(P)\to \psl$ with $|\tr \rho(\gamma_i)|=2\cosh(a_i)$ for
  $i\in \{1,2,3\}$. These representations are the ones described above and have respective Euler class $-1,1,0^+$ and $0^-$. 
\end{lemma}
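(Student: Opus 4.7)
The plan is to reduce to counting $\sldeuxR$-representations, by lifting $\rho$ to $\sldeuxR$ and quotienting by the lifting ambiguity. First I would show that any representation $\rho$ satisfying the hypotheses is non-elementary: an elementary representation whose three boundary images $\rho(\gamma_i)$ are hyperbolic and satisfy $\rho(\gamma_1)\rho(\gamma_2)\rho(\gamma_3)=\pm I$ must preserve a common axis in $\HH$, and the resulting additive relation $\pm 2a_1\pm 2a_2\pm 2a_3=0$ on signed translation lengths is equivalent to $\Delta=0$, which contradicts the hypothesis.

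Since $\pi_1(P)\cong F_2$ is free, $\rho$ admits $\sldeuxR$-lifts $\tilde\rho$ forming a free torsor under $\Hom(\pi_1(P),\{\pm I\})\cong(\Z/2)^2$, so $\psl$-conjugacy classes of $\rho$ are in bijection with $(\Z/2)^2$-orbits of $\sldeuxR$-conjugacy classes of lifts. To enumerate the latter, I normalize $\tilde\rho(\gamma_1)=\varepsilon_1 T_{2a_1}$ with $\varepsilon_1\in\{\pm 1\}$; writing
\[
\tilde\rho(\gamma_2)=\begin{pmatrix} p & q \\ r & s \end{pmatrix},
\]
the trace conditions $\tr\tilde\rho(\gamma_2)=2\varepsilon_2\cosh(a_2)$ and $\tr\tilde\rho(\gamma_1\gamma_2)=2\varepsilon_3\cosh(a_3)$ form a linear system in $(p,s)$ with determinant $-2\sinh(a_1)\neq 0$, uniquely determining $(p,s)$ from the signs $(\varepsilon_2,\varepsilon_3)$. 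The relation $ps-qr=1$ fixes $qr=ps-1$, which a short computation shows is nonzero precisely because $\Delta\neq 0$. The residual centralizer $\{T_t\}$ of $\tilde\rho(\gamma_1)$ acts on $(q,r)$ by $(e^tq,e^{-t}r)$ and preserves the sign of $q$, so there are exactly two $\sldeuxR$-orbits per sign triple $(\varepsilon_1,\varepsilon_2,\varepsilon_3)$, giving $16$ $\sldeuxR$-conjugacy classes in total. The lift action
\[
(\epsilon_1,\epsilon_2)\cdot(\varepsilon_1,\varepsilon_2,\varepsilon_3,\operatorname{sgn}(q)) = (\epsilon_1\varepsilon_1,\;\epsilon_2\varepsilon_2,\;\epsilon_1\epsilon_2\varepsilon_3,\;\epsilon_2\operatorname{sgn}(q))
\]
is free, with two independent invariants $\delta=\varepsilon_1\varepsilon_2\varepsilon_3$ and $\tau=\varepsilon_2\operatorname{sgn}(q)$, yielding exactly $16/4=4$ $\psl$-conjugacy classes.

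To match these four classes with the constructed representations, I would observe that $\delta$ detects the parity of the Euler class ($\delta=-1$ for Fuchsian representations of $\eu=\pm 1$, because the canonical $\sldeuxR$-lifts of the three hyperbolic boundaries satisfy $\tilde A_1\tilde A_2\tilde A_3=\pm I$ with the sign equal to the parity of $\eu$; $\delta=+1$ for $\eu=0$), while $\tau$ distinguishes a representation from its conjugate by the orientation-reversing element $U=\operatorname{diag}(1,-1)$, separating $\eu=+1$ from $\eu=-1$ within the Fuchsian pair and $\eu=0^+$ from $\eu=0^-$ within the Euler-zero pair. The main obstacle will be this final identification step, in particular matching the abstract sign $\tau$ with the $0^\pm$ distinction arising from the two orientations of the triangle or self-intersecting hexagon in the $\eu=0$ construction; the rest of the argument amounts to the linear-algebraic normalization already outlined.
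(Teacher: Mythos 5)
Your argument is correct in outline and reaches the right count, but it takes a more hands-on route than the paper. The paper's proof is a two-line reduction: lift to $\sldeuxR$, normalize $\tr(A),\tr(B)>0$, note that the sign of $\tr(AB)$ equals $(-1)^{\eu(\rho)}$, and then quote Culler--Shalen (Prop.~1.5.2) to the effect that an $\sldeuxR$-representation of $F_2$ with $\tr[A,B]\neq 2$ is determined up to $\mathrm{GL}_2(\R)$-conjugacy by the three traces; this gives two $\mathrm{GL}_2(\R)$-classes, each splitting into the two $\psl$-classes related by $U=\mathrm{diag}(1,-1)$. You instead re-derive the Culler--Shalen input by explicit normalization ($\tilde\rho(\gamma_1)=\varepsilon_1T_{2a_1}$, linear system for the diagonal of $\tilde\rho(\gamma_2)$, residual diagonal centralizer acting on $(q,r)$), which is more self-contained and has a genuine payoff: you get \emph{exactly} sixteen $\sldeuxR$-classes with a free $(\Z/2)^2$-action and two explicit invariants $\delta,\tau$, so the "exactly four, pairwise non-conjugate" conclusion (in particular the non-conjugacy of the $0^+$ and $0^-$ models) falls out of the count, whereas the paper's argument a priori only bounds the number by four and leans on the constructions to realize them. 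A few points need tightening. First, your claim that $qr=ps-1$ is nonzero "precisely because $\Delta\neq 0$" is only half right: one computes $\tr[A,B]-2=-4qr\sinh^2(a_1)=-4\Delta_\delta$, where $\Delta_{+1}=\Delta$ but $\Delta_{-1}=-2\cosh a_1\cosh a_2\cosh a_3-\sum\cosh^2 a_i+1<0$ always; so for $\delta=-1$ the nonvanishing is automatic and only the $\delta=+1$ case uses $\Delta\neq0$. Second, in the preliminary elementarity discussion, two hyperbolic generators of an elementary group need not share an axis --- they may share a single fixed point at infinity --- but the upper-triangular trace computation is the same and the conclusion $\pm a_1\pm a_2\pm a_3=0\Leftrightarrow\Delta=0$ stands. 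Finally, the identification step you flag is exactly where the paper's statement from Section~2.2 (sign of $\tr(AB)$ is $(-1)^{\eu}$, and reflection by $U$ reverses $\eu$ and exchanges $0^+$ with $0^-$) closes the argument: since $U$-conjugation flips $\tau$ and there are exactly two classes with each value of $\delta$, the four constructed models must exhaust the four classes. So nothing is missing in principle, but that last matching should be written out rather than left as an "obstacle".
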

\begin{proof}
Recall that $\pi_1(P)$ is a free group with two generators $a$ and $b$. Taking
$A,B\in$ SL$_2(\R)$ two lifts of $\rho(a)$ and $\rho(b)$, we define a
representation $\overline{\rho}:F_2\to\textrm{SL}_2(\R)$ by setting $\overline{\rho}(a)=A$ and $\overline{\rho}(b)=B$. They should satisfy
$$|\tr(A)|=2\cosh(a_1),\quad |\tr(B)|=2\cosh(a_2),\quad|\tr(AB)|=2\cosh(a_3).$$
By changing the signs of $A$ and $B$, one can suppose that
$\tr(A)=2\cosh(a_1)$ and $\tr(B)=2\cosh(a_2)$. The sign
of $\tr(AB)$ is $(-1)^{\eu(\rho)}$, see Section \ref{SectionEuler}. Hence it remains to recall the
fact that a representation $\rho:F_2\to$SL$_2(\R)$ is determined up to
conjugacy in GL$_2(\R)$ by the quantities $u=\tr(a),v=\tr(b)$ and $w=\tr(ab)$ provided that
$\tr([a,b])=u^2+v^2+w^2-uvw-2\ne 2$, see for instance
Proposition 1.5.2 of \cite{CullerShalen}.
\end{proof}

{\bf Case $\eu=0$, $\Delta=0$.} \\
In this case, up to cyclically permuting the indices, we may suppose that $a_3=a_1+a_2$.
The representation of $\pi_1(P)$ can be diagonal,
upper triangular, or lower triangular. The diagonal case is already
covered by putting $d_1=d_2=d_3$ above. In the upper triangular case
(but not diagonal)
a quick computation yields:
\[
X_1=S\left(\begin{array}{cc}1 & \epsilon\sinh a_1 \\ 0 & 1\end{array}\right),
\text{ }
X_2=\left(\begin{array}{cc}1 & -\epsilon\sinh a_2 \\ 0 & 1\end{array}\right) S
\text{ and }
X_3=\left(\begin{array}{cc}1 & \epsilon\sinh a_1 \\ 0 & 1\end{array}\right),
\]
where $\epsilon=-1$ or $+1$.
The lower diagonal case is obtained with the same matrices, simply by
replacing each parabolic matrix above by its transpose.

\subsection{Gluing pants representations}\label{SectionGluing}
Let $\Sigma$ be a genus 2 surface obtained by gluing two pairs of pants $P_1$
and $P_2$ along three curves $\gamma_1,\gamma_2$ and $\gamma_3$.
Let $k$ be an integer and $\rho\in\mo^{k}(\Sigma)$ be a representation such
that $\rho(\gamma_i)$ is hyperbolic for all $i\in\{1,2,3\}$. Pick $a_i> 0$ so
that $\tr\rho(\gamma_i)=2\cosh(a_i)$.

Let $\epsilon_1\in\{-1,0,1\}$ (resp. $\epsilon_2$) be the Euler class of the
restriction $\rho_1$ of $\rho$ to $P_1$ (resp. $\rho_2$). The additivity
of the Euler class implies the inequality $\epsilon_1+\epsilon_2=k$. Moreover,
we know from the preceding lemma that the representation $\rho_i$ is
conjugate to exactly one of our pants representations. 

To be more precise, we fix a base point on $\gamma_1$ and let $\rho_1$ and
$\rho_2$ be the pants representations on $P_1$ and $P_2$ corresponding to
the choice of Euler class. In Figure \ref{genus2}, we will denote by
$X_1,X_2,X_3$ and $Y_1,Y_2,Y_3$ the matrices associated to these
pants.
Then there exists $g_1$ and $g_2$ so that $\rho_1=g_1 \rho'_1 g_1^{-1}$ and
$\rho_2=g_2 \rho'_2 g_2^{-1}$. 
We have from the identity $\rho'_1(\gamma_1)=\rho'_2(\gamma_2)=T_{2a_1}$
the following relation:
$$\rho(\gamma_1)=\rho_1(\gamma_1)=g_1T_{2a_1}g_1^{-1}=\rho_2(\gamma_2)=g_2T_{2a_1}g_2^{-1}.$$
We deduce from this that $g_2^{-1}g_1$ commutes with $T_{2a_1}$ and hence has
the form $T_{t_1}$ for some $t_1\in \R$. Applying the same trick for the three
boundary curves, we find that the initial representation $\rho$ can be
described by the following cocycle represented in Figure \ref{genus2}.

 \begin{figure}[htbp]
\centering
  \def\svgwidth{10cm}
 \executeiffilenewer{genus2.svg}{genus2.pdf}%
 {inkscape -z -D --file=genus2.svg %
 --export-pdf=genus2.pdf --export-latex}%
 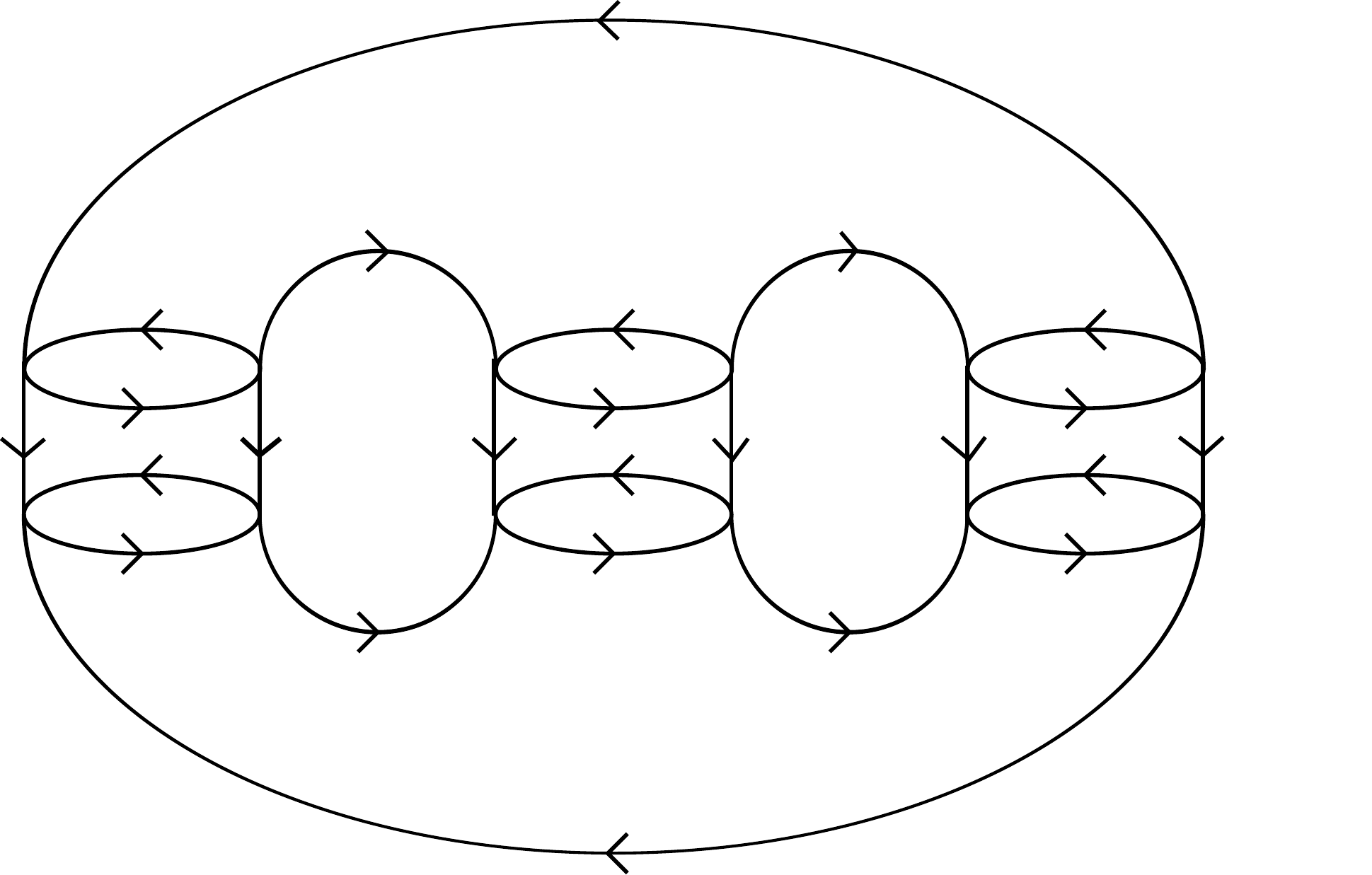%

  \caption{Cocycle on a genus 2 surface}
  \label{genus2} 
\end{figure}
We will denote by
$\rho=\rho^{\epsilon_1,\epsilon_2}_{a_1,a_2,a_3}(t_1,t_2,t_3)$ the
representation associated to this cocycle.
We notice that the Dehn twist $\tau_1$ along $\gamma_1$ acts on $\rho$
in the following way:
$$\tau_1. \rho=\rho^{\epsilon_1,\epsilon_2}_{a_1,a_2,a_3}(t_1\pm 2a_1,t_2,t_3).$$
We have the corresponding formulas for the two other Dehn twists.

\subsection{Explicit computations}\label{explicit}
We provide here explicit formulas for traces of curves which be useful in the next section. Let $\beta_1,\beta_2,\beta_3$ be the three curves appearing in Figure \ref{genus2}.

We have $\rho(\beta_1)=X_1^{-1}T_{-t_3}Y_1T_{t_2}$ and $\rho(\gamma_2)=T_{2a_2}$. Moreover, these two curves intersect once hence their commutator is a separating curve that we denote by $\delta_3$. 

\subsubsection{Case $\epsilon_1=1,\epsilon_2=-1$} We have in that case:
$$X_1=Y_1=R_r T_{b_1} R_r, \quad X_2=Y_2=R_r T_{b_2} R_r, \quad X_3=Y_3=R_r T_{b_3} R_r.$$
A direct computation gives:
\begin{eqnarray*}\label{betaunmoinsun}
\tr\rho(\beta_1)&=&\tr(X_1^{-1}T_{-t_3}Y_1T_{t_2})\\
&=&2\cosh(\frac{b_1}{2})^2\cosh(\frac{t_2+t_3}{2})-2\sinh(\frac{b_1}{2})^2\cosh(\frac{t_2-t_3}{2})\\
&=&2\cosh(\frac{t_2}{2})\cosh(\frac{t_3}{2})+2\cosh(b_1)\sinh(\frac{t_2}{2})\sinh(\frac{t_3}{2}).
\end{eqnarray*}
Similarly, one has
\begin{eqnarray}\label{epsunmoinsun}
\tr \rho(\delta_3)&=&2\cosh(a_2)^2-2\cosh(b_1)^2\sinh(a_2)^2+2\sinh(b_1)^2\sinh(a_2)^2\cosh(t_3)\notag{}\\
&=&2+4\sinh(a_2)^2\sinh(b_1)^2\sinh(\frac{t_3}{2})^2.
\end{eqnarray}
In particular, it follows that $\rho\in \mo^0_{-}$.

\subsubsection{Case $\epsilon_1=\epsilon_2=0^+$ and $\Delta<0$} In that case, we can suppose up to a cyclic change of indices that $a_3>a_1+a_2$. We set 
$$X_1=Y_1=R_lT_{d_1}R_l,\quad X_2=Y_2=R_rT_{d_2}R_r,\quad X_3=Y_3=R_lT_{d_3}R_r.$$
A direct computation gives:
\begin{eqnarray}\label{ep0deltaneg}
\tr(\delta_3)&=&2\cosh(a_2)^2-2\cosh(d_1)^2\sinh(a_2)^2+2\cosh(t_3)\sinh(a_2)^2\sinh(d_1)^2\notag{}\\
&=&2+4\sinh(\frac{t_3}{2})^2\sinh(d_1)^2\sinh(a_2)^2.
\end{eqnarray}
In particular, $\rho$ belongs again to $\mo^0_{-}$. 

\subsubsection{Case $\epsilon_1=0^+,\epsilon_2=0^-$ and $\Delta<0$}\label{LeCas3}
This case is the same as the preceding one for $X_1,X_2,X_3$ but we have now:
$$Y_1=R_lT_{-d_1}R_l,\quad Y_2=R_rT_{-d_2}R_r,\quad Y_3=R_lT_{-d_3}R_r.$$
\begin{eqnarray*}
\tr \rho(\beta_1)&=&2\sinh(\frac{t_2}{2})\sinh(\frac{t_3}{2})+2\cosh(d_1)\cosh(\frac{t_2}{2})\cosh(\frac{t_3}{2})\\
\tr \rho(\beta_2)&=&2\sinh(\frac{t_1}{2})\sinh(\frac{t_3}{2})+2\cosh(d_2)\cosh(\frac{t_1}{2})\cosh(\frac{t_3}{2})\\
\tr \rho(\beta_3)&=&-2\sinh(\frac{t_2}{2})\sinh(\frac{t_1}{2})+2\cosh(d_3)\cosh(\frac{t_2}{2})\cosh(\frac{t_1}{2}).
\end{eqnarray*}
This gives after a direct computation:
\begin{equation}\label{ep0mdeltaneg}
\tr\rho(\delta_3)=2-4\cosh(\frac{t_3}{2})^2\sinh(d_1)^2\sinh(a_2)^2.
\end{equation}
In particular, $\rho$ belongs this time to $\mo^0_{+}$.

\subsubsection{Case $\epsilon_1=\epsilon_2=0^+$ and $\Delta>0$}\label{LeCas4}
$$X_1=Y_1=SR_{\theta_1},\quad  X_2=Y_2=SR_{\theta_2},\quad X_3=Y_3=SR_{\theta_3}.$$
A direct computation gives 
\begin{eqnarray}\label{betaeu0} 
\tr \rho(\beta_1)&=&2\cos(\frac{\theta_1}{2})^2\cosh(\frac{t_2+t_3}{2})+2\sin(\frac{\theta_1}{2})^2\cosh(\frac{t_2-t_3}{2})\notag \\
&=&2\cosh(\frac{t_2}{2})\cosh(\frac{t_3}{2})+2\cos(\theta_1)\sinh(\frac{t_2}{2})\sinh(\frac{t_3}{2}).
\end{eqnarray}
We now look for an expression for $\rho(\delta_3)$:
\begin{eqnarray}\label{deltaeu0}
\tr\rho(\delta_3)&=&2\cosh(a_2)^2-2\cos(\theta_1)^2\sinh(a_2)^2-2\sin(\theta_1)^2\sinh(a_2)^2\cosh(t_3)\notag \\
&=&2-4\sin(\theta_1)^2\sinh(a_2)^2\sinh(\frac{t_3}{2})^2.
\end{eqnarray}
In particular, $\rho$ belongs this time to $\mo^0_{+}$.
\subsubsection{Case $\epsilon_1=0^+,\epsilon_2=0^-$ and $\Delta>0$} 
The $X_i$'s are the same as before but we have now:
$$Y_1=SR_{-\theta_1},\quad  Y_2=SR_{-\theta_2},\quad Y_3=SR_{-\theta_3}.$$
\begin{equation}\label{betaeu0prime} 
\tr \rho(\beta_1)=2\sinh(\frac{t_2}{2})\sinh(\frac{t_3}{2})+2\cos(\theta_1)\cosh(\frac{t_2}{2})\cosh(\frac{t_3}{2}).
\end{equation}
We now look for an expression for $\rho(\delta_3)$:
\begin{equation}\label{deltaeu0prime}
\tr\rho(\delta_3)=2+4\sin(\theta_1)^2\sinh(a_2)^2\cosh(\frac{t_3}{2})^2.
\end{equation}
In particular, $\rho$ belongs this time to $\mo^0_{+}$ and we have listed all cases of Euler class 0
and such that $\Delta\neq 0$.

\subsubsection{Case $\Delta=0$ and $\epsilon_1=\epsilon_2=0$}\label{LeCas5}
In this case, since $\rho$ is not elementary, one of the two pants has to
yield an upper triangular representation, while the other has to yield a
lower triangular one. Up to conjugating $\rho$, we thus have
\[
X_1=S\cdot\left(\begin{array}{cc}1 & u\sinh a_1 \\ 0 & 1\end{array}\right)
\text{ and }
Y_1=S\cdot\left(\begin{array}{cc}1 & 0 \\ v\sinh a_1 & 1\end{array}\right)
\]
with $u,v\in\{-1,+1\}$. A direct computation gives
\begin{equation}\label{DeltaNulEuZero}
\tr \rho(\delta_3)=2+4uv\sinh(a_1)^2e^{-t_3}.
\end{equation}
In particular, $\rho$ belongs to $\mo^0_+$ or $\mo^0_-$ depending on the sign of $uv$. 

\subsubsection{Case $\epsilon_1=0^{\pm},\epsilon_2=-1$ and $\Delta<0$}\label{CasUnDeltaNeg}
In that case, we can choose:
$$Y_1=R_r T_{b_1} R_r, \quad Y_2=R_r T_{b_2} R_r, \quad Y_3=R_r T_{b_3} R_r\quad \text{and}$$
$$X_1=R_lT_{\pm d_1}R_l,\quad X_2=R_rT_{\pm d_2}R_r,\quad X_3=R_lT_{\pm d_3}R_r.$$
Hence, we consider the case when $\epsilon_1=0^+$, the other case being obtained by changing the signs of the $d_i's$. 
This gives finally:
$$\tr\rho(\delta_3)=2\cosh(a_2)^2-2\cosh(b_1)\cosh(d_1)\sinh(a_2)^2-2\cosh(t_3)\sinh(a_2)^2\sinh(b_1)\sinh(d_1).$$
Together with Formula \eqref{longself},
this yields the following equality: 
\begin{equation}\label{comdeltaneg}
\tr\rho(\delta_3)=2\frac{\sinh(a_1)^2-\sinh(a_2)^2}{\sinh(a_3)^2}-2\cosh(t_3)\sinh(a_2)^2\sinh(b_1)\sinh(d_1).
\end{equation}

\subsubsection{Case $\epsilon_1=0^{\pm},\epsilon_2=-1$ and $\Delta>0$}
$$Y_1=R_r T_{b_1} R_r, \quad Y_2=R_r T_{b_2} R_r, \quad Y_3=R_r T_{b_3} R_r\quad \text{and}$$
$$X_1=SR_{\pm\theta_1},\quad X_2=SR_{\pm\theta_2},\quad X_3=SR_{\pm\theta_3}.$$
As before, we consider only the case $\epsilon_1=0^+$, the other case being obtained by changing the signs of the $\theta_i's$. A direct computation gives:
\begin{equation}\label{beta1} 
\tr \rho(\beta_1)=-2\cos(\frac{\theta_1}{2})\cosh(\frac{b_1}{2})\cosh(\frac{t_2+t_3}{2})-2\sin(\frac{\theta_1}{2})\sinh(\frac{b_1}{2})\sinh(\frac{t_2-t_3}{2})
\end{equation}
$$\tr\rho(\delta_3)=2\cosh(a_2)^2-2\cos(\theta_1)\cosh(b_1)\sinh(a_2)^2+2\sin(\theta_1)\sinh(b_1)\sinh(a_2)^2\sinh(t_3).$$
Using Formula \eqref{longtri} this finally gives:
\begin{equation}\label{delta1}
\tr\rho(\delta_3)=2\frac{\sinh(a_1)^2-\sinh(a_2)^2}{\sinh(a_3)^2}+2\sin(\theta_1)\sinh(b_1)\sinh(a_2)^2\sinh(t_3).
\end{equation}

\subsubsection{Case $\Delta=0$, $\epsilon_1=0$, $\epsilon_2=-1$}
If the restriction of $\rho$ to $\pi_1(P_1)$ is diagonal, the equations are
already computed in case \ref{CasUnDeltaNeg}, with $d_1=d_2=d_3=0$.
We write here the case when this restriction is upper diagonal. The
case when it is lower diagonal is deduced by taking the inverse of the
transposition: the formulas obtained are the same, under changing the
signs of all the $a_i's$. We have
\[
X_1=S\left(\begin{array}{cc}1 & u\sinh a_1 \\ 0 & 1\end{array}\right)
\text{ and }
Y_1=R_rT_{b_1}R_r.
\]
A direct computation gives
\begin{equation}\label{DeltaNulEuUn}
\tr \rho(\delta_3)=2-4\sinh(b_1/2)^2\sinh(a_2)^2+2u\sinh(a_1)\sinh(a_2)^2\sinh(b_1)e^{-t_3}.
\end{equation}

\subsection{A second proof of Corollary \ref{CorNonErgod}}
It is noteworthy that the formulas of the preceding section yield a computational proof of
Corollary \ref{CorNonErgod}. 

Let $\mo^{00}$ be the set of classes of representations $[\rho]$ which send no separating simple curve to the identity. The subset $\mo^{00}$ has full measure in $\mo^0$. Let $\Gamma=(\gamma_1,\gamma_2,\gamma_3)$ be a triple of non-separating disjoint curves and define $\mo^{00}_\Gamma$ to be the set of classes of representations $[\rho]$ in $\mo^{00}$ mapping $\gamma_i$ to hyperbolic elements and satisfying $\tr\rho(\delta_3)<2$ where $\delta_3$ is the curve described in the preceding section. 
We show in this section that for any $\Gamma,\Gamma'$ we have $\mo^{00}_{\Gamma}=\mo^{00}_{\Gamma'}$ which implies that $\mo^{00}_\Gamma$ is invariant by the mapping class group - and coincides with $\mo^{00}\cap\mo^0_+$. 

In order to prove this, it is sufficient to prove the equality $\mo^{00}_{\Gamma}=\mo^{00}_{\tau_\zeta\Gamma}$ for $\zeta\in \{\gamma_1,\gamma_2,\gamma_3,\beta_1,\beta_2,\beta_3\}$ because these Dehn twists generate the mapping class group. 
Starting from a representation $[\rho]$ in $\mo^{00}_\Gamma$, the different descriptions given in the preceding sections indicate that $\rho$ falls into the cases \ref{LeCas3}, \ref{LeCas4} and \ref{LeCas5}. As the sign of $\tr \rho(\delta_3)-2$ is independent of $t_1,t_2,t_3$, it follows that the condition defining $\mo^{00}_\Gamma$ is invariant under the Dehn twists on $\gamma_1,\gamma_2$ and $\gamma_3$ and hence depends only on $\Gamma$ as the notation suggests. Moreover straightforward computations shows a symmetry in the formulas for $\tr \rho(\delta_3)$ proving that $\delta_3$ can be replaced by $\delta_1$ or $\delta_2$. 

If we apply the Dehn twist $\tau_{\beta_1}$, we get the new curves $\tau_{\gamma_1}^{-1}\beta_1,\tau_{\gamma_2}^{-1}\beta_1,\gamma_3$ and the explicit formulas of the corresponding sections show that these curves are sent to hyperbolic elements. At the same time, $\beta_3$ and $\gamma_1$ are unchanged hence their commutator $\delta_2$ still satisfies $\tr\rho(\delta_2)<2$ and the statement is proved.

\section{Search for a non-hyperbolic curve}\label{SectionChasseElliptique}

For simplicity we will say, in the rest of this text, that $\rho$
{\em has a non-hyperbolic curve} if there exists a simple closed
curve $\gamma$ such that $|\tr(\rho(\gamma))|\leq 2$.

\subsection{The general strategy}\label{StrategieGenerale}

Let $[\rho]\in\mo^1\cup\mo^{-1}\cup\mo_-^0$.
Let $j\in \mo^{-2}$ be a representation dominating $\rho$ as given by Theorem
\ref{domination}. Let $B_2$ be the Bers constant in genus 2: this means that
one can find three curves $\gamma_1,\gamma_2,\gamma_3$ bounding two pairs of
pants $P_1$ and $P_2$ such that
$|\tr \rho(\gamma_i)|\le |\tr j(\gamma_i)|\le 2\cosh(B_2/2)$ for $i=1,2,3$.
The constant $B_2$ is known explicitly: $B_2\simeq 4.45$, see \cite{Gendulphe}. 

Suppose first that one of these curves is separating, say $\gamma_1$.
Then the restriction of $\rho$ to one of the two one-holed tori obtained
by cutting $\Sigma_2$ along $\gamma_1$ has Euler class $0$
(in $\mo^{\pm 1}$ this is obvious, and in Euler class $0$ this is the
condition of not being in $\mo^0_+$).
Since we have $2<\tr \rho(\gamma_1)\le 2\cosh(B_2/2)\simeq 9.3$,
Goldman's Theorem \ref{dixhuit} implies that there exists a non-separating
curve
which is sent to a non-hyperbolic element. 

Therefore we may suppose that $\gamma_1,\gamma_2,\gamma_3$ are non-separating,
so that they bound two pairs of pants $P_1$ and $P_2$,
and we can suppose for all $i$ that $|\tr\rho(\gamma_i)|>2$, otherwise the conclusion
follows immediately. We set $|\tr\rho(\gamma_i)|=2\cosh(a_i)$.

Now we can use the formulas of Section \ref{SectionParam}.
Our systematic strategy will be as follows: we will deal separately with all
the pertinent cases from Section \ref{SectionParam}, and our aim will be
either to prove that there exists a non-hyperbolic curve, or to prove that
we can find three disjoint curves $\beta_1$, $\beta_2$, $\beta_3$ such
that
\[
\max|\tr(\rho(\beta_i))|\leq \max|\tr(\rho(\gamma_i))|-\mu,
\]
where $\mu$ is some positive constant independent of $\rho$. This suffices
to prove that after finitely many steps, we are able to find a simple closed
curve mapped to a non-hyperbolic element.

\subsection{The case of $\mo^0_{-}$}

Let $\rho$ be a representation in $\mo^0_{-}$. We show in this part that
$\rho$ has a non-hyperbolic curve. By the defining property of $\mo^0_{-}$,
this curve has to be non-separating.

From Subsection \ref{explicit}, we have three cases to consider up to symmetries. 
\subsubsection{The case $\epsilon_1=\epsilon_2=0^+,\Delta<0$}\label{cas1}
We start from Equation \eqref{ep0deltaneg}
\begin{eqnarray*}
\tr\rho(\delta_3)&=&2+4\sinh(\frac{t_3}{2})^2\sinh(d_1)^2\sinh(a_2)^2\\
&=&2+4\sinh(\frac{t_3}{2})^2\frac{\sinh(d_3)^2\sinh(a_2)^2\sinh(a_1)^2}{\sinh(a_3)^2}
\end{eqnarray*}
where we used Heron formula \eqref{heronhex}. Using Formula \eqref{longself},
we deduce that $\cosh(d_3)\le \frac{2\sinh(\frac{a_3}{2})^2}{\sinh(a_1)\sinh(a_2)}$.
Using the inequality $\sinh(x)\le \cosh(x)$ we get 
\begin{equation}\label{com3}
2<\tr\rho(\delta_3)\le 2+16\frac{\sinh(\frac{t_3}{2})^2\sinh(\frac{a_3}{2})^4}{\sinh(a_3)^2}.
\end{equation}
Up to applying a power of the Dehn twist $\tau_3$ to $\delta_3$, we can suppose that $t_3$ belongs to $[-a_3,a_3]$. Hence, the maximal value of $\tr(\delta_3)$ is $2+4\frac{\sinh(\frac{a_3}{2})^4}{\cosh(\frac{a_3}{2})^2}$ which is an increasing function of $a_3$ not exceeding $6.8$ for $a_3\le \frac{B_2}{2}$. 

The curve $\delta_3$ is separating and using Goldman's Theorem \ref{dixhuit}, we can conclude that $\rho$ has a non-hyperbolic curve.  

\subsubsection{The case $\epsilon_1=0^+,\epsilon_2=0^-,\Delta>0$}\label{cas2}
Recall Equation \eqref{deltaeu0prime}:
$$\tr\rho(\delta_3)=2+4\sin(\theta_1)^2\sinh(a_2)^2\cosh(\frac{t_3}{2})^2.$$
Goldman's Theorem \ref{dixhuit} provides a non-hyperbolic curve if
$|\sin(\theta_1)|\sinh(a_2)\cosh(\frac{a_3}{2})\le 2$. 
Hence, it is sufficient to have $|\sin(\theta_i)|\leq\frac{2}{\sinh(\frac{B_2}{2})\cosh(\frac{B_2}{4})}$:
this deals with the cases when the triangle of lengths $(a_1,a_2,a_3)$ is
close to be flat.

In general, recall Formula \eqref{betaeu0prime} and its cyclical companions: 
$$\tr \rho(\beta_1)=2\sinh(\frac{t_2}{2})\sinh(\frac{t_3}{2})+2\cos(\theta_1)\cosh(\frac{t_2}{2})\cosh(\frac{t_3}{2}).$$
This gives the inequality
$|\tr \rho(\beta_1)|\leq 2\cosh(\frac{|t_2|+|t_3|}{2})\le 2\cosh(a_3)$.
When the triangle of lengths $(a_1,a_2,a_3)$ is not close to be flat,
$|\cos(\theta_1)|$ is far from $1$, hence the inequality
$|\tr\rho(\beta_1)|\leq 2\cosh(a_3)$ can actually be improved by a positive
constant $\mu$. Similarly, we have $|\tr(\rho(\beta_i)|\leq 2\cosh(a_3)-\mu$ for
all $i$.

\subsubsection{The case $\epsilon_1=\epsilon_2=0,\Delta=0$}\label{cas3}
From Equation \eqref{DeltaNulEuZero}, by taking $t_3$ large enough
(by applying Dehn twists $\tau_3$ to $\delta_3$) we get
$|\tr\rho(\delta_3)|\leq 18$, hence Goldman's Theorem \ref{dixhuit}
yields a non-hyperbolic curve.
%
%

\subsubsection{The case $\epsilon_1=1,\epsilon_2=-1$}\label{cas4}
This case is, by far, the most involved of the four cases corresponding
to $\mo^0_-$. Depending on the triple $(a_1,a_2,a_3)$, we need to adopt
different strategies, described in the following lemmas.
For the sake of readability, we will not incorporate the quantity $\mu$
in these lemmas, but we will come back to this in the conclusion
of this case.

Using Heron formula \eqref{heronhex} and Formula \eqref{epsunmoinsun}, we get: 
\begin{equation}\label{com3bis}
\tr\rho(\delta_3)=2+4\frac{D'^2\sinh(\frac{t_3}{2})^2}{\sinh(a_3)^2}.
\end{equation}
Theorem \ref{dixhuit} can conclude that case if $D'|\sinh(\frac{t_3}{2})|\le 2 \sinh(a_3).$
Moreover, as $t_3$ can be chosen in $[-a_3,a_3]$, it is sufficient to have $D'\le 4\cosh(\frac{a_3}{2})$.

\begin{lemma}[Bandwidth]\label{bandwidth}
If the following estimate holds:
$$\tanh(\frac{a_1}{2})\le \sinh(\frac{|t_3|}{2})\sinh(b_2)\le \coth(\frac{a_1}{2})$$
then up to a translation of $t_1$ by a multiple of $2a_1$, we have $|\tr \rho(\beta_2)|\le 2$. In particular, $\rho$ has a non-hyperbolic curve. 
\end{lemma}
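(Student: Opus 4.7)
The plan is to write down the analogue of the formula for $\tr\rho(\beta_1)$ (derived in Subsection \ref{explicit} for the case $\epsilon_1=1,\epsilon_2=-1$) for the curve $\beta_2$. By the cyclic symmetry of the picture and of the cocycle representation of Figure \ref{genus2}, permuting the roles of the indices yields
\[
\tr\rho(\beta_2)=2\cosh\!\left(\tfrac{t_1}{2}\right)\cosh\!\left(\tfrac{t_3}{2}\right)+2\cosh(b_2)\sinh\!\left(\tfrac{t_1}{2}\right)\sinh\!\left(\tfrac{t_3}{2}\right).
\]
Setting $u=t_1/2$, $C=\cosh(t_3/2)$ and $S=\cosh(b_2)\sinh(t_3/2)$, this becomes $\tfrac12\tr\rho(\beta_2)=C\cosh(u)+S\sinh(u)=:F(u)$. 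The Dehn twist $\tau_1$ shifts $t_1$ by $\pm 2a_1$, so $u$ varies in the discrete lattice $u_0+a_1\Z$; the lemma amounts to finding some shift in this lattice with $|F(u)|\le 1$.

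The key identity is
\[
C^2-S^2=\cosh^2(t_3/2)-\cosh^2(b_2)\sinh^2(t_3/2)=1-\sinh^2(t_3/2)\sinh^2(b_2),
\]
so the sign of $C^2-S^2$ is governed precisely by the position of $\sinh(|t_3|/2)\sinh(b_2)$ with respect to $1$. I would then split into two cases. When $\sinh(|t_3|/2)\sinh(b_2)<1$, there exists $\alpha\in\R$ such that $F(u)=\sqrt{C^2-S^2}\,\cosh(u+\alpha)$, and the set $\{u:|F(u)|\le 1\}$ is a closed interval of length $2\cosh^{-1}\!\bigl(1/\sqrt{C^2-S^2}\bigr)$. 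A short computation shows this length is $\ge a_1$ if and only if $1-\sinh^2(t_3/2)\sinh^2(b_2)\le\operatorname{sech}^2(a_1/2)$, that is $\sinh(|t_3|/2)\sinh(b_2)\ge\tanh(a_1/2)$. When $\sinh(|t_3|/2)\sinh(b_2)>1$, the analogous rewriting gives $F(u)=\pm\sqrt{S^2-C^2}\,\sinh(u+\beta)$, and the set $\{u:|F(u)|\le 1\}$ has length $2\sinh^{-1}\!\bigl(1/\sqrt{S^2-C^2}\bigr)$, which is $\ge a_1$ exactly when $\sinh^2(t_3/2)\sinh^2(b_2)-1\le 1/\sinh^2(a_1/2)$, i.e.\ $\sinh(|t_3|/2)\sinh(b_2)\le\coth(a_1/2)$. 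The boundary case $\sinh(|t_3|/2)\sinh(b_2)=1$ is a limit of either side and is automatically covered since $\tanh(a_1/2)<1<\coth(a_1/2)$.

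Thus the two inequalities of the hypothesis correspond exactly to the two cases, and each guarantees that the sub-interval of $\R$ on which $|F|\le 1$ has length at least $a_1$. Since the admissible values of $u$ form an arithmetic progression of step $a_1$, at least one of them lies in that interval, producing a translate of $t_1$ by a multiple of $2a_1$ for which $|\tr\rho(\beta_2)|\le 2$. The curve $\beta_2$ is simple (it is one of the three curves of the cut system described in Section \ref{SectionGluing}), so $\rho$ indeed has a non-hyperbolic curve. The only delicate point in the proof is the bookkeeping of the rewriting $C\cosh(u)+S\sinh(u)$ in terms of a single hyperbolic cosine or sine, and the two algebraic manipulations that translate the width condition into the symmetric bounds $\tanh(a_1/2)$ and $\coth(a_1/2)$; everything else is formal.
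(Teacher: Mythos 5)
Your proof is correct and is essentially the paper's own argument in a slightly different dress: the paper writes $\tr\rho(\beta_2)=Ae^{t_1/2}+Be^{-t_1/2}$ and computes the gap between the solutions of $\Phi(t_1)=\pm2$ as $4\operatorname{arccoth}$ or $4\operatorname{arctanh}$ of $\sinh(|t_3|/2)\sinh(b_2)$ according to the sign of $AB=C^2-S^2$, which is exactly your dichotomy between the $\cosh(u+\alpha)$ and $\sinh(u+\beta)$ normal forms. The key identity $1-AB=\sinh^2(t_3/2)\sinh^2(b_2)$ and the conclusion (an interval of length $\ge 2a_1$ in $t_1$ must meet the twist orbit) are identical.
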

\begin{proof}
Recall the expression for $\tr\rho(\beta_2)$:
$$\tr \rho(\beta_2)=2\cosh(\frac{t_1}{2})\cosh(\frac{t_3}{2})+2\cosh(b_2)\sinh(\frac{t_1}{2})\sinh(\frac{t_3}{2})=A e^{\frac{t_1}{2}}+Be^{-\frac{t_1}{2}}$$
writing $A=\cosh(\frac{t_3}{2})+\cosh(b_2)\sinh(\frac{t_3}{2})$ and $B=\cosh(\frac{t_3}{2})-\cosh(b_2)\sinh(\frac{t_3}{2})$.
We thus have $1-AB=\sinh(\frac{t_3}{2})^2\sinh(b_2)^2$.

Suppose that $AB<0$: then the map
$\Phi:t_1\mapsto A e^{\frac{t_1}{2}}+Be^{-\frac{t_1}{2}}$ is a diffeomorphism
from $\R$ to $\R$ hence there exists $t_1^\pm$ so that $\Phi(t_1^{\pm})=\pm 2$
and the statement of the lemma is equivalent to the following one:
$$|t_1^+ - t_1^-|\ge 2a_1.$$
We compute explicitly $t_1^{\pm}=2\ln\frac{\pm 1+\sqrt{1-AB}}{A}$ and
$|t_1^+ - t_1^-|=2\ln\frac{\sqrt{1-AB}+1}{\sqrt{1-AB}-1}=
4\operatorname{arccoth}(\sinh(\frac{|t_3|}{2})\sinh(b_2))$.
By the second part of the inequality, the result follows.
Suppose that $AB>0$.
Up to considering $-t_1$ instead of $t_1$, we may suppose that $t_3\geq 0$,
and that $A>0$, $B>0$.
In that case, the equation $\Phi(t_1)=2$ has
two solutions $t_1^-<t_1^+$ given by $t_1^\pm=2\ln\frac{1\pm \sqrt{1-AB}}{A}$.
We have now
$|t_1^+ - t_1^-|=4\operatorname{arctanh}(\sin(\frac{|t_3|}{2})\sinh(b_2))$.
By the first part of the inequality, the result follows.
If $A$ or $B$ vanishes, the result still holds true as $\Phi$ goes to $0$
when $t_1$ goes to $+\infty$ or $-\infty$.  
\end{proof}

\begin{lemma}[Intervals strategy]\label{intervalles}
Suppose that $a_1\le a_2\le a_3$ and $\rho$ does not have any non-hyperbolic curve. Then either
$\cosh(a_1)>3$ or $\cosh(a_2)>\cosh(a_1)+2$. 
\end{lemma}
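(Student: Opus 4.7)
I argue by contradiction: assume that $\rho$ has no non-hyperbolic simple closed curve, $\cosh(a_1)\le 3$, and $\cosh(a_2)\le\cosh(a_1)+2$, and derive a contradiction by combining Lemma~\ref{bandwidth} with Goldman's Theorem~\ref{dixhuit} applied to the separating curve $\delta_3$.

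First, I apply the Bandwidth Lemma~\ref{bandwidth} in its two relevant avatars: to $\beta_2$ (twisting along $\gamma_1$) and to $\beta_1$ (twisting along $\gamma_2$). Using the Heron identity $\sinh(b_j)=D'/(\sinh(a_i)\sinh(a_k))$ from~\eqref{heronhex} to rewrite the Bandwidth thresholds, and recalling that the absence of a non-hyperbolic curve is invariant under every Dehn twist $\tau_3^n$, it follows that for every $n\in\Z$ the quantity
\[
\sigma_3^{(n)} := \frac{D'\sinh\!\bigl(|t_3+2na_3|/2\bigr)}{\sinh(a_3)}
\]
avoids both intervals $[\cosh(a_1)-1,\cosh(a_1)+1]$ and $[\cosh(a_2)-1,\cosh(a_2)+1]$. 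Under the hypothesis $\cosh(a_2)\le\cosh(a_1)+2$ these two intervals overlap, so $\sigma_3^{(n)}$ must avoid the single interval $[\cosh(a_1)-1,\cosh(a_2)+1]$ for every $n$.

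Next, I invoke Goldman's Theorem~\ref{dixhuit} on $\delta_3$. By~\eqref{com3bis}, $\tr\rho(\delta_3)=2+4(\sigma_3^{(n)})^2$; if some $\sigma_3^{(n)}\le 2$ then $\tr\rho(\delta_3)\le 18$, and applying Theorem~\ref{dixhuit} on the one-holed torus bounded by $\delta_3$ that carries a representation of Euler class $0$ (available because $\rho\in\mo^0_-$, as noted in Subsection~\ref{StrategieGenerale}) produces a non-hyperbolic simple curve, contradicting the assumption. Hence $\sigma_3^{(n)}>2$ for every $n$, and combined with $\cosh(a_1)-1\le 2$ this forces $\sigma_3^{(n)}>\cosh(a_2)+1$ for every $n$. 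Choosing the representative with $|t_3|\le a_3$ and using $\sinh(a_3)=2\sinh(a_3/2)\cosh(a_3/2)$, we obtain
\[
\cosh(a_2)+1<\sigma_3^{(0)}\le\frac{D'}{2\cosh(a_3/2)},\qquad\text{hence}\qquad D'^2>2(\cosh(a_3)+1)(\cosh(a_2)+1)^2.
\]

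Finally, setting $x=\cosh(a_1),\ y=\cosh(a_2),\ z=\cosh(a_3)$ and substituting $D'^2=2xyz+x^2+y^2+z^2-1$, this rearranges into $2yz(x-y-2)+z^2-2z+x^2-y^2-4y-3>0$. Since $a_1\le a_2$ gives $x\le y$ and hence $x-y-2\le-2$, the left-hand side is bounded above by the quadratic $Q(z):=z^2-(4y+2)z+(x^2-y^2-4y-3)$, whose larger root $(2y+1)+\sqrt{5y^2+8y+4-x^2}$ is non-decreasing in $y$ and evaluates to $4x+3\ge 7$ at $y=x$. The Bers-constant bound $\cosh(a_3)\le\cosh(B_2/2)\simeq 4.67<7$ from the outer strategy of Subsection~\ref{StrategieGenerale} then places $z$ below this larger root and between the two roots of $Q$, giving $Q(z)\le 0$ and thus the desired contradiction. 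The main obstacle will be executing the final algebraic simplification cleanly; a secondary check is to confirm that the Euler-class-zero one-holed torus bounded by $\delta_3$ is indeed available throughout the Dehn-twist iterates so that Theorem~\ref{dixhuit} genuinely applies.
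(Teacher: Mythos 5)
Your proof is correct and follows the same skeleton as the paper's: the three intervals $I_0=[0,2]$ (from \eqref{com3bis} and Goldman's Theorem \ref{dixhuit}), $I_1=[\cosh(a_1)-1,\cosh(a_1)+1]$ and $I_2=[\cosh(a_2)-1,\cosh(a_2)+1]$ (from Lemma \ref{bandwidth} and its cyclic companion, rewritten via Heron's formula \eqref{heronhex}), whose union covers $[0,\cosh(a_2)+1]$ once the conclusion of the lemma is negated, forcing $u_3>\cosh(a_2)+1$. The one place you genuinely diverge is in ruling this out. The paper bounds $u_3\le\cosh(a_2)+1$ directly by maximizing over $t_3\in[-a_3,a_3]$ and over $a_1\le a_2$, an inequality valid for all $a_2\le a_3$ with no size restriction. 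You instead convert $u_3>\cosh(a_2)+1$ into $D'^2>2(\cosh(a_3)+1)(\cosh(a_2)+1)^2$ and refute it through the quadratic $Q(z)$, which requires $z=\cosh(a_3)$ to lie below the larger root $4y+3\ge 7$ and therefore invokes the Bers bound $\cosh(a_3)\le\cosh(B_2/2)\simeq 4.67$. Your algebra checks out (in particular the constant term $x^2-y^2-4y-3$ of $Q$ is negative, so the smaller root is negative and $z$ sits strictly between the two roots), but be aware that this makes your argument depend on a hypothesis, $a_3\le B_2/2$, that is not part of the lemma's statement: it is available at every step of the iterative scheme of Section \ref{StrategieGenerale}, so nothing breaks in the application, whereas the paper's version of the bound is unconditional.
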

\begin{proof}
We first use Dehn twists along $\gamma_3$ in order to have $t_3\in[-a_3,a_3]$.
Using Heron formula, the criterium of Lemma \ref{bandwidth} can be reformulated
in the following way:
$$\cosh(a_1)-1\le \frac{D'\sinh(\frac{|t_3|}{2})}{\sinh(a_3)}\le \cosh(a_1)+1.$$
Hence, if the quantity $u_3=\frac{D'\sinh(\frac{|t_3|}{2})}{\sinh(a_3)}$
belongs to the interval $I_1=[\cosh(a_1)-1,\cosh(a_1)+1]$ $\rho$ has a
non-hyperbolic curve. But the same is true if $u_3$ belongs to the interval
$I_2=[\cosh(a_2)-1,\cosh(a_2)+1]$ and again, by Formula \eqref{com3bis},
the same is true if $u_3\in I_0=[0,2]$. 
Moreover, the maximal value of $u_3$ is reached for $t_3=a_3$ and $a_1=a_2$.
We have precisely
$u_3\le \frac{\cosh(\frac{a_2}{2})}{\cosh(\frac{a_3}{2})}\sqrt{\cosh(a_2+\frac{a_3}{2})\cosh(a_2-\frac{a_3}{2})}$.
A straightforward computation ensures that this is lower or equal to $\cosh(a_2)+1$,
as long as $a_2\leq a_3$.
Hence, we have $u_3\le \cosh(a_2)+1$ and either $I_0\cap I_1=\emptyset$ or
$I_1\cap I_2=\emptyset$.  These two cases imply respectively the two
inequalities of the lemma. 
\end{proof}

\begin{lemma}[Equilateral strategy]\label{equ}
Suppose that one has $a_1\le a_2\le a_3$
and suppose that the following holds:
\begin{itemize}
\item[(0)] $\cosh(\frac{b_3}{2})^2\cosh(\frac{a_3}{2})\le\cosh(a_3)+\sinh(\frac{b_3}{2})^2$.
\end{itemize}
Define $\lambda\ge 0$ by the following equation:
$$ \cosh(\frac{b_3}{2})^2\cosh(\frac{a_3+\lambda}{2})=\cosh(a_3)+\sinh(\frac{b_3}{2})^2,$$
and suppose also that $|t_i|\le a_i$ and that the following inequalities hold:
\begin{enumerate}
\item $\cosh(b_3)\sinh(\frac{3a_3-\lambda}{4})^2-\cosh(\frac{3a_3-\lambda}{4})^2\le 1$
\item $\cosh(\frac{\lambda}{2})\cosh(\frac{a_3}{2})-\cosh(b_1)\sinh(\frac{\lambda}{2})\sinh(\frac{a_3}{2})\le 1.$
\end{enumerate}
Then either $\rho$ has a non-hyperbolic curve or for all $i\in\{1,2,3\}$ one has $|\tr \rho(\beta_i)|\le 2\cosh(a_3)$. 
\end{lemma}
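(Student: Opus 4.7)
\emph{Plan.} The strategy is to combine the explicit trace formulas of Section \ref{explicit} with the standing hypothesis $|t_i|\leq a_i$ in order to bound $|\tr\rho(\beta_j)|$; when such a bound fails, I will produce a non-hyperbolic simple curve instead. Applying the identity $2\cosh x\cosh y + 2\cosh(b)\sinh x\sinh y = 2\cosh^2(b/2)\cosh(x+y) - 2\sinh^2(b/2)\cosh(x-y)$ to the cyclic versions of the formula for $\tr\rho(\beta_1)$, one gets
\[
\tr\rho(\beta_j) = 2\cosh^2(b_j/2)\cosh\!\tfrac{t_{j-1}+t_{j+1}}{2} - 2\sinh^2(b_j/2)\cosh\!\tfrac{t_{j-1}-t_{j+1}}{2}
\]
for $j=1,2,3$ with cyclic indexing. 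In this form, $|\tr\rho(\beta_j)|\leq 2\cosh(a_3)$ decouples into upper and lower bounds controlled respectively by $|t_{j-1}+t_{j+1}|$ and $|t_{j-1}-t_{j+1}|$; the lower bound follows straightforwardly from $|t_i|\leq a_i\leq a_3$, so the real task is the upper bound, which simplifies to
\[
\cosh^2(b_j/2)\cosh\!\tfrac{t_{j-1}+t_{j+1}}{2}-\sinh^2(b_j/2)\leq\cosh(a_3).
\]

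For $\beta_3$, hypothesis (0) is exactly the assertion that this upper bound is respected at the boundary value $t_1+t_2=a_3$, and $\lambda\geq 0$ then measures the largest excess $|t_1+t_2|-a_3$ still compatible with it. So $|\tr\rho(\beta_3)|\leq 2\cosh(a_3)$ holds whenever $|t_1+t_2|\leq a_3+\lambda$, and parallel analyses handle $\beta_1,\beta_2$ (treating the sums $|t_2+t_3|$ and $|t_1+t_3|$, which under the ordering $a_1\leq a_2\leq a_3$ and the Heron-type identity \eqref{heronhex} are no more constraining). What remains is the bad case $|t_1+t_2|>a_3+\lambda$. The hypothesis $|t_i|\leq a_i\leq a_3$ then forces $t_1,t_2$ to share a sign and each to exceed $\lambda$ in absolute value, with the larger of the two being at least $(3a_3-\lambda)/4$ on average.

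In this remaining case I would consider the Dehn twist orbit $\{\tau_3^k(\beta_1)\}_{k\in\Z}$. The trace of $\rho$ on $\tau_3^k(\beta_1)$ is obtained from the formula for $\tr\rho(\beta_1)$ by shifting $t_3\mapsto t_3+2ka_3$. Rewriting this trace as $Ae^{t_3'/2}+Be^{-t_3'/2}$ with $AB=1-\sinh^2(b_1)\sinh^2(t_2/2)$, the bandwidth analysis of Lemma \ref{bandwidth} applies; hypothesis (1), which rearranges algebraically to $\sinh(b_3/2)\sinh((3a_3-\lambda)/4)\leq 1$, translates via the hexagon identity \eqref{longhex} into the assertion that the interval of $t_3'$ on which $|\tr\rho(\tau_3^k\beta_1)|\leq 2$ has length at least $2a_3$, so one value of $k$ lands in it. Hypothesis (2), which is exactly the evaluation of the formula for $\tr\rho(\beta_1)$ at $(t_2,t_3)=(\lambda,-a_3)$ being at most $2$, then locates this good interval within $[-a_3,a_3]$, so the curve $\tau_3^k(\beta_1)$ supplied by (1) is in fact non-hyperbolic. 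The main obstacle is precisely the interlocking of hypotheses (1) and (2): each is too weak on its own, and verifying that together they cover every configuration with $|t_1+t_2|>a_3+\lambda$ requires careful manipulation of the hexagon identity to match $\lambda$, $b_1$, $b_3$ against the $a_i$'s.
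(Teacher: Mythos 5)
Your setup for the ``good'' region coincides with the paper's: you write $\tr\rho(\beta_j)$ as $F_j(x,y)=2\cosh^2(\tfrac{b_j}{2})\cosh(\tfrac{x+y}{2})-2\sinh^2(\tfrac{b_j}{2})\cosh(\tfrac{x-y}{2})$ evaluated at the two relevant twist parameters, you correctly read condition $(0)$ and the definition of $\lambda$ as saying that $F_j\le 2\cosh(a_3)$ as long as $|x+y|\le a_3+\lambda$, and you correctly reduce to the largest $b_j$, namely $b_3$. Two things then go wrong. First, the lower bound is not ``straightforward from $|t_i|\le a_i$'': the worst value on $P=\{|x|,|y|\le a_3,\ |x+y|\le a_3+\lambda\}$ is $F_j(-a_3,a_3)=2-2\sinh^2(\tfrac{b_j}{2})(\cosh(a_3)-1)$, which is $\ge-2\cosh(a_3)$ only if $\sinh(\tfrac{b_j}{2})\tanh(\tfrac{a_3}{2})\le 1$, and that is not automatic. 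The paper extracts it from hypothesis $(1)$, because $(-a_3,a_3)$ is the center of the translated square $Q$ on which $(1)$ and $(2)$ force $|F_j|\le 2$.

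Second, and more seriously, your bad-case mechanism is not the one the hypotheses are designed for, and it does not close. The bad case can occur for any of the three pairs, not only $(t_1,t_2)$. Hypotheses $(1)$ and $(2)$ are not bandwidth conditions: they are the pointwise bounds $F_j(\tfrac{-3a_3+\lambda}{2},\tfrac{3a_3-\lambda}{2})\ge-2$ (your rearrangement $\sinh(\tfrac{b_3}{2})\sinh(\tfrac{3a_3-\lambda}{4})\le 1$ is exactly this, stated for the largest $b_3$) and $F_j(-\lambda,a_3)\le 2$ (stated for the smallest $b_1$). These are the extreme corners, for the monotone directions of $F_j$, of the square $Q$ with vertices $(-a_3,\lambda),(-\lambda,a_3),(-a_3,2a_3-\lambda),(-2a_3+\lambda,a_3)$, which is where the bad pair lands after a \emph{single} Dehn twist along $\gamma_{j-1}$ or $\gamma_{j+1}$ (i.e. $t_{j-1}\mapsto t_{j-1}-2a_{j-1}$ or $t_{j+1}\mapsto t_{j+1}+2a_{j+1}$); hence $|F_j|\le 2$ on $Q$ and the twisted $\beta_j$ itself is the non-hyperbolic curve. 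Your proposal instead twists a different curve, $\beta_1$, along $\gamma_3$, i.e. moves $t_3$, which does not interact with the badness of $t_1+t_2$ at all; and the two-sided control of $\sinh(b_1)\sinh(\tfrac{|t_2|}{2})$ that an application of Lemma \ref{bandwidth} in the $t_3$ direction would require is nowhere implied by $(1)$ and $(2)$. As your last sentence concedes, this part of the argument is left open, and it is the part the lemma's hypotheses are actually built to handle.
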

\begin{proof}
Note that we have $\lambda<a_3$, obviously from its definition.
For each $i\in\{1,2,3\}$, we introduce the function 
$$ F_i(x,y)=2\cosh(\frac{x+y}{2})\cosh(\frac{b_i}{2})^2-2\cosh(\frac{x-y}{2})\sinh(\frac{b_i}{2})^2.$$
Consider the polygon 
$$P=\{(x,y)\in [-a_3,a_3]^2\text{ s.t. }|x+y|\le a_3+\lambda\}.$$
The monotony of the function $F_i$ implies that for all $(x,y)\in P$ one has:
$$F_i(-a_3,a_3)\le F_i(x,y)\le F_i(\frac{a_3+\lambda}{2},\frac{a_3+\lambda}{2}).$$
The expression
$F_i(\frac{a_3+\lambda}{2},\frac{a_3+\lambda}{2})=2\cosh(\frac{a_3+\lambda}{2})\cosh(\frac{b_i}{2})^2
-2\sinh(\frac{b_i}{2})^2$,
viewed as a function of the two variables $a_3+\lambda$ and $b_i$, is
increasing in $b_i$, so this is not greater than $2\cosh(a_3)$.

Assume for the moment that $F_i(-a_3,a_3)\ge -2\cosh(a_3)$: if for each $i$
one has $(t_i,t_j)\in P$ then for all $i$ one has
$|\tr \rho(\beta_i)|\le 2\cosh(a_3)$ and we are done. Otherwise, there
exists $i$ such that $t_j\in [-a_j,a_j], t_k\in [-a_k,a_k]$ and
$|t_j+t_k|\ge a_3+\lambda$. If $t_j>0$ we replace it with $t_j-2a_j$, else
we replace $t_k$ with $t_k+2a_k$. In any case, the new pair $(t'_j,t'_k)$
belongs to a square $Q$ which has the following end-points:
$(-a_3,\lambda),(-\lambda,a_3),(-a_3,2a_3-\lambda),(-2a_3+\lambda,a_3)$.
If we can show that $|F_i(t'_j,t'_k)|\le 2$ then we show that $\rho$ has
some non-hyperbolic curves, the one obtained by twisting $\beta_i$ along
$\gamma_j$ or $\gamma_k$. The monotony of $F_i$ implies that for all
$(x,y)\in Q$ one has: 
$$F_i(\frac{-3a_3+\lambda}{2},\frac{3a_3-\lambda}{2})\le F_i(x,y)\le F_i(-\lambda,a_3).$$ 
The assumptions (1) and (2) imply respectively the estimations
$F_i(-\lambda,a_3)\le2$ and $F_i(\frac{-3a_3+\lambda}{2},\frac{3a_3-\lambda}{2})\ge -2$.
The point $(-a_3,a_3)$ being the center of $Q$, we have
$F_i(-a_3,a_3)\ge -2$ and the lemma is proved. 
\end{proof}

\begin{lemma}[Isosceles strategy]\label{isozero}
Suppose that one has $a_1\le a_2\le a_3, |t_i|\le a_i$
and suppose the following holds:
\begin{itemize}
\item[(0)] $\cosh(\frac{b_1}{2})^2\cosh(\frac{a_3}{2})\le\cosh(a_3)+\sinh(\frac{b_1}{2})^2$.
\end{itemize}
Define $\lambda\ge0$ by the equation
$$\cosh(\frac{b_1}{2})^2\cosh(\frac{a_3+\lambda}{2})=\cosh(a_3)+\sinh(\frac{b_1}{2})^2,$$
and suppose that the following inequalities hold:
\begin{enumerate}
\item $\cosh(\frac{\lambda}{2})\cosh(\frac{a_3}{2})-\cosh(b_1)\sinh(\frac{\lambda}{2})\sinh(\frac{a_3}{2})\le 1$
\item $\sinh(\frac{b_1}{2})^2\cosh(\frac{3a_3-\lambda}{2})-\cosh(\frac{b_1}{2})^2\le 1$
\item $\cosh(\frac{a_1}{2})\cosh(\frac{a_3}{2})+\cosh(b_3)\sinh(\frac{a_1}{2})\sinh(\frac{a_3}{2})\le \cosh(a_3)$.
\end{enumerate}
Then either $\rho$ has a non-hyperbolic curve, or up to a translation
$t_2\mapsto t_2-2a_2$ or $t_3\mapsto t_3+2a_3$ one has
$|\tr \rho(\beta_i)|<2\cosh(a_3)$ for $i=1,2,3$. 
\end{lemma}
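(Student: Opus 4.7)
My plan is to adapt the proof of Lemma \ref{equ} (Equilateral Strategy), replacing $b_3$ with $b_1$ as the reference length and using condition (3) to compensate for the fact that $b_1, b_2, b_3$ need no longer be close. First I would introduce
$$F_i(x,y) = 2\cosh(b_i/2)^2\cosh((x+y)/2) - 2\sinh(b_i/2)^2\cosh((x-y)/2),$$
so that the computations of Subsection \ref{explicit} give $\tr\rho(\beta_i) = F_i(t_j, t_k)$ with $\{i,j,k\} = \{1,2,3\}$. Expanding with the addition formulas for $\cosh$, I would check that (0) together with the defining equation of $\lambda$ amounts to the existence of $\lambda \ge 0$ satisfying $F_1((a_3+\lambda)/2, (a_3+\lambda)/2) = 2\cosh(a_3)$, that (1) amounts to $F_1(-\lambda, a_3) \le 2$, that (2) amounts to $F_1(-(3a_3-\lambda)/2, (3a_3-\lambda)/2) \ge -2$, and that (3) amounts to $F_3(a_1, a_3) \le 2\cosh(a_3)$. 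Formula \eqref{longhex} additionally forces $b_1 \le b_2 \le b_3$ under $a_1 \le a_2 \le a_3$.

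The proof then splits on whether $(t_2, t_3)$ lies in the polygon $P = \{(x, y) \in [-a_3, a_3]^2 : |x+y| \le a_3 + \lambda\}$. If $(t_2, t_3) \in P$, the monotonicity of $F_1$ in sum-difference coordinates (increasing in $|x+y|$, decreasing in $|x-y|$), exactly as in Lemma \ref{equ}, would give $|F_1(t_2, t_3)| \le 2\cosh(a_3)$, the lower bound coming from the reflection identity $F_i(x,y) + F_i(x,-y) = 4\cosh(x/2)\cosh(y/2)$ applied at $(a_3, a_3)$ together with the upper bound just obtained. For $\beta_2$ and $\beta_3$, I would use the alternative expression $F_i(x, y) = 2\cosh(x/2)\cosh(y/2) + 2\cosh(b_i)\sinh(x/2)\sinh(y/2)$ and the ordering $b_2 \le b_3$ to dominate both $F_2(t_1, t_3)$ and $F_3(t_1, t_2)$, on the a priori rectangles $|t_l| \le a_l$, by $F_3(a_1, a_3) \le 2\cosh(a_3)$ (condition (3)); the matching lower bounds follow again from the reflection identity. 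This would realize the second alternative of the lemma, with identity translation.

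If instead $(t_2, t_3) \notin P$, then, following Lemma \ref{equ}, a single Dehn twist $t_2 \mapsto t_2 - 2a_2$ or $t_3 \mapsto t_3 + 2a_3$ (according to the sign of $t_2 + t_3 - (a_3 + \lambda)$) would bring the pair into the quadrilateral $Q$ with vertices $(-a_3, \lambda)$, $(-\lambda, a_3)$, $(-a_3, 2a_3 - \lambda)$, $(-2a_3 + \lambda, a_3)$, centered at $(-a_3, a_3)$. Conditions (1) and (2) respectively bound $F_1 \le 2$ and $F_1 \ge -2$ at two opposite extreme corners of $Q$; monotonicity then gives $|F_1| \le 2$ throughout $Q$, producing a twisted version of $\beta_1$ sent by $\rho$ to a non-hyperbolic element. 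This realizes the first alternative.

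The main obstacle will be the handling of $\beta_2$ and $\beta_3$ in the first case. Since $P$ is tuned to $b_1$ rather than to $b_3$, I cannot absorb these two curves into the same polygon-and-twist argument as in Lemma \ref{equ}; condition (3) is the only extra input that allows their control. The delicate point will be to verify that the single upper bound of (3), combined with the ordering $b_1 \le b_2 \le b_3$ and the reflection identity, produces two-sided bounds for $|F_2|$ and $|F_3|$ uniformly over all sign patterns of $(t_1, t_2, t_3)$ in their a priori rectangles, and in such a way as to be compatible with the twist applied in the second case.
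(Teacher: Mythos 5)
Your proposal follows essentially the same route as the paper: the same functions $F_i$, the same polygon $P$ and square $Q$ tuned to $b_1$ and $\lambda$, the same identification of conditions (0)--(2) with bounds on $F_1$ at the extreme points of $P$ and $Q$, and the same use of condition (3) together with $b_2\le b_3$ and $|t_i|\le a_i$ to control $\beta_2,\beta_3$ (your final worry is moot, since after a twist out of $P$ the first alternative already holds). The only slip is the lower bound for $F_1$ on $P$: the reflection identity at $(a_3,a_3)$ does not apply since that point need not lie in $P$; instead note that the minimum of $F_1$ on $P$ is attained at $(-a_3,a_3)$, which is the center of $Q$, where condition (2) and monotonicity give $F_1\ge -2\ge -2\cosh(a_3)$.
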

\begin{proof}
We follow the strategy of Lemma \ref{equ} but only for $i=1$.
Define $P$ and $Q$ accordingly.
The definition of $\lambda$ ensures that $F_1(x,y)\le 2\cosh(a_3)$ for all $(x,y)\in P$. 
On the other hand, we have for any $(x,y)\in Q$:
$$F_1(\frac{-3a_3+\lambda}{2},\frac{3a_3-\lambda}{2})\le F_1(x,y)\le F_1(-\lambda,a_3).$$ 
The assumptions (1) and $(2)$ are respectively equivalent to $F_1(-\lambda,a_1)\le 2$ and $F_1(\frac{-3a_3+\lambda}{2},\frac{3a_3-\lambda}{2})\ge -2$. 
Hence we can suppose that $(t_2,t_3)$ are in $P$ because otherwise, up to a translation $\rho(\beta_1)$ becomes non-hyperbolic. 

The other traces satisfy the following estimation for $i=2,3$:
$$|\tr\rho(\beta_i)|\le 2\cosh(\frac{a_1}{2})\cosh(\frac{a_3}{2})+\cosh(b_3)\sinh(\frac{a_1}{2})\sinh(\frac{a_3}{2}).$$
This is less than $2\cosh(a_3)$ thanks to the third assumption of the lemma. 
\end{proof}

{\bf Conclusion}.

If $\cosh(a_3)\leq 3$ and $\cosh(a_2)\leq \cosh(a_1)+2$, then
Lemma \ref{intervalles} provides a non hyperbolic curve. This covers the region $X_1$ in Figure \ref{graph1}. Hence we are left
with the two following cases corresponding to the regions $X_2$ and $X_3$ in the Figure. 

Suppose that $\cosh(a_1)>3$. Thus,
$3<\cosh(a_1)\leq \cosh(a_2)\leq \cosh(a_3)\leq \cosh(\frac{B_2}{2})$.
In this case, we use the
Equilateral strategy: the inequalities we need to check are the
assumptions of Lemma \ref{equ}.
Consider the condition $(0)$ of that lemma as a condition on independent
variables $a_3$ and $b_3$. It suffices to be checked when $b_3$ takes
its biggest possible value, that is,
$\mathrm{arcch}(\frac{\cosh(a_3)+9}{8})$.
Straightforward computation proves that, in order to check condition $(0)$,
it suffices to check that the real number $x=\cosh(\frac{a_3}{2})$
satisfies the inequality $x^3+8x+8\leq 17x^2$. And $\cosh(a_3)=2x^2-1$
is in the interval $(3,4.68)$, so $x\in(\sqrt{2},\sqrt{\frac{5.68}{2}})=(x_0,x_1)$.
We actually have $x_1^3+8x_1+8<17x_0^2$, so the condition $(0)$ of Lemma \ref{equ}
is automatic.

Checking condition $(1)$ of Lemma \ref{equ} amounts to check the inequality
$(\cosh(b_3)-1)\sinh(\frac{3a_3-\lambda}{4})^2\leq 2$. As before, it suffices
to check this when replacing $(\cosh(b_3)-1)$ by $\frac{\cosh(a_3)+1}{8}$,
which is lower than $\frac{\cosh(B_2/2)+1}{8}$. So it suffices to
check that $\sinh(\frac{3a_3-\lambda}{4})\leq\frac{4}{\sqrt{\cosh(B_2/2)+1}}$.
Now, from the equation defining $\lambda$, (where $a_3$ and $b_3$ are
seen as independent variables) we see that $\lambda$ is decreasing in $b_3$.
So $\lambda\geq2\mathrm{arccosh}(\frac{17\cosh(a_3)+1}{\cosh(a_3)+17})-a_3$.
Putting all this together reduces the checking of condition $(1)$ to the
positivity of an explicit analytic function in one real variable $a_3$:
with the help of SAGE, we observe that this condition holds.

We are left, in the case $\cosh(a_1)>3$, with condition $(2)$ of Lemma \ref{equ}.
Because of the monotony of the function $F_1$, as noted in the proof of
Lemma \ref{equ}, the quantity
\[ 1 + \cosh(b_1)\sinh(\frac{\lambda}{2})\sinh(\frac{a_3}{2})-
\cosh(\frac{\lambda}{2})\cosh(\frac{a_3}{2}) \]
is increasing in $\lambda$ (considering $b_1$ and $a_3$ as independent variables
in this expression).
Under the conditions at hand, $\cosh(b_1)$ cannot be smaller than
$\frac{3+\cosh(a_3)^2}{\sinh(a_3)^2}$, whereas
$\cosh(b_3)$ cannot be greater than $\frac{\cosh(a_3)+9}{8}$, hence
$\lambda$ cannot be smaller than
$\lambda\geq2\mathrm{arccosh}(\frac{17\cosh(a_3)+1}{\cosh(a_3)+17})-a_3$.
Once again, putting all this together we are left with the positivity
of one explicit real analytic function of one variable $a_3$ on an
explicit segment. We check this with the help of SAGE.

All the estimations are checked by proving that some continuous function
is positive on the segment $[\mathrm{arcch}(3),\frac{B_2}{2}]$.
A fortiori these estimates can all be improved by some positive
constant, and all the equalities in the lemmas (including the definition
of $\lambda$) can be improved by a positive constant. It follows that
whe indeed obtain either a non-hyperbolic curve, or three new curves
$\beta_i$ satisfying $\max|\tr(\rho(\beta_i))|\leq 2\cosh(a_3)-\mu$
for some positive $\mu$.

\medskip

Finally, suppose that $\cosh(a_2)>\cosh(a_1)+2$. In this case we use the
Isosceles strategy. The same discussion as in the preceding case shows
that condition $(0)$ of Lemma \ref{isozero} is automatic.
Conditions $(1)$, $(2)$ and $(3)$ are checked, as in the preceding
case, with the help of SAGE and by estimating the extremal possible
values of $\lambda$, $b_1$, $b_3$ and $a_1$ subject to the relations
$\frac{B_2}{2}\geq a_3\geq a_2\geq\mathrm{arcch(3)}$ and
$\cosh(a_2)>\cosh(a_1)+2$.

\begin{figure}
\centering
\def\svgwidth{9cm}
 \executeiffilenewer{graph1.svg}{graph1.pdf}%
 {inkscape -z -D --file=graph1.svg %
 --export-pdf=graph1.pdf --export-latex}%
 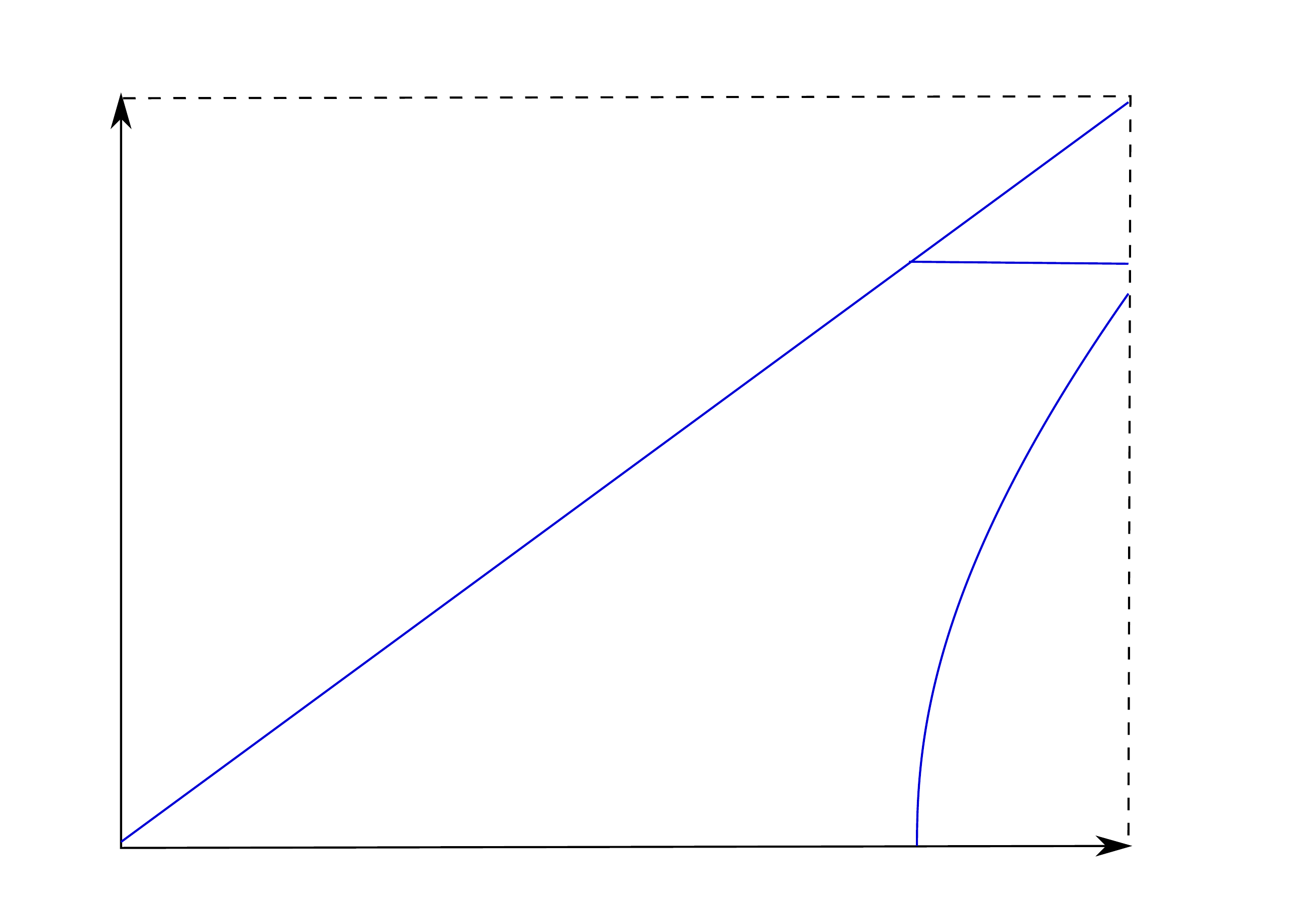%

\caption{Three strategies in Euler class 0}
\label{graph1}
\end{figure}

\subsection{The case of Euler class $\pm 1$}
Following the discussion of Section \ref{StrategieGenerale}, we start with
a triple of disjoint non-separating curves $(\gamma_1,\gamma_2,\gamma_3)$, each
mapped to a hyperbolic element of $\psl$, and cutting
$\Sigma$ in two pants $P_1$ $P_2$ with, say, $\epsilon_1=0,\epsilon_2=-1$.

\subsubsection{The case $\Delta<0$}
From Equation \eqref{comdeltaneg} and the equality
$\frac{\sinh(d_1)}{\sinh(a_1)}=\frac{\sinh(d_3)}{\sinh(a_3)}$ we get:
$$\tr\rho(\delta_3)=2\frac{\sinh(a_1)^2-\sinh(a_2)^2}{\sinh(a_3)^2}-2\cosh(t_3)\frac{\sinh(a_2)^2\sinh(a_1)^2}{\sinh(a_3)^2}\sinh(b_3)\sinh(d_3).$$
Using the majoration $\sinh(x)\le \cosh(x)$ and the identity 
$$\cosh(b_3)\cosh(d_3)=\frac{\cosh(a_3)^2-\cosh(a_1)^2\cosh(a_2)^2}{\sinh(a_1)^2\sinh(a_2)^2}$$
we obtain:
$$\sinh(a_3)^2|\tr(\delta_3)|\le2|\sinh(a_1)^2-\sinh(a_2)^2|+2\cosh(t_3)(\cosh(a_3)^2-\cosh(a_1)^2\cosh(a_2)^2).$$
From $\cosh(a_1)^2\cosh(a_2)^2\ge 1$ we get finally
$$|\tr\rho(\delta_3)|\le 2+2\cosh(t_3).$$
Up to applying a power of the Dehn twist $\tau_3$ to $\delta_3$, we can
suppose that $t_3$ belongs to $[-a_3,a_3]$. Hence, the maximal value of
$|\tr\rho(\delta_3)|$ is $2+2\cosh(a_3)\le 2+2\cosh(B_2/2)\le 11.35<18$. 

The curve $\delta_3$ separates $\Sigma$ into two pants, one of them having Euler class $0$. 
Using again Theorem \ref{dixhuit}, we conclude that $\rho$ has a non-hyperbolic curve.

\subsubsection{The case $\Delta=0$}
By applying Dehn twists $\tau_3$ to $\delta_3$, the trace $\tr(\rho(\delta_3))$
can be taken arbitrarily close to $2-4\sinh(b_1/2)^2\sinh(a_2)^2$. So it
suffices to prove that $\sinh(b_1/2)^2\sinh(a_2)^2<4$, ie
$(\cosh(b_1)-1)\sinh(a_2)^2<8$.
We compute:
\begin{align*}
(\cosh(b_1)-1)\sinh(a_2)^2 & =
\frac{\cosh(a_1)+\cosh(a_2)\cosh(a_3)-\sinh(a_2)\sinh(a_3)}{\sinh(a_2)\sinh(a_3)}\sinh(a_2)^2 \\
& = \frac{2\cosh(a_1)\sinh(a_2)}{\sinh(a_3)},
\end{align*}
since $a_3=a_1+a_2$. This quantity is actually lower than 2,
because $\sinh(a_3)=\cosh(a_1)\sinh(a_2)+\cosh(a_2)\sinh(a_1)$.
Goldman's Theorem \ref{dixhuit} concludes the existence of a closed curve
mapped to a non-hyperbolic element of $\psl$.

\subsubsection{The case $\Delta>0$}

We reinterpret Formula \eqref{delta1} by using Heron formula simultaneously
for the triangle and the right-angled hexagon with lengths $a_1,a_2,a_3$. 

Precisely, we obtain: 
\begin{eqnarray*}
DD'&=&\sin(\theta_1)\sinh(b_1)\sinh(a_2)^2\sinh(a_3)^2\\
&=&\sqrt{\sinh(2s)\sinh(2s-2a_1)\sinh(2s-2a_2)\sinh(2s-2a_3)}=\frac{D_2}{2}
\end{eqnarray*}
where $D_2$ is the Heron invariant associated to a triangle whose lengths are $2a_1,2a_2,2a_3$. 
In this way the formula \eqref{delta1} becomes:
\begin{equation}\label{delta2}
\tr\rho(\delta_3)=2\frac{\sinh(a_1)^2-\sinh(a_2)^2}{\sinh(a_3)^2}+\frac{D_2\sinh(t_3)}{\sinh(a_3)^2}.
\end{equation}
Let now $2\alpha_1,2\alpha_2,2\alpha_3$ be the angles of the triangle whose lengths are $2a_1,2a_2,2a_3$.
Formula \eqref{longtri} gives:
\begin{eqnarray*}
\cos(\alpha_1)^2&=&\frac{\cosh(2a_2+2a_3)-\cosh(2a_1)}{2\sinh(2a_2)\sinh(2a_3)}=\frac{\cosh(a_2+a_3)^2-\cosh(a_1)^2}{\sinh(2a_2)\sinh(2a_3)}\\
&=& (1+\cosh(\theta_1))(1+\cosh(b_1))\tanh(a_2)\tanh(a_3)/4\\
&=&\cos(\frac{\theta_1}{2})^2\cosh(\frac{b_1}{2})^2\tanh(a_2)\tanh(a_3)
\end{eqnarray*}
and similarly
$\sin(\alpha_1)^2=\frac{\cosh(2a_1)-\cosh(2a_2-2a_3)}{2\sinh(2a_2)\sinh(2a_3)}=\sin(\frac{\theta_1}{2})^2\sinh(\frac{b_1}{2})^2\tanh(a_2)\tanh(a_3)$.
Using these identities, Formula \eqref{beta1} becomes
\begin{equation}\label{beta2} 
\tr \rho(\beta_1)=\frac{-2}{\sqrt{\tanh(a_2)\tanh(a_3)}}\left(\cos(\alpha_1)\cosh(\frac{t_2+t_3}{2})+\sin(\alpha_1)\sinh(\frac{t_2-t_3}{2})\right).
\end{equation}

In these series of lemmas $a_1,a_2,a_3$ are three positive numbers satisfying the triangle inequality, $t_1,t_2,t_3$ are three real numbers and $\rho=\rho^{0,\pm 1}_{a_1,a_2,a_3}(t_1,t_2,t_3)$.
\begin{lemma}\label{com}
Suppose that $a_1\le a_2\le a_3$ and $|t_3|\le a_3$. Then we have
$|\tr \rho(\delta_3)|\le 2\Phi(a_1,a_3)$ where 
$$\Phi(a_1,a_3)=\frac{\sinh(a_3)^2-\sinh(a_1)^2}{\sinh(a_3)^2}+\sqrt{\sinh(2a_3)^2-\sinh(a_1)^2}\frac{\sinh(a_1)}{\sinh(a_3)}.$$
Hence if $\Phi(a_1,a_3)\le 9$ then $\rho$ has a non-hyperbolic curve.
This occurs for instance as soon as $a_3<1.459$.
\end{lemma}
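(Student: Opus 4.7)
\medskip
\noindent\textbf{Proof plan for Lemma \ref{com}.}

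The plan is to bound each of the two terms appearing in Formula \eqref{delta2} separately, using only the hypotheses $a_1\leq a_2\leq a_3$ and $|t_3|\leq a_3$. The first term $2(\sinh(a_1)^2-\sinh(a_2)^2)/\sinh(a_3)^2$ is immediately bounded in absolute value by $2(\sinh(a_3)^2-\sinh(a_1)^2)/\sinh(a_3)^2$, which is exactly the first summand of $2\Phi(a_1,a_3)$. Since $|\sinh(t_3)|\leq\sinh(a_3)$, it remains to show that $D_2\leq 2\sinh(a_1)\sqrt{\sinh(2a_3)^2-\sinh(a_1)^2}$ on the allowed range of $a_2$.

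For this bound, I would use the polynomial expression
\[
D_2^2=-\cosh(2a_2)^2+2\cosh(2a_1)\cosh(2a_3)\cosh(2a_2)-\cosh(2a_1)^2-\cosh(2a_3)^2+1,
\]
which is a downward-opening quadratic in $x=\cosh(2a_2)$ with vertex at $x=\cosh(2a_1)\cosh(2a_3)$. Since $\cosh(2a_1)\geq 1$, this vertex lies to the right of $\cosh(2a_3)$, so on the interval $x\in[\cosh(2a_1),\cosh(2a_3)]$ (i.e.\ $a_2\in[a_1,a_3]$) the function is monotone increasing, and the maximum is attained at $a_2=a_3$. Plugging $a_2=a_3$ gives, after using $\cosh(2a_1)-1=2\sinh(a_1)^2$ and $\cosh^2-\sinh^2=1$,
\[
D_2^2\;\leq\;4\sinh(a_1)^2\bigl(\cosh(2a_3)^2-\cosh(a_1)^2\bigr)=4\sinh(a_1)^2\bigl(\sinh(2a_3)^2-\sinh(a_1)^2\bigr),
\]
which is exactly what is needed. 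Adding the two bounds yields $|\tr\rho(\delta_3)|\leq 2\Phi(a_1,a_3)$.

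For the implication $\Phi(a_1,a_3)\leq 9\Rightarrow \rho$ has a non-hyperbolic curve: the curve $\delta_3$ is separating, and as we are in the case $\eu=\pm 1$ one of the two one-holed tori bounded by $\delta_3$ inherits a representation of Euler class $0$ whose boundary trace is $\tr\rho(\delta_3)\in[-18,18]$; Theorem \ref{dixhuit} then produces a simple curve sent to a non-hyperbolic element.

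Finally, for the numerical claim $a_3<1.459$, I would show that $\Phi(a_1,a_3)$ is monotone nondecreasing in $a_1$ on $[0,a_3]$, so that its maximum over $a_1\in[0,a_3]$ is attained at $a_1=a_3$, where it simplifies (using $\sinh(2a_3)^2-\sinh(a_3)^2=\sinh(a_3)^2(4\cosh(a_3)^2-1)$) to $\sinh(a_3)\sqrt{4\cosh(a_3)^2-1}$. One then checks that this single-variable function equals $9$ at $a_3\approx 1.459$ and is increasing in $a_3$. The main (and only mildly delicate) obstacle is the monotonicity-in-$a_1$ step: it reduces to showing that the derivative of $\Phi(a_1,a_3)$ with respect to $a_1$ is nonnegative, which follows from a direct computation since the negative contribution from the first summand is dominated by the positive contribution coming from the factor $\sinh(a_1)$ in the second summand (the square root factor remains positive and bounded below).
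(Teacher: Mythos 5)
Your proof of the main inequality is correct and is essentially the paper's argument. The paper also reduces the bound on $D_2$ to the extremal case $a_2=a_3$, arguing via $D_2=\sin(2\alpha_2)\sinh(2a_1)\sinh(2a_3)$ and the monotonicity of the angle $\alpha_2$ in $a_2$, rather than via the vertex of your quadratic in $\cosh(2a_2)$; the two arguments are interchangeable and give the same value $D_2\le 2\sinh(a_1)\sqrt{\sinh(2a_3)^2-\sinh(a_1)^2}$. Your treatment of the first summand of \eqref{delta2} and the appeal to Theorem \ref{dixhuit} (via additivity of the Euler class, one of the two one-holed tori bounded by $\delta_3$ carrying Euler class $0$) are also exactly what is intended.

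The one genuine gap is in the last step: $\Phi(\cdot,a_3)$ is \emph{not} monotone on $[0,a_3]$ when $a_3$ is small. Writing $u=\sinh(a_1)$, $v=\sinh(a_3)$, $w=\sinh(2a_3)$, one has
\[
\partial_u\Phi=-\frac{2u}{v^2}+\frac{w^2-2u^2}{v\sqrt{w^2-u^2}},
\]
and at the endpoint $u=v$ the condition $\partial_u\Phi\ge0$ reduces, after squaring and using $w^2=4v^2(1+v^2)$, to $(v^2+1)(4v^4-3)\ge0$, i.e.\ $\sinh(a_3)\ge(3/4)^{1/4}$, i.e.\ $a_3\gtrsim0.831$. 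For smaller $a_3$ the maximum over $a_1$ is attained in the interior: for instance $\Phi(\operatorname{arcsinh}(0.4),\,0.5)\approx1.26$ while $\Phi(0.5,0.5)\approx1.05$. So the inequality "negative contribution from the first summand is dominated by the positive contribution of the second" is simply false in that range. The numerical claim itself survives and the fix is cheap: for $a_3\ge\operatorname{arcsinh}((3/4)^{1/4})$ a slightly more careful discriminant analysis shows $\partial_u\Phi\ge0$ on all of $[0,v]$, so your computation of $\Phi(a_3,a_3)=\sinh(a_3)\sqrt{4\cosh(a_3)^2-1}$ and the threshold $a_3\approx1.459$ apply; for smaller $a_3$ the crude bound $u\sqrt{w^2-u^2}\le w^2/2$ gives $\Phi\le1+\tfrac{w^2}{2v^2}=3+2\sinh(a_3)^2<9$. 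You should split into these two regimes rather than assert monotonicity throughout.
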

\begin{proof}
From the equation $D_2=\sinh(2\alpha_2)\sinh(2a_1)\sinh(2a_3)$ we see
that the maximal value for $D_2$ is obtained for $\alpha_2$ maximal, that
is when $a_2=a_3$. The Heron formula then gives
$D_2=2\sqrt{\sinh(2a_3+a_1)\sinh(2a_3-a_1)\sinh(a_1)^2}$ and
the estimation follows. 
\end{proof} 

\begin{lemma}[Boum strategy]\label{majo}
The following inequality and its cyclic companions hold:
$$|\tr \rho(\beta_3)|\le 2\sqrt{\frac{\cosh(t_1)\cosh(t_2)}{\tanh(a_1)\tanh(a_2)}}.$$
As an application, if for all $i\in\{1,2,3\}$ $t_i\in[-a_i,a_i]$ and the inequalities $\frac{\cosh(a_1)^2}{\sinh(a_1)}\le\sinh(a_3)$ and $\frac{\cosh(a_2)^2}{\sinh(a_2)}\le\sinh(a_3)$ hold then for all $i\in\{1,2,3\}$ the following holds:
$$|\tr \rho(\beta_i)|\le2\cosh(a_3).$$
\end{lemma}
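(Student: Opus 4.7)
The plan is to deduce the stated bound on $|\tr\rho(\beta_3)|$ from a direct Cauchy--Schwarz estimate applied to the cyclic analogue of Formula \eqref{beta2}, and then to read off the application by using $|t_i|\le a_i$ together with the two hypotheses on $a_1,a_2,a_3$.

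First, by the same reasoning that led to Formula \eqref{beta2}, the cyclic permutation of indices gives
\[
\tr\rho(\beta_3)=\frac{-2}{\sqrt{\tanh(a_1)\tanh(a_2)}}\left(\cos(\alpha_3)\cosh(\tfrac{t_1+t_2}{2})+\sin(\alpha_3)\sinh(\tfrac{t_1-t_2}{2})\right).
\]
The elementary inequality $|x\cos\theta+y\sin\theta|\le\sqrt{x^2+y^2}$ then yields
\[
|\tr\rho(\beta_3)|\le \frac{2}{\sqrt{\tanh(a_1)\tanh(a_2)}}\sqrt{\cosh(\tfrac{t_1+t_2}{2})^{2}+\sinh(\tfrac{t_1-t_2}{2})^{2}}.
\]
The key identity under the square root, obtained from $\cosh^{2}u=\tfrac{1}{2}(\cosh 2u+1)$ and $\sinh^{2}u=\tfrac{1}{2}(\cosh 2u-1)$, is
\[
\cosh(\tfrac{t_1+t_2}{2})^{2}+\sinh(\tfrac{t_1-t_2}{2})^{2}=\tfrac{1}{2}\bigl(\cosh(t_1+t_2)+\cosh(t_1-t_2)\bigr)=\cosh(t_1)\cosh(t_2),
\]
which gives the first claimed inequality.

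For the application, assume $|t_i|\le a_i$ for $i=1,2,3$, so that $\cosh(t_i)\le\cosh(a_i)$. Writing $\cosh(t_i)/\tanh(a_i)\le\cosh(a_i)^{2}/\sinh(a_i)$ and applying the first inequality in its three cyclic forms yields, for $(i,j,k)$ a cyclic permutation of $(1,2,3)$,
\[
|\tr\rho(\beta_i)|\le 2\sqrt{\frac{\cosh(a_j)^{2}}{\sinh(a_j)}\cdot\frac{\cosh(a_k)^{2}}{\sinh(a_k)}}.
\]
For $i=3$ we use both assumptions $\cosh(a_1)^{2}/\sinh(a_1)\le\sinh(a_3)$ and $\cosh(a_2)^{2}/\sinh(a_2)\le\sinh(a_3)$ to bound the right-hand side by $2\sinh(a_3)\le 2\cosh(a_3)$. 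For $i=1$, the right-hand side is $2\cosh(a_3)$ times $\sqrt{\cosh(a_2)^{2}/(\sinh(a_2)\sinh(a_3))}$, which is $\le 2\cosh(a_3)$ by the second hypothesis; the case $i=2$ is identical using the first hypothesis. Hence $|\tr\rho(\beta_i)|\le 2\cosh(a_3)$ for every $i$.

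There is no real obstacle here: the only point requiring care is the trigonometric simplification giving $\cosh(t_1)\cosh(t_2)$, and the bookkeeping needed to see that the two hypotheses suffice for all three cyclic instances (the hypotheses are precisely the two non-trivial ones, since for $\beta_3$ both are used, while for $\beta_1,\beta_2$ the factor involving $a_3$ combines with the $\cosh(a_3)$ target to leave exactly one of the two conditions to verify).
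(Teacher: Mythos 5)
Your proof is correct and follows essentially the same route as the paper: the Cauchy--Schwarz inequality applied to the cyclic analogue of Formula \eqref{beta2}, together with the identity $\cosh(\tfrac{t_1+t_2}{2})^2+\sinh(\tfrac{t_1-t_2}{2})^2=\cosh(t_1)\cosh(t_2)$. The only difference is that you spell out the bookkeeping for the application (which the paper leaves implicit), and your indexing is in fact cleaner than the paper's, which contains a typo in the denominators.
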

\begin{proof}
From Cauchy-Schwarz inequality in Formula \eqref{beta2} we get 
$|\tr\rho(\beta_1)|^2\le\frac{4}{\tanh(a_2)\tanh(a_3)}(\cosh(\frac{t_1+t_2}{2})^2+\sinh(\frac{t_1-t_2}{2})^2)=\frac{4\cosh(t_1)\cosh(t_2)}{\tanh(a_1)\tanh(a_2)}$.
\end{proof}

\begin{lemma}[Equilateral strategy]\label{equi}
Suppose that one has $a_1\le a_2\le a_3$ and that the inequality
\begin{itemize}
\item[(0)]$\cosh(a_3)\tanh(a_1)^2\ge 1$
\end{itemize}
holds. Define $\lambda\ge 0$ by the formula
$\cosh(\lambda)=\cosh(a_3)\tanh(a_1)^2$,
and suppose also that $|t_i|\le a_i$ and that the following inequalities hold:
\begin{enumerate}
\item $\sinh(2a_1)+\sinh(a_3)^2\le 2\sinh(a_1)^2\cosh(a_3)$
\item $-\cos(\alpha_M)+\sin(\alpha_M)\sinh(\frac{3a_3-\lambda}{2})\le \tanh(a_1)$
\item $\cos(\alpha_m)\cosh(\frac{a_3-\lambda}{2})-\sin(\alpha_m)\sinh(\frac{a_3+\lambda}{2})\le \tanh(a_1)$,
\end{enumerate}
where $\alpha_m$ and $\alpha_M$ are defined by the equations 
$$\sin(\alpha_m)= \frac{\sinh(a_1)}{\sinh(2a_3)}\text{ and }\sin(\alpha_M)=\frac{\sinh(a_3)}{\sinh(2a_1)}.$$
Then  either $\rho$ has a non-hyperbolic curve or for all $i\in\{1,2,3\}$ one has $|\tr\rho(\beta_i)|\le2\cosh(a_3)$. 
\end{lemma}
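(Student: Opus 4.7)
The plan is to follow the same architecture as Lemma \ref{equ}, with the functions $F_i$ replaced by the cyclic analogues of the trace formula \eqref{beta2}. Namely, for each cyclic permutation $(i,j,k)$ of $(1,2,3)$ I would introduce
$$H_i(x,y) = \frac{-2}{\sqrt{\tanh(a_j)\tanh(a_k)}}\left(\cos(\alpha_i)\cosh\left(\frac{x+y}{2}\right) + \sin(\alpha_i)\sinh\left(\frac{x-y}{2}\right)\right),$$
so that $\tr\rho(\beta_i) = H_i(t_j,t_k)$. Since $\alpha_i \in (0,\pi/2)$, both coefficients $\cos(\alpha_i)$ and $\sin(\alpha_i)$ are positive, and $H_i$ depends monotonically on the variables $s = (x+y)/2$ and $d = (x-y)/2$; this monotonicity is what drives all subsequent comparisons.

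As in the proof of Lemma \ref{equ}, I would then introduce the polygon
$$P = \{(x,y) \in [-a_3,a_3]^2 : |x+y| \le a_3 + \lambda\}$$
and consider the quadrilateral $Q$ obtained by a Dehn-twist translation $t_j \mapsto t_j \mp 2a_j$ applied to a point of its complement in $[-a_3,a_3]^2$. The first step is to show that on $P$, the definition of $\lambda$ (with assumption $(0)$) combined with inequality $(1)$ yields $|H_i(x,y)| \le 2\cosh(a_3)$ for every $i$; condition $(1)$ is what makes this estimate uniform across the three cyclic variants, compensating for the worst prefactor $1/\sqrt{\tanh(a_j)\tanh(a_k)}$. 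The second step is to show that on $Q$ one has instead $|H_i(x,y)| \le 2$: by monotonicity in $(s,d)$ it suffices to evaluate $H_i$ at the two extremal corners $(-\lambda, a_3)$ and $\left(\frac{-3a_3+\lambda}{2}, \frac{3a_3-\lambda}{2}\right)$, and the resulting two inequalities reduce, after using $\sin(\alpha_i) \le \sin(\alpha_M)$ and $\cos(\alpha_i) \ge \cos(\alpha_m)$ (valid because the hyperbolic law of sines in the triangle with sides $2a_1, 2a_2, 2a_3$ gives $\alpha_m \le \alpha_i \le \alpha_M$), together with the elementary estimate $\sqrt{\tanh(a_j)\tanh(a_k)} \ge \tanh(a_1)$, exactly to conditions $(2)$ and $(3)$.

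The main obstacle, compared with Lemma \ref{equ}, is that the three trace functions $H_i$ no longer differ only through a single parameter: both the prefactor $1/\sqrt{\tanh(a_j)\tanh(a_k)}$ and the trigonometric coefficients $\cos(\alpha_i), \sin(\alpha_i)$ vary with $i$. This forces us to replace each $\alpha_i$ by the extremal values $\alpha_m, \alpha_M$ dictated by the law of sines, which explains why the statement of the lemma involves these auxiliary angles rather than the individual $\alpha_i$. A delicate point is that the bound on $P$ and the two bounds on $Q$ must each be uniform in $i$; this uniformity is precisely encoded by the choice of $\alpha_m, \alpha_M$ and by the appearance of $\tanh(a_1)$ on the right-hand side of $(2)$ and $(3)$. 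Once these uniform estimates are established, the alternative from Lemma \ref{equ} applies verbatim: either $(t_j,t_k) \in P$ for every cyclic choice of $(i,j,k)$ and the bound $|\tr\rho(\beta_i)| \le 2\cosh(a_3)$ holds for all $i$, or for some $i$ a single Dehn twist along $\gamma_j$ or $\gamma_k$ brings $(t_j,t_k)$ into $Q$, yielding a simple closed curve mapped by $\rho$ to a non-hyperbolic element of $\psl$.
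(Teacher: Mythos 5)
Your proposal follows the paper's proof in its essential architecture: the same functions, the same regions $P$ and $Q$, the same corner evaluations on $Q$, and the same reduction to hypotheses (1)--(3) via the extremal angles $\alpha_m,\alpha_M$ and the worst-case prefactor $1/\tanh(a_1)$. One step, however, is not covered by the monotonicity you invoke. On $P$, the two terms $\cos(\alpha_i)\cosh(\frac{x+y}{2})$ and $\sin(\alpha_i)\sinh(\frac{x-y}{2})$ are not maximized at the same point (the first is largest on the truncating sides $|x+y|=a_3+\lambda$, the second at $(a_3,-a_3)$, where $x+y=0$), so monotonicity in the variables $\frac{x+y}{2}$ and $\frac{x-y}{2}$ alone does not reduce the estimate on $P$ to a corner check. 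The paper fills this in by observing that $F_i$ is convex along the sides of $P$ (indeed $\partial_x^2F_i=\partial_y^2F_i=F_i/4>0$), so its maximum on $P$ is attained at $(a_3,\lambda)$ or at $(a_3,-a_3)$; the first corner is then controlled by the Cauchy--Schwarz estimate of Lemma \ref{majo} together with the defining equation $\cosh(\lambda)=\cosh(a_3)\tanh(a_1)^2$, and the second by hypothesis (1), exactly as you indicate. You should also correct a sign: since $\alpha_m\le\alpha_i$, one has $\cos(\alpha_i)\le\cos(\alpha_m)$ (not $\ge$), which is the inequality actually needed to majorize the left-hand side of condition (3). With these two repairs your argument coincides with the paper's.
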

\begin{proof}
In cyclic notation, introduce the function 
$$F_i(x,y)=\frac{2}{\sqrt{\tanh(a_j)\tanh(a_k)}}(\cos(\alpha_i)\cosh(\frac{x+y}{2})+\sin(\alpha_i)\sinh(\frac{x-y}{2})).$$

Define $\lambda\ge 0$ by the equation $\cosh(\lambda)=\cosh(a_3)\tanh(a_1)^2$. This makes sense by condition (0) and it satisfies $\lambda\le a_3$. 
Consider the set 
$$P=\{(x,y)\in [-a_3,a_3]\times[-a_3,a_3]\text{ s.t. }|x+y|\le a_3+\lambda\}.$$
By the symmetries and monotony of the function $F_i$, it reaches its maximum either on the right or on the bottom side of $P$. 
Moreover, the function $F_i$ is convex on these sides because
$\partial_x^2F_i=\partial_y^2F_i=F_i/4$ and $F_i$ is positive.
Hence, the maximum of $F_i$ on $P$ is reached either at the point
$(a_3,\lambda)$ or at the point $(a_3,-a_3)$. 

One has the formula:
$$F_i(a_3,-a_3)=\frac{2}{\sqrt{\tanh(a_j)\tanh(a_k)}}(\cos(\alpha_i)+\sin(\alpha_i)\sinh(a_3)).$$
When $a_2$ runs in $[a_1,a_3]$, the maximal value of $\alpha_i$ denoted by $\alpha_M$ is obtained for a
triangle with lengths $2a_1,2a_1,2a_3$. It satisfies
$\sin(\alpha_M)=\frac{\sinh(a_3)}{\sinh(2a_1)}$. Hence one has the inequality
$F_i(a_3,-a_3)\leq 2\cosh(a_3)$ if
$1+\frac{\sinh(a_3)^2}{\sinh(2a_1)}\le \cosh(a_3)\tanh(a_1)$. This inequality
is provided by condition (1).

On the other hand, the estimation of Lemma \ref{majo} gives 
$$F_i(a_3,\lambda)\le 2\sqrt{\frac{\cosh(\lambda)\cosh(a_3)}{\tanh(a_j)\tanh(a_k)}}\le 2\sqrt{\cosh(\lambda)\cosh(a_3)}/\tanh(a_1).$$
This is less than $2\cosh(a_3)$ by definition of $\lambda$.
The minimal value of $F_i$ is reached at $(-a_3,a_3)$. The explicit
formula shows that it is in absolute value less than $F_i(a_3,-a_3)$.

We proceed as in Lemma \ref{equ}. If $(t_i,t_j)$ is not in $P$, we can translate it so that it belongs to a square $Q$ with vertices 
$(-a_3,\lambda),(-\lambda,a_3),(-a_3,2a_3-\lambda),(-2a_3+\lambda,a_3)$.
If we can show that $|F_i|\le 2$ on $Q$, then we are done.  
The properties of $F_i$ give directly that the minimum of $F_i$ is reached
at $(\frac{-3a_3+\lambda}{2},\frac{3a_3-\lambda}{2})$ and its maximum is
reached at $(-\lambda,a_3)$. Hence, to conclude it is sufficient to show that
$$-2\le F_i(\frac{-3a_3+\lambda}{2},\frac{3a_3-\lambda}{2})\text{ and }F_i(-\lambda,a_3)\le 2.$$
These inequations are implied by the following ones
$$-\cos(\alpha_i)+\sin(\alpha_i)\sinh(\frac{3a_3-\lambda}{2})\le \sqrt{\tanh(a_j)\tanh(a_k)}$$
$$\cos(\alpha_i)\cosh(\frac{a_3-\lambda}{2})-\sin(\alpha_i)\sinh(\frac{a_3+\lambda}{2})\le \sqrt{\tanh(a_j)\tanh(a_k)}.$$
The two left hand sides are monotone functions of $\alpha_i$.
It is sufficient to show that the first one is satisfied for $a_j=a_k=a_1$ and $\alpha_i=\alpha_M$ whereas the second one has to be satisfied for $a_j=a_k=a_1$ and $\alpha_i=\alpha_m$ where $\sin(\alpha_m)=\frac{\sinh(a_1)}{\sinh(2a_3)}$.
This is precisely the content of the inequalities (3) and (4). \end{proof}

\begin{lemma}[Isosceles strategy]\label{iso}
Suppose that $a_1\le a_2\le a_3$ and that the following inequations hold:
$$\sinh(a_3)\ge 2\text{ and }\sinh(a_1)+\sinh(a_1)^{-1}\le \sinh(a_3).$$
Define $\lambda$ as the largest solution of the equation $\cosh(\lambda)^2=\sinh(\lambda)\sinh(a_3)$. 
Suppose that $|t_i|\le a_i$ for all $i$ and that the following assumptions hold:
\begin{enumerate}
\item $a_2\ge \lambda$
\item $1+\sin(\alpha_1^M)\sinh(a_3)\le\sqrt{\cosh(\lambda)\cosh(a_3)}$
\item $-\cos(\alpha_1^M)+\sin(\alpha_1^M)\sinh(\frac{3a_3-\lambda}{2})\le \sqrt{\cosh(\lambda))\cosh(a_3)}$
\item $\cos(\alpha_1^m)\cosh(\frac{a_3-\lambda}{2})-\sin(\alpha_1^m)\sinh(\frac{a_3+\lambda}{2})\le \sqrt{\cosh(\lambda)\cosh(a_3)}$
\item $\cosh(a_1)^2\cosh(2a_3-\lambda)\le \cosh(a_3)\sinh(a_3)\sinh(a_1)$
\end{enumerate}
$$\text{where }\sin(\alpha_1^m)=\frac{\sinh(a_1)}{\sinh(2a_3)}\text{ and }\cos(2\alpha_1^M)=\frac{\cosh(2\lambda)\cosh(2a_3)-\cosh(2a_1)}{\sinh(2\lambda)\sinh(2a_3)}.$$ 
Then up to a translation $t_2\mapsto t_2-2a_2$ or $t_3\mapsto t_3+2a_3$ one has 
$|\tr \rho(\beta_i)|\le 2\cosh(a_3)$ for $i=1,2,3$. 
\end{lemma}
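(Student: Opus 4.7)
The plan is to adapt the proof of the Equilateral strategy (Lemma \ref{equi}) but to focus the analysis on $\beta_1$ alone, with the specific choice of $\lambda$ tailored so that the Boum inequality of Lemma \ref{majo} yields the desired bound $|\tr\rho(\beta_1)| \leq 2\cosh(a_3)$ at the critical corner of the polygon. Setting $\cosh(\lambda)^2=\sinh(\lambda)\sinh(a_3)$ is equivalent to the identity $\sinh(a_3)\tanh(\lambda)=\cosh(\lambda)$, which will be the key algebraic identity throughout. The hypothesis $\sinh(a_3)\ge 2$ guarantees the existence of such a $\lambda$ (the defining equation $\cosh(\lambda)^2-\sinh(a_3)\sinh(\lambda)=0$ has real solutions in $\lambda\ge 0$ exactly when $\sinh(a_3)^2\ge 4$), and the second inequality $\sinh(a_1)+\sinh(a_1)^{-1}\le\sinh(a_3)$ together with assumption (1) arranges that $\lambda\le a_2$ (so that $\tanh(a_2)\ge\tanh(\lambda)$ below).

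First I would define the polygon $P=\{(x,y)\in[-a_3,a_3]^2:|x+y|\le a_3+\lambda\}$ and analyze the function $F_1$ on $P$. By the same convexity/monotonicity arguments as in Lemma \ref{equi}, the maximum of $F_1$ on $P$ is attained at one of the corners $(a_3,\lambda)$ or $(a_3,-a_3)$. At the corner $(a_3,\lambda)$, Lemma \ref{majo} gives $F_1(a_3,\lambda)\le 2\sqrt{\cosh(\lambda)\cosh(a_3)/(\tanh(a_2)\tanh(a_3))}$; using $\tanh(a_2)\ge\tanh(\lambda)$ and $\sinh(a_3)\tanh(\lambda)=\cosh(\lambda)$ this reduces to exactly $2\cosh(a_3)$. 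At the corner $(a_3,-a_3)$, the explicit formula \eqref{beta2} gives a bound of the form $\cos(\alpha_1)+\sin(\alpha_1)\sinh(a_3)\le\sqrt{\cosh(\lambda)\cosh(a_3)}$ after the same manipulation of the denominator, and monotonicity in $\alpha_1$ (which ranges in an interval whose right endpoint is $\alpha_1^M$, realised when $a_2=\lambda$) means this is implied by assumption (2). If $(t_2,t_3)\in P$ we are done for $\beta_1$.

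Next, if $(t_2,t_3)\notin P$, one translates by $t_2\mapsto t_2-2a_2$ or $t_3\mapsto t_3+2a_3$ exactly as in Lemma \ref{equi}, landing in the square $Q$ with vertices $(-a_3,\lambda),(-\lambda,a_3),(-a_3,2a_3-\lambda),(-2a_3+\lambda,a_3)$. On $Q$, the extremal values of $F_1$ are reached at $(-\lambda,a_3)$ and at $(\tfrac{-3a_3+\lambda}{2},\tfrac{3a_3-\lambda}{2})$; translating the inequalities $F_1\le 2$ and $F_1\ge -2$ at these two points into the $\alpha_1$-variable and monotonizing to the relevant extremal angle ($\alpha_1^M$ in one case, $\alpha_1^m$ in the other, the latter being realised in the degenerate isoceles configuration $a_2=a_3$), produces precisely assumptions (3) and (4). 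Thus in this case $|\tr\rho(\beta_1)|\le 2$, which is strictly stronger than $2\cosh(a_3)$.

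It remains to bound $\tr\rho(\beta_2)$ and $\tr\rho(\beta_3)$. Here I would apply the Boum inequality of Lemma \ref{majo} in its second (uniform) form: under the assumption $|t_i|\le a_i$ one gets $|\tr\rho(\beta_i)|\le 2\cosh(a_j)\cosh(a_k)/\sqrt{\tanh(a_j)\tanh(a_k)}$ for $i=2,3$, $\{j,k\}=\{1,3\}$ or $\{1,2\}$; after a translation that moves $t_j$ or $t_k$ into the range dictated by $Q$, the range for $(t_j,t_k)$ extends up to $2a_3-\lambda$ in absolute value on one coordinate, and the resulting estimate reduces, via the explicit expression for $\cos\alpha_i,\sin\alpha_i$ in terms of $a_1,a_2,a_3$, to the inequality $\cosh(a_1)^2\cosh(2a_3-\lambda)\le\cosh(a_3)\sinh(a_3)\sinh(a_1)$, which is hypothesis (5). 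The main obstacle I expect is this last step: the translations used to bring $(t_2,t_3)$ into $P$ enlarge the effective range of the other pair of coordinates, and one has to verify carefully that the cyclic analogues of conditions (3)–(4) are not needed for $\beta_2,\beta_3$, i.e.\ that a single uniform bound of the Boum type suffices and matches exactly condition (5). Once this bookkeeping is done the lemma follows.
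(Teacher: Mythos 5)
Your proposal follows the paper's proof essentially step for step: restrict the Equilateral strategy to $i=1$, bound $F_1$ on $P$ at the corners $(a_3,\lambda)$ (via Lemma \ref{majo}, assumption (1), and the identity $\cosh(\lambda)=\tanh(\lambda)\sinh(a_3)$) and $(a_3,-a_3)$ (via assumption (2)), pass to the square $Q$ after translation using assumptions (3)--(4), and control $\beta_2,\beta_3$ by the Boum bound with the enlarged range $|t'_j|\le 2a_3-\lambda$, which is exactly assumption (5). The identification of the extremal angles ($\alpha_1^M$ at $a_2=\lambda$ for (2)--(3), $\alpha_1^m$ at $a_2=a_3$ for (4)) also matches the paper.

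One point is misstated, though it does not break the argument. You claim that on $Q$ assumptions (3)--(4) encode $|F_1|\le 2$, hence a non-hyperbolic curve. With their right-hand side $\sqrt{\cosh(\lambda)\cosh(a_3)}$ they do not: after dividing by the prefactor $2/\sqrt{\tanh(a_2)\tanh(a_3)}$ and using $\tanh(a_2)\ge\tanh(\lambda)$ together with $\cosh(\lambda)^2=\sinh(\lambda)\sinh(a_3)$, one gets exactly $|F_1|\le 2\cosh(a_3)$ on $Q$ --- which is all the lemma claims (note its conclusion, unlike that of Lemma \ref{equ}, offers no non-hyperbolic-curve alternative). To get $|F_1|\le 2$ the right-hand sides would have to be $\sqrt{\tanh(a_2)\tanh(a_3)}\le 1$, as in Lemma \ref{equi}, whereas $\sqrt{\cosh(\lambda)\cosh(a_3)}\ge 1$. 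So the conditions are calibrated to the target $2\cosh(a_3)$, not to $2$; with that normalization corrected, your proof is the paper's.
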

\begin{proof}
We apply the same strategy as in Lemma \ref{equi}, but only for $i=1$. Recall that we introduced the following function: 
$$F_1(x,y)=\frac{2}{\sqrt{\tanh(a_2)\tanh(a_3)}}(\cos(\alpha_1)\cosh(\frac{x+y}{2})+\sin(\alpha_1)\sinh(\frac{x-y}{2})).$$
Notice that $\lambda$ satisfies $0\le \lambda\le a_3$ and Lemma \ref{majo} gives $F_1(a_3,\lambda)\le 2\cosh(a_3)$ under the hypothesis (1). 
We have also $F_1(a_3,-a_3)\le 2\cosh(a_3)$ by the assumption (2). Hence, $F_1$ is less than $2\cosh(a_3)$ on the polygon $P$. 

If $(t_2,t_3)\in P$ then it follows that $|\tr \rho(\delta_1)|\le 2\cosh(a_3)$.
The estimation of Lemma \ref{majo} and the assumption $\sinh(a_1)+\sinh(a_1)^{-1}\le\sinh(a_3)$ imply that the traces of $\beta_2$ and $\beta_3$ are also less than $2\cosh(a_3)$. 

If $(t_2,t_3)\notin P$ then we set $t_2'=t_2-2a_2$ if $t_2>0$ and
$t_3'=t_3+2a_3$ if $t_2<0$. As for Lemma \ref{equi}, $(t'_2,t'_3)$
belongs to the polygon $Q$ and we have the estimation 
$$F_1(\frac{-3a_3+\lambda}{2},\frac{3a_3-\lambda}{2})\le F_1(t'_2,t'_3)\le F_1(-\lambda,a_3).$$
Hence, to prove the lemma it is sufficient to have
$$ -\cos(\alpha_1)+\sin(\alpha_1)\sinh(\frac{3a_3-\lambda}{2})\le \sqrt{\cosh(a_3)\cosh(\lambda)}$$
$$ \cos(\alpha_1)\cosh(\frac{a_3-\lambda}{2})-\sin(\alpha_1)\sinh(\frac{a_3+\lambda}{2})\le \sqrt{\cosh(a_3)\cosh(\lambda)}.$$
It is sufficient to prove the first inequality for $\alpha_1=\alpha_M$ and the second one for $\alpha_1=\alpha_m$.
This is the content of Equations (3) and (4).
Finally, we need to check that $F_2(t_1,t'_3)$ and $F_3(t_1,t'_2)$ are lower
than $2\cosh(a_3)$. Using Lemma \ref{majo} it is sufficient to have the following inequality, provided by the assumption (5):
$$\sqrt{\frac{\cosh(a_1)\cosh(2a_3-\lambda)}{\tanh(a_1)\tanh(a_3)}}\le \cosh(a_3).$$ 
\end{proof}

{\bf Conclusion}.

Set $l_1(a_3)=-0.9(a_3-1.695)+1.18$ and $l_2(a_3)=0.8(a_3-1.695)+1.18$.
Let $X=\{(a_1,a_2,a_3)\in \R^3\text{ s.t. }0\le a_1\le a_2\le a_3\le 2.23\}$. We divide $X$ into the following compact sets shown in Figure \ref{graphique}:
$$X_1=\{(a_1,a_2,a_3)/a_1\le l_1(a_3)\}$$
$$X_2=\{(a_1,a_2,a_3)/a_1\ge l_1(a_3) \text{ and }a_1\ge l_2(a_3)\}$$
$$X_3=\{(a_1,a_2,a_3)/l_1(a_3)\le a_1\le l_2(a_3)\text{ and }\cosh(a_2)^2\le \sinh(a_2)\sinh(a_3)\}$$
$$X_4=\{(a_1,a_2,a_3)/, l_1(a_3)\le a_1\le l_2(a_3)\text{ and }\cosh(a_2)^2\ge \sinh(a_2)\sinh(a_3)\}.$$
\begin{figure}
\centering
\def\svgwidth{9cm}
 \executeiffilenewer{graph2.svg}{graph2.pdf}%
 {inkscape -z -D --file=graph2.svg %
 --export-pdf=graph2.pdf --export-latex}%
 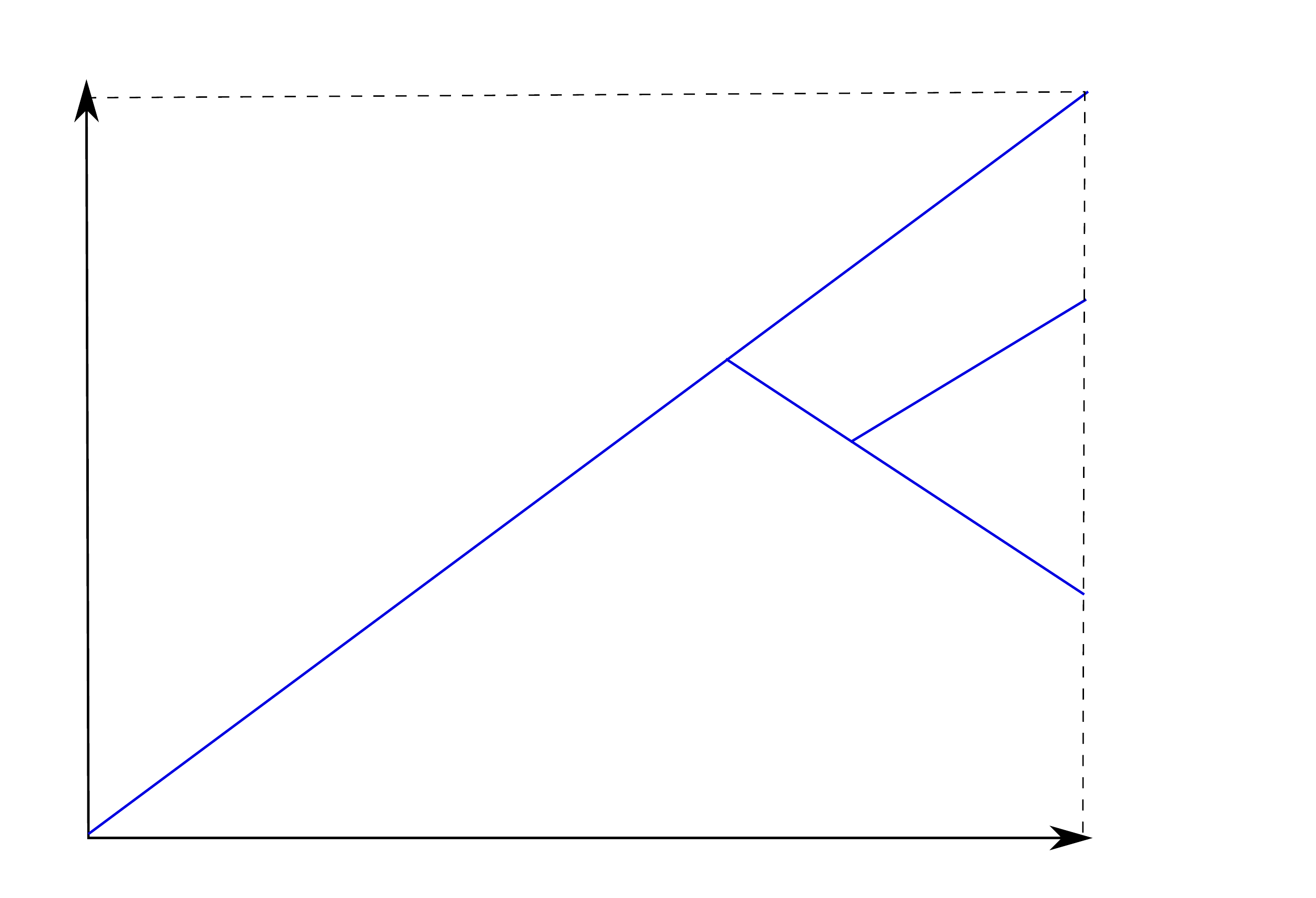%

\caption{Three strategies in Euler class $\pm 1$}
\label{graphique}
\end{figure}
Let $\rho=\rho^{0,\pm 1}_{a_1,a_2,a_3}(t_1,t_2,t_3)$ be a representation
such that $a_3\le 2.23$. Set $A=(a_1,a_2,a_3)$: 
\begin{itemize}
\item[-] if $A\in X_1$ then $\Phi(a_1,a_3)<9$ and Lemma \ref{com} implies
  that $\rho$ has a non-hyperbolic curve. 
\item[-] If $A\in X_2$ and $a_3\ge 1.69$, then $a_1\le l_1(a_3)$ and a
  computer check shows that all inequalities of Lemma \ref{equi} are
  satisfied for $a_1=l_1(a_3)$ and greater. For $a_3\ge 1.69$, the same
  hold for $a_1=l_2(a_3)$ and greater.
\item[-] Suppose now that $A$ satisfies $ l_1(a_3)\le a_1\le l_3(a_1)$.
  If $A\in X_3$, that is $\cosh(a_2)^2\le \sinh(a_2)\sinh(a_3)$, then as we
  also have $\cosh(a_1)^2\le \sinh(a_1)\sinh(a_3)$,  Lemma \ref{majo} applies.
\item[-] Else $A$ belongs to $X_4$ and a computer check shows that the
  inequalities of Lemma \ref{iso} are satisfied. 
\end{itemize}
In any case, the strategy either produces non-hyperbolic curves or strictly
decreases the value of $2\cosh(a_3)$.

%

\section{Ergodicity}\label{SectionErgod}

As in the preceding section, and until the end of this text, we will
say for simplicity that a representation {\em has a
non-hyperbolic curve} if it maps some closed simple loop to a
non-hyperbolic element of $\psl$.

Let $\Sigma$ be a closed surface, and let
$\mathcal{S}$ denote the set of simple closed curves in $\Sigma$
(we view it as a subset of the set of conjugacy classes in
$\pi_1(\Sigma)\smallsetminus\{1\}$). Let
\[
\mathcal{NH}^k=\{ [\rho]\in\mathcal{M}^k;\, 
\exists [\gamma]\in\mathcal{S}\text{ s.t. }|\tr(\rho(\gamma))|\leq 2 \}
\]
be the set of non-elementary representations which have a
non-hyperbolic curve.
Obviously $\mathcal{NH}^k$ is invariant under the action of $\mcg(\Sigma)$.
Also, let $\mathcal{S}^{ns}$ be the subset of $\mathcal{S}$ consisting of non-separating simple closed curves. We define
\[ \mathcal{E}^k=\{ [\rho]\in\mathcal{M}^k;\, \exists [\gamma]\in\mathcal{S}^{ns}\text{ s.t. }\rho(\gamma)\text{ is elliptic}\}. \]
It is a $\mcg(\Sigma)$-invariant open subset of $\mo^k$.
We will also consider its subset
\[ \mathcal{EI}^k=\{ [\rho]\in\mathcal{M}^k;\, \exists [\gamma]\in\mathcal{S}^{ns}\text{ s.t. }\rho(\gamma)\text{ is elliptic of infinite order}\}. \]
It comes for free that $\mathcal{EI}^k$ has full measure in $\mathcal{E}^k$. 
 Indeed, $\mathcal{E}^k$ is open, and
up to the action of the mapping class group there
is only one non-separating simple closed loop, and it is easy to see
that its trace defines an algebraic function which is non constant, on each
connected component of $\mathcal{M}$.

The aim of this section is to prove the the following statements:
\begin{theorem}\label{Ergod}
  Let $\Sigma$ be a closed surface of genus $g\geq 2$, and let
  $k\in\{3-2g,\ldots,2g-3\}$. Then $\mathcal{EI}^k$ is non-empty, connected,
  and the action of $\mcg(\Sigma)$ on $\mathcal{E}^k$ is ergodic.
\end{theorem}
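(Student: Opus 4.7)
The plan is to treat separately non-emptiness, connectedness of $\mathcal{EI}^k$, and ergodicity of the $\mcg(\Sigma)$-action on $\mathcal{E}^k$; since $\mathcal{EI}^k$ has full measure in $\mathcal{E}^k$ as already noted in the excerpt, it suffices to prove the ergodicity statement on $\mathcal{EI}^k$.

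For non-emptiness, I would fix a non-separating simple closed curve $\gamma_0$ contained in a one-holed torus $T\subset\Sigma$, whose complement $\Sigma'$ is a surface of genus $g-1$ with one boundary component. First I would build $\rho_T:\pi_1(T)\to\psl$ sending $\gamma_0$ to a rotation of irrational angle, and sending the other standard generator to a generic hyperbolic element, so that $\rho_T(\partial T)$ is hyperbolic. The bound $|k|\leq 2g-3$ lies strictly inside the Milnor--Wood range for $\Sigma'$, so Theorem \ref{fibr0}, together with Theorem \ref{fibr1} to adjust the boundary holonomy, produces $\rho_{\Sigma'}:\pi_1(\Sigma')\to\psl$ of Euler class $k$ with boundary holonomy $\rho_T(\partial T)^{-1}$. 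Additivity of the Euler class (Section \ref{SectionEuler}) guarantees that the glued representation has Euler class $k$; by construction it belongs to $\mathcal{EI}^k$.

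For connectedness, I would fix $\gamma_0$ and consider the open subset $V_{\gamma_0}\subset\mathcal{EI}^k$ of classes on which $\rho(\gamma_0)$ is elliptic of infinite order. Cutting $\Sigma$ along $\gamma_0$ produces a surface $\Sigma''$ of genus $g-1$ with two boundary components, and $V_{\gamma_0}$ is naturally fibered over the space of elliptic conjugacy classes of $\psl$ (an open interval of rotation angles), the fiber being the space of relative representations of $\pi_1(\Sigma'')$ with prescribed boundary holonomies and relative Euler class $k$, modded appropriately by the gluing parameter. I would adapt the proof of Theorems \ref{fibr0}--\ref{fibr1} to elliptic boundary holonomy by using continuous paths from elliptic to hyperbolic boundary values to import the known connectedness in the hyperbolic case; this shows the fibers are connected, hence so is $V_{\gamma_0}$. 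To promote this to connectedness of the entire $\mathcal{EI}^k$, I would argue that for any two isotopy classes of non-separating simple curves $\gamma,\gamma'$ one can construct (as in the non-emptiness step, using disjoint one-holed tori when $\gamma$ and $\gamma'$ are disjoint, and a deformation argument otherwise) a representation simultaneously sending both to elliptic elements of infinite order, thereby linking $V_\gamma$ to $V_{\gamma'}$ inside $\mathcal{EI}^k$.

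Ergodicity should then follow from Goldman's twist-flow theorem. On $V_{\gamma_0}$, the Dehn twist $\tau_{\gamma_0}$ coincides with the time-$\theta(\rho)$ map of the Hamiltonian flow of the rotation-angle function $\rho\mapsto\theta(\rho)$ of $\rho(\gamma_0)$; on the full-measure subset where $\theta(\rho)/\pi$ is irrational, the iterates $\{\tau_{\gamma_0}^n[\rho]\}_{n\in\Z}$ equidistribute along the corresponding circle orbit. Hence any $\mcg(\Sigma)$-invariant measurable subset of $V_{\gamma_0}$ restricts, up to null sets, to a union of entire $S^1$-orbits of the $\gamma_0$-twist flow. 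Using Dehn twists along curves intersecting $\gamma_0$, whose Hamiltonian flows are transverse to that of $\gamma_0$, combined with the connectedness of $\mathcal{EI}^k$, a standard Hopf-type argument then upgrades this to full ergodicity on $\mathcal{EI}^k$, and therefore on $\mathcal{E}^k$. The step I expect to be the main obstacle is the connectedness part, namely adapting the Goldman-type connectedness results of \cite{Goldman88} to surfaces whose boundary holonomy is elliptic rather than hyperbolic, and in particular tracking the Euler class carefully across the natural stratification of $\mathcal{EI}^k$ by which non-separating simple curves are sent to elliptic elements.
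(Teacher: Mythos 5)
Your overall architecture (non-emptiness via extension from a one-holed torus, connectedness of $\mathcal{EI}^k$, then a twist-flow/Goldman--Xia argument) matches the paper's, but two of your three steps have genuine gaps, and in both cases the paper's actual proof takes a detour precisely to avoid the difficulty you run into.

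First, the connectedness step. You cut $\Sigma$ along the elliptic curve $\gamma_0$ and propose to ``adapt Theorems \ref{fibr0}--\ref{fibr1} to elliptic boundary holonomy,'' which you yourself flag as the main obstacle --- and it is: Goldman's connectedness results are stated for hyperbolic boundary, and the relative Euler class and component structure behave differently when the boundary holonomy is elliptic (in particular as the rotation angle crosses multiples of $2\pi$), so this is not a routine adaptation. The paper sidesteps this entirely: it works with pairs $(a,b)$ of simple curves with $i(a,b)=1$ both sent to elliptic, non-commuting elements, so that the commutator $[a,b]$ --- the boundary of the one-holed torus neighbourhood of $a\cup b$ --- is automatically \emph{hyperbolic}; one then cuts along that separating curve and applies Goldman's theorems as stated. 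Globalizing is done through an equivalence relation on such pairs and the connectedness of the curve complex, which also replaces your vague ``deformation argument otherwise'' for intersecting curves by concrete Dehn-twist lemmas. Second, the ergodicity step. Invoking the periodic twist flow of $\gamma_0$ plus ``Dehn twists along curves intersecting $\gamma_0$, whose Hamiltonian flows are transverse'' and a ``standard Hopf-type argument'' hides the real content: to get local transitivity one needs, at each point of $\mathcal{EI}^k$, a full set of $6g-6$ simple closed curves whose images are \emph{elliptic} (so that the associated Hamiltonian flows are periodic circle actions on which irrational rotation applies) and whose trace differentials span the cotangent space. That the elliptic condition and the spanning condition can be achieved simultaneously is Proposition \ref{ToutesLesDirections} in the paper, proved by a trace-identity recursion (Lemmas \ref{inter1}, \ref{inter0}, \ref{poisson}); one transverse direction is far from enough, and your sketch supplies no substitute for this argument.
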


\begin{proposition}\label{EDense}
  Suppose that $(g,k)\neq (2,0)$. 
  Then $\mathcal{E}^k$ has full measure in $\mathcal{NH}^k$.
\end{proposition}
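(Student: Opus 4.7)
The plan is to show $\mu(\mathcal{NH}^k\setminus \mathcal{E}^k)=0$. The set of $[\rho]\in\mo^k$ for which some simple closed curve has parabolic or central image forms a countable union of proper real-analytic subvarieties, hence has measure zero; on the complement, a point of $\mathcal{NH}^k$ comes equipped with some simple closed curve $\gamma$ mapped to an elliptic element of infinite order, and $[\rho]\notin \mathcal{E}^k$ forces $\gamma$ to be separating. Hence it suffices to show, for each separating simple $\gamma$, that $B_\gamma=U_\gamma\setminus\mathcal{E}^k$ has measure zero, where $U_\gamma$ denotes the open subset of $\mo^k$ on which $\rho(\gamma)$ is elliptic of infinite order, and then to sum over the countably many $\mcg(\Sigma)$-orbits of such $\gamma$.

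Cut $\Sigma$ along $\gamma$ into $\Sigma_1\cup\Sigma_2$ with $g_1+g_2=g$ and $g_i\geq 1$. On $U_\gamma$, the Fenchel--Nielsen twist flow around $\gamma$, generated by rotating the gluing around the fixed point of $\rho(\gamma)$, preserves the symplectic volume and foliates $U_\gamma$ into circles of period $2\pi$; denote it by $\Phi_t$. I would then pick a non-separating simple closed curve $\delta\subset\Sigma$ with $i(\gamma,\delta)=2$, for instance one going once around a handle on each side of $\gamma$ (such $\delta$ exists because $g_i\geq 1$). The trace $t\mapsto \tr(\Phi_t[\rho](\delta))$ is a non-constant trigonometric function of the twist parameter $t$, because $\rho(\gamma)$ has infinite order and $\delta$ crosses the gluing locus; hence $\{t:\tr(\Phi_t[\rho](\delta))^2<4\}$ is a finite union of open arcs, and when it is non-empty it has positive length.

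The crux of the proof is to show that, outside a measure-zero subset of initial $[\rho]$, the union $\bigcup_{\delta\in \mathcal{S}^{ns}}\{t:\Phi_t[\rho](\delta)\text{ elliptic}\}$ has full measure on the twist orbit, i.e., almost every point of every orbit lies in $\mathcal{E}^k$. Fubini applied to the twist disintegration of Liouville measure then yields $\mu(B_\gamma)=0$. The condition $(g,k)\neq(2,0)$ enters precisely here: the excluded configuration $g_1=g_2=1$, $k_1=k_2=0$, $[\rho]\in\mo^0_+(\Gamma_2)$ is exactly the Haas--Susskind phenomenon made explicit in Proposition \ref{propintrononergod}, where every non-separating simple curve stays hyperbolic under every deformation, so no crossing into $(-2,2)$ can ever occur. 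For every other $(g,k)$, I would verify the full-measure statement by an analysis based on the explicit trace formulas of Subsection \ref{explicit}, split into cases by $(g_1,g_2)$ and by the Euler-class decomposition $k_1+k_2=k$ on either side of $\gamma$: in each case either the non-zero total Euler class or a higher-genus factor produces, along the twist orbit, enough non-separating simple curves whose trace functions together cover a full-measure subset of the twist orbit in $(-2,2)$. I expect this case analysis—and in particular the step that combines many choices of $\delta$ into a full-measure cover of the twist orbit, rather than just a single open arc—to be the main obstacle, as it is a strengthening of the single-curve estimates that dominate Sections \ref{SectionNonErgod} and \ref{SectionChasseElliptique}.
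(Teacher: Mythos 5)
Your opening reduction (discard the countable union of proper subvarieties where some simple curve is parabolic, of finite order or central; then a point of $\mathcal{NH}^k$ off that set which is not in $\mathcal{E}^k$ carries a \emph{separating} simple curve with elliptic image of infinite order) is the same as the paper's, and your Fubini set-up over the periodic twist flow along $\gamma$ is a legitimate framework. But the proof stops exactly where the real difficulty begins. The crux you isolate --- that for almost every $[\rho]\in U_\gamma$ almost every point of the twist orbit admits a non-separating simple curve with elliptic image --- is asserted, not proved, and the tool you invoke for it does not apply: the trace formulas of Subsection \ref{explicit} are written for a genus-$2$ decomposition along three \emph{non-separating} curves, all with \emph{hyperbolic} holonomy, whereas here $\gamma$ is separating with elliptic holonomy, and for $g\geq 3$ no such formulas are available at all. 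Even the preliminary claim that $t\mapsto\tr\bigl(\Phi_t[\rho](\delta)\bigr)$ is non-constant needs an argument, and a single such function need never enter $(-2,2)$; nothing in the proposal explains why the union over all non-separating $\delta$ covers a full-measure set of twist times, which is precisely the phenomenon that fails on $\mo^0_+(\Gamma_2)$ and must therefore be extracted from the hypotheses $k\neq 0$ or $g\geq 3$ by some genuine geometric mechanism.

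That mechanism is what the paper supplies, and it is in fact stronger and simpler in structure than your measure estimate: it shows \emph{pointwise} that any $[\rho]$ in the full-measure set $\mathcal{N}$ (no simple curve parabolic, of finite order, or trivial) which sends a separating simple curve to an elliptic element already sends some non-separating simple curve to an elliptic element, so $B_\gamma\cap\mathcal{N}=\emptyset$ and no Fubini argument is needed. For $g=2$, $k=\pm1$ (Lemma \ref{SepNonSepLem1}) the Euler class forces the two handle commutators to rotate in the same direction, and after composing with high powers of Dehn twists one gets hyperbolic elements $\rho(a_1)$, $\rho(a_2)$ with large displacement and controlled axes on either side of the elliptic fixed point, so that a curve of the form $a_1\gamma^N a_2\gamma^{-N}$ is non-separating and elliptic (via the commutator/reflection constructions of Section \ref{SectionCommut}). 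For $g\geq 3$ (Lemma \ref{SepNonSepLem2} and the following lemma) one conjugates a fixed non-separating curve by high powers of a disjoint one, $a^N\delta a^{-N}$, to produce non-separating curves of bounded displacement whose axes drift arbitrarily far from the elliptic fixed point, and a right-angled hexagon criterion $H(\ell_1,d,\ell_2)$ then guarantees that an explicit product of two such curves is elliptic. Until you prove an analogue of these constructions, your argument has a genuine gap at its central step; if you do prove them, the twist-flow/Fubini scaffolding becomes unnecessary.
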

These two statements, together with Theorem \ref{Bowditch}, imply
Theorems \ref{ergodicite} and \ref{ergodgeneral}.

Now we draw a proof Theorem \ref{Ergod}, using some propositions which will
be proved in the rest of this section.

\begin{proposition}\label{EConnexe}
 The space $\mathcal{EI}^k$ is connected.
\end{proposition}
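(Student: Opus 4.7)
The plan is to first establish connectedness of the open superset $\mathcal{E}^k \subset \mathcal{M}^k$, and then upgrade to connectedness of $\mathcal{EI}^k$ via a perturbation argument exploiting the real-analytic structure of traces. For the first step, fix a non-separating simple closed curve $\gamma$ and set $\mathcal{E}^k_\gamma = \{[\rho] \in \mathcal{M}^k : \rho(\gamma) \text{ is elliptic}\}$, an open set. By transitivity of $\mcg(\Sigma)$ on non-separating simple closed curves, $\mathcal{E}^k = \bigcup_{\phi \in \mcg(\Sigma)} \phi \cdot \mathcal{E}^k_\gamma$ and the summands are mutually homeomorphic. I would first prove each $\mathcal{E}^k_\gamma$ is connected, then show that for any two disjoint non-separating curves $\gamma, \gamma'$ the sets $\mathcal{E}^k_\gamma, \mathcal{E}^k_{\gamma'}$ intersect, and finally use the connectedness of the disjointness graph of non-separating simple closed curves on $\Sigma$ to glue everything together.

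For the connectedness of $\mathcal{E}^k_\gamma$, cut $\Sigma$ along a simple closed curve $\delta$ bounding a one-holed torus $T$ containing $\gamma$; the complement is a surface $\Sigma''$ of genus $g-1$ with one boundary circle. On the open dense subset where $\rho(\delta)$ is hyperbolic, Euler class additivity gives $k = \eu(\rho|_T) + \eu(\rho|_{\Sigma''})$ with $\eu(\rho|_T) \in \{-1,0,+1\}$ by Milnor-Wood. Each admissible stratum $\{\eu(\rho|_T) = \epsilon\}$ is connected by combining Theorem \ref{fibr0} for the $\Sigma''$ factor (applicable when $|k-\epsilon|<2g-2$, which is automatic for admissible $\epsilon$) with a direct analysis of representations of the free group $\pi_1(T)$ on two generators, in trace coordinates and in the spirit of Section \ref{SectionParam}. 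The different strata are bridged across the wall $\{|\tr \rho(\delta)|=2\}$: Theorem \ref{fibr1} produces paths of representations with prescribed hyperbolic boundary values degenerating to parabolic values, and the Euler class of $\rho|_T$ jumps by $\pm 1$ as $\rho(\delta)$ crosses this wall, realizing the transitions between adjacent strata. For the intersection of two sets $\mathcal{E}^k_\gamma$ and $\mathcal{E}^k_{\gamma'}$ with $\gamma, \gamma'$ disjoint and non-separating, I would exhibit an explicit representation sending both curves to elliptic elements by choosing a pants decomposition containing both and using the building blocks of Section \ref{SectionParam}.

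For the upgrade to $\mathcal{EI}^k$, given $[\rho_0], [\rho_1] \in \mathcal{EI}^k$ and a path $[\rho_t]$ in $\mathcal{E}^k$ joining them, by compactness of $[0,1]$ and openness of $\mathcal{E}^k_\gamma$ there is a finite cover $I_1, \ldots, I_m$ of $[0,1]$ with non-separating curves $\gamma_1, \ldots, \gamma_m$ such that $\rho_t(\gamma_j)$ is elliptic for $t \in I_j$. The trace $t \mapsto \tr(\rho_t(\gamma_j))$ is real-analytic; being the restriction of a non-constant algebraic function on $\mathcal{M}^k$, it hits only a discrete subset of the countably many finite-order trace values on $I_j$. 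A small transversal perturbation of the path within $\mathcal{E}^k_{\gamma_j}$, possible since $\dim \mathcal{M}^k = 6g-6 \geq 6$, avoids the forbidden locus and moves the path into $\mathcal{EI}^k_{\gamma_j}$ on $I_j$, with compatibility at the overlaps $I_j \cap I_{j+1}$ ensured by generic transversality.

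The main obstacle lies in Step 1, specifically the bridging of Euler class strata for the one-holed torus piece across the non-hyperbolic $\rho(\delta)$ locus. While Theorem \ref{fibr1} handles paths of hyperbolic boundary values on the $\Sigma''$ side, on the $T$ side one needs an explicit construction showing that as $\rho(\delta)$ becomes parabolic and re-emerges hyperbolic, one can continuously deform through representations with $\rho(\gamma)$ remaining elliptic; tracking the Euler class jump via Milnor's algorithm in this regime is the most delicate point, likely to be handled by a refined version of the pair-of-pants parametrization from Section \ref{SectionParam}, adapted to one-holed tori with elliptic non-boundary generator.
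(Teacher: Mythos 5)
Your overall architecture (cover by sets indexed by curves, connect each piece, link pieces via the curve complex) is the same as the paper's, but two of your steps have genuine gaps, and the paper's proof is organized precisely to avoid them.

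First, the bridging of the Euler-class strata of $\mathcal{E}^k_\gamma$ across the wall $|\tr\rho(\delta)|=2$ is not just ``delicate'': the tools you invoke do not apply there. Theorem \ref{fibr1} requires the path of boundary values to stay in the set of \emph{hyperbolic} elements, and the relative Euler class of $\rho|_T$ and $\rho|_{\Sigma''}$ is only defined when $\rho(\delta)$ is hyperbolic, so the stratification you want to glue degenerates exactly at the locus where the gluing must occur. The paper sidesteps this entirely by never cutting along a curve that could become non-hyperbolic: it indexes its pieces by pairs $(a,b)$ of simple curves with $i(a,b)=1$ and considers representations sending $a$ and $b$ to non-commuting elliptics; the commutator $\rho([a,b])$ is then automatically hyperbolic, the one-holed torus piece is parametrized explicitly by $(d,\theta_a,\theta_b)\in(0,\infty)\times(0,2\pi)^2$, and Theorem \ref{fibr1} applies with no wall-crossing. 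The linking of these pieces is then done by Dehn twists (Lemmas \ref{LemConnex0}--\ref{LemConnex2}) and the connectedness of the curve complex, not by degenerating boundary holonomies.

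Second, and more fatally, the upgrade from $\mathcal{E}^k$ to $\mathcal{EI}^k$ by ``a small transversal perturbation avoiding the forbidden locus'' cannot work. The forbidden locus on $\mathcal{E}^k_{\gamma_j}$ is the union of the level sets $\{\tr(\rho(\gamma_j))=2\cos(p\pi/q)\}$ over all rationals $p/q$; these are codimension-one hypersurfaces whose union is \emph{dense} in the region where $\rho(\gamma_j)$ is elliptic. Any path along which $\tr(\rho_t(\gamma_j))$ is non-constant must cross infinitely many of them, no matter how you perturb; the only way to stay in $\mathcal{EI}^k_{\gamma_j}$ is to hold that trace fixed at an irrational-angle value, which defeats the purpose of the deformation. (Note also that connectedness of $\mathcal{E}^k$ plus full measure of $\mathcal{EI}^k$ would not by itself give connectedness of $\mathcal{EI}^k$.) The paper resolves exactly this difficulty by keeping \emph{two} curves $a,b$ with $i(a,b)=1$ elliptic simultaneously and requiring only one of them to have infinite order: the set of $(\theta_a,\theta_b)\in(0,2\pi)^2$ with at least one angle irrational is path-connected (move one coordinate while the other is pinned at an irrational value), which is the key point of Lemma \ref{ChacunConnexe}. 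You would need some version of this two-curve mechanism to make your last step correct.
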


The second step takes its inspiration from Goldman and Xia's paper
\cite{GoldmanXia11}.
To every closed loop $\gamma$ in $\Sigma$,
associate the function
$f_\gamma\colon\mathcal{M}^k\rightarrow\R_+$ defined by
$f_\gamma(\rho)=\tr(\rho(\gamma))^2$. 

We set $\mathcal{U}^k$
to be the set of $[\rho]\in\mathcal{E}^k$ such that there exist  simple curves $\gamma_1,\ldots,\gamma_{6g-6}$ such that the
cotangent $T^*_{[\rho]}\mathcal{M}^k$ is generated by the differentials
$df_{\gamma_1},\ldots,df_{\gamma_{6g-6}}$ and such that $|\tr \rho(\gamma_i)|<2$ for all $i$.
\begin{proposition}\label{ToutesLesDirections}
  The space $\mathcal{U}^k$ is an open subspace of $\mathcal{M}^k$ containing $\mathcal{EI}^k$.
\end{proposition}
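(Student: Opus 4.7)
The proof splits into two parts: openness of $\mathcal{U}^k$ in $\mathcal{M}^k$, and the inclusion $\mathcal{EI}^k\subset\mathcal{U}^k$. The first is essentially formal. Given $[\rho_0]\in\mathcal{U}^k$ witnessed by simple curves $\gamma_1,\ldots,\gamma_{6g-6}$, the strict inequalities $|\tr\rho(\gamma_i)|<2$ are open conditions on $\rho$, the linear independence of the differentials $df_{\gamma_i}$ (a non-vanishing determinant in local coordinates) is open, and $\mathcal{E}^k$ is open in $\mathcal{M}^k$. Intersecting these open conditions yields a neighborhood of $[\rho_0]$ entirely contained in $\mathcal{U}^k$.

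For the inclusion, let $[\rho]\in\mathcal{EI}^k$ and fix a non-separating simple closed curve $a_0$ with $\rho(a_0)$ elliptic of infinite order. By Theorem 2.1 of \cite{GoldmanXia11}, the trace functions of simple closed curves generate the algebra of regular functions on the character variety, so at the smooth point $[\rho]$ their differentials span $T^*_{[\rho]}\mathcal{M}^k$. One can therefore select $6g-6$ simple curves $\gamma_1,\ldots,\gamma_{6g-6}$ whose differentials form a basis of the cotangent space. To additionally enforce $|\tr\rho(\gamma_i)|<2$, one uses $a_0$: whenever $i(a_0,\gamma_i)>0$ the family $T_{a_0}^n(\gamma_i)$ consists of simple closed curves, and the sequence $n\mapsto\tr\rho(T_{a_0}^n(\gamma_i))$ is almost periodic. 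Since the rotation angle of $\rho(a_0)$ is an irrational multiple of $\pi$, this sequence is dense in its closed range, which generically meets $(-2,2)$. If $i(a_0,\gamma_i)=0$, one first replaces $\gamma_i$ by $T_b^m(\gamma_i)$ for an auxiliary simple curve $b$ crossing both $a_0$ and $\gamma_i$, reducing to the previous case.

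The main obstacle is to guarantee that successively replacing each $\gamma_i$ by $T_{a_0}^{n_i}(\gamma_i)$ preserves the linear independence of the differentials at $[\rho]$. Using Goldman's Poisson bracket formula for trace functions, the differential $df_{T_{a_0}^n\gamma}$ can be shown to depend non-trivially on $n$ whenever $i(a_0,\gamma)>0$. The set of tuples $(n_1,\ldots,n_{6g-6})\in\Z^{6g-6}$ for which $df_{T_{a_0}^{n_i}\gamma_i}$ remains a basis is therefore open and dense in a suitable sense (realized via the Hamiltonian flow of $f_{a_0}$, which interpolates the discrete action of $T_{a_0}$), and one selects the $n_i$ sequentially so as to satisfy both the trace bound and the independence condition. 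This combinatorial selection, together with the non-degeneracy arising from the Poisson bracket computation, constitutes the technical heart of the argument.
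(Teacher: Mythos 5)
Your openness argument is fine (and is the easy half of the statement). The real content is the inclusion $\mathcal{EI}^k\subset\mathcal{U}^k$, and there your proof has a genuine gap at exactly the point you yourself flag as ``the technical heart'': you first choose $6g-6$ simple curves whose differentials form a basis, and then try to twist each one along $a_0$ so as to force $|\tr\rho(\gamma_i)|<2$ while preserving independence. Two steps of this are asserted rather than proved. First, the claim that $n\mapsto\tr\rho(T_{a_0}^n\gamma_i)$ ``generically meets $(-2,2)$'' is only clear when $i(a_0,\gamma_i)=1$, where the trace equals $(x+t)\cos(n\theta)+(z-y)\sin(n\theta)$ and is dense in an interval containing $0$; for higher intersection number the trace is a higher-degree trigonometric expression in $n\theta$ and nothing guarantees it ever enters $(-2,2)$. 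Second, and more seriously, nothing in your argument shows that the exponents $n_i$ can be chosen so that the twisted differentials remain a basis \emph{and} the traces are simultaneously small: the set of $n$ for which $df_{T_{a_0}^{n}\gamma_i}$ stays independent of the others and the set of $n$ for which the trace lies in $(-2,2)$ may both be infinite, but you give no reason they intersect. ``Open and dense in a suitable sense'' is not an argument in a discrete parameter space, and the Hamiltonian interpolation of the twist does not by itself supply one.

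The paper sidesteps this simultaneity problem by reversing the logic. It defines $D_{[\rho]}\subset T^*_{[\rho]}\mathcal{M}^k$ as the span of the differentials $df_\delta$ over all simple curves \emph{already sent to elliptic elements}, and proves $D_{[\rho]}=T^*_{[\rho]}\mathcal{M}^k$; a basis extracted from this family then automatically satisfies both conditions. The key computation (Lemma \ref{inter1}) is close in spirit to your recursion idea but is used in the opposite direction: the trace identity gives $dF_{n+1}+dF_{n-1}=G\,dF_n$ modulo $D_{[\rho]}$, so $dF_n\equiv\cos(n\theta)a+\sin(n\theta)b$ modulo $D_{[\rho]}$; since $|\tr(BA^n)|<2$ for infinitely many $n$, one gets $dF_n\in D_{[\rho]}$ for infinitely many $n$, which for irrational $\theta$ forces $a=b=0$ and hence $df_\delta\in D_{[\rho]}$ for every simple $\delta$ meeting $\gamma$ once. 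Curves disjoint from $\gamma$ are then handled by further trace identities (Lemma \ref{inter0}), and the last direction is killed using the restriction to $\Sigma\setminus\gamma$, the twist vector field $X_\gamma$, and a curve with $\{f_\gamma,f_\delta\}\neq 0$ (Lemma \ref{poisson}). To salvage your approach you would essentially have to prove this spanning statement anyway.
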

This should be compared to Theorem 2.1 of \cite{GoldmanXia11}
which is a key step for their proof of ergodicity
of $\mcg(\Sigma)$ on the $\mathrm{SU}_2$-character variety. Proposition
\ref{ToutesLesDirections} thus enable to adapt part of their
strategy here.

The following fact is then directly adapted from \cite{GoldmanXia11}:
\begin{proposition}\label{Voisinage}
  Let $f\colon\mathcal{U}^k\rightarrow\R$ be a measurable function, invariant
  under $\mcg(\Sigma)$. Then every $[\rho]\in\mathcal{U}^k$ has a
  neighbourhood $\mathcal{V}_{[\rho]}$ such that $f$ is almost everywhere
  constant on $\mathcal{V}_{[\rho]}$.
\end{proposition}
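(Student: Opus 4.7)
The plan is to adapt the strategy of \cite{GoldmanXia11}, where the analogous statement is proved on the $\mathrm{SU}_2$-character variety. I fix $[\rho_0]\in\mathcal{U}^k$, and by definition of $\mathcal{U}^k$ I can choose simple curves $\gamma_1,\ldots,\gamma_{6g-6}$ such that the differentials $df_{\gamma_i}$ span $T^*_{[\rho_0]}\mathcal{M}^k$ and $|\tr(\rho_0(\gamma_i))|<2$. By nondegeneracy of the Goldman symplectic form, the Hamiltonian vector fields $X_i$ associated to $f_{\gamma_i}$ also span $T_{[\rho_0]}\mathcal{M}^k$; by openness of linear independence and of the trace inequality, I can work in an open neighborhood $\mathcal{W}$ of $[\rho_0]$ on which the $X_i$ span the tangent bundle and each $\rho(\gamma_i)$ remains elliptic.

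The crucial intermediate step is to show that $f$ is almost everywhere invariant under each Hamiltonian flow $\phi_i^t$ of $f_{\gamma_i}$. The key input is Goldman's twist flow formula: since $\rho(\gamma_i)$ is elliptic on $\mathcal{W}$, the flow $\phi_i^t$ is periodic along each orbit (the period depending on the rotation angle), and the Dehn twist $\tau_{\gamma_i}$ acts as the time-$\theta_i(\rho)$ map of $\phi_i^t$, where $\theta_i$ is a real-analytic function of $\tr(\rho(\gamma_i))$ and hence constant along orbits of $\phi_i^t$. For $[\rho]$ in the complement of a countable union of proper algebraic subsets of $\mathcal{W}$, the ratio of $\theta_i(\rho)$ to the period is irrational, so the cyclic orbit $\{\tau_{\gamma_i}^n\cdot[\rho]\}_{n\in\Z}$ is dense in the $\phi_i$-orbit through $[\rho]$. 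Disintegrating the measure along $\phi_i$-orbits and applying the classical fact that a measurable function on a circle invariant under an irrational rotation is almost everywhere constant, I conclude that the $\mcg(\Sigma)$-invariant $f$ is almost everywhere invariant under the full flow $\phi_i^t$.

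To conclude, I introduce the map $\Phi(t_1,\ldots,t_{6g-6})=\phi_1^{t_1}\circ\cdots\circ\phi_{6g-6}^{t_{6g-6}}([\rho_0])$, which by the inverse function theorem is a diffeomorphism from a small neighborhood of $0$ in $\R^{6g-6}$ onto a neighborhood $\mathcal{V}_{[\rho_0]}\subset\mathcal{W}$ of $[\rho_0]$. A short inductive computation using the identity $\phi_i^{t_i+s}=\phi_i^s\circ\phi_i^{t_i}$, together with the almost-everywhere $\phi_j$-invariance of $f$ applied successively for $j=1,\ldots,i-1$ to cancel the conjugating factors, shows that $f\circ\Phi$ is almost everywhere invariant under translation in each coordinate $t_i$. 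A standard Fubini argument then yields that $f\circ\Phi$ is almost everywhere constant on a neighborhood of $0$, so $f$ is almost everywhere constant on $\mathcal{V}_{[\rho_0]}$. I expect the principal difficulty, beyond what appears in the $\mathrm{SU}_2$ setting of \cite{GoldmanXia11}, to be the careful treatment of Goldman's twist formula in $\psl$: one must track lifts and rotation angles through the non-compactness of the target group, and verify that the ``irrational rotation'' subset indeed has full measure in each flow orbit. Once this analytic input is in place, the remainder of the argument is a mechanical adaptation.
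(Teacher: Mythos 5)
Your proposal is correct and follows essentially the same route as the paper's own sketch: Goldman's twist-flow formula identifies the Dehn twist along $\gamma_i$ with a time-$\theta_i$ map of a periodic Hamiltonian flow (the paper normalizes via $h_i=\arccos(\sqrt{f_i}/2)$ to make all flows $2\pi$-periodic, whereas you keep the flow of $f_{\gamma_i}$ and track the period, which is equivalent), the irrational-rotation argument gives almost-everywhere invariance under each flow, and the spanning of the tangent space plus a Fubini-type argument yields local almost-everywhere constancy. No gap.
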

\begin{proof}[Sketch of proof]
Pick $[\rho]\in\mathcal{U}^k$ and let $\gamma_1,\ldots,\gamma_{6g-6}\in \mathcal{S}$ be simple curves such that $df_1,\ldots,df_{6g-6}$ form a basis of $T^*_{[\rho]}\mathcal{M}^k$ and such that $0<f_i(\rho)<4$ for all $i$ in $\{1,\ldots,6g-6\}$. For every $i$, let $X_i$ be the Hamiltonian vector field of the function $h_i=\arccos(\sqrt{f_i}/2)$. Its flow $\Phi_i$ is $2\pi$-periodic and the Dehn twist $\tau_i$ along $\gamma_i$ acts on $\mo^k$ by the formula 
$$\tau_i. \phi= \Phi_i^{h_i(\phi)}. \phi.$$
This implies that if $h_i(\phi)\notin 2\pi\Q$, a function $f:\mo^k\to \R$ invariant by the Dehn twist $\tau_i$ satisfies $f(\Phi^\theta_i(\phi))=f(\phi)$ for almost every $\theta\in \R/2\pi\Z$. There is a neighborhood $\mathcal{V}_{[\rho]}$ of $[\rho]$ in $\mo^k$ such that for all $i\in\{1,\ldots,6g-6\}$ and $[\phi]\in \mathcal{V}_{[\rho]}$ one has $0<f_i(\phi)<4$ and 
$$\operatorname{Span}\{X_1(\phi),\ldots,X_{6g-6}(\phi)\}=T_{[\phi]}\mathcal{M}^k.$$
Up to shrinking $\mathcal{V}_{[\rho]}$, one can suppose that the flows $\Phi_i$ act transitively on $\mathcal{V}_{[\rho]}$ and on almost any orbit of these flows, $f$ is almost constant. A standard measure theoretic argument implies that $f$ is almost constant on $\mathcal{V}_{[\rho]}$, see \cite{GoldmanXia11} for details.
\end{proof}

Now we give a proof of Theorem \ref{Ergod}, assuming  Proposition \ref{EConnexe}
and \ref{ToutesLesDirections}.

\begin{proof}[Proof of Theorem \ref{Ergod}]
  Let $f\colon\mathcal{E}^k\rightarrow\R$ be a measurable function,
  invariant under $\mcg(\Sigma)$.
  Proposition
  \ref{ToutesLesDirections} implies that $\mathcal{U}^k$ has full measure
  in $\mathcal{E}^k$. Therefore it suffices to prove that $f$ is
  almost everywhere constant on $\mathcal{U}^k$.
  Proposition \ref{Voisinage} enables to define a function
  $\bar{f}\colon\mathcal{U}^k\rightarrow\R$ as follows: for every
  $[\rho]\in\mathcal{U}^k$, $f$ is almost everywhere constant on
  $\mathcal{V}_{[\rho]}$; let $\bar{f}([\rho])$ be this constant.
  Clearly $\bar{f}$ is well-defined, and locally constant on
  $\mathcal{U}^k$. By Propositions \ref{EConnexe} and \ref{ToutesLesDirections}, $\mathcal{U}^k$ is connected.
  Hence $\bar{f}$ is constant on $\mathcal{U}^k$ and $f$ - being 
  almost everywhere equal to $\bar{f}$ - is also almost
  everywhere constant on $\mathcal{U}^k$.
\end{proof}

Propositions \ref{EConnexe} and \ref{ToutesLesDirections}
are proved in Sections \ref{SectionEConnexe} 
and \ref{SectionToutesLesDirections}, respectively.
Finally, Proposition \ref{EDense} is proved in Section \ref{SectionEDense}.


\subsection{Connectedness of $\mathcal{EI}^k$}\label{SectionEConnexe}

Let $i\colon\mathcal{S}\times\mathcal{S}\rightarrow\N$ denote
the minimal geometric intersection number. Note that $i(a,b)=1$
implies that $a$ and $b$ are non-separating, and that a neighborhood
of $a\cup b$ is homeomorphic to a one-holed torus embedded in $\Sigma$.
Let $\mathcal{C}$ denote the set of $(a,b)\in\mathcal{S}\times\mathcal{S}$
such that $i(a,b)=1$.

If $(a,b)\in\mathcal{C}$, denote by $\mathcal{EI}_{(a,b)}^k$ the set of classes
of representations $\rho$ of Euler class $k$ such that
$\rho(a)$ and $\rho(b)$ are elliptic, do not commute with each
other, and such that at least one of them has infinite order. Notice that this implies that $\rho([a,b])$ is hyperbolic. 

\begin{lemma}\label{ChacunConnexe}
  For every $(a,b)\in\mathcal{C}$,
  for every $k\in\{3-2g,\ldots,2g-3\}$,
  $\mathcal{EI}_{(a,b)}^k$ is non-empty and connected.
\end{lemma}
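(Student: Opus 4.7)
The plan is to decompose $\Sigma = T \cup_c \Sigma'$, where $T$ is a regular neighborhood of $a \cup b$ (a one-holed torus whose boundary $c$ is freely homotopic to $[a,b]$) and $\Sigma' = \overline{\Sigma \smallsetminus T}$ is a genus $g-1$ surface with one boundary component $c$. For every $\rho \in \mathcal{EI}_{(a,b)}^k$, the element $\rho([a,b])$ is hyperbolic (as observed just after the definition of $\mathcal{EI}_{(a,b)}^k$), so the relative Euler classes $\eu(\rho|_T)$ and $\eu(\rho|_{\Sigma'})$ are well-defined and sum to $k$ by additivity. The key observation is that $\pi_1(T)$ is free of rank $2$, so $\rho|_T$ lifts to $\sldeuxR$; hence $\eu(\rho|_T)$ is even, and combined with the Milnor-Wood bound $|\eu(\rho|_T)| \leq 1$ for the one-holed torus this forces $\eu(\rho|_T) = 0$ and $\eu(\rho|_{\Sigma'}) = k$.

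For non-emptiness, I would pick $A, B \in \psl$ elliptic, non-commuting, with $A$ of infinite order, so that $C := [A,B]$ is hyperbolic. Since $|k| \leq 2g-3 < 2(g-1)$, Theorem \ref{fibr0} ensures the space of representations of $\pi_1(\Sigma')$ of Euler class $k$ with hyperbolic boundary is non-empty, and Theorem \ref{fibr1} applied to the constant path at $C^{-1}$ then produces such a $\rho'$ with $\rho'(c) = C^{-1}$. Gluing $(A,B)$ with $\rho'$ yields a representation in $\mathcal{EI}_{(a,b)}^k$; its non-elementarity is automatic because $A$ has infinite order and $B$ has a different fixed point in $\HH$, so $\langle A,B\rangle$ preserves neither a point in $\HH$ nor a pair on $\partial \HH$.

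For connectedness, I would view the preimage $\widetilde{\mathcal{EI}}_{(a,b)}^k \subset \Hom(\Gamma_g,\psl)$ as fibered over the open set $\mathcal{P} = \{(A,B) \in \psl^2 : A,B \text{ elliptic, non-commuting, at least one of infinite order}\}$ via $\rho \mapsto (\rho(a), \rho(b))$. The base $\mathcal{P}$ is connected: the product of the two elliptic loci in $\psl$ is a connected $6$-manifold, from which we remove only the codimension-$2$ sublocus of commuting pairs together with a measure-zero set of pairs both of finite order. Given $\rho_0, \rho_1 \in \widetilde{\mathcal{EI}}_{(a,b)}^k$, I first connect $(\rho_0(a), \rho_0(b))$ to $(\rho_1(a), \rho_1(b))$ by a path $(A_t,B_t) \subset \mathcal{P}$, then apply Theorem \ref{fibr1} to the path $C_t := [A_t,B_t]^{-1}$ to produce a path $\rho'_t$ of Euler-class-$k$ representations of $\pi_1(\Sigma')$ with $\rho'_t(c) = C_t$. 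The combined path $(A_t, B_t, \rho'_t)$ lies in $\widetilde{\mathcal{EI}}_{(a,b)}^k$, though $\rho'_0$ and $\rho'_1$ generally differ from $\rho_0|_{\Sigma'}$ and $\rho_1|_{\Sigma'}$; I would close these gaps inside the fibers over $(A_0,B_0)$ and $(A_1,B_1)$, and finally project to the conjugation quotient to get a path in $\mathcal{EI}_{(a,b)}^k$.

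The main obstacle is precisely the connectedness of these fibers: the space of non-elementary representations of $\pi_1(\Sigma')$ with a prescribed hyperbolic boundary holonomy and Euler class $k$. This is a refinement of Theorem \ref{fibr0} with the boundary holonomy fixed rather than merely hyperbolic; it should follow from the same fibration-type argument of Goldman together with the fact that the centralizer of a hyperbolic element is a connected one-parameter subgroup, so fixing a specific boundary element only contributes a connected factor beyond fixing its conjugacy class.
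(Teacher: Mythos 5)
Your overall route is the same as the paper's: cut $\Sigma$ along $c=[a,b]$ into the one-holed torus $T$ and the genus $g-1$ piece $\Sigma'$, connect the torus-side data through the space of non-commuting elliptic pairs with an infinite-order member, and use Goldman's results (Theorems \ref{fibr0} and \ref{fibr1}) to deal with $\Sigma'$. The paper does the torus step at the level of conjugacy classes, via the homeomorphism of that space with $(0,\infty)\times(0,2\pi)^2$ given by $(d,\theta_a,\theta_b)$; you do it upstairs in $\psl^2$, which is equivalent. The endpoint-matching issue you isolate at the end (connectedness of the fiber of the boundary-restriction map over a fixed hyperbolic element) is real, but the paper's own proof is no more explicit there -- it simply invokes Theorem \ref{fibr1} -- so I would not count it against you; your sketch via the connected centralizer of a hyperbolic element is the right way to fill it.

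Two of your justifications, however, are wrong as stated, even though both conclusions are correct. First, ``$\rho|_T$ lifts to $\sldeuxR$, hence $\eu(\rho|_T)$ is even'' is false for the \emph{relative} Euler class: every representation of a free group lifts, yet a Fuchsian representation of the one-holed torus has relative Euler class $\pm 1$. If your parity argument were valid it would force relative Euler class $0$ on every one-holed torus with hyperbolic boundary, which is absurd. The correct reason $\eu(\rho|_T)=0$ here is that for lifts $A,B\in\sldeuxR$ of two elliptic elements one has $\tr[A,B]\geq 2$ (Goldman's Lemma 4.4, quoted in Section \ref{SectionCommut}: $\tr[A,B]<2$ forces $A,B$ hyperbolic with crossing axes), hence $\tr[A,B]>2$ when the commutator is hyperbolic, and this sign condition is exactly relative Euler class $0$ for the one-holed torus (compare the paper's Lemma \ref{LemConnex0}, which cites Goldman's Theorem 3.4). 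Second, your argument that $\mathcal{P}$ is connected because you only remove ``a measure-zero set of pairs both of finite order'' does not work: that set is dense, and deleting a dense measure-zero set can disconnect a manifold. What saves you is the paper's observation that the complement of $(\pi\Q)^2$ in $(0,2\pi)^2$ is path-connected (move one angle at a time, keeping the other irrational), which transfers to $\mathcal{P}$ via the $(d,\theta_a,\theta_b)$ parametrization. Both fixes are short, but as written these two steps are gaps.
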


\begin{proof}
  Let $\Sigma_0$ be a one-holed torus containing $a$ and $b$ in $\Sigma$,
  and let $\Sigma_1$ be the complementary genus $g-1$ surface with one
  boundary component.
  
  First we analyse the space $\mathcal{M}_0$ of
  conjugacy classes of representations
  of $\pi_1(\Sigma_0)$ sending $a$ and $b$ to non commuting elliptic
  elements. If $\rho\in\mathcal{M}_0$, the distance $d(\rho)$ between
  the fixed points of $\rho(a)$ and $\rho(b)$ is positive, and the
  angles $\theta_a(\rho)$ and $\theta_b(\rho)$ of these two rotations
  lie in $(0,2\pi)$. The map
  $\mathcal{M}_0\rightarrow(0,+\infty)\times(0,2\pi)\times(0,2\pi)$
  defined by $(d,\theta_a,\theta_b)$ is a homeomorphism. In $(0,2\pi)\times(0,2\pi)$, the set of
  pairs $(\theta_a,\theta_b)$ such that $\theta_a/\pi$ or $\theta_b/\pi$ is irrational is path-connected.
  Hence, the subspace $\mathcal{M}_0'$ of $\mathcal{M}_0$ consisting of
  classes of representations such that $\rho(a)$ or $\rho(b)$ has
  infinite order is path-connected.

  Now, given two elements $\rho$, $\rho'$ in $\mathcal{EI}_{(a,b)}^k$,
  we can consider their restrictions $\rho_0$, $\rho_0'$ to
  $\pi_1(\Sigma_0)$, and connect them by a path in $\mathcal{M}_0'$.
  Theorem \ref{fibr1} allows us to complete it to a path in
  $\mathcal{EI}_{(a,b)}^k$.
\end{proof}

We define an equivalence relation $\sim$ on $\mathcal{C}$
generated by: $(a,b)\sim(a',b')$ if
$\mathcal{EI}_{(a,b)}^k\cap\mathcal{EI}_{(a',b')}^k\neq\emptyset$.

The rest of this subsection will consist in the proof of the
following fact:

\begin{proposition}\label{RelationTransitive}
  The equivalence relation $\sim$ has a unique class in $\mathcal{C}$.
\end{proposition}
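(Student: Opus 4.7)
The plan is to combine the transitivity of $\mcg(\Sigma)$ on $\mathcal{C}$, a local equivalence for pairs sharing one curve, and the connectedness of an auxiliary curve graph. The relation $\sim$ is symmetric in each pair by definition, since $\mathcal{EI}^k_{(a,b)}=\mathcal{EI}^k_{(b,a)}$. Let $G$ denote the graph whose vertices are isotopy classes of non-separating simple closed curves in $\Sigma$, with an edge $\{a,b\}$ whenever $i(a,b)=1$. For $g\ge 2$ the graph $G$ is connected, for instance as a consequence of the Hatcher--Thurston connectedness theorem for the cut system complex: each simple move exchanges two curves intersecting once, which corresponds exactly to an edge of $G$. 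Granted the following key claim, we obtain Proposition \ref{RelationTransitive}: given $(a,b),(a',b')\in\mathcal{C}$, pick a path $a=c_0,c_1,\ldots,c_n=a'$ in $G$ and chain the moves $(a,b)\sim(a,c_1)\sim(c_1,a)\sim(c_1,c_2)\sim\cdots\sim(a',c_{n-1})\sim(a',b')$, alternating the key claim (for pairs sharing a curve) and the symmetry $(x,y)\sim(y,x)$.

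\emph{Key claim.} For every $(a,b),(a,c)\in\mathcal{C}$ sharing the first curve, $(a,b)\sim(a,c)$.

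To prove the key claim we produce a single representation $\rho\in\mathcal{EI}^k_{(a,b)}\cap\mathcal{EI}^k_{(a,c)}$: non-elementary of Euler class $k$, with $\rho(a)$ elliptic of infinite order and both $\rho(b),\rho(c)$ elliptic, such that $\rho(a)$ commutes with neither $\rho(b)$ nor $\rho(c)$. The infinite-order condition in both pairs is then supplied by $\rho(a)$, and the non-commutation condition is an open generic condition, easily arranged once the ellipticity is in place. By Lemma \ref{ChacunConnexe}, $\mathcal{EI}^k_{(a,b)}$ is non-empty and connected, and the subset where $\rho(a)$ has infinite order is dense. It therefore suffices to show that on this connected space, the image of the continuous function $\rho\mapsto \tr\rho(c)$ meets the interval $(-2,2)$. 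The freedom to deform $\rho$ comes from Theorem \ref{fibr1}: decomposing $\Sigma=\Sigma_0\cup\Sigma_1$ with $\Sigma_0$ a regular neighbourhood of $a\cup b$ (a one-holed torus) and $\Sigma_1$ the complementary genus-$(g-1)$ surface, the restriction $\rho|_{\pi_1(\Sigma_1)}$ can be continuously deformed through representations with hyperbolic boundary equal to $\rho([a,b])$ and any prescribed relative Euler class, while the $\Sigma_0$-part is parametrized by $(\theta_a,\theta_b,d)$ as in Lemma \ref{ChacunConnexe}. Whenever $c$ essentially crosses $\partial\Sigma_0$, deforming $\rho|_{\pi_1(\Sigma_1)}$ continuously moves $\tr\rho(c)$; if instead $c$ is homotopic into $\Sigma_0$, then $c$ corresponds to a rational slope in the $(a,b)$-coordinates and $\tr\rho(c)$ is a non-constant Fricke--Klein polynomial in $(\theta_a,\theta_b,d)$.

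The main obstacle is showing that $\tr\rho(c)$ actually crosses the parabolic threshold $|\tr|=2$ on $\mathcal{EI}^k_{(a,b)}$. In the first case one lets a Fenchel--Nielsen twist parameter of $\rho|_{\pi_1(\Sigma_1)}$ along a curve of $\Sigma_1$ crossed by $c$ tend to $\pm\infty$, which drives $|\tr\rho(c)|$ to $\infty$; while choosing a different deformation that brings $\rho(c)$ close to the identity yields $|\tr\rho(c)|$ close to $2$. In the second case, the explicit dependence of $\tr\rho(c)$ on $(\theta_a,\theta_b,d)$---a real analytic non-constant function---shows that it takes both values above and below $2$ as these parameters vary. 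The intermediate value theorem, applied to the continuous function $\tr\rho(c)$ on the connected space $\mathcal{EI}^k_{(a,b)}$, then provides a representation $\rho$ with $\rho(c)$ elliptic; a small further perturbation ensures that $\rho(a),\rho(c)$ do not commute, completing the proof of the key claim and, with it, of Proposition \ref{RelationTransitive}.
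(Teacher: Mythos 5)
Your global skeleton (reduce to pairs sharing a curve, then chain along a connected graph of non-separating curves) is reasonable and close in spirit to the paper, which also combines curve-complex connectivity with ``one-curve-in-common'' moves. But the proof of your key claim has a genuine gap at its analytic core. You reduce everything to showing that $|\tr\rho(c)|<2$ is attained somewhere on the connected space $\mathcal{EI}^k_{(a,b)}$, and you propose to get this by the intermediate value theorem between deformations where $|\tr\rho(c)|\to\infty$ and deformations where $\rho(c)$ is ``close to the identity''. This does not work: a representation with $\rho(c)$ close to the identity has $|\tr\rho(c)|$ close to $2$, not below $2$ (and if $\rho(c)$ stays hyperbolic or parabolic it approaches $2$ from above), so the IVT between $+\infty$ and ``close to $2$'' certifies nothing in $(-2,2)$. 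Worse, the assertion that one can deform within $\mathcal{EI}^k_{(a,b)}$ (elliptic non-commuting handle, hyperbolic boundary $\rho([a,b])$, prescribed relative Euler class on $\Sigma_1$) so as to make $\rho(c)$ nearly trivial is unjustified and is essentially equivalent to what you are trying to prove; similarly, in the case $c\subset\Sigma_0$, ``$\tr\rho(c)$ is a non-constant polynomial in $(\theta_a,\theta_b,d)$'' does not imply its range on this region meets $(-2,2)$. The Fenchel--Nielsen blow-up claim also needs care (the twisting curve must have hyperbolic holonomy and the leading coefficient of the twist expansion must not vanish), but the missing endpoint of the IVT is the fatal point.

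The paper supplies exactly the ingredient you are missing, and it is worth internalizing: since $\rho(a)$ is an elliptic element of infinite order, the traces of $\rho(ca^n)$ oscillate like $R\cos(n\theta+\phi_0)$ with $\theta/\pi$ irrational (Lemma \ref{multiplicationdespetitselliptiques}), so for suitable $n$ the simple curve obtained from $c$ by twisting along $a$ is sent to an elliptic element --- ellipticity is produced arithmetically, not by a connectedness/IVT argument. This is then combined with Lemma \ref{LemConnex0} (twists in the handle do not change the class, proved by an explicit representation with $\rho(a)$ of order $2$ extended via Theorem \ref{fibr0}), with Lemma \ref{LemConnex1} (the case $i(b,c)=0$), with an induction on $i(a,a')$ in Lemma \ref{LemConnex2}, and finally with connectivity of the curve complex. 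If you want to keep your streamlined reduction, you must replace your IVT step by this twisting argument (or an equivalent mechanism) to actually produce a representation in $\mathcal{EI}^k_{(a,b)}\cap\mathcal{EI}^k_{(a,c')}$ for some twist $c'$ of $c$ along $a$, and then argue as in the paper that twisting along $a$ does not change the equivalence class.
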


Lemma \ref{ChacunConnexe} and Proposition \ref{RelationTransitive}
together imply that the union
$\cup_{(a,b)\in\mathcal{C}}\mathcal{EI}_{(a,b)}^k$
is connected. Therefore, in order to prove Proposition \ref{EConnexe}
we just have to notice that
$\cup_{(a,b)\in\mathcal{C}}\mathcal{EI}_{(a,b)}^k=\mathcal{EI}^k$.
The first inclusion follows from the definitions.
Now let $[\rho]\in\mathcal{EI}^k$. By definition, there exists a non-separating simple
closed curve $a$ such that $\rho(a)$ is elliptic of infinite order.
Since $\rho$ is non-elementary, there exists a simple closed curve
$b$ such that $(a,b)\in\mathcal{C}$ and such that $\rho(a)$ and
$\rho(b)$ do not commute
(indeed, curves $b$ such that $(a,b)\in\mathcal{C}$ generate $\pi_1(\Sigma)$).
And for every $n\in\Z$, we also have $(a,ba^n)\in\mathcal{C}$. The
existence of some $n\in\Z$ such that $[\rho]\in\mathcal{EI}_{(a,ba^n)}^k$
follows from the following remark.

\begin{lemma}\label{multiplicationdespetitselliptiques}
  Let $A,B\in\psl$. Suppose that $A$ is elliptic of infinite order.
  Then there exists $n\in\Z$ such that $BA^n$ is elliptic, not of order 2. 
\end{lemma}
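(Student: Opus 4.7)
My plan is to reduce to an explicit trigonometric computation by placing $A$ in a normal form. Since $A$ is elliptic of infinite order, I may conjugate in $\psl$ so that $A$ fixes $i$ in the upper half-plane and is represented by $\pm R_{\alpha}$ for some $\alpha \in \mathbb{R}$ with $\alpha/\pi$ irrational. Writing a lift of $B$ as $\begin{pmatrix}a&b\\c&d\end{pmatrix}\in\sldeuxR$, a direct matrix multiplication gives
\[
\tr(B R_\theta) = (a+d)\cos(\theta/2) + (b-c)\sin(\theta/2)
= M\cos\!\bigl(\theta/2-\phi\bigr),
\]
where $M=\sqrt{(a+d)^2+(b-c)^2}$ and $\phi$ is some phase. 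Note $M>0$: otherwise $a+d=0$ and $b=c$, forcing $\det B = -a^2-b^2 \le 0$, contradicting $\det B = 1$.

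Now I specialize to $\theta=n\alpha$. Since $\alpha/\pi$ is irrational, the sequence $\{n\alpha/2 \bmod 2\pi\}_{n\in\Z}$ is equidistributed, hence dense in $\R/2\pi\Z$. Therefore the set $\{\tr(BA^n):n\in\Z\}$ is dense in the interval $[-M,M]$.

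From this density the conclusion is immediate in two cases. If $M\ge 2$, the open set $\{\theta\in\R/2\pi\Z : 0<|M\cos(\theta/2-\phi)|<2\}$ is nonempty, so some $n$ realizes $\tr(BA^n)\in(-2,2)\setminus\{0\}$, making $BA^n$ elliptic and not of order $2$. If $M<2$, every $BA^n$ already has trace in $(-M,M)\subset(-2,2)$, hence is either elliptic or the identity; and the equation $M\cos(\theta/2-\phi)=0$ has only finitely many solutions in $\theta/2\bmod 2\pi$, so all but finitely many $n$ give $\tr(BA^n)\neq 0$ and $BA^n\neq\pm I$, yielding the required elliptic non-involution.

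There is no real obstacle here; the only points requiring minor care are the passage between $\sldeuxR$-traces and the trace-up-to-sign in $\psl$ (order two in $\psl$ corresponds to trace $0$ in any lift, so the criterion $\tr(BA^n)\in(-2,2)\setminus\{0\}$ is exactly the right one), and the observation that $M>0$ so that the dense set of traces genuinely sweeps a nondegenerate interval.
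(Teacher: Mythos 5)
Your proof is correct and takes essentially the same route as the paper: put $A$ in rotation normal form, write $\tr(BA^n)$ as an amplitude-phase expression $M\cos(n\alpha/2-\phi)$ with $M\neq 0$, and use the irrationality of the rotation angle to make this trace land in $(-2,2)\setminus\{0\}$, i.e.\ elliptic and not of order $2$. The only differences are cosmetic: you justify $M>0$ via $\det B=1$ (the paper simply asserts the vector $(x+t,z-y)$ is nonzero) and you split into the cases $M\geq 2$ and $M<2$, whereas the paper concludes in one stroke by taking the trace small and nonzero.
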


\begin{proof}
  In an adapted basis we may write
  $A=\pm\left(\begin{array}{cc}\cos\theta & \sin\theta \\ -\sin\theta & \cos\theta \end{array}\right)$
  and
  $B=\pm\left(\begin{array}{cc}x & y \\ z & t \end{array}\right)$.
  This gives $\pm \tr(B\cdot A^n)=(x+t)\cos(n\theta)+(z-y)\sin(n\theta)$.
  This is the scalar product of the vectors $(\cos(n\theta),\sin(n\theta))$ and
  $((x+t),(z-y))$ of $\R^2\smallsetminus\{(0,0)\}$. For a suitable $n$, this trace can be taken in a dense neighborhood of $0$.
\end{proof}

Now we focus on the proof of Proposition \ref{RelationTransitive}.

\begin{lemma}\label{LemConnex0}
  Let $(a,b)\in\mathcal{C}$. For all $n\in\Z$, $(a,b)\sim(a,ba^n)$.
  In other words, we do not leave an equivalence class when we apply
  Dehn twists in the handle defined by $(a,b)$.
\end{lemma}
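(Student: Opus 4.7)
The plan is to exhibit a single representation lying in both $\mathcal{EI}^k_{(a,b)}$ and $\mathcal{EI}^k_{(a,ba^n)}$, which yields the equivalence $(a,b)\sim(a,ba^n)$ directly. First note that $(a,ba^n)\in\mathcal{C}$: the element $ba^n$ is $\tau_a^n(b)$, hence simple, and its geometric intersection with $a$ is still $1$ since $\tau_a$ fixes $a$. I will specify the restriction $\rho_0$ to the handle $\Sigma_0\supset a\cup b$ (a one-holed torus), and then extend it to $\pi_1(\Sigma)$ using Theorem \ref{fibr1}, exactly as in the proof of Lemma \ref{ChacunConnexe}.

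Choose $A := \rho_0(a)$ to be an elliptic rotation by angle $2\alpha$ at a point $P_A$ with $\alpha$ an irrational multiple of $\pi$ (so that $A$ has infinite order), and $B := \rho_0(b)$ to be elliptic of angle $2\beta\in(0,2\pi)$ at a point $P_B$, with $d := d_{\HH}(P_A,P_B)>0$. Using $\rho_0(ba^n)=A^nB$ (by the paper's convention $\rho(\gamma_1\gamma_2)=\rho(\gamma_2)\rho(\gamma_1)$), the classical identity $\tr([A,B])=\tr(A)^2+\tr(B)^2+\tr(AB)^2-\tr(A)\tr(B)\tr(AB)-2$ combined with the formula $\tr(AB)=2\cos\alpha\cos\beta-2\cosh(d)\sin\alpha\sin\beta$ for the product of two elliptic elements at distance $d$ simplifies to
\[
\tr([A,B]) \;=\; 2 + 4\sin^2\alpha\,\sin^2\beta\,\sinh^2 d \;>\; 2,
\]
so $[A,B]$ is automatically hyperbolic. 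Similarly $\tr(A^n B) = 2\cos(n\alpha)\cos\beta - 2\cosh(d)\sin(n\alpha)\sin\beta$ tends to $2\cos\beta\in(-2,2)$ as $\alpha\to 0$, so choosing $\alpha$ sufficiently small makes $A^n B$ elliptic. Finally $[A,A^nB]=A^n[A,B]A^{-n}\neq 1$, so $A$ and $A^nB$ do not commute. Since $A$ has infinite order, every condition defining both $\mathcal{EI}^k_{(a,b)}$ and $\mathcal{EI}^k_{(a,ba^n)}$ is satisfied at the level of the handle.

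Because $[A,B]=\rho_0([a,b])$ is hyperbolic, Theorem \ref{fibr1} applied to the complementary genus-$(g-1)$ surface with one boundary component provides an extension of $\rho_0$ to a representation $\rho$ of $\pi_1(\Sigma)$ with any prescribed Euler class in $\{3-2g,\ldots,2g-3\}$; the flexibility required comes from the choice of lift of $(A,B)$ to $\sldeuxR$ (which can adjust the handle's contribution $k_0\in\{-1,0,1\}$) together with the range of achievable relative Euler classes in the complement. The resulting $\rho$ then lies simultaneously in $\mathcal{EI}^k_{(a,b)}$ and $\mathcal{EI}^k_{(a,ba^n)}$, proving the lemma.

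The main content of the argument is the explicit trace identity for $\tr([A,B])$, which guarantees the hyperbolicity of the commutator and hence the applicability of Theorem \ref{fibr1}; the verification that $A^n B$ is elliptic is a soft perturbation argument, and the extension step is routine bookkeeping identical to that carried out in Lemma \ref{ChacunConnexe}.
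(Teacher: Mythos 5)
Your proof is correct and follows essentially the same strategy as the paper's: exhibit one explicit handle representation lying in $\mathcal{EI}^k_{(a,b)}\cap\mathcal{EI}^k_{(a,ba^n)}$ and extend it to all of $\Gamma_g$ via Goldman's results on the complementary genus-$(g-1)$ piece; the only difference is cosmetic, in that the paper takes $\rho(a)$ to be a rotation of order $2$ so that $\rho(ba^n)\in\{B,BA\}$ is elliptic for every $n$ simultaneously, while you take $\rho(a)$ of small irrational angle chosen in terms of the fixed $n$. One small misstatement: the relative Euler class of the handle is not adjustable by the choice of lift of $(A,B)$ to $\sldeuxR$ (it is determined by $\rho$ alone and equals $0$ here precisely because $\tr([A,B])>2$), but this is harmless since with handle contribution $0$ the complementary genus-$(g-1)$ surface with one boundary component already realizes every $k$ with $|k|\le 2g-3$ with the prescribed hyperbolic boundary holonomy.
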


\begin{proof}
  Put $A=\pm\left(\begin{array}{cc}0 & -1 \\ 1 & 0\end{array}\right)$, and
  $B=\pm\left(\begin{array}{cc}1 & -1 \\ 1-\varepsilon & \varepsilon\end{array}\right)$,
  for $\varepsilon\in(0,1)$. Then $|\tr(BA)|=|2-\varepsilon|$. Since $A$ is of
  order $2$, for all $n\in\Z$, $A$, $B$ and $BA^n$ are elliptic. Moreover
  $\varepsilon$ can be chosen so that $B$ has infinite order. Now $[A,B]$ is
  hyperbolic. We can define a representation of the fundamental group of the
  one holed torus $\langle a,b\rangle$ by sending $a$ to $A$ and $b$ to $B$.
  This representation is hyperbolic at the boundary, and has Euler class $0$
  (indeed, eg apply Theorem 3.4 of \cite{Goldman88} when $M$ is
  a one holed torus). By Theorem \ref{fibr0} we can complete this
  representation to a representation of $\pi_1(\Sigma)$ of Euler class
  $k$, provided that $|k|\leq 2g-3$. By construction, this representation
  lies in $\mathcal{EI}_{(a,b)}^k\cap\mathcal{EI}_{(a,ba^n)}^k$, which therefore
  is non-empty, as claimed.
\end{proof}

\begin{lemma}\label{LemConnex1}
  Let $(a,b)\in\mathcal{C}$, and let $c$ be a non-separating simple
  closed curve, disjoint from $b$, and such that $(a,c)\in\mathcal{C}$.
  Then $(a,b)\sim(a,c)$.
\end{lemma}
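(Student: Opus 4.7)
The plan is to exhibit a single representation $\rho^*\in\mathcal{EI}^k_{(a,b)}\cap\mathcal{EI}^k_{(a,c)}$, from which $(a,b)\sim(a,c)$ follows immediately. Let $T\subset\Sigma$ be a regular neighborhood of $a\cup b$, a one-holed torus with boundary $\gamma$, and let $\Sigma_1=\overline{\Sigma\setminus T}$, of genus $g-1$ with single boundary $\gamma$. After isotopy minimizing $|c\cap\partial T|$, the curve $c$ meets $T$ in a single simple arc $\alpha$ crossing $a$ once (and avoiding $b$) and meets $\Sigma_1$ in a single simple arc $\beta$. Fixing a basepoint $x_0\in\gamma$, the free homotopy class of $c$ is represented by a based loop $\alpha\cdot\beta$ with $\alpha\in\pi_1(T,x_0)$ and $\beta\in\pi_1(\Sigma_1,x_0)$. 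We may assume $c$ is not isotopic to $b$ (else the conclusion is immediate), whence $\beta$ is non-trivial and not conjugate inside $\pi_1(\Sigma_1)$ to a power of $\gamma$.

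On $T$ I would set $\rho^*(a)=A$ and $\rho^*(b)=B$ by the explicit recipe of Lemma \ref{LemConnex0}: $A$ is an order-$2$ rotation and $B$ is elliptic of infinite order, chosen so that $A,B$ do not commute and $[A,B]$ is hyperbolic. This fixes $H:=[A,B]=\rho^*(\gamma)$ and $X:=\rho^*(\alpha)\in\psl$. Since $|k|\le 2g-3<2(g-1)$, Theorem \ref{fibr0} produces a non-empty space $\mathcal{R}$ of representations of $\pi_1(\Sigma_1)$ with $\rho(\gamma)=H$ and relative Euler class $k$, and Theorem \ref{fibr1} ensures that $\mathcal{R}$ is path-connected. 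What remains is to select $\rho^*|_{\Sigma_1}\in\mathcal{R}$ so that $\rho^*(c)=X\cdot\rho^*(\beta)$ is elliptic of infinite order; granted such a selection, $\rho^*$ has $\rho^*(a),\rho^*(b),\rho^*(c)$ all elliptic with $\rho^*(b),\rho^*(c)$ of infinite order, the non-commuting pairs conditions are satisfied generically, and the Euler class is $k$, so $\rho^*\in\mathcal{EI}^k_{(a,b)}\cap\mathcal{EI}^k_{(a,c)}$.

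This selection is the main obstacle. The evaluation map $\Phi\colon\mathcal{R}\to\psl$, $\rho\mapsto\rho(\beta)$, is a non-constant algebraic map from the positive-dimensional variety $\mathcal{R}$ (since $\beta$ is non-trivial and not conjugate to a power of $\gamma$). The plan would be to show that $\Phi(\mathcal{R})$ has non-empty interior in $\psl$, which suffices since $X^{-1}\cdot\{\text{elliptics of infinite order}\}$ is open. The natural way to prove this is to complete $\beta$ to a simple closed curve $\tilde\beta\subset\Sigma_1$ by adjoining a subarc of $\gamma$, extend to a pants decomposition of $\Sigma_1$ containing $\tilde\beta$, and use the length-and-twist coordinates of Section \ref{SectionParam} (together with Theorem \ref{fibr1} to pass between the fibers of the boundary-evaluation map) to exhibit an explicit family of extensions along which $\rho(\beta)$ sweeps an open subset of $\psl$. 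Once this flexibility is established, the desired $\rho^*$ is obtained and the lemma follows.
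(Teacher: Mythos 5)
Your strategy is genuinely different from the paper's, and it has a gap at its central step. The paper does not try to build one representation lying in $\mathcal{EI}^k_{(a,b)}\cap\mathcal{EI}^k_{(a,c)}$ directly. Instead it starts from any $[\rho]\in\mathcal{EI}^k_{(a,b)}$ with $\rho(a)$ elliptic of \emph{infinite} order, applies the Dehn twist $\tau_a^n$ (which sends $b\mapsto ba^n$ and $c\mapsto ca^n$ since both meet $a$ once), and invokes Lemma \ref{multiplicationdespetitselliptiques}: multiplying by powers of an infinite-order elliptic always produces an elliptic for suitable $n$. This yields $[\rho\circ\varphi]\in\mathcal{EI}^k_{(a,c)}\cap\mathcal{EI}^k_{(a,ba^{-n})}$, and Lemma \ref{LemConnex0} closes the chain. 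Note that by fixing $\rho^*(a)=A$ to be an order-$2$ rotation you forfeit exactly this tool: powers of $A$ give you nothing, so you cannot twist your way to an elliptic image for $c$ and are forced to control $\rho^*(c)$ by hand through the extension over $\Sigma_1$.

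That control is precisely what your proof does not supply. You acknowledge that making $\rho^*(c)=X\cdot\rho^*(\beta)$ elliptic of infinite order is ``the main obstacle'' and offer only a plan, but the plan is logically insufficient as stated: showing that $\Phi(\mathcal{R})$ has non-empty interior does \emph{not} imply that it meets the particular open set $X^{-1}\cdot\{\text{elliptics}\}$ --- two non-empty open subsets of $\psl$ can be disjoint, and the elliptic locus is a proper open subset. (Also, the set of elliptics of \emph{infinite order} is not open; you would have to hit the open elliptic locus and then perturb.) To make this route work you would need to prove something much stronger, e.g.\ that $\rho(\beta)$ can be prescribed arbitrarily, or at least that its image under $\Phi$ is dense, and nothing in Theorems \ref{fibr0}--\ref{fibr1} or the coordinates of Section \ref{SectionParam} gives this without substantial extra work. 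The topological reduction at the start (that $c\cap T$ is a single arc crossing $a$ once, since every essential arc of the one-holed torus must cross $a$ or $b$) is fine, but the analytic heart of the argument is missing. The simplest repair is to abandon the order-$2$ choice for $\rho^*(a)$, keep $\rho(a)$ elliptic of infinite order, and run the twist-and-Lemma-\ref{multiplicationdespetitselliptiques} argument --- which is the paper's proof.
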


\begin{proof}
  Let $[\rho]\in\mathcal{EI}_{(a,b)}^k$ be such that $\rho(a)$ has infinite
  order. Let $\varphi\in \Aut(\pi_1\Sigma)$ be associated to a Dehn twist of
  order $n$ along a curve freely homotopic to $a$. Up to conjugacy we have
  $\phi(a)=a$, $\phi(b)=ba^n$ and $\phi(c)=ca^n$. It follows from
  Lemma \ref{multiplicationdespetitselliptiques} that for a suitable $n$,
  $\rho\circ\varphi(c)$ is elliptic. Thus $[\rho\circ\varphi]\in\mathcal{EI}_{(a,c)}^k$.
  Also, $\phi(ba^{-n})=b$, hence $\rho\circ\varphi(ba^{-n})=\rho(b)$ is elliptic.
  Therefore $[\rho\circ\varphi]\in\mathcal{EI}_{(a,c)}^k\cap\mathcal{EI}_{(a,ba^{-n})}^k$,
  and $(a,c)\sim(a,ba^{-n})$. By Lemma \ref{LemConnex0} we have $(a,b)\sim(a,ba^{-n})$,
  therefore $(a,b)\sim(a,c)$.
\end{proof}

\begin{lemma}\label{LemConnex2}
  Let $b$ be a non-separating simple closed curve and let $a$ and $a'$
  be such that $(a,b)\in\mathcal{C}$ and $(a',b)\in\mathcal{C}$.
  Then $(a,b)\sim(a',b)$.
\end{lemma}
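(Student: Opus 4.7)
The plan is to build a chain of curves $a = a_0, a_1, \ldots, a_n = a'$ with each $(a_i,b) \in \mathcal{C}$ and with consecutive $a_i, a_{i+1}$ disjoint in $\Sigma$, and then apply a symmetric version of Lemma \ref{LemConnex1} along the chain.

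First I would observe that $\sim$ is symmetric in the unordered pair, since $\mathcal{EI}_{(a,b)}^k = \mathcal{EI}_{(b,a)}^k$ as subsets of $\mo^k$. Applying Lemma \ref{LemConnex1} to the pair $(b,a)$ with a third curve $c$ disjoint from $a$ and such that $(b,c) \in \mathcal{C}$ yields $(b,a) \sim (b,c)$, i.e.\ $(a,b) \sim (c,b)$. Thus Lemma \ref{LemConnex1} also holds in the symmetric form: \emph{if $(a,b), (c,b) \in \mathcal{C}$ and $a$ is disjoint from $c$, then $(a,b) \sim (c,b)$.}

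Next I would cut $\Sigma$ along $b$ to obtain a compact surface $\Sigma'$ of genus $g-1$ (hence positive, since $g \geq 2$) with two boundary components $b^+$ and $b^-$. An isotopy class of simple closed curve $a$ in $\Sigma$ with $(a,b) \in \mathcal{C}$ corresponds bijectively to an isotopy class of essential properly embedded arc in $\Sigma'$ joining $b^+$ to $b^-$: any such arc, reglued in $\Sigma$, meets $b$ transversely in a single point and is therefore non-separating. Moreover, two such arcs can be realised disjointly (endpoints included) if and only if the corresponding closed curves in $\Sigma$ are disjoint.

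The problem thus reduces to showing that the graph whose vertices are isotopy classes of essential arcs from $b^+$ to $b^-$ in $\Sigma'$, with edges between classes admitting disjoint representatives, is connected. This is the classical connectedness of an arc graph on a surface with boundary, which holds in our setting since $\Sigma'$ has positive genus; alternatively, it can be proved by a direct induction on the intersection number, resolving an intersection point of $\alpha$ and $\alpha'$ to produce an intermediate essential arc $\alpha''$ from $b^+$ to $b^-$ whose intersections with both $\alpha$ and $\alpha'$ are strictly smaller. Chaining the resulting disjoint arcs and applying the symmetric form of Lemma \ref{LemConnex1} along the chain then yields $(a,b) \sim (a',b)$. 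The main obstacle is this last connectedness step; the preceding reductions are formal.
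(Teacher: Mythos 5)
Your proof is correct, and it packages the argument differently from the paper. The paper works pictorially inside a one-holed torus neighbourhood of $a\cup b$: it inducts on $i(a,a')$, reduces that number when possible by twisting along $b$, and otherwise performs an explicit surgery producing a curve $a''$ disjoint from $a'$, leaning throughout on Lemma \ref{LemConnex0}. You instead first record that $\sim$ is symmetric in the unordered pair (since $\mathcal{EI}^k_{(a,b)}=\mathcal{EI}^k_{(b,a)}$ is non-empty), deduce the symmetric form of Lemma \ref{LemConnex1} (disjoint curves both meeting $b$ once are $\sim$-equivalent), and then reduce everything to a purely topological statement: connectivity of the graph of essential arcs from $b^+$ to $b^-$ in $\Sigma$ cut along $b$, which your surgery-and-induction argument establishes correctly (the surgered arc still joins the two distinct boundary components, hence is automatically essential). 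This cleanly separates the dynamical input from the surface topology; the combinatorial engine (surgery decreasing intersection number) is the same as the paper's. One imprecision: the correspondence between isotopy classes of curves $a$ with $i(a,b)=1$ and isotopy classes of arcs from $b^+$ to $b^-$ is not a bijection — it is a bijection onto curve classes modulo the Dehn twist $\tau_b$ (already on the torus, all $(1,n)$-curves cut to the same arc class). So when you reglue a chain of arcs, consecutive choices of closed-curve representatives of the same arc may differ by a power of $\tau_b$. This is harmless: $(c,b)\sim(\tau_b^n c,b)$ follows from Lemma \ref{LemConnex0} applied with the roles of the two curves exchanged, using the same symmetry of $\sim$ you already invoked — but that extra application should be made explicit.
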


\begin{proof}
 Up to homeomorphism of $\Sigma$, and up to free homotopy, $a$ and $b$ look as in Figure \ref{FigureInHandle}.
  \begin{figure}[htbp]
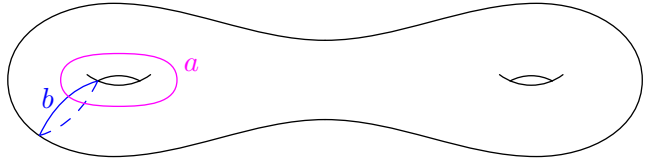

  \begin{asy}
    import geometry;
    unitsize(2pt);

    //
    //
    path contg=((0,0){dir(180)}..(-40,-7){dir(180)}..(-60,7.5){dir(90)}..(-40,22){dir(0)}..(0,15){dir(0)});
    draw(contg);
    path contd=shift((0,15))*rotate(180)*contg;
    draw(contd);
    //

    //
    //
    picture trou;
    draw (trou,(-6,8.5)..(0,6.5)..(6,8.5));
    draw (trou,(-4,7.3)..(0,8.3)..(4,7.3));
    //
    picture troudgauche=shift(-39,0)*trou;
    add(troudgauche);
    //
    picture trouddroite=shift(39,0)*trou;
    add(trouddroite);

    //
    //draw ((0,0)..(-2.5,7.5)..(0,15),red);
    //draw ((0,0)..(2.5,7.5)..(0,15),red);

    draw ((-54,-3){dir(65)}..(-43,7.3){dir(15)},blue);
    draw ((-54,-3){dir(15)}..(-43,7.3){dir(65)},blue+dashed);

    label("{\small $b$}",(-50,1),NW,blue);

    //
    draw ((-50,7.5)..(-39,12.5)..(-28,7.5)..(-39,2.5)..cycle,magenta);
    label("{\small $a$}",(-28,8),NE,magenta);

    //label("{\small $*$}",(0,0),NW);
  \end{asy}
  \caption{In a handle}
  \label{FigureInHandle}
  \end{figure}
   If $a'$ can be freely homotoped inside the one holed torus defined by
  thickening $a\cup b$, then we apply Lemma \ref{LemConnex0} and conclude
  the proof in this case. Otherwise, we proceed by induction on the
  intersection number $i(a,a')$. If this number can be decreased by
  applying a Dehn twist along $b$ to $a'$, then we are done, because
  applying such a twist does not change the class of $(a',b)$ by
  Lemma \ref{LemConnex0}.
  Otherwise, in this one-holed torus $a'$ looks as in Figure \ref{FigureIntersect}
  (we cut this picture along $b$ for graphical convenience).
  \begin{figure}
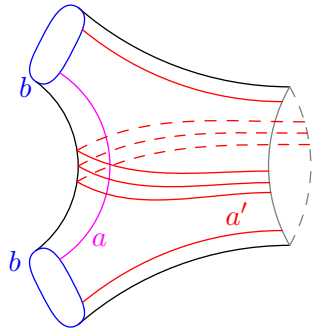

  \begin{asy}
    import geometry;
    unitsize(2pt);

    draw ((-40,30)..(0,15){right});
    draw ((-40,-30)..(0,-15){right});
    path LigneGauche=((-48,16)..(-40,0)..(-48,-16));
    draw(LigneGauche);

    path LigneBleueHaut=((-41,22)..(-48,16)..(-47,24)..(-40,30)..cycle);
    path LigneBleueBas=((-41,-22)..(-48,-16)..(-47,-24)..(-40,-30)..cycle);
    draw (LigneBleueHaut,blue);
    draw (LigneBleueBas,blue);

    path BordDevant=((0,15)..(-4,0)..(0,-15));
    draw (BordDevant,grey);
    path BordDerriere=((0,15)..(4,0)..(0,-15));
    draw (BordDerriere,grey+dashed);

    draw (relpoint(LigneBleueHaut,0.13)..(-34,0)..relpoint(LigneBleueBas,0.13),magenta);

    point Devant1=relpoint(BordDevant,0.6);
    point Gauche1=relpoint(LigneGauche,0.5);
    point Derriere1=relpoint(BordDerriere,0.3);
    point Devant2=relpoint(BordDevant,0.66);
    point Gauche2=relpoint(LigneGauche,0.58);
    point Derriere2=relpoint(BordDerriere,0.37);
    point Devant3=relpoint(BordDevant,0.53);
    point Gauche3=relpoint(LigneGauche,0.42);
    point Derriere3=relpoint(BordDerriere,0.23);
    draw (Gauche1{dir(-30)}..Devant1{right},red);
    draw (Gauche1{dir(30)}..Derriere1{right},red+dashed);
    draw (Gauche2{dir(-30)}..Devant2{right},red);
    draw (Gauche2{dir(30)}..Derriere2{right},red+dashed);
    draw (Gauche3{dir(-30)}..Devant3{right},red);
    draw (Gauche3{dir(30)}..Derriere3{right},red+dashed);

    draw (relpoint(LigneBleueHaut,0.89)..relpoint(BordDevant,0.1){right},red);
    draw (relpoint(LigneBleueBas,0.89)..relpoint(BordDevant,0.9){right},red);

    label("{\small $a$}",(-36,-14),magenta);
    label("{\small $a'$}",(-10,-9),red);
    label("{\small $b$}",(-50,15),blue);
    label("{\small $b$}",(-52,-18),blue);

    //label("{\small $*$}",(-8,0));
  \end{asy}
  \caption{Reducing the intersection number}
  \label{FigureIntersect}
  \end{figure}

  The intersection number $i(a,a')$ is seen as the number
  of horizontal red strings
  in Figure \ref{FigureIntersect}. If $i(a,a')=0$, then Lemma \ref{LemConnex0}
  can be applied. Now suppose that $i(a,a')=N>0$. In Figure 
  \ref{FigureIntersect}, we may define a new curve $a''$ as follows. Start from
  the lower intersection point between $a'$ and $b$, and follow
  the curve $a'$ to the right. Then follow $a'$ until you
  reach the uppermost horizontal red string, and then stop following
  $a'$ and go directly to hit $b$ (at its upper representative in
  the picture). This defines a curve $a''$ which, up to applying
  to it a Dehn twist along the blue curve $b$, does not intersect
  $a'$. Hence by Lemma \ref{LemConnex0} we have $(a',b)\sim(a'',b)$.
\end{proof}

We can now prove Proposition \ref{RelationTransitive}.

\begin{proof}[Proof of Proposition \ref{RelationTransitive}]
  Let $(a,b), (a',b')\in\mathcal{C}$. It is well-known that
  the 1-skeleton of the curve complex of $\Sigma$ is connected
  (see \cite{MasurMinsky99}, Lemma 2.1). In other words, there
  exist $b_0=b$, \ldots, $b_n=b'$ such that for every
  $i\in\{0,\ldots,n-1\}$, $b_i$ and $b_{i+1}$ are disjoint.
  Up to deleting some terms of this sequence, and up to
  inserting others, we may also suppose that for all $i$,
  $b_i$ is a non-separating simple curve. Thus, for all
  $i\in\{0,\ldots,n-1\}$,
  we can find a non-separating simple curve $a_i$ such that
  $(a_i,b_i)\in\mathcal{C}$ and $(a_i,b_{i+1})\in\mathcal{C}$.
  Put $a_n=a'$.
  By Lemma \ref{LemConnex1} we have $(a_i,b_i)\sim(a_i,b_{i+1})$
  for every $i\in\{0,\ldots,n-1\}$. By Lemma \ref{LemConnex2}
  we also have $(a_i,b_{i+1})\sim(a_{i+1},b_{i+1})$, thus
  $(a_i,b_i)\sim(a_{i+1},b_{i+1})$ for every $i\in\{0,\ldots,n-1\}$.
  Again by Lemma \ref{LemConnex2} we have $(a,b)\sim(a_0,b)$ and $(a_n,b')\sim(a',b')$ .
  Finally we have $(a,b)\sim(a',b')$.
\end{proof}


\subsection{$\mathcal{EI}^k$ is a subset of $\mathcal{U}^k$}\label{SectionToutesLesDirections}

Let $[\rho]\in\mathcal{EI}^k$ et $\gamma\in\mathcal{S}^{ns}$ such that
$\rho(\gamma)$ is elliptic of infinite order. We denote by
$D_{[\rho]}\subset T_{[\rho]}^*\mathcal{M}^k$ the linear subspace
generated by the differentials $df_{\gamma}$ of the traces of the
curves $\gamma$ such that $\rho(\gamma)$ is elliptic. We already have $df_\gamma\in D_{[\rho]}$.

\begin{lemma}\label{inter1}
  Let $\delta$ be such that $i(\gamma,\delta)=1$.
  Then $df_\delta\in D_{[\rho]}$.
\end{lemma}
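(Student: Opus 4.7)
The plan is to express $df_\delta$ as a linear combination of $df_\gamma$ and finitely many $df_{\delta\gamma^n}$ for well-chosen integers $n$. Since $i(\gamma,\delta)=1$, the curves $\delta\gamma^n$ (being images of $\delta$ under powers of the Dehn twist along $\gamma$) are all simple and non-separating. Locally near $[\rho]$ I would lift $A:=\rho(\gamma)$ and $B:=\rho(\delta)$ to $\mathrm{SL}_2(\R)$ and set $x=\tr A$, $u_n=\tr(A^n B)$, so that $f_{\delta\gamma^n}=u_n^2$. The identity $\tr(X)\tr(Y)=\tr(XY)+\tr(XY^{-1})$ in $\mathrm{SL}_2$ yields the three-term recurrence $u_{n+1}=x u_n-u_{n-1}$, and hence
\[ u_n=U_{n-1}(x/2)\,u_1-U_{n-2}(x/2)\,u_0, \]
where $U_k$ is the Chebyshev polynomial of the second kind. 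Differentiating at $[\rho]$,
\[ du_n \equiv U_{n-1}(x/2)\,du_1-U_{n-2}(x/2)\,du_0 \pmod{dx}. \]

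Since $\rho(\gamma)$ is elliptic of infinite order, $x\neq 0$, so $dx=df_\gamma/(2x)$ already belongs to $D_{[\rho]}$. Writing $A$, in appropriate coordinates, as a rotation of irrational angle $\theta/\pi$, the orbit $\{n\theta\bmod 2\pi : n\in\Z\}$ is dense in $\R/2\pi\Z$; the angles for which $B\cdot R_\phi$ is elliptic and not of order $2$ form an open subset of the circle, which is nonempty by Lemma \ref{multiplicationdespetitselliptiques}. Consequently the set $N=\{n\in\Z : \rho(\delta\gamma^n) \text{ is elliptic not of order } 2\}$ is infinite. For every $n\in N$, $u_n\neq 0$ and $df_{\delta\gamma^n}=2u_n\,du_n\in D_{[\rho]}$, so $du_n\in D_{[\rho]}$.

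Finally, for any two distinct $n_1,n_2\in N$, using $U_k(x/2)=\sin((k+1)\theta)/\sin\theta$ and a short product-to-sum computation, one obtains
\[ \det\begin{pmatrix} -U_{n_1-2}(x/2) & U_{n_1-1}(x/2) \\ -U_{n_2-2}(x/2) & U_{n_2-1}(x/2) \end{pmatrix}=\frac{\sin((n_2-n_1)\theta)}{\sin\theta}\neq 0, \]
since $\theta/\pi$ is irrational and $n_1\neq n_2$. Therefore $du_0$ is a linear combination of $du_{n_1}$, $du_{n_2}$ and $dx$, all of which lie in $D_{[\rho]}$. Hence $du_0\in D_{[\rho]}$ and $df_\delta=2u_0\,du_0\in D_{[\rho]}$ (trivially if $u_0=0$).

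The main obstacle is producing the two indices $n_1\neq n_2$ in $N$ for which the Chebyshev determinant is nonzero. Both facts are automatic because $\rho(\gamma)$ has \emph{infinite} order: this is exactly what forces $\theta/\pi$ to be irrational, making $N$ infinite and the determinant above nonzero for any distinct pair. Without this assumption the Chebyshev machinery degenerates; with it, the rest of the argument is routine linear algebra driven by the trace recurrence.
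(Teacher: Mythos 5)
Your proof is correct and follows essentially the same route as the paper's: both rest on the three-term trace recurrence $u_{n+1}=xu_n-u_{n-1}$, the fact (from Lemma \ref{multiplicationdespetitselliptiques} and density of the irrational rotation orbit) that infinitely many of the simple curves $\delta\gamma^n$ are sent to elliptic elements of nonzero trace, and the irrationality of $\theta/\pi$ to extract two independent linear relations modulo $D_{[\rho]}$. The paper phrases this by solving the recursion for $dF_n$ in the quotient $T^*_{[\rho]}\mathcal{M}^k/D_{[\rho]}$ as $\cos(n\theta)a+\sin(n\theta)b$ and forcing $a=b=0$, which is the same linear algebra as your Chebyshev determinant computation.
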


\begin{proof}
We use the intersection point of $\gamma$ and $\delta$ as a base point and choose $A$ (resp. $B$) a lift of $\rho(\gamma)$ (resp. $\rho(\delta)$) to $\sldeuxR$. We are in the situation already described in Lemma \ref{multiplicationdespetitselliptiques}: with the same notation we have
\begin{equation}\label{twisttrace}
\pm\tr \rho (\delta_n)=(x+t)\cos(n\theta)+(z-y)\sin(n\theta).
\end{equation}
Take a neighborhood $\mathcal{V}$ of $[\rho]$ small enough so that there exist continuous lifts $A(\rho)$ and $B(\rho)\in$ SL$_2(\R)$  of $\rho(\gamma)$ and $\rho(\delta)$. We define the maps $F_n:\mathcal{V}\to \R$ by the formula $F_n(\rho)=\tr(BA^n)$ so that we have $f_{\delta_n}=F_n^2$. In particular, the derivatives $df_n$ and $dF_n$ are proportional so that it is sufficient to show that $dF_0\in D_{[\rho]}$. We also set $G(\rho)=\tr A(\rho)$. By assumption, we have $dG\in D_{\rho}$ because $f_\gamma(\rho)<4$. 

The trace identity implies that $F_{n+1}+F_{n-1}=G F_n$. By derivating this functional equation on $\mathcal{V}$ we obtain
$$dF_{n+1}+dF_{n-1}=dG F_n+GdF_n =GdF_n\mod D_{[\rho]}.$$
We conclude that the sequence $dF_n\in T^*_{[\rho]}\mathcal{M}^k/D_{\rho}$ satisfies an order 2 recursion so that there exists $a,b\in T^*_{[\rho]}\mathcal{M}^k$ such that 
$$ dF_n= \cos(n\theta)a+\sin(n\theta)b \mod D_{[\rho]}.$$

Observe from Equation \eqref{twisttrace} that $|\tr(BA^n)|<2$ for an infinitely many $n$'s. This implies that $dF_n\in D_{[\rho]}$ for such $n's$. As $\theta$ is irrational, this is impossible unless $a=b=0$. Finally we have $dF_n\in D_{[\rho]}$ for all $n$, in particular for $n=0$ and the lemma is proved. 
\end{proof}
\begin{lemma}\label{inter0}
  Let $\delta$ be such that $i(\gamma,\delta)=0$.
  Then $df_\delta\in D_{[\rho]}$.
\end{lemma}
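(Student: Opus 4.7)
The plan is to reduce the statement to a double application of Lemma \ref{inter1}. Since $i(\gamma,\delta)=0$, Lemma \ref{inter1} does not apply directly to the pair $(\gamma,\delta)$. Instead I would find a simple closed curve $\eta$ with $i(\eta,\gamma)=i(\eta,\delta)=1$ and, by Dehn-twisting $\eta$ along $\gamma$, reach a curve $\eta_n=T_\gamma^n(\eta)$ with $\rho(\eta_n)$ elliptic of infinite order. Since $\gamma\cap\delta=\emptyset$ implies $i(\eta_n,\delta)=i(\eta,\delta)=1$, Lemma \ref{inter1} applied with $\eta_n$ in the role of $\gamma$ then yields $df_\delta\in D_{[\rho]}$.

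I would first treat the case in which $\delta$ is non-separating. By the change-of-coordinates principle on $\Sigma_g$ ($g\geq 2$), for any pair of disjoint non-separating simple closed curves there exists a simple closed curve meeting each of them once, and one constructs $\eta$ explicitly depending on whether $\gamma\cup\delta$ separates $\Sigma$: if it does not, glue two disjoint arcs in the connected complement linking opposite sides of $\gamma$ and $\delta$; if it does, take one arc from $\gamma$ to $\delta$ in each of the two components of the complement and concatenate them.

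To produce an $n$ with $\rho(\eta_n)$ elliptic of infinite order, I would reuse the trace computation from Lemma \ref{multiplicationdespetitselliptiques}: if $A,B\in\sldeuxR$ are lifts of $\rho(\gamma)$ and $\rho(\eta)$, and $B=\pm\begin{pmatrix}x & y\\ z & t\end{pmatrix}$ in a basis where $A=\pm R_\theta$, then
\[
\pm\tr(BA^n)=(x+t)\cos(n\theta)+(z-y)\sin(n\theta),
\]
with $\theta/\pi\notin\Q$ (since $\rho(\gamma)$ is elliptic of infinite order) and $(x+t,z-y)\neq(0,0)$ (otherwise $\det B=-x^2-y^2\neq 1$). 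Equidistribution of $\{n\theta\}$ then forces infinitely many $n$ to make $\rho(\eta_n)$ elliptic, and a generic such $n$ produces infinite order; for any such $n$, Lemma \ref{inter1} with $\gamma$ replaced by $\eta_n$ gives the desired conclusion.

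The main obstacle is the case when $\delta$ is separating: no simple closed curve intersects such a $\delta$ exactly once, so the curve $\eta$ above does not exist. I would handle it by Fricke's identity
\[
\tr\rho([\alpha,\beta])=\tr\rho(\alpha)^2+\tr\rho(\beta)^2+\tr\rho(\alpha\beta)^2-\tr\rho(\alpha)\tr\rho(\beta)\tr\rho(\alpha\beta)-2,
\]
applied in a one-holed torus cut off by $\delta$ (and iterated when $\delta$ bounds a higher-genus subsurface). This expresses $f_\delta$ locally as a smooth function of $f_\alpha$, $f_\beta$, $f_{\alpha\beta}$ for non-separating simple closed curves $\alpha,\beta$ with $i(\alpha,\beta)=1$, so that $df_\delta$ is a linear combination of $df_\alpha$, $df_\beta$, $df_{\alpha\beta}$, each of which belongs to $D_{[\rho]}$ by the non-separating case just treated together with Lemma \ref{inter1}.
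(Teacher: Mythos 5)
Your route is genuinely different from the paper's, which never changes the base elliptic curve $\gamma$: for $\delta$ non-separating the paper chooses $\zeta$ with $i(\gamma,\zeta)=i(\delta,\zeta)=1$, arranges $\tr\rho(\zeta)\neq 0$ by twisting $\zeta$ along $\gamma$, and differentiates the trace identity $F_\delta F_\zeta=F_{\delta\zeta}+F_{\delta\zeta^{-1}}$; since $\zeta$, $\delta\zeta$ and $\delta\zeta^{-1}$ all meet $\gamma$ once, Lemma \ref{inter1} and the Leibniz rule give $F_\zeta\,dF_\delta\in D_{[\rho]}$, hence $dF_\delta\in D_{[\rho]}$. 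That argument only needs a nonvanishing trace, not a new elliptic curve, and this is exactly where your version runs into trouble. To apply Lemma \ref{inter1} with $\eta_n=T_\gamma^n(\eta)$ as the new base curve you need $\rho(\eta_n)$ elliptic of \emph{infinite} order: the proof of Lemma \ref{inter1} uses the irrationality of the rotation angle of the base curve in an essential way (it is what kills the coefficients $a,b$ in the recursion). Equidistribution does give infinitely many $n$ with $|\tr\rho(\eta_n)|<2$, but the achieved traces $r\cos(n\theta-\phi)$ form a countable set, as does $\{2\cos(\pi q):q\in\Q\}$; ``a generic such $n$'' is not an argument here, and for the fixed representation $\rho$ at hand nothing you say excludes that every elliptic $\rho(\eta_n)$ has finite order. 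This is a genuine gap, not a formality.

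The separating case has a second gap. Fricke's identity does dispose of a $\delta$ bounding a one-holed torus (hence of all separating curves when $g\leq 3$, and your accounting of why $df_\alpha$, $df_\beta$, $df_{\alpha\beta}$ lie in $D_{[\rho]}$ is correct, provided you take $\alpha=\gamma$ when $\gamma$ lies in that torus). But for $g\geq 4$ a separating curve may bound genus $\geq 2$ on both sides, and ``iterating'' Fricke is not a defined procedure: the natural trace expansions of $[\alpha_1,\beta_1][\alpha_2,\beta_2]$ produce either traces of non-simple curves or traces of separating curves of undiminished complexity, none of which are known to contribute to $D_{[\rho]}$. The paper's solution is to pick $\zeta$ with $i(\zeta,\gamma)=1$ and $i(\zeta,\delta)=2$, split it as $\zeta=\zeta_1\zeta_2$ at the two intersection points with $\delta$, and use the longer identity \eqref{tracensep}, in which every curve appearing either meets $\gamma$ once (those involving $\zeta_1$) or is non-separating (those involving $\zeta_2$), so that the first case applies. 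Some such device is needed; the one-holed-torus reduction alone does not cover the statement.
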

\begin{proof}
Consider first the case when $\delta$ is non-separating. Then there exists a curve $\zeta$ such that $i(\gamma,\zeta)=i(\delta,\zeta)=1$ as shown in Figure \ref{TraceIdentities}. 
Up to replacing $\zeta$ by $\tau_\gamma^n\zeta$, we can suppose that $\tr \rho(\zeta)\ne 0$ as shown in Lemma \ref{multiplicationdespetitselliptiques}. 

\begin{figure}[htbp]
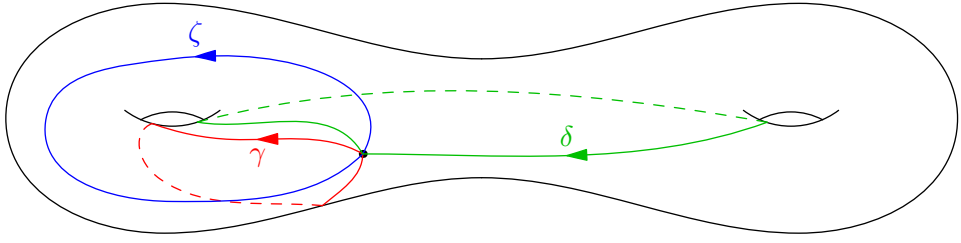

\centering
\begin{asy}
import geometry;
unitsize(3pt);

  //
  //
  path contg=((0,0){dir(180)}..(-40,-7){dir(180)}..(-60,7.5){dir(90)}..(-40,22){dir(0)}..(0,15){dir(0)});
  draw(contg);
  path contd=shift((0,15))*rotate(180)*contg;
  draw(contd);
  //

  //
  //
  picture trou;
  draw (trou,(-6,8.5)..(0,6.5)..(6,8.5));
  draw (trou,(-4,7.3)..(0,8.3)..(4,7.3));
  //
  picture troudgauche=shift(-39,0)*trou;
  add(troudgauche);
  //
  picture trouddroite=shift(39,0)*trou;
  add(trouddroite);

  point PBas=(-15,3);
  dot(PBas);

  // D'abord gamma
  //
  path BasTrouG=shift(-39,0)*((-6,8.5)..(0,6.5)..(6,8.5));
  point MBAG=relpoint(BasTrouG,0.3);

  point BasGamma=relpoint(contg,0.15);

  path DebutGamma=(PBas{dir(150)}..(MBAG+(10,-2))..MBAG{dir(160)});
  draw (DebutGamma,red,Arrow(Relative(0.5)));
  label("{\small $\gamma$}",relpoint(DebutGamma,0.5),S,red);
  draw (MBAG{dir(200)}..(MBAG+(1,-7))..BasGamma{dir(-5)},red+dashed);
  draw (BasGamma{dir(40)}..PBas{up},red);

  // Maintenant zeta
  path zeta=(PBas{dir(60)}..(-39,15)..(-55,7)..(-39,-3)..PBas{dir(45)});
  draw (zeta,blue,Arrow(Relative(0.3)));
  label("{\small $\zeta$}",relpoint(zeta,0.3),N,blue);

  // Maintenant delta
  point PDelt1=relpoint(BasTrouG,0.75);
  point PDelt2=(36,7);
  path delta1=(PBas{dir(120)}..PDelt1{dir(170)});
  path delta2=(PDelt1{dir(15)}..PDelt2{dir(-10)});
  path delta3=(PDelt2{dir(-160)}..PBas{left});
  draw (delta1,heavygreen);
  draw (delta2,heavygreen+dashed);
  draw (delta3,heavygreen,Arrow(Relative(0.5)));
  label("{\small $\delta$}",relpoint(delta3,0.5),N,heavygreen);
\end{asy}
\caption{The non-separating case}
\label{TraceIdentities}
\end{figure}

Take the intersection of $\gamma$ and $\zeta$ as a base point and consider a neighborhood of $[\rho]$ denoted by $\mathcal{V}$ so that $\rho(\gamma),\rho(\zeta)$ and $\rho(\delta)$ have lifts in SL$_2(\R)$ that we denote respectively by $A(\rho),B(\rho)$ and $C(\rho)$. 
The trace identity $\tr(C)\tr(B)=\tr(CB)+\tr(CB^{-1})$ can be reinterpreted as 
$$F_\delta F_\zeta=F_{\delta\zeta}+F_{\delta\zeta^{-1}}$$
where $F_\delta(\rho)=\tr (C(\rho)),F_\zeta(\rho)=\tr(B(\rho)),F_{\delta\zeta}(\rho)=\tr (C(\rho)B(\rho))$ and $F_{\delta\zeta^{-1}}(\rho)=\tr (C(\rho)B^{-1}(\rho))$.
From Lemma \ref{inter1}, we now that $dF_{\delta\zeta},dF_{\delta\zeta^{-1}}$ and $dF_\zeta$ belong to $D_{[\rho]}$ as $\delta\zeta,\delta\zeta^{-1}$ and $\zeta$ intersect $\gamma$ once. Hence from Leibnitz formula we get $dF_\delta F_\zeta\in D_{[\rho]}$. As $F_\zeta(\rho)\ne 0$ we finally get $dF_\delta\in D_{[\rho]}$. 

\begin{figure}[htbp]
\centering
\begin{asy}
  import geometry;
  unitsize(3pt);

  //
  //
  path contg=((0,0){dir(180)}..(-40,-7){dir(180)}..(-60,7.5){dir(90)}..(-40,22){dir(0)}..(0,15){dir(0)});
  draw(contg);
  path contd=shift((0,15))*rotate(180)*contg;
  draw(contd);
  //

  //
  //
  picture trou;
  draw (trou,(-6,8.5)..(0,6.5)..(6,8.5));
  draw (trou,(-4,7.3)..(0,8.3)..(4,7.3));
  //
  picture troudgauche=shift(-39,0)*trou;
  add(troudgauche);
  //
  picture trouddroite=shift(39,0)*trou;
  add(trouddroite);

  point PBas=(-15,3);
  dot(PBas);

  // D'abord gamma
  //
  path BasTrouG=shift(-39,0)*((-6,8.5)..(0,6.5)..(6,8.5));
  point MBAG=relpoint(BasTrouG,0.3);

  point BasGamma=relpoint(contg,0.15);
  //dot(BasGamma);

  path DebutGamma=(PBas{dir(150)}..(MBAG+(10,-2))..MBAG{dir(160)});
  draw (DebutGamma,red,Arrow(Relative(0.5)));
  label("{\small $\gamma$}",relpoint(DebutGamma,0.5),S,red);
  //draw (MBAG{dir(200)}..(MBAG+(-2,-10))..BasGamma{dir(-45)},Arrow(Relative(0.5)),red+dashed);
  draw (MBAG{dir(200)}..(MBAG+(1,-7))..BasGamma{dir(-5)},red+dashed);
  draw (BasGamma{dir(40)}..PBas{up},red);

  //Maintenant delta
  point PDelt1=relpoint(contg,0.9);
  point PDelt2=relpoint(contg,0.1);
  path delta1=(PBas{dir(80)}..PDelt1{dir(70)});
  path delta2=(PDelt1{dir(-70)}..PDelt2{dir(-115)});
  path delta3=(PDelt2{dir(115)}..PBas{dir(110)});
  draw(delta1,heavygreen,Arrow(Relative(0.8)));
  label("{\small $\delta$}",relpoint(delta1,0.8),W,heavygreen);
  draw(delta2,heavygreen+dashed);
  draw(delta3,heavygreen);

  // Maintenant zeta1
  point PZ1=relpoint(delta1,0.15);
  point PZ2=PZ1+(-5,6);
  path zeta1=(PZ1{left}..PZ2{dir(160)}..(-39,15)..(-55,7)..(-39,-3)..PBas{dir(25)});
  draw (zeta1,blue,Arrow(Relative(0.3)));
  label("{\small $\zeta_1$}",relpoint(zeta1,0.3),N,blue);
  // Maintenant zeta2
  path zeta2=(PBas{dir(15)}..(PBas+(20,0))..(39,-3)..(55,7)..(39,15)..PZ1{left});
  draw (zeta2,blue,Arrow(Relative(0.4)));
  label("{\small $\zeta_2$}",relpoint(zeta2,0.4),N,blue);
\end{asy}
\caption{The separating case}
\label{Figuuuure}
\end{figure}

Suppose now that $\delta$ is separating: as before we can choose $\zeta$ so that $i(\zeta,\gamma)=1$, $i(\zeta,\delta)=2$ and $\tr \rho(\zeta)\ne 0$. 
With the notation suggested in Figure \ref{Figuuuure} we have the following trace identity
\begin{eqnarray*}
\tr (B)\tr (C)&=& \tr (B_1B_2)\tr (C)=\tr(B_1B_2C)+\tr(B_1B_2C^{-1})\\
&=&\tr (B_1) \tr(B_2C)-\tr(B_1C^{-1}B_2^{-1})+\tr(B_2)\tr(B_1C^{-1})\\
&&-\tr(B_1B_2^{-1}C^{-1}).
\end{eqnarray*}
We interpret this equality in terms of trace functions in the following way 
\begin{equation}\label{tracensep}
F_\zeta F_\delta=F_{\zeta_1}F_{\zeta_2\delta}+F_{\zeta_2} F_{\zeta_1\delta^{-1}}-F_{\zeta_1\delta^{-1}\zeta_2^{-1}}-F_{\zeta_1\delta^{-1}\zeta_2^{-1}}.
\end{equation}
All curves $\xi$ involving $\zeta_1$ satisfy $dF_\xi\in D_{[\rho]}$ thanks to Lemma \ref{inter1} and all curves $\xi$ involving $\zeta_2$ are non-separating and  hence the same conclusion follows from the first case. Hence, we conclude again by derivating Equation \eqref{tracensep}.
\end{proof}

\begin{lemma}\label{poisson}
There exists a curve $\delta$ such that $i(\gamma,\delta)=1$ and $\{f_\gamma,f_{\delta}\}\ne 0$. In particular $df_\gamma\ne 0$ and $\mathcal{M}_\theta=f_\gamma^{-1}(4\cos(\theta)^2)$ is a subvariety of $\mathcal{M}(\Sigma)$
\end{lemma}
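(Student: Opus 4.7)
My plan is to exploit the Dehn twist along $\gamma$ to produce a family of candidates for $\delta$, and to show that if the Poisson bracket $\{f_\gamma, f_\delta\}$ were to vanish against every member of this family, then the matrix of $\rho(\delta_0)$ would satisfy an algebraic identity incompatible with lying in $\mathrm{SL}_2(\R)$.

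First I fix any simple curve $\delta_0$ with $i(\gamma,\delta_0)=1$ (such a curve exists since $\gamma$ is non-separating), and set $\delta_n = \tau_\gamma^n(\delta_0)$ for $n\in\Z$. Each $\delta_n$ is simple and satisfies $i(\gamma,\delta_n)=1$, so is a valid candidate. In a basis in which the lift $A\in\mathrm{SL}_2(\R)$ of $\rho(\gamma)$ is the rotation of angle $\theta$ with $\theta/\pi\notin\Q$ (allowed since $\rho(\gamma)$ is elliptic of infinite order), I write $B=\begin{pmatrix}x & y \\ z & w\end{pmatrix}\in\mathrm{SL}_2(\R)$ for a lift of $\rho(\delta_0)$, so that $BA^n$ is a lift of $\rho(\delta_n)$.

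I then apply Goldman's Poisson bracket formula for two simple curves meeting once, combined with the chain rule $\{f_\gamma, f_{\delta_n}\} = 4\, t_\gamma\, t_{\delta_n}\, \{t_\gamma, t_{\delta_n}\}$ (with $t_\bullet = \tr\rho(\bullet)$). A direct matrix computation shows that $\{f_\gamma, f_{\delta_n}\}([\rho])$ equals a constant multiple of
\[
(x+w)(z-y)\cos(n\theta) + \tfrac{1}{2}\bigl[(z-y)^2 - (x+w)^2\bigr]\sin(n\theta),
\]
the overall constant being non-zero thanks to $t_\gamma = 2\cos(\theta/2)\neq 0$ and $\sin\theta\neq 0$ (both guaranteed by $\theta/\pi\notin\Q$).

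The heart of the argument is an irrationality trick. Assume for contradiction that $\{f_\gamma, f_{\delta_n}\}([\rho])=0$ for every $n\in\Z$. Since $\{n\theta \bmod 2\pi : n\in\Z\}$ is dense in $[0,2\pi)$, the trigonometric expression above must vanish identically, giving $(x+w)(z-y)=0$ and $(z-y)^2=(x+w)^2$; together these force $x+w=0$ and $z=y$. But then $\det B = xw-yz = -x^2 - y^2 \le 0$, contradicting $\det B = 1$. This yields the desired $\delta = \delta_n$. The ``in particular'' statements follow immediately: the bilinearity of the Poisson bracket in $df_\gamma$ and $df_\delta$ forces $df_\gamma(\rho)\neq 0$, so the regular value theorem makes $\mathcal{M}_\theta$ a smooth hypersurface near $[\rho]$, hence a proper subvariety of $\mathcal{M}(\Sigma)$. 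The main subtlety is the matrix-entry reduction in the fourth paragraph, where the reality of the entries of $B\in\mathrm{SL}_2(\R)$ is used in an essential way (over $\C$ the argument fails); everything else is routine.
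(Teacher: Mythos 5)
Your proof is correct and follows essentially the same route as the paper: twist $\delta$ by powers of $\tau_\gamma$, use the density of $\{n\theta \bmod 2\pi\}$ to force the trigonometric expression to vanish identically, and derive $x+w=0$, $z=y$, which contradicts $\det B=1$. The only differences are cosmetic: you compute the bracket via Goldman's product formula where the paper invokes the twist-flow derivative $\tr(A'(0)A(n\theta)B)$, and you spell out the determinant contradiction that the paper leaves as ``this is impossible because $\theta$ is irrational.''
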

\begin{proof}
One can suppose with the notation of Lemma \ref{multiplicationdespetitselliptiques} that $\rho(\gamma)=A(\theta)$ and $\rho(\delta)=B$. We have $\{f_\gamma,f_\delta\}=0$ if and only if $\frac{d}{d\theta}\big{|}_{\theta=0}\tr(A(\theta)B)=\tr (A'(0)B)=0$. If for all $n\in\Z$ we have $\{f_\gamma,f_{\tau_\gamma^n\delta}\}=0$ then it follows that $\tr(A'(0)A(n\theta)B)=0$ but this is impossible because $\theta$ is irrational. 
\end{proof}
We can now finish the proof of Proposition \ref{SectionToutesLesDirections} by showing that $D_{[\rho]}=T^*_{[\rho]}\mo^k$. 

Let $\xi\in T_{[\rho]}\mo^k$ be orthogonal to $D_{[\rho]}$. Then we have $df_\gamma(\xi)=0$ which means that $\xi$ is tangent to the subvariety $\mathcal{M}_\theta(\Sigma)$. 
Let $r:\mathcal{M}_\theta(\Sigma)\to \mathcal{M}_\theta(\Sigma\setminus\gamma)$ be the restriction map. The representation $r(\rho)$ is again non-elementary for the following reason: as $\rho$ is non-elementary, there exists $\delta$ such that $i(\gamma,\delta)=1$ and such that $\rho(\gamma)$ and $\rho(\delta)$ do not commute. The commutator is a separating curve in $\Sigma\setminus \gamma$ whose image is a hyperbolic element. This prevents the restriction of $\rho$ to be elementary. In particular, the representation $r(\rho)$ is a smooth point of $\mathcal{M}(\Sigma\setminus\gamma)$. 

It is well-known that the differentials $df_{\delta}$ for $\delta$ disjoint from $\gamma$ generate the cotangent space of $\mathcal{M}(\Sigma\setminus \gamma)$, cf \cite{GoldmanXia11}, Lemma 3.1. As $df_\delta(\xi)=0$ by Lemma \ref{inter0}, we conclude that $Dr(\xi)=0$. 
On the other hand, the fiber of the map $r$ is given by the action of the twist flow $\Phi_\gamma$. In other words, the space $\ker Dr$ is generated by the symplectic gradient $X_{\gamma}$ of $f_\gamma$. Hence there exists $\lambda\in \R$ so that $\xi=\lambda X_\gamma$. 

Let $\delta$ be a curve given by Lemma \ref{poisson}. Then by Lemma \ref{inter1} we have $df_\delta(\xi)=0=\lambda df_\delta(X_\gamma)=\lambda\{f_\gamma,f_\delta\}$. As $\{f_\gamma,f_\delta\}\ne 0$ we have $\lambda=0$ and hence $\xi=0$ and the proposition is proved.


\subsection{$\mathcal{EI}^k$ has full measure in $\mathcal{NH}^k$}\label{SectionEDense}
As already observed in the beginning of Section \ref{SectionErgod}, 
for every $\gamma$ representing a simple closed curve,
and for every real $t$, the set
$\{[\rho]\in\mo\, |\,|\tr(\rho(\gamma))|=t\}$
is a proper semi-algebraic subvariety of each connected component of
$\mo$,
and the set $\mathcal{N}\subset\mathcal{M}$ of $[\rho]$
which map no simple closed curve to a parabolic or to an elliptic element
of finite order or to the identity, has full measure in $\mo$.

Thus, it suffices to prove that $\mathcal{EI}^k\cap\mathcal{N}$ has
full measure in $\mathcal{NH}^k\cap\mathcal{N}$.

Let $[\rho]\in\mathcal{NH}^k\cap\mathcal{N}(\Sigma)$.
There exists a simple closed curve $\gamma$ such
that $\rho(\gamma)$ is elliptic of infinite order.
If $\gamma$ is homotopic to a non-separating
curve then $[\rho]\in\mathcal{EI}^k(\Sigma)$. Therefore, it remains
to prove that if $\gamma$ is homotopic to a separating curve, then
we can find another curve which is mapped to an elliptic element of $\psl$.
As we saw in Section \ref{SectionNonErgod},
this is not true if $(g,k)=(2,0)$, but we will see that it
is true in every other case.

We will first do it in the simplest yet significative case:
\begin{lemma}\label{SepNonSepLem1}
  Let $\rho\colon\Gamma_2\rightarrow\psl$ be a representation
  of Euler class $\pm 1$, sending some separating simple
  closed curve to an elliptic element of infinite order. Then $\rho$
  sends a non-separating simple closed curve to an elliptic element.
\end{lemma}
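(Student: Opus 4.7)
My approach is to cut $\Sigma_2$ along the separating curve $\gamma$ into two one-holed tori $T_1, T_2$ and apply Goldman's trichotomy (Theorem 6.3 of \cite{Goldman03}, used in the proof of Theorem \ref{dixhuit}) to one of them. First I would write $\pi_1(T_i) = \langle a_i, b_i \rangle$ so that $\gamma = [a_1,b_1] = [a_2,b_2]^{-1}$ in the standard presentation of $\Gamma_2$, pick $\sldeuxR$-lifts $\tilde A_i, \tilde B_i$ of $\rho(a_i), \rho(b_i)$, and set $C_i = [\tilde A_i, \tilde B_i]$. Because $\rho$ has odd Euler class it does not lift to $\sldeuxR$, while each restriction $\rho|_{\pi_1(T_i)}$ does (since $\pi_1(T_i)$ is free); the Milnor algorithm from Section \ref{SectionEuler} then forces $C_1 C_2 = -I$ in $\sldeuxR$, whence $\tr(C_2) = -\tr(C_1)$. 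As $\rho(\gamma)$ is elliptic, both traces lie in $(-2,2)$, so up to exchanging $T_1$ and $T_2$ I may assume $\tr(C_1) \in [0,2)$.

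Next, set $(x,y,z) = (\tr \tilde A_1, \tr \tilde B_1, \tr(\tilde A_1 \tilde B_1))$; the Fricke identity gives
\[
\tr(C_1) = x^2 + y^2 + z^2 - xyz - 2 \in [0,2).
\]
By Theorem 6.3 of \cite{Goldman03}, after acting by an element of $\mcg(T_1)$, either (i) $x,y,z < -2$, or (ii) one of $x,y,z$ lies in $[-2,2]$. In case (i) one has $x^2 + y^2 + z^2 > 12$ and $-xyz > 8$, hence $\tr(C_1) > 18$, which contradicts $\tr(C_1) < 2$. So case (ii) must hold, and one of $a_1, b_1, a_1 b_1$ (in a suitable free basis of $\pi_1(T_1)$) is sent by $\rho$ to an element of trace in $[-2,2]$. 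The corresponding curve is non-boundary-parallel in $T_1$, hence non-separating in $\Sigma_2$.

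Strict inequality $|\tr|<2$ gives ellipticity and closes the argument. The boundary case $|\tr|=2$ (parabolic or identity) cuts out a proper algebraic subvariety and is irrelevant in the intended application to Proposition \ref{EDense}: there $\rho$ is taken in the full-measure set $\mathcal{N}$ of representations sending no simple closed curve to a parabolic element or to an elliptic of finite order, so the non-hyperbolic image produced above is automatically elliptic of infinite order. The main delicate point is that Theorem 6.3 of Goldman must be invoked here in the elliptic-commutator regime $\tr(C_1) \in [0,2)$, outside the regime $\tr > 2$ in which it appears in the proof of Theorem \ref{dixhuit}; this is legitimate since Goldman's trichotomy is formulated for arbitrary triples $(x,y,z)$ without restriction on the sign of $\tr([A,B])$.
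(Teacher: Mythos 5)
Your reduction to the two one-holed tori and the computation $C_1C_2=-I$, hence $\tr(C_2)=-\tr(C_1)\in(-2,2)$, are fine, but the pivotal step --- invoking Goldman's Theorem~6.3 for $\tr(C_1)\in[0,2)$ --- is a genuine gap, and the conclusion you extract from it is in fact false in that regime. Theorem~6.3 of \cite{Goldman03} concerns the level sets $\kappa^{-1}(t)$ with $t>2$; this is precisely why the proof of Theorem~\ref{dixhuit} in Section~\ref{SectionReminders} first establishes $\tr([A,B])>2$ before citing it. For $t\in(-2,2)$ the dichotomy you want does not hold: the representations of the one-holed torus with elliptic boundary and relative Euler class $\pm1$ (holonomies of hyperbolic cone tori of cone angle $\theta\in(0,2\pi)$) send \emph{every} essential simple closed curve to a hyperbolic element, so no point of their $\mcg$-orbit has a coordinate in $[-2,2]$, and since $t<18$ no point lies in $(-\infty,-2)^3$ either. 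Such representations exist for every commutator trace $t=-2\cos(\theta/2)\in(-2,2)$, in particular for $t\in[0,2)$, so your normalization $\tr(C_1)\ge0$ does not exclude them: the restriction $\rho|_{\pi_1(T_1)}$ may perfectly well be of this type, and then your ``case (ii) must hold'' fails.

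What could rescue the strategy is the hypothesis $\eu(\rho)=\pm1$, which you never exploit beyond fixing the sign of $\tr(C_1)$. Additivity of the relative Euler class (Section~\ref{SectionEuler}, extended to elliptic boundary via rotation numbers in $\psltild$) forces the two restrictions to have relative Euler classes $\{0,\pm1\}$, so exactly one torus carries a relative Euler class $0$ representation; for \emph{that} torus one can hope to exhibit a non-hyperbolic simple closed curve, but this requires the part of Goldman's analysis devoted to $-2<t<2$ and relative Euler class $0$, not Theorem~6.3, and the relevant torus need not be the one you selected by the sign of the commutator trace. The paper's own proof avoids this route entirely: it is a direct geometric construction, using the commutator pictures of Section~\ref{SectionCommut} and large powers of Dehn twists, of an explicit non-separating curve $a_1\gamma^Na_2\gamma^{-N}$ with elliptic image. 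Your closing remark that the borderline case $|\tr|=2$ is harmless in the application to Proposition~\ref{EDense} is reasonable, but it is secondary to the gap above.
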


\begin{proof}
  We use the same presentation of $\Gamma_2$ as in Section \ref{SectionNotation},
  and the same notation for
  the matrices as well.
  Up to conjugating $\rho$ by an inversion, we may suppose that
  $\eu(\rho)=1$. Then both $\rho([a_1,b_1])$ and $\rho([a_2,b_2])$,
  well-defined in $\psltild$, are positive rotations in the
  trigonometric direction. It follows that the axes of $\pm A_1$
  and $\pm B_1$ are as in Figure \ref{FigureComm}, and that
  the axis of $\pm A_1$ turns negatively around the fixed point
  of $\pm [A_1,B_1]$ (in other words, when we travel along the axis of
  $\pm A_1$ towards its attractive point, we have the fixed point of
  $\pm [A_1,B_1]$ at our right hand). The same is true for $A_2$, and remains true if we replace $A_1$ by $B_1^NA_1$ and
  $A_2$ by $B_2^NA_2$, with $N$ large. This replacement amounts to
  precomposing $\rho$ with Dehn twists, hence it does not change the Euler
  class of $\rho$ and guarantees that both displacements $\lambda(A_i)$'s
  are as large as we may want.

  Let $C=[A_1,B_1]$. The formula obtained in Remark \ref{RemarqueComm}
  implies that
  the axes of $\pm A_1$ and $\pm A_2$ are at distance close to
  $\operatorname{arcsinh}(\operatorname{cotan}\theta)$ of the fixed point of
  $C$, $\theta$ being its rotation angle. Now, up to conjugating $A_2$ by a suitable power
  of $C$ (ie, up to Dehn twists), the situation is as
  in the left part of Figure \ref{Figurea1a2},
  where the axes of $\pm A_1$ and $\pm A_2$ are approximately symmetric
  around the fixed point of $C$. Here, the product
  $\pm A_1A_2$ can be constructed as a product of two reflections, as suggested
  in the left part of Figure \ref{Figurea1a2}.
  The displacements of $\pm A_1$ and $\pm A_2$ being
  large, and the distance between their axes being bounded from zero, the picture
  is indeed as in this figure, and $\pm A_1A_2$ is hyperbolic.

  Conjugating $A_2$ by suitable powers of $C$ results (at least)
  in the freedom
  of choosing the parameter $\ell$, in the right part of Figure \ref{Figurea1a2}, in a dense
  subset of the segment $[0,d]$, where $d$ is the distance between
  the axes of $A_1$ and $A_2$.
  \begin{figure}[htbp]
  \begin{asy}
    import math;
    import hyperbolic_geometry;
    real taille=140;

    size(taille,taille);
    hyperbolic_point fixC=hyperbolic_point(0.1,85);
    hyperbolic_point pa1=hyperbolic_point(0.6,180);
    hyperbolic_point pb1=hyperbolic_point(0.9,0);
    hyperbolic_line cent=hyperbolic_line(pa1,pb1);
    hyperbolic_line axA=hyperbolic_normal(cent,pa1);
    hyperbolic_line axB=hyperbolic_normal(cent,pb1);
    hyperbolic_line constr=hyperbolic_line(hyperbolic_point(1.5,100),hyperbolic_point(1.5,80));
    hyperbolic_line rA=common_perpendicular(axA,constr);
    hyperbolic_line rB=common_perpendicular(axB,constr);

    draw(unitcircle);
    draw(cent,blue);
    dot(fixC,red);
    label("{\small ${\rm Fix}_C$}",fixC.get_euclidean(),N);
    draw(reverse(axA.to_path()),Arrow(Relative(0.2)));
    draw(reverse(axB.to_path()),Arrow(Relative(0.8)));
    label("\small $A_1$ \normalsize",relpoint(axA.to_path(),0.8),dir(-55));
    label("\small $A_2$ \normalsize",relpoint(axB.to_path(),0.2),dir(235));
    label("\small $s_2$ \normalsize",relpoint(cent.to_path(),0.1),dir(35),blue);
    label("\small $s_3$ \normalsize",relpoint(rA.to_path(),0.2),dir(35),blue);
    label("\small $s_1$ \normalsize",relpoint(rB.to_path(),0.2),dir(55),blue);
    draw(rA,blue);
    draw(rB,blue);
  \end{asy}
  \quad
  \begin{asy}
    import math;
    import hyperbolic_geometry;
    real taille=120;
    size(taille,taille);
    hyperbolic_point fixC=hyperbolic_point(0.1,85);
    hyperbolic_point pa1=hyperbolic_point(0.6,180);
    hyperbolic_point pb1=hyperbolic_point(0.9,0);
    hyperbolic_line cent=hyperbolic_line(pa1,pb1);
    hyperbolic_line axA=hyperbolic_normal(cent,pa1);
    hyperbolic_line axB=hyperbolic_normal(cent,pb1);
    hyperbolic_line constr=hyperbolic_line(hyperbolic_point(1.5,100),hyperbolic_point(1.5,80));
    //hyperbolic_point pa2=intersection(axA,constr);
    //hyperbolic_point pb2=intersection(axB,constr);
    //hyperbolic_line rA=hyperbolic_normal(axA,pa2);
    //hyperbolic_line rB=hyperbolic_normal(axB,pb2);
    hyperbolic_line rA=common_perpendicular(axA,constr);
    hyperbolic_line rB=common_perpendicular(axB,constr);
    hyperbolic_point pa2=intersection(axA,rA);
    hyperbolic_point pb2=intersection(axB,rB);
    hyperbolic_point ma=midpoint(pa1,pa2);
    hyperbolic_point mb=midpoint(pb1,pb2);

    draw(unitcircle);
    draw(cent,blue);
    draw(axA);
    draw(axB);
    //dot(fixC,red);
    //label("{\small ${\rm Fix}_C$}",fixC.get_euclidean(),N);
    //draw(reverse(axA.to_path()),Arrow(Relative(0.2)));
    //draw(reverse(axB.to_path()),Arrow(Relative(0.8)));
    //label("\small $A$ \normalsize",relpoint(axA.to_path(),0.8),dir(-55));
    //label("\small $B$ \normalsize",relpoint(axB.to_path(),0.2),dir(235));
    //// draw(common_perpendicular(rA,rB));
    //draw(cent,blue);
    draw(rA,blue);
    draw(rB,blue);
    draw(hyperbolic_segment(pa1,pb1),blue+1.4pt);
    draw(hyperbolic_segment(pa1,pa2),black+1.2pt);
    draw(hyperbolic_segment(pb1,pb2),black+1.2pt);
    label("{\small $\ell$}",(0,0),S,blue);
    label("{\small $\frac{\lambda(A_1)}{2}$}",ma.get_euclidean(),W);
    label("{\small $\frac{\lambda(A_2)}{2}$}",mb.get_euclidean(),E);
  \end{asy}
  \caption{}
  \label{Figurea1a2}
  \end{figure}
  For a suitable $\ell$, the axes of $s_1$ and $s_3$ can be made to
  intersect each other. The element $\rho(a_1 c^{N} a_2 c^{-N})$,
  where $N$ is chosen properly, is then elliptic, and it is
  the image of a simple non-separating closed curve.
\end{proof}
We are left with the case $g\geq 3$. In order to adapt the proof
of Lemma \ref{SepNonSepLem1} to this case, the technical statement
to prove is the following.
\begin{lemma}\label{SepNonSepLem2}
  Let $\Sigma$ be a surface of genus $g\geq 2$ with one boundary
  component, freely homotopic to $\gamma\in\pi_1(\Sigma)$.
  Let $\rho\colon\pi_1(\Sigma)\rightarrow\psl$ such that:
  \begin{itemize}
  \item[-] It sends no simple closed curve to an elliptic
  element of finite order, or to the identity, or to a parabolic
  element. 
  \item[-] It sends $\gamma$ to an elliptic element (of infinite order).
  \item[-] It sends every non-separating simple
  closed loop to a hyperbolic element. Let $x_0\in\HH$ be the
  fixed point of $\rho(\gamma)$.
  \end{itemize}
  Then there exists a positive real number $D>0$ and a sequence of closed
  non-separating simple loops $\gamma_n$ such that the displacement
  $\lambda(\gamma_n)$ is equal to $D$ for all $n$, and such that
  the distance between $x_0$ and the axis of $\gamma_n$ tends
  to $+\infty$.
\end{lemma}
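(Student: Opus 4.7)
My plan is to adapt the geometric construction behind Lemma~\ref{SepNonSepLem1} to the higher-genus setting, exploiting $g\geq 2$ for additional room. First I would pick inside $\Sigma$, disjoint from $\gamma$, a handle formed by two simple non-separating curves $a_1,b_1$ with $i(a_1,b_1)=1$. By hypothesis, both $\rho(a_1)$ and $\rho(b_1)$ are hyperbolic. A preliminary observation is that $C:=\rho([a_1,b_1])$ is hyperbolic as well: otherwise, Theorem~\ref{dixhuit} applied to the restriction of $\rho$ to the one-holed torus spanned by $a_1$ and $b_1$ would produce a simple curve inside this handle mapped to a non-hyperbolic element of $\psl$, and since such a curve is non-separating in $\Sigma$, this would contradict the hypothesis of the lemma.

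Next, using $g\geq 2$ once more, I would select a non-separating simple curve $\sigma$ disjoint from $\gamma$ that intersects the separating curve $[a_1,b_1]$ transversely. The curves $\gamma_N:=\tau_{[a_1,b_1]}^N(\sigma)$ are then non-separating simple, and up to convention $\rho(\gamma_N)$ is a product of $\rho(\sigma)$ with $C^N$. Because $\rho(\gamma)$ is an elliptic rotation around $x_0$ while $C$ is hyperbolic, the axis $L_C$ of $C$ does not contain $x_0$; consequently, the axis of $\rho(\gamma_N)$ is shifted along $L_C$ and escapes any compact neighborhood of $x_0$ as $N\to\infty$.

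The main difficulty is that the displacements $\lambda(\rho(\gamma_N))$ grow monotonically with $N$, so this one-parameter family does not realize a single value $D$. To resolve this I would introduce a compensating second twist parameter, available thanks to $g\geq 2$: additional Dehn twists along a non-separating simple curve $\mu$ intersecting $\sigma$, producing a two-parameter family $\gamma_{N,M}$ of non-separating simple curves whose traces are polynomial expressions in the exponentials $e^{\pm N\ell_C/2}$ and $e^{\pm M\ell_\mu/2}$. An infinite level set of these polynomials would then provide the required sequence of curves with constant displacement $D$. The hardest part of the argument, and the real obstacle, is to verify that along such a level set one can simultaneously preserve simplicity (and the non-separating property) of the curves and ensure that the corresponding axes genuinely tend to infinity rather than staying bounded near $x_0$; this requires a careful analysis of the polynomial's level set geometry in $\mathbb{Z}^2$ together with the geometric interpretation of Dehn twists on axes in $\HH$.
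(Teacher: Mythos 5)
There is a genuine gap here, and it is exactly the point you flag yourself at the end: your construction does not produce curves of constant displacement. Applying $\tau_{[a_1,b_1]}^N$ to $\sigma$ changes the conjugacy class of the curve, so $|\tr\rho(\gamma_N)|$ grows like $e^{N\ell_C/2}$, and the proposed repair --- a second twist parameter $M$ and an ``infinite level set'' of the resulting exponential polynomial in $(N,M)\in\Z^2$ --- cannot be expected to work: for a fixed representation the trace of $\rho(\gamma_{N,M})$ is a finite sum of terms $c\,e^{(\pm N\ell_C\pm M\ell_\mu)/2}$, and a fixed value $D$ is generically attained at only finitely many lattice points. Even if it were attained infinitely often, you would still have to control simplicity and the location of the axes along the level set, which you acknowledge you cannot do. (A smaller issue: the claim that $x_0\notin L_C$ does not follow merely from $\rho(\gamma)$ being elliptic and $C$ hyperbolic.)

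The idea you are missing is that the conclusion concerns \emph{based} loops, so one should use inner rather than outer automorphisms: conjugation acts trivially on traces but nontrivially on axes. The paper takes two non-separating simple based loops $\delta$ and $a$ whose images are hyperbolic with \emph{distinct} axes, and replaces $\delta$ by $a^N\delta a^{-N}$. This element is conjugate to $\delta$ in $\pi_1(\Sigma)$, hence is still a simple non-separating loop with $\lambda(\rho(a^N\delta a^{-N}))=\lambda(\rho(\delta))=:D$ for all $N$; meanwhile its axis is the $\rho(a)^{\mp N}$-translate of the axis of $\rho(\delta)$, which converges to a fixed point of $\rho(a)$ on $\partial\HH$ and therefore leaves every compact set, so its distance to $x_0$ tends to $+\infty$. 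The only thing left to verify is the existence of two such loops with non-commuting hyperbolic images; the paper exhibits three curves $\delta$, $a$, $b$ with $i(a,b)=1$ and observes that if all their images commuted, the simple closed curve $[a,b]$ would be sent to the identity, contradicting the first hypothesis. Your preliminary step (that $[a_1,b_1]$ has hyperbolic image via Theorem~\ref{dixhuit}) is fine but unnecessary once the conjugation trick is in place.
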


Now let us use this lemma to prove the following one, which concludes
the proof of Proposition \ref{EDense}. We will then prove
Lemma \ref{SepNonSepLem2}.

\begin{lemma}
  Let $\Sigma$ be a closed surface of genus $g\geq 3$. Let
  $\rho:\Gamma_g\rightarrow\psl$
  be a representation such that $[\rho]\in\mathcal{N}^k$,
  with $|k|\leq 2g-3$. Suppose moreover that $\rho$ sends some
  separating simple closed loop $\gamma$ to an elliptic element
  (thus, of infinite order).

  Then $\rho$ also sends some non-separating simple closed loop
  to an elliptic element.
\end{lemma}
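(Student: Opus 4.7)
I will cut $\Sigma$ along $\gamma$ into two subsurfaces $\Sigma_1,\Sigma_2$ of genera $g_1,g_2\geq 1$ with $g_1+g_2=g\geq 3$; after swapping the two sides we may assume $g_1\geq 2$. The argument is by contradiction: suppose that every non-separating simple closed loop of $\Sigma$ is sent by $\rho$ to a hyperbolic element. A non-separating simple loop inside $\Sigma_i$ remains non-separating in $\Sigma$, so the same assumption holds for each restriction $\rho|_{\pi_1(\Sigma_i)}$; together with $[\rho]\in\mathcal N^k$, this puts us in the hypotheses of Lemma \ref{SepNonSepLem2} on every $\Sigma_i$ of genus $\geq 2$. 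The proof then splits into the cases $g_2=1$ and $g_2\geq 2$.

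\textbf{Case $g_2=1$.} Here $\Sigma_2$ is a one-holed torus; write $\pi_1(\Sigma_2)=\langle a,b\rangle$ with $[a,b]$ freely homotopic to $\gamma$. Since $\rho(\gamma)$ is elliptic, $\tr\rho([a,b])\in(-2,2)$. I will invoke Theorem~6.3 of \cite{Goldman03}, exactly as quoted in the proof of Theorem~\ref{dixhuit}: up to the action of $\mcg(\Sigma_2)$, either $\tr\rho(a),\tr\rho(b),\tr\rho(ab)$ are all $<-2$, or all $>2$, or one of them lies in $[-2,2]$. The first two cases each force $|\tr\rho([a,b])|\geq 18$ via the Fricke identity $\tr([A,B])=x^2+y^2+z^2-xyz-2$, contradicting $\tr\rho([a,b])\in(-2,2)$. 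In the third case one obtains a simple non-boundary curve of $\Sigma_2$, necessarily non-separating in $\Sigma$, whose $\rho$-image has trace in $[-2,2]$; since $[\rho]\in\mathcal N^k$ this image must be elliptic, contradicting the standing assumption.

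\textbf{Case $g_1,g_2\geq 2$.} Applying Lemma~\ref{SepNonSepLem2} on both sides yields constants $D_1,D_2>0$ and simple non-separating loops $\alpha_n\subset\Sigma_1$, $\beta_n\subset\Sigma_2$ with $\lambda(\rho(\alpha_n))=D_1$, $\lambda(\rho(\beta_n))=D_2$ and whose axes have distance to $x_0$, the fixed point of $R:=\rho(\gamma)$, tending to $+\infty$. Fix a base point $p\in\gamma$ and realize $\alpha_n,\beta_n$ as simple based loops at $p$ with $\alpha_n\cap\beta_n=\{p\}$, each lying in its own side of $\gamma$. Smoothing their concatenation at $p$ yields a simple closed curve $\delta_n^{(0)}$ in $\Sigma$, and I set $\delta_{n,N}:=\tau_\gamma^N\,\delta_n^{(0)}$. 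Each $\delta_{n,N}$ is a simple closed curve whose free homotopy class is represented by $\alpha_n\gamma^N\beta_n\gamma^{-N}\in\pi_1(\Sigma,p)$; it is non-separating in $\Sigma$ because its homology class equals $[\alpha_n]+[\beta_n]\neq 0$ in $H_1(\Sigma)=H_1(\Sigma_1)\oplus H_1(\Sigma_2)$. The holonomy is $\rho(\delta_{n,N})=R^{-N}\rho(\beta_n)R^N\,\rho(\alpha_n)$, which must be hyperbolic for every $n,N$ under the standing assumption.

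\textbf{The contradiction, and the main obstacle.} The rotation angle of $R$ is irrational, so as $N$ varies, the axis of $R^{-N}\rho(\beta_n)R^N$ rotates densely around $x_0$. Since both $\mathrm{axis}(\rho(\alpha_n))$ and $\mathrm{axis}(\rho(\beta_n))$ are far from $x_0$, this freedom allows the common perpendicular $m$ of the two axes to be made arbitrarily short. I then mimic the compass-and-straightedge construction of Lemma~\ref{SepNonSepLem1}: decomposing along $m$, $\rho(\alpha_n)=s_{n_1}s_m$ and $R^{-N}\rho(\beta_n)R^N=s_m s_{n_2}$, so $\rho(\delta_{n,N})=s_{n_1}s_{n_2}$ is elliptic precisely when $n_1\cap n_2\neq\emptyset$ in $\HH$. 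The right-angled-pentagon identity of Section~\ref{SectionCommut} forces such an intersection as soon as the length $\ell$ of $m$ is small enough relative to $D_1,D_2$. The main obstacle is a coupling of parameters: Lemma~\ref{SepNonSepLem2} produces only specific displacements $D_1,D_2$, and one must verify that they lie in the pentagon-admissible range; if not, additional Dehn twists inside each $\Sigma_i$ along loops non-separating there can be prepended to inflate $D_i$ while preserving the axes-to-infinity property of Lemma~\ref{SepNonSepLem2}. Producing one elliptic $\rho(\delta_{n,N})$ contradicts the assumption and closes the proof.
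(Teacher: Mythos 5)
Your case $g_2=1$ contains a genuine gap. Theorem \ref{dixhuit} (Goldman's Theorem 6.3) is not applicable when the boundary of the one-holed torus is sent to an \emph{elliptic} element: in that situation $\kappa:=\tr\rho([a,b])\in(-2,2)$, whereas the trichotomy you quote is established only after reducing to $\kappa>2$ (that is the role of the Euler class $0$ hypothesis and of the bound $|\tr|\le 18$ in the paper's proof of Theorem \ref{dixhuit}). For $\kappa\in(-2,2)$ the trichotomy is simply false: the level set contains characters with all three coordinates $x,y,z>2$ (e.g.\ $x=y=10$, $z\simeq 98$), and in fact the non-compact components of the relative character variety with elliptic boundary are Teichm\"uller spaces of hyperbolic tori with one cone point, whose holonomies send \emph{every} simple closed curve of the one-holed torus to a hyperbolic element. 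Your claim that ``all three traces $>2$'' forces $|\kappa|\ge 18$ is also incorrect (the Fricke identity gives that conclusion only when all three are $<-2$). Moreover such a restriction really can occur under the hypotheses of the lemma (glue a cone-torus type representation on $\Sigma_2$ to a suitable representation on the genus $g-1\ge2$ side, keeping $|k|\le 2g-3$), so in this case the elliptic non-separating curve cannot in general be found inside $\Sigma_2$: it must cross $\gamma$, which is exactly why the paper runs the two-sided construction for \emph{both} pieces (the proof of Lemma \ref{SepNonSepLem2} adapts to a one-holed torus piece, since $a^N\delta a^{-N}$ is freely homotopic to the simple non-separating curve $\delta$ and its image's axis escapes to infinity).

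In the case $g_1,g_2\ge 2$ your skeleton (combine a loop from each side across $\gamma$, twist along $\gamma$ using the irrationality of the rotation, detect ellipticity by a reflection decomposition) is the paper's, but the decisive step is wrong as stated: making the common perpendicular $m$ short does \emph{not} force $\rho(\delta_{n,N})$ to be elliptic. Indeed a product of two hyperbolic elements whose axes cross is never elliptic (write $\rho(\alpha_n)=\sigma_q\sigma_p$ and $R^{-N}\rho(\beta_n)R^N=\sigma_r\sigma_q$ with point-reflections, $q$ the crossing point; the product is $\sigma_r\sigma_p$, a translation), and as the distance $\ell$ between the axes tends to $0$ the trace tends to $\pm2\cosh\bigl(\frac{|D_1-D_2|}{2}\bigr)$, which is $\ge 2$. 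Ellipticity requires $\ell$ to lie in a bounded window (roughly $\cosh\ell\approx\coth\frac{D_1}{2}\coth\frac{D_2}{2}$) \emph{and} the translation directions along the two axes to be suitably opposed; this is precisely the content of the orientation dichotomy and of the hexagon condition $H(\lambda(\rho(a)),|D_a-D_b|,\lambda(\rho(b)))$ (resp.\ $D_a+D_b$) in the paper's proof, which arranges the configuration of Figures \ref{Figurea1a2} and \ref{FigureAB} before twisting. Your ``main obstacle'' paragraph points at this coupling but resolves it only by an unverified device (inflating $D_1,D_2$ by extra twists), which is not provided by Lemma \ref{SepNonSepLem2} as stated; so the argument is incomplete exactly where the paper does the real work. (A small further point: the smoothing of the concatenation at $p$ yields a simple curve only for one choice of orientation, so as in the paper one should allow replacing $\beta_n$ by $\beta_n^{-1}$, which is harmless.)
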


\begin{proof}
  Base $\Gamma_g$ on a simple closed separating curve
  $\gamma$ and suppose that $\rho(\gamma)$ is elliptic
  (of infinite order).

  Let us say that three positive real numbers $\ell_1$, $d$, $\ell_2$
  satisfy the condition $H(\ell_1,d,\ell_2)$ if there exists a hyperbolic
  right-angled hexagon with three consecutive lengths equal to
  $\ell_1$, $d$ and $\ell_2$. It is easy to restate condition
  $H(\ell_1,d,\ell_2)$ in terms of hyperbolic trigonometric functions,
  but all we will need is the remark that
  $H(x_n,y_n,z_n)$ holds for all $n$ large enough,
  whenever $x_n$, $y_n$ and $z_n$ are sequences such that none of them
  accumulates to $0$, and $y_n$ goes to infinity or $x_n$ and $z_n$
  both go to infinity. Observe, also, that if $H(x,y,z)$ holds and if
  $x'\geq x$, $y'\geq y$ and $z'\geq z$, then $H(x',y',z')$ holds.
  Also, the condition $H(x,y,z)$ is open in $(x,y,z)$.

  Let $\Sigma_1$ and $\Sigma_2$ be the two pieces of
  $\Sigma\smallsetminus\gamma$.
  Lemma \ref{SepNonSepLem2} either finds some simple closed loop mapped
  by $\rho$ to an elliptic element, or guarantees that there exist a simple,
  non-separating loop $a\in\pi_1(\Sigma_1)$ and a simple, non-separating
  loop $b\in\pi_1(\Sigma_2)$ such that the condition
  $H(\lambda(\rho(a)),|D_a-D_b|,\lambda(\rho(b)))$ holds,
  where $D_a$ (resp. $D_b$) is the distance between the axis of
  $\rho(a)$ (resp. $\rho(b)$) to the fixed point $x_0$ of $\rho(\gamma)$.

  Up to replacing $b$ by $b^{-1}$, we may suppose that $a\cdot b$ is a
  non-separating simple closed curve.

  Denote $A=\rho(a)$, $B=\rho(b)$, and $C=\rho(\gamma)$.
  Up to conjugating $b$ by an adequate power of $\gamma$,
  we may suppose that the fixed point of $C$ lies very close to the segment
  perpendicular to the axes of $A$ and $B$.

  Orient the axes of $A$ and $B$ accordingly to the displacements of
  $A$ and $B$ along these axes. Then either these orientations agree,
  or they do not.

  If they do agree, then we end the argument exactly as we did in the
  proof of Lemma \ref{SepNonSepLem1}: since the condition
  $H(\lambda(\rho(a)),D_a+D_b,\lambda(\rho(b)))$ holds, the picture is
  indeed as in Figure \ref{Figurea1a2}, upon replacing $A_1$, $A_2$
  by $A$, $B$. Then we proceed the same argument and find a non-separating 
  simple closed loop sent to an elliptic element of $\psl$.

  If they do not agree, then up to conjugating $B$ by a suitable power
  of $\rho(\gamma)$, we are this time in the situation of Figure \ref{FigureAB}.
  \begin{figure}[htbp]
  \begin{asy}
    import math;
    import hyperbolic_geometry;
    real taille=120;

    size(taille,taille);
    hyperbolic_point fixC=hyperbolic_point(0.9,175);
    hyperbolic_point pa1=hyperbolic_point(0.3,180);
    hyperbolic_point pb1=hyperbolic_point(1.3,0);
    hyperbolic_line cent=hyperbolic_line(pa1,pb1);
    hyperbolic_line axA=hyperbolic_normal(cent,pa1);
    hyperbolic_line axB=hyperbolic_normal(cent,pb1);
    hyperbolic_line constr=hyperbolic_line(hyperbolic_point(2.1,75),hyperbolic_point(2.1,65));
    //hyperbolic_point pa2=intersection(axA,constr);
    //hyperbolic_point pb2=intersection(axB,constr);
    //hyperbolic_line rA=hyperbolic_normal(axA,pa2);
    //hyperbolic_line rB=hyperbolic_normal(axB,pb2);
    hyperbolic_line rA=common_perpendicular(axA,constr);
    hyperbolic_line rB=common_perpendicular(axB,constr);
    hyperbolic_point pa2=intersection(axA,rA);
    hyperbolic_point pb2=intersection(axB,rB);
    hyperbolic_point ma=midpoint(pa1,pa2);
    hyperbolic_point mb=midpoint(pb1,pb2);

    draw(unitcircle);
    draw(cent,blue);
    //draw(axA);
    //draw(axB);
    dot(fixC,red);
    label("{\small ${\rm Fix}_C$}",fixC.get_euclidean(),N);
    draw(reverse(axA.to_path()),Arrow(Relative(0.2)));
    draw(reverse(axB.to_path()),Arrow(Relative(0.8)));
    label("\small $A$ \normalsize",relpoint(axA.to_path(),0.8),dir(-55));
    label("\small $B$ \normalsize",relpoint(axB.to_path(),0.2),dir(235));
    //// draw(common_perpendicular(rA,rB));
    //draw(cent,blue);
    draw(rA,blue);
    draw(rB,blue);
    draw(hyperbolic_segment(pa1,pb1),blue+1.4pt);
    //draw(hyperbolic_segment(pa1,pa2),black+1.2pt);
    //draw(hyperbolic_segment(pb1,pb2),black+1.2pt);
    label("{\scriptsize $\simeq\! |\!D_a\!-\!D_b|$}",midpoint(pa1,pb1).get_euclidean()+(-0.03,0),S,blue);
    //label("{\small $\frac{d_A}{2}$}",ma.get_euclidean(),W);
    //label("{\small $\frac{d_B}{2}$}",mb.get_euclidean(),E);
  \end{asy}
  \caption{}
  \label{FigureAB}
  \end{figure}
  The picture is indeed as in this figure, because the condition
  $H(\lambda(\rho(a)),|D_a-D_b|,\lambda(\rho(b)))$ holds.
  Now we conclude the proof in exactly the same way as in the preceding case.
\end{proof}

We conclude with the proof of Lemma \ref{SepNonSepLem2}.

\begin{proof}[Proof of Lemma \ref{SepNonSepLem2}]
  Consider the curves $\delta$, $a$ and $b$ as in
  Figure \ref{FigureFinDeChasse}.
  \begin{figure}[htbp]
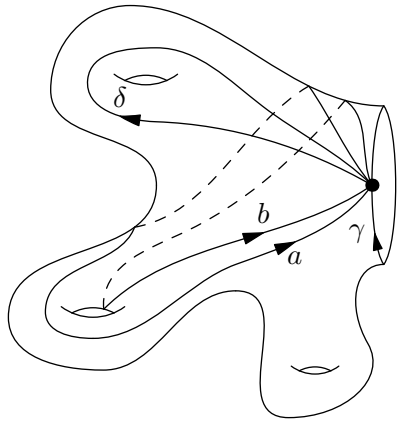

  \begin{asy}
    import geometry;
    unitsize(2pt);

    // Le Bord
    path bord1 = ((3,-15) .. controls (0,-11) and (0,11) .. (3,15));
    path bord2 = rotate(180,(3,0))*bord1;

    draw(bord1,Arrow(Relative(0.2))); draw(bord2);
    label("{\small $\gamma$}",relpoint(bord1,0.2),W);
    point Orig = relpoint(bord1,0.5);

    //
    path cont1 = ((3,15){left}..(-45,34){left}..(-60,18){down}..(-40,0){down});
    draw(cont1);
    path cont2 = ((-40,0){down}..(-68,-25){down}..(-50,-35){right}..(-25,-20){right});
    draw(cont2);
    path cont3 = ((-25,-20){right}..(-15,-44){right}..(0,-30){dir(120)}..(3,-15){right});
    draw(cont3);

    //
    //
    picture trou;
    path chtrb=((-6,1)..(0,-1)..(6,1));
    path chtrh=((-4,-0.2)..(0,0.8)..(4,-0.2));
    draw (trou,(-6,1)..(0,-1)..(6,1));
    draw (trou,(-4,-0.2)..(0,0.8)..(4,-0.2));
    //
    picture trouhaut=shift(-42,20)*trou;  //rien \`a voir avec Barbara ;-)
    add(trouhaut);
    //
    picture troumilieu=shift(-52,-24)*trou;
    add(troumilieu);
    //
    picture troubas = shift(-10,-35)*scale(0.75)*trou;
    add(troubas);

    // Maintenant delta
    path delta=(Orig{dir(150)}..(-40,12)..(-53,20)..(-40,26){right}..(-20,15)..Orig{dir(-45)});
    draw (delta,Arrow(Relative(0.4)));
    label("{\small $\delta$}",relpoint(delta,0.4),N);

    // Puis a
    path a1 = (Orig{dir(130)}..relpoint(cont1,0.14){dir(125)});
    path a2 = (relpoint(cont1,0.14){dir(210)}..relpoint(cont2,0.1){dir(190)});
    path a3 = (relpoint(cont2,0.1){dir(-110)}..(-61,-24){down}..(-52,-29){right});
    path a4 = ((-52,-29){right}..(-25,-15){dir(20)}..Orig{dir(55)});
    draw(a1); draw(a2,dashed); draw(a3); draw(a4,Arrow(Relative(0.7)));
    label("{\small $a$}",relpoint(a4,0.7),S);

    // Puis b
    path chemtroumil=shift(-52,-24)*chtrh;
    point Pb = relpoint(chemtroumil,0.75);
    path b1 = (Orig{dir(115)}..relpoint(cont1,0.07){dir(145)});
    path b2 = (relpoint(cont1,0.07){dir(230)}..(Pb+(14,14))..(Pb+(0,2))..Pb{dir(-80)});
    path b3 = (Pb{dir(45)}..Orig{dir(35)});
    draw(b1); draw(b2,dashed); draw(b3,Arrow(Relative(0.6)));
    label("{\small $b$}",relpoint(b3,0.6),N);

    // Et puis le point base !
    dot(Orig,black+5pt);
  \end{asy}
  \caption{One-holed surface of genus at least $2$}
  \label{FigureFinDeChasse}
  \end{figure}
  If $\delta$, $a$ or $b$ is sent by $\rho$ to a non-hyperbolic element,
  then we are done.

  If $\delta$ and $a$ are sent to hyperbolic elements with different
  axes, then we may replace $\delta$ by its image under a big power
  of a Dehn twist along a curve homotopic to $a$, close to $a$ in
  Figure \ref{FigureFinDeChasse}. 
  In other words, we may replace
  $\delta$ by $a^N\delta a^{-N}$. Letting $N$ go to $+\infty$
  or to $-\infty$ we obtain elements $\rho(a^N\delta a^{-N})$ which
  satisfy the conclusion of Lemma \ref{SepNonSepLem2}.

  We will proceed the same way if $\delta$ and $b$ are sent to
  hyperbolic elements with different axes. We are left with the case when $\delta$, $a$ and $b$ are all sent
  to commuting hyperbolic elements. In that case, since $i(a,b)=1$,
  up to replacing $b$ by its inverse, $[a,b]=aba^{-1}b^{-1}$
  is a simple closed loop in $\Sigma$, sent by $\rho$ to the identity;
  this contradicts the assumption that $\rho\in\mathcal{N}$.
\end{proof}


\bibliographystyle{plain}

\bibliography{Biblio}

\end{document}